\documentclass{amsart}

\usepackage{amsmath,amssymb}

\usepackage{epstopdf}
\usepackage{times}
\usepackage{amssymb}
\usepackage{mathrsfs}
\usepackage{graphicx}
\usepackage{rotating,subfigure}
\usepackage[flushleft]{threeparttable}
\usepackage{multirow}

\newcommand{\head}[1]{\textnormal{\textbf{#1}}}

%%%STAR micro
\usepackage{star}

\usepackage[colorlinks,
linkcolor=blue,
anchorcolor=blue,
citecolor=blue
]{hyperref}

%%%%operators

%%%%Definition of Operators

\newcommand \Sb{\mathbf{S}}

\newcommand*{\supp}{\mathrm{supp}}

%%%%Main Document
\begin{document}

\title[Spectral properties of kernel-based sensor fusion]{On the spectral property of kernel-based sensor fusion algorithms of high dimensional data}     % Option 1

\author{Xiucai Ding}
\address{Department of Mathematics, Duke University, Durham, NC, USA}

		\author{Hau-Tieng Wu}
\address{Department of Mathematics and Department of Statistical Science, Duke University, Durham, NC, USA; Mathematics Division, National Center for Theoretical Sciences, Taipei, Taiwan}
\email{hauwu@math.duke.edu}
\maketitle

\begin{abstract}
We apply local laws of random matrices and free probability theory to study the spectral properties of two kernel-based sensor fusion algorithms, nonparametric canonical correlation analysis (NCCA) and alternating diffusion (AD), for two simultaneously recorded high dimensional datasets under the null hypothesis.
The matrix of interest is the product of the kernel matrices associated with the databsets, which may not be diagonalizable in general. 
We prove that in the regime where dimensions of both random vectors are comparable to the sample size, if NCCA and AD are conducted using a smooth kernel function, then the first few nontrivial eigenvalues will converge to real deterministic values provided the datasets are independent Gaussian random vectors. {Toward the claimed result, we also provide a convergence rate of eigenvalues of a kernel affinity matrix.}
\end{abstract}

\section{Introduction}\label{sec:intro}
Recently, many scientific fields have advanced due to revolutionary sensor technologies, which provide more delicate information about the system of interest. However, a complicated system might not be fully captured by a single sensor, and a common practice is applying various sensors to understand the system from different angles. This leads to heterogeneous datasets with various sensors involved. 
A long lasting challenge is adequately quantifying the system of interest by assembling available information from a given heterogeneous dataset. This problem is usually understood as the {\em sensor fusion} problem  \cite{lahat2015multimodal}. 
The process toward sensor fusion is challenged on multiple fronts, and a lot of research efforts have been invested in the past decades. For example, the classical canonical correlation analysis (CCA) \cite{hotelling1936relations}, which quantifies the system by estimating the highly correlated {\em linear} affine subspaces from two datasets. Since then, many variations have been proposed, including utilizing principle component analysis to correct CCA \cite{Hwang2013}, and its extension to multiple datasets \cite{Horst1961}, to name but a few. 

In recent years, due to the growth of data volume and complexity, researchers pay more attention to the possible {\em nonlinear} structure of the system, and many studies have been conducted toward this goal. 
Specifically, the CCA idea is generalized by taking the nonlinearity structure into account and many algorithms have been proposed to recover the information commonly captured by different sensors. For example, 
in \cite{michaeli2016nonparametric},  the {\em nonparametric CCA} (NCCA) quantifies the information jointly captured by two sensors, {which is related to} the mutual information between two databases. A similar work, named nonlinear CCA, based on the reproducing kernel Hilbert space is proposed in \cite{Hardoon2004}.
In \cite{froyland2015dynamic,lederman2015learning,talmon2016latent,froyland2017dynamic,marshall2017time}, the spectral structure of the product of the transition matrices of different sensors is considered. Among those algorithms, the product of transition matrices of different sensors proposed in \cite{lederman2015learning,talmon2016latent} is called the {\em alternating diffusion} (AD), which is very similar to  NCCA {and has been applied to study the sleep dynamics via two electroencephalogram signals \cite{liu2020diffuse}}.
In \cite{shnitzer2018recovering}, the product structure is generalized by symmetrizing and anti-symmetrizing transition matrices of different sensors to enhance the common and different information captured by different sensors. The symmetrization and anti-symmetrization of transition matrices {lead to algorithms called} {\em symmetrized AD and anti-symmetrized AD} respectively.
We refer readers with interest to \cite{shnitzer2018recovering} for a more complete discussion of other sensor fusion algorithms.

On the theoretical side, there has been a rich literature discussing the statistical behavior of CCA \cite{anderson3rd}, including its behavior in the high dimensional setup \cite{bao2017}. Compared with the linear setup, there are fewer studies analyzing algorithms with the nonlinear setup. Under the assumption that the common information shared by two sensors can be parametrized by a metric space, it is shown in \cite{lederman2015learning} that AD faithfully recovers the metric of the metric space. Under the assumption that the common information shared by two sensors can be parametrized by a manifold, it is proved in \cite{talmon2016latent,shnitzer2018recovering} that AD and the symmetrized AD recover the Laplace-Beltrami operator of the manifold up to some deviations, while the anti-symmetrized AD asymptotically behaves like a first order differential operator. A similar setup and proof are also available in \cite{froyland2015dynamic}. In \cite{marshall2017time}, the multiplication of transition matrices from a sequence of manifold is analyzed.

Note that the basic assumption beyond sensor fusion algorithms is that the sensors do capture useful information of the system. However, to the best of our knowledge, this assumption is rarely confirmed. This fundamental question motivated this work -- if the datasets are high dimensional and only contain noise, what is the spectral behavior of kernel-based sensor fusion algorithms? Specifically, we are interested in the spectral behavior of kernel-based sensor fusion algorithms in the {\em null case}. While there are many algorithms available, we focus on two closely related algorithms, NCCA and AD. Before jumping into technical details, below we summarize NCCA and AD.

\subsection{Generic formula for NCCA and AD}\label{sec:alog}
 
In practice, we are provided two point clouds 
\begin{equation}
\mathcal{X}:=\{\xb_i\}_{i=1}^n\subset \mathbb{R}^{p_1},\,\,\mathcal{Y}:=\{\yb_i\}_{i=1}^n\subset \mathbb{R}^{p_2}\,, 
\end{equation}
where $n,p_1,p_2\in \mathbb{N}$ and for $i=1,\ldots,n$, $\xb_i$ and $\yb_i$ are observed simultaneously. 
The first step is constructing affinity matrices $\mathbf{W}_x,\,\mathbf{W}_y\in \mathbb{R}^{n\times n}$ from the point clouds by
\begin{equation}\label{Definition:Wx Wy matrix}
\mathbf{W}_x(i,j) := \exp\left( -\frac{\norm{\xb_i-\xb_j}^2_{\mathbb{R}^{p_1}}}{2\sigma_x^2}\right), \ \mathbf{W}_y(i,j) := \exp\left( -\frac{\norm{\yb_i-\yb_j}^2_{\mathbb{R}^{p_2}}}{2\sigma_y^2}\right)\,,
\end{equation} 
where $\sigma_x,\sigma_y>0$ are chosen by the user and $i,j=1,2,\ldots,n$.
The second step is normalizing $\mathbf{W}_x$ and $\mathbf{W}_y$ 
so that they become row stochastic; that is, constructing two row stochastic matrices $\mathbf{A}_x,\,\mathbf{A}_y\in \mathbb{R}^{n\times n}$ via 
\begin{equation}\label{Definition Ax Ay}
\mathbf{A}_x =\mathbf{D}_x^{-1}\mathbf{W}_x,\,\,\mathbf{A}_y=\mathbf{D}_y^{-1}\mathbf{W}_y\,,
\end{equation}
where $\mathbf{D}_x,\mathbf{D}_y\in \mathbb{R}^{n\times n}$ are diagonal matrices with the diagonal entries defined by 
\begin{equation}\label{Definition Dx Dy}
\mathbf{D}_x(i,i)=\sum_{j=1}^n\mathbf{W}_x(i,j),\,\,\mathbf{D}_y(i,i)=\sum_{j=1}^n\mathbf{W}_y(i,j) \,,
\end{equation} 
for $i=1,\ldots,n$. Usually we call $\mathbf{D}_x$ and $\mathbf{D}_y$ {\em degree matrices}.
Different algorithms utilize the above ingredients differently. In NCCA \cite{michaeli2016nonparametric}, construct a non-stochastic matrix 
\begin{equation}\label{Definition Sxy}
\mathbf{S}_{xy}:=\mathbf{A}_x\mathbf{A}^\top_y\in \mathbb{R}^{n\times n};
\end{equation} 
in AD \cite{lederman2015learning}, construct a new row stochastic matrix 
\begin{equation}\label{Definition Axy}
\mathbf{A}_{xy}:=\mathbf{A}_x\mathbf{A}_y\in \mathbb{R}^{n\times n}.
\end{equation}  
We call $\mathbf{S}_{xy}$ the {\em NCCA matrix} and $\mathbf{A}_{xy}$ the {\em AD matrix}. Finally, compute the eigenvalues and associated eigenvectors of $\mathbf{S}_{xy}$ and $\mathbf{A}_{xy}$. Or in some cases, the singular values and vectors can be considered. 
The key ingredient in these algorithms is the spectral structure of $\mathbf{S}_{xy}$ and $\mathbf{A}_{xy}$, which is the main focus of the current paper. 

{
\begin{remark}
We remark that in the algorithm of NCCA \cite{michaeli2016nonparametric}, it studies the singular values of $\Sb_{xy}.$ We will see later that $\Sb_{xy}$ can be well approximated by a positive definite matrix (c.f. see our explanation in Section \ref{sec:proofstrategy}). In this sense, it makes no difference to study the eigenvalues of $\Sb_{xy}.$  {Moreover, in practice we can choose more general kernels for NCCA and AD. See \cite{lederman2015learning,michaeli2016nonparametric} for details. When the kernel is not Gaussian, the relationship between NCCA and AD is no longer as transparent as that shown in \eqref{Definition Sxy} and \eqref{Definition Axy}. Indeed, in general NCCA may not be written in the product form.}
\end{remark}
}

\subsection{Mathematical model for the algorithm}\label{Section:Mathematical Model}

We are interested in the null model. Assume $\mathcal{X}$ and $\mathcal{Y}$ are independently and identically sampled from Gaussian random vectors; that is, $\xb_i\sim \mathcal{N}(\mathbf{0}_{p_1},p_1^{-1}\Ib_{p_1})$ and $\yb_i\sim \mathcal{N}(\mathbf{0}_{p_2},p_2^{-1}\Ib_{p_2})$,
where $\mathbf{0}_p \in \mathbb{R}^p$ is the vector with all entries 0, and $\Ib_p$ is the $p \times p$ identity matrix. 
Furthermore, we are interested in the high dimensional regime; i.e, we assume that $p_1=p_1(n)$, $p_2=p_2(n)$ and
 \begin{equation} \label{eq_boundc1c2}
 \gamma \leq \frac{n}{p_i}\leq \gamma^{-1}, \ i=1,2,
 \end{equation}
 for a fixed constant $0<\gamma \leq 1$ when $n$ is sufficiently large. Denote $c_1=c_1(n):=\frac{n}{p_1}$ and $c_2=c_2(n):=\frac{n}{p_2}$. In the present paper, we only need the assumption (\ref{eq_boundc1c2}) and do not require $c_1$ and $c_2$ to converge. This makes our result more adaptive to the datasets since many classical results depend on the assumption of convergence of $c_1$ and $c_2$ \cite{bai2010spectral}. Such relaxation is due to the recent developments of local laws in random matrix theory; see, for instance, \cite{Knowles2017}.

For the NCCA and AD algorithms, we consider more general kernel functions than the Gaussian kernel in \eqref{Definition:Wx Wy matrix}. Take a decreasing and twice continuously differentiable  function $f$ satisfying 
\begin{equation}\label{eq_fassum}
f(0)+2 f'(2)-f(2)>0\mbox{ and }f(2)>0\,,
\end{equation}
and construct $n \times n$ affinity matrices $\mathbf{W}_x,\mathbf{W}_y\in\mathbb{R}^{n\times n}$ with $f$ via
\begin{equation}\label{eq_defnmxmy}
\mathbf{W}_x(i,j):=f (\norm{\mathbf{x}_i-\mathbf{x}_j}_2^2 ) , \ \mathbf{W}_y(i,j):=f (\norm{\mathbf{y}_i-\mathbf{y}_j}_2^2 ) \,.
\end{equation}
{Note that the kernel in \eqref{Definition:Wx Wy matrix} is a special case. While it is possible to consider different bandwidths for $\mathcal{X}$ and $\mathcal{Y}$, to simplify the discussion, we focus on \eqref{eq_defnmxmy}.}
The definitions of degree matrices and row stochastic matrices follow those in \eqref{Definition Dx Dy} and \eqref{Definition Ax Ay} respectively.
Since the proof strategy and techniques are the same, we focus our discussion on the NCCA matrix $\mathbf{S}_{xy}$. The same discussion holds for the AD matrix and will be summarized in the end of the paper. 
{
\begin{remark}
Note that in (\ref{eq_fassum}), we need to impose concrete assumptions at the points $0$ and $2$ following \cite{elkaroui2010}. It will be seen later that our proof relies on the expansion of $f$ for pairwise distance 
around the concentration points. On one hand, for the diagonal entries, it is clear that $\| \xb_i-\xb_i \|_2^2=0, \ i=1,2,\cdots, n;$ on the other hand, for the off-diagonal entries, we have 
\begin{equation*}
\mathbb{E}\left(\| \xb_i-\xb_j \|_2^2 \right)=2, \ 1 \leq i \neq j \leq n. 
\end{equation*}    
\end{remark}
}

\subsection{Our contribution and relevant existing results}
To study the spectral property of $\mathbf{S}_{xy}$, the first difficulty we encounter is its non-diagonalizability due to the normalization procedure \eqref{Definition Ax Ay} and the product structure \eqref{Definition Sxy}.   With the smoothness assumption on $f$ and independence assumption on $\mathcal{X}$ and $\mathcal{Y}$, we can find a diagonalizable approximation 
$f^{-2}(2) \mathbf{W}_x \mathbf{W}_y $ for $n^2 \mathbf{S}_{xy}$. It thus suffices to study the spectrum of a product of two affinity matrices, which is the main focus of the current paper.\footnote{Note that by Bochner's theorem, $\mathbf{W}_x$ and $\mathbf{W}_y$ are positive definite when $f$ is positive definite; for example, when $f$ is Gaussian. In this case, $\mathbf{W}_x \mathbf{W}_y$ is similar to a symmetric matrix $\mathbf{W}_x^{1/2} \mathbf{W}_y \mathbf{W}_x^{1/2}$ and hence the spectrum exists and is real. However, the positive definiteness assumption does not hold for general kernels.} To this end, we need spectral properties of $\mathbf{W}_x$ and $\mathbf{W}_y$.

Take $\mathbf{W}_x$ as an example. The spectrum of $\mathbf{W}_x$ has been well studied in \cite{elkaroui2010} when $f$ is smooth, in \cite{cheng2013} for more general kernels and in \cite{el2015graph} when the kernel includes ``connection'' information {modeled by group actions}. In \cite{elkaroui2010}, it has been shown that when $f$ is smooth, the spectrum of kernel distance matrix is close to an isotropic shift of sample  Gram matrix $\Xb^\top \Xb$ with $\Xb\in \mathbb{R}^{p\times n}$, where the $i$-th column is $\xb_i$. It is well-known that the spectrum of the sample Gram matrix satisfies the celebrated Marchenko-Pastur (MP) law \cite{mp} and hence the spectrum of $\mathbf{W}_x$ follows a shifted  
MP law.  As a result, when $f$ is smooth, it is shown in \cite{elkaroui2010} that the spectrum of $\mathbf{W}_x$ can be asymptotically described by the finite rank additive perturbation of a sample Gram matrix.  
More recently, such results have been stated and extended using the free probability theory in  \cite{Fan2018}.  
However, none of the above results provide a convergent rate for the eigenvalues of $\mathbf{W}_x$. The first contribution of this paper is providing a convergent rate {for the eigenvalues of $\mathbf{W}_x$ under the null model} in the high dimensional regime using the technique of \emph{local laws} \cite{erdos2017dynamical} to study the \emph{rigidity} of eigenvalues for random matrices; see, for instance, \cite{bao2015, ding2018, ERDOS2012, Knowles2017}. Specifically, after showing that the \emph{Stieltjes} transforms of $\mathbf{W}_{x}$ and its deterministic counterpart are close, we translate this closeness to the closeness of eigenvalues of $\mathbf{W}_x$ and quantiles of some measure.  

Our second contribution is studying the eigenvalues of  $\mathbf{W}_x \mathbf{W}_y$ under the null model in the high dimensional regime. We first freeze one of the affinity matrices and replace it by its deterministic counterpart that happens with high probability. Similar ideas have been used in the study of  $F$-type matrices in \cite{han2016}. Since $\mathcal{X}$ and $\mathcal{Y}$ are independent, we show that with high probability, it suffices to study the model $\Ab\Ub \Bb \Ub^\top$, where $\Ab$ and $\Bb$ are deterministic symmetric matrices and $\Ub$ is an orthogonal matrix following the Haar distribution. The local law of this model has been recently studied in \cite{JHC} and its parallel additive model, $\Ab+\Ub \Bb \Ub^\top$, has been extensively studied in \cite{Bao2017cmp, bao20171, BAO2016}. By modifying their results, we prove the local laws and the rigidity of the eigenvalues for $\mathbf{W}_x \mathbf{W}_y$.      

\subsection{Notation and Organization}
To avoid repetition and make it convenient for readers, we summarize the notations and symbols in Table \ref{table_symbols}.
Throughout the paper, we use bold capital letters for matrices and bold lower-case letters for vectors. {When tracing the constant in the proof is not a concern,} we use $C$ as a generic constant, which may change from line to line{; otherwise we will make clear the difference from line to line}. We will systematically use two asymptotical notations. For two positive quantities $f_1$ and $f_2$, we use $f_1\lesssim f_2$ to indicate that there exists a constant $C$ such that $f_1\leq Cf_2$. If $f_1$ and $f_2$ are comparable; that is, $f_1\lesssim f_2$ and $f_2\lesssim f_1$, we use $f_1\sim f_2$. Another one is the stochastic domination, which we will introduce later in Definition \ref{defn_stochasdomi}.

The paper is organized as follows. In Section \ref{sec:preliminary}, we summarize relevant background knowledge. In Section \ref{sec:main}, we state our main results of this paper.
 In Section  \ref{sec:mainproof}, we prove the main results of this paper. We conclude the paper in Section \ref{sec:discussion}. A collection of preliminary lemmata  with a brief introduction of free probability theory, and all technical proofs are presented in Appendices \ref{sec:app:pre}--\ref{Section validation remark eq_firstthree}.

\begin{table*}[hbt!]
\centering
\begin{tabular}{@{}l*2{>{}l}%
  l<{}l@{}}
\toprule[1.5pt]
  & \multicolumn{1}{c}{\head{Symbol}}
  & \multicolumn{1}{c}{\head{Meaning}}\\
  \cmidrule(lr){2-2}\cmidrule(lr){3-4}
  \multirow{4}{*}{Setup} 
	& $\mathcal{X},\mathcal{Y}$ & point clouds $\mathcal{X}=\{\xb_i\}_{i=1}^n$ and $\mathcal{Y}=\{\yb_i\}_{i=1}^n$ & \\
 	& $p_1,p_2$ & dimensions of $\xb_i$ and $\yb_i$ respectively & \\
	& $c_1, c_2$ & aspect ratios, $c_1=n/p_1, c_2=n/p_2$ & \\
	& $\Xb,\Yb$ & data matrices associated with $\mathcal{X}$ and $\mathcal{Y}$ &   \\
  \cmidrule(lr){2-2}\cmidrule(lr){3-4}
  \multirow{4}{*}{Algorithm} 
	& $f$ & kernel function &   \\
	& $\mathbf{W}_x, \mathbf{W}_y$ & affinity matrices for $\mathcal{X}$ and $\mathcal{Y}$ respectively & \\
	& $\mathbf{D}_x, \mathbf{D}_y$ & degree matrices for $\mathbf{W}_x$ and $\mathbf{W}_y$ respectively & \\
	& $\mathbf{A}_x, \mathbf{A}_y$ & normalized affinity matrices& \\ 
  \cmidrule(lr){2-2}\cmidrule(lr){3-4}
    \multirow{4}{*}{General} 
	& $\|\Ab\|$ & the operator norm of $\Ab$& \\
	& $\circ$ & Hadamard product & \\
	& $\mathsf{S}(\alpha,\tau)$ & spectral parameter set with parameters $\alpha>0$ and $\tau>0$ defined in \eqref{eq_sprime} &\\
	& $\gamma_\mu(j)$ & the $j$-th typical location of the probability measure $\mu$ (Definition \ref{Definition typical locations}) &\\
  \cmidrule(lr){2-2}\cmidrule(lr){3-4}
    \multirow{4}{*}{Transforms} 
    &$m_\Ab(z)$ & Stieltjes transform of the ESD (equation (\ref{eq_defn_esd})) of the matrix $\Ab$ &\\
   & $m_\mu(z)$ & Stieltjes transform of the probability measure $\mu$ & \\
   & $S_\mu(z)$ & $S$-transform of the probability measure $\mu$ & \\
   & $R_\mu(z)$ & $R$-transform of the probability measure $\mu$ & \\
      & $M_\mu(z)$ & $M$-transform of the probability measure $\mu$ & \\
      \cmidrule(lr){2-2}\cmidrule(lr){3-4}
  \multirow{2}{*}{Free probability} 
  		& $\mu_{\alpha} \boxtimes \mu_{\beta} $ & Free multiplicative convolution  of $\mu_{\alpha}$ and $\mu_{\beta}$ & \bfseries \\
        		& $\mu_{\alpha} \boxplus \mu_{\beta} $ & Free additive convolution  of $\mu_{\alpha}$ and $\mu_{\beta}$ & \bfseries \\
  \cmidrule(lr){2-2}\cmidrule(lr){3-4}
 \multirow{5}{*}{Measures} 
  	&  $\mu_\Ab$  &  ESD of the matrix $\Ab$& \\
	&  $\mu_{c,\sigma^2}$  & MP distribution with parameters $c$ and $\sigma^2$ defined in \eqref{eq_mp}& \\
	& $\mu_{c,\Sigma}$ &  Generalized MP distribution with parameters $c$ and $\Sigma$ defined in \eqref{eq_defnf} & \\
  	&  $\nu_x,\nu_y$  &  Limiting spectral distributions of $\breve{\mathbf{K}}_x$ and $\breve{\mathbf{K}}_y$ respectively defined in \eqref{Definition special measure for kernel matrix} \\
  	&  $\nu_{xy}$  & $\nu_x \boxtimes \nu_y$  \\
   \cmidrule(lr){2-2}\cmidrule(lr){3-4}
\multirow{1}{*}{Constants}
         & $\varsigma$ & $f(0)+2 f'(2)-f(2)$ & \slshape &\\
   \cmidrule(lr){2-2}\cmidrule(lr){3-4}
\multirow{6}{*}{Matrices}
 	&  $\boldsymbol{1}_n$ & a $n$-dim vector with all entries being $1$& \\
 	& $\mathbf{I}_{n}$ & the $n\times n$ identity matrix & \\
        & $\psi$ & $\psi=(\psi(1),\ldots, \psi(n))^\top$ and $\psi(i):=\norm{\xb_i}_2^2-1$ & \slshape &\\
         & $\mathrm{Sh}_0$ & $f(2) \mathbf{1} \mathbf{1}^\top$ & \slshape &\\
         & $\mathrm{Sh}_1$ & $f'(2)[\mathbf{1} \psi^\top+\psi \mathbf{1}^\top ]$ & \slshape &\\
         & $\mathrm{Sh}_2$ & $\frac{f''(2)}{2}\left[ \mathbf{1}( \psi \circ \psi)^\top+(\psi \circ \psi) \mathbf{1}^\top+2 \psi \psi^\top \right]$ & \slshape &\\
\bottomrule[1.5pt]
\end{tabular}
\caption{Summary of commonly used symbols}\label{table_symbols}
\end{table*}

\section{Background}\label{sec:preliminary}

For any $n \times n$ matrix $\Hb$, its {\em empirical spectral distribution (ESD)} is defined as
\begin{equation}\label{eq_defn_esd}
\mu_{\Hb}=\frac{1}{n} \sum_{i=1}^n \delta_{\lambda_i}\,,
\end{equation} 
where $\lambda_i$, $i=1,2,\ldots,n$, are the eigenvalues of $\Hb$ and $\delta_{\lambda_i}$ is a Dirac delta measure supported on $\lambda_i\in \mathbb{C}$. Clearly, when $\Hb$ is Hermitian, $\mu_{\Hb}$ is a measure supported on $\mathbb{R}$. %In this paper we assume that $\Hb$ is Hermitian. 
Denote $\mathbb{C}^+$ as the upper complex half plane.  The {\em Green function} of $\Hb$ is defined as
\begin{equation}
G_{\Hb}(z)=(\Hb-z)^{-1}, \ z \in \mathbb{C}^+\,. 
\end{equation}
The local convergence can be best formulated using the Green function.

\begin{definition}[Stieltjes transform]
The {\em Stieltjes transform} of a probability measure $\mu$ supported on $\mathbb{R}$ is defined as 
\begin{equation}
m_{\mu}(z)=\int \frac{1}{\lambda-z} \mu(d\lambda), \ z \in \mathbb{C}^{+}\,.
\end{equation}
\end{definition}
It is clear that for the ESD of $\Hb$, its Stieljes transform satisfies
\begin{equation}\label{eq_stieltjes}
m_{\mu_\Hb}(z)=\int \frac{1}{\lambda-z} \mu_{\Hb}(d\lambda)=\frac{1}{n}\text{tr}[G_{\Hb}(z)]\,.
\end{equation}
To simplify notation, when there is no danger of confusion, we denote $m_{\Hb}:=m_{\mu_\Hb}$.
Note that we have a trivial but useful bound 
\begin{equation}\label{eq_stitrivial}
|m_{\Hb}(z)| \leq \frac{1}{n} \sum_{i=1}^n \frac{1}{|\lambda_i-z|} \leq \frac{1}{\operatorname{Im}z}\,.
\end{equation}

\begin{definition}[$S$-transform and $R$-transform]
Take a Borel probability measure $\mu$ supported on $\mathbb{R}$. The \emph{$S$-transform} of $\mu$ is defined by
\begin{equation}
S_{\mu}(z):=-\frac{z+1}{z} \eta_{\mu}^{-1}(z+1)\,,
\end{equation}
where $z \in \mathbb{C}^+$ and
\begin{equation}
\eta_{\mu}(z)=\frac{m_{\mu}(-1/z)}{z}\,.
\end{equation}
The {\em $R$-transform} of $\mu$ is defined by (see \cite[Section 2.5.1]{ben2011} or \cite[Section 2]{RD})
\begin{equation}
R_{\mu}(z)=m_{\mu}(-z)-\frac{1}{z}\,,
\end{equation}
where $z \in \mathbb{C}^+$.
\end{definition}

It can be checked that there exists an one-to-one correspondence between the $S$-transform and the Stieltjes transform \cite{CDM}.
The $R$-transform is useful in characterizing the law of addition of independent random matrices whereas the $S$-transform is useful in characterizing the law of product of independent random matrices \cite[Section 2.5.2]{ben2011}.

\subsection{``Macroscopic'' asymptotics of eigenvalues}
For $\sigma>0$, let $\Zb =\sigma \Xb\in \mathbb{R}^{p_1 \times n}$  be a rectangular matrix whose columns are $\sigma \mathbf{x}_i$, $i=1,2,\ldots,n$. 
It is well-known that the ESD of $\Zb^\top \Zb$ has a deterministic limiting measure, denoted as $\mu_{c_1,\sigma^2}$, that follows the Marchenko-Pastur (MP) law \cite{mp} and satisfies
\begin{equation}\label{eq_mp}
\mu_{c_1,\sigma^2}(I)=
(1-c_1)_{+} \chi_{I}(0)+\zeta_{c_1,\sigma^2}(I)\,,
\end{equation}
for any measurable set $I \subset \mathbb{R}$, where $\chi_I$ is the indicator function and $(a)_+:=0$ when $a\leq 0$ and $(a)_+:=a$ when $a>0$,
\begin{equation}
d \zeta_{c_1,\sigma^2}(x)=\frac{1}{2\pi \sigma^2} \frac{\sqrt{(\lambda_+-x)(x-\lambda_-)}}{c_1x} dx\,, 
\end{equation}
$\lambda_{+}=\sigma^2(1+ \sqrt{c_1})^2$ and $\lambda_{-}=\sigma^2 (1-  \sqrt{c_1})^2$. Moreover, the Stieltjes transform of $\mu_{c_1,\sigma^2}$  satisfies the following self-consistent equation (for instance, see {\cite[Section 2]{alex2014})}:
\begin{equation}\label{eq_selfconsistent}
m_{\mu_{c_1,\sigma^2}}(z)+\frac{1}{z-(1-c_1)+zc_1 m_{\mu_{c_1,\sigma^2}}(z)}=0\,.
\end{equation}
{
\begin{remark}
In the standard literature of Random Matrix Theory, for instance \cite{bai2010spectral}, (\ref{eq_selfconsistent}) requires that $c_1$ converges to some fixed constant value. However, with the recent development using dynamic approach \cite{erdos2017dynamical}, we only need the boundedness of $c_1$, for instance, see \cite[Section 2]{alex2014}.  
\end{remark}
}

The above results have been extended to the case when the random vector has a general covariance structure; that is, the covariance matrix $\Sigma_{p_1}$ is not a scaled identity matrix. Denote the eigenvalues of $\Sigma_{p_1}$ by $\{\lambda_1, \ldots, \lambda_{p_1}\}$. Then, under a regularity condition of $\{\lambda_1, \ldots, \lambda_{p_1}\}$ (see Assumption \ref{assu_regularity} in  Appendix \ref{sec:app:pre} for details), the limiting spectral distribution (LSD) of $(\Sigma_{p_1}^{1/2} \Xb)^\top \Sigma_{p_1}^{1/2}\Xb$, denoted as $\mu_{c_1,\texttt{gMP}}$, follows the {\em generalized MP law}, which is determined by the unique solution of $g(m_{\Sigma_{p_1}}(z))=z$ \cite[Section 3]{bai2010spectral}  with $g$ defined as
 \begin{equation}\label{eq_defnf}
g(z)=-\frac{1}{z}+c_1 \sum_{i=1}^{p_1} \frac{p_1^{-1}}{z+\lambda_i^{-1}}\,.
\end{equation}  
We mention that the spectrum of $(\Sigma_{p_1}^{1/2} \Xb)^\top  \Sigma_{p_1}^{1/2}\Xb$ can be understood using the free probability theory. We only discuss the essential concepts for the statements of our main results and leave details to Appendix \ref{sec:appen:freeprob}.
Let $\Ab_n$ and $\Bb_n$ be independent $n \times n$ symmetric (or Hermitian) positive-definite random matrices
that are invariant, in law, by conjugation by any orthogonal (or unitary) matrix. Assume $\mu_{\Ab_n} \rightarrow \mu_A, \ \mu_{\Bb_n} \rightarrow \mu_B$ when $n \rightarrow \infty$, where $\mu_A$ and $\mu_B$ are two Borel probability measures. 
Denote $\mu_{\Ab_n} \boxtimes \mu_{\Bb_n}:=\mu_{\Ab_n \times \Bb_n}$. 
If $\lim_{n\to \infty}\mu_{\Ab_n\times \Bb_n}$ exists, we denote  
\begin{equation}
\mu_A \boxtimes \mu_B:=\lim_{n\to \infty}\mu_{\Ab_n\times \Bb_n}=\lim_{n\to \infty}\mu_{\Ab_n} \boxtimes \mu_{\Bb_n}\,.
\end{equation}
The left-hand side of the above equation is known as the {\em free multiplicative convolution} of $\mu_A$ and $\mu_B$, and it can be characterized using the $S$-transform via \cite[Chapter 22.5.2]{handbook}
\begin{equation} \label{eq_multiplicative}
S_{\mu_A \boxtimes \mu_B}(z)=S_{\mu_A}(z) S_{\mu_B}(z).
\end{equation}
As a concrete example, assuming that $\lim_{n\to\infty}\mu_{\Sigma_{p_1}}$ exists, then the LSD of $(\Sigma_{p_1}^{1/2} \Xb)^\top \Sigma_{p_1}^{1/2}\Xb $ can also be written as 
$(\lim_{n\to\infty}\mu_{\Sigma_{p_1}}) \boxtimes \mu_{c_1,1}$.

In parallel, if $\lim_{n\to \infty}\mu_{A_n+B_n}$ exists, we denote
\begin{equation*}
\mu_A \boxplus \mu_B:=\lim_{n\to \infty}\mu_{A_n +B_n}\,,
\end{equation*}
which is known as the {\em free additive convolution} for the summation of  $\mu_A$ and $\mu_B$.
The probability measure $\mu_A \boxplus \mu_B$ can be characterized in terms of the $R$-transform as \cite[Chapter 22.5.1]{handbook}
\begin{equation*}
R_{\mu_A \boxplus \mu_B}(z)=R_{\mu_A}(z)+R_{\mu_B}(z)\,. 
\end{equation*}

\subsection{``Microscopic'' asymptotics of eigenvalues}
The aforementioned spectral analysis provides only the ``macroscopic'' asymptotic of eigenvalues in the sense of quantifying the overall distribution of eigenvalues.  To obtain  information for each individual eigenvalue, we need to study the convergence all the way down to the scale of eigenvalue spacing of each individual eigenvalue.
To this end, we adopt the notation of stochastic domination, which is introduced in \cite[Definition 2.1]{alex2014} and is commonly used in the literature related to this work. It provides a way of making precise statements of the form ``$\mathsf{X}$ is bounded with high probability by $\mathsf{Y}$ up to small powers of $n$''.
 \begin{definition} [Stochastic domination]\label{defn_stochasdomi} Let
 \begin{equation*}
 \mathsf{X}=\big\{\mathsf{X}^{(n)}(u):  n \in \mathbb{N}, \ u \in \mathsf{U}^{(n)}\big\}, \   \mathsf{Y}=\big\{\mathsf{Y}^{(n)}(u):  n \in \mathbb{N}, \ u \in \mathsf{U}^{(n)}\big\},
 \end{equation*}
be two families of nonnegative random variables, where $\mathsf{U}^{(n)}$ is a possibly $n$-dependent parameter set. We say that $\mathsf{X}$ is {\em stochastically dominated} by $\mathsf{Y}$, uniformly in the parameter $u$, if for all small $\epsilon>0$ and large $ D>0$, we have 
\begin{equation*}
\sup_{u \in \mathsf{U}^{(n)}} \mathbb{P} \Big( \mathsf{X}^{(n)}(u)>n^{\epsilon}\mathsf{Y}^{(n)}(u) \Big) \leq n^{- D},
\end{equation*}   
for a sufficiently large $n \geq  n_0(\epsilon, D)$. We use the notation $\mathsf{X}=O_{\prec}(\mathsf{Y})$ if $\mathsf{X}$ is stochastically dominated by $\mathsf{Y}$, uniformly in $u$. 
 \end{definition}

For any (sufficiently small) constant { $0<\tau<1$} and $0 \leq \alpha \leq 1$, denote the {\em spectral parameter set} with parameter $\tau$ and $\alpha$ as
\begin{equation}\label{eq_sprime}
\mathsf{S}(\alpha,\tau):=\{ z=E+\mathrm{i} \eta : \tau \leq E \leq \tau^{-1}, \ n^{-\alpha+\tau} \leq \eta \leq \tau^{-1}\}.
\end{equation}
Throughout this paper, the stochastic domination will always be uniform in
all parameters, including matrix indices and the spectral parameter $z$ in the spectral parameter set.
It can be shown that under suitable regularity conditions on the covariance matrix $\Sigma_{p_1}$ (c.f. Assumption \ref{assu_regularity}), we have the \emph{averaged local law} \cite{Knowles2017}. {Specifically, for $z=E+\mathrm{i} \eta \in \mathsf{S}(1,\tau),$ we have}
\begin{equation}\label{eq_m1n}
m_{(\Sigma_{p_1}^{1/2} \Xb)^\top\Sigma_{p_1}^{1/2}\Xb}(z)=m_{\mu_{c_1,\texttt{gMP}}}(z)+O_{\prec}\Big(\frac{1}{n \eta}\Big)\, .
\end{equation}

For the multiplication of random matrices with Haar unitary (orthogonal) conjugation, i.e, 
\begin{equation}\label{model H=AUBUt}
\Hb=\Ab \Ub \Bb \Ub^\top,
\end{equation}
where $\Ub$ is a Haar distributed random unitary (orthogonal) matrix, the local law has also been studied in \cite{JHC,DJ}, and for {$z=E+\mathrm{i}\eta \in \mathsf{S}(1,\tau)$,} we have 
\begin{equation}
m_{\Hb}(z)=m_{\mu_\Ab \boxtimes \mu_\Bb}(z)+O_{\prec}\Big(\frac{1}{n \eta}\Big). 
\end{equation}
In parallel, although we will not need it, we mention that for the addition of random matrices with Haar unitary (orthogonal) conjugation, i.e, 
$\tilde\Hb=\Ab+\Ub \Bb\Ub^\top$, where $\Ub$ is a Haar distributed random unitary (orthogonal) matrix,
under mild conditions, the local law has been established \cite{Bao2017cmp, bao20171}. Specifically, for {$z=E+\mathrm{i} \eta \in \mathsf{S}(1,\tau)$},  $m_{\tilde\Hb}(z)=m_{\mu_\Ab \boxplus \mu_\Bb}(z)+O_{\prec}\Big(\frac{1}{n \eta}\Big)$.

\section{Main results}\label{sec:main}

We now state our main results under the mathematical model described in Section \ref{Section:Mathematical Model} with the assumptions 
%(!!) \eqref{eq_xyassum}, \eqref{eq_suggaussian}, 
\eqref{eq_boundc1c2} and \eqref{eq_fassum} held true. We prepare some notations and quantities.
Denote 
\begin{equation}\label{eq_mu}
\varsigma:=f(0)+2 f'(2)-f(2). 
\end{equation}
For $c\in \mathbb{R}$, denote $T_c$ to be the shifting operator that shifts a probability measure:
\begin{equation*}
T_c\mu(I):=\mu(I-c)\,,
\end{equation*}
where $I\subset\mathbb{R}$ is a measurable subset and $\mu$ is a probability measure defined on $\mathbb{R}$.
\begin{definition}[Typical locations]\label{Definition typical locations}
For a given probability measure $\mu$ and $n\in \mathbb{N}$, define the $j$-th \emph{typical location} of  $\mu$ as $\gamma_\mu(j)$; that is,  
\begin{equation}\label{eq_typical}
\int_{\gamma_\mu(j)}^{\infty} \mu(dx)=\frac{j-1/2}{n}\,,
\end{equation}
where $j=1,\ldots,n$. $\gamma_\mu(j)$ is also called the $j/n$-quantile of $\mu$.
\end{definition}

\subsection{Convergence rate of eigenvalues of a kernel affinity matrix}\label{Subsection:eigenvalue convergence rate of kernel matrix}

As discussed in Section \ref{sec:intro}, the affinity matrix $\mathbf{W}_x$ can be approximated by a shifted sample Gram matrix with finite rank perturbations. We first remove such perturbations. Denote
\begin{equation}\label{eq_defntildem}
\tilde{\mathbf{W}}_x:=\mathbf{W}_x-\mathrm{Sh}_0-\mathrm{Sh}_1-\mathrm{Sh}_2,
\end{equation}
where the shifts $\mathrm{Sh}_i\in \mathbb{R}^{n\times n}$, $i=0,1,2$,  are finite rank matrices defined as 
\begin{align}
& \mathrm{Sh}_0:= (f(2)+\frac{4}{p_1}) \mathbf{1} \mathbf{1}^\top,\nonumber \\
& \mathrm{Sh}_1:=f'(2)[\mathbf{1} \psi^\top+\psi \mathbf{1}^\top ],  \label{eq_sh0} \\
& \mathrm{Sh}_2:= \frac{f''(2)}{2}\left[ \mathbf{1}( \psi \circ \psi)^\top+(\psi \circ \psi) \mathbf{1}^\top+2 \psi \psi^\top \right], \nonumber
\end{align}
where $\circ$ is the Hadamard product, $\psi=(\psi(1),\ldots, \psi(n))^\top$, and $\psi(i):=\norm{\xb_i}_2^2-1$. 
Denote 
\begin{align}\label{Definition special measure for kernel matrix}
\nu_x:=T_{\varsigma}\mu_{c_1,-2f'(2)},\,\,\,
\nu_y:=T_{\varsigma}\mu_{c_2,-2f'(2)},\,\,\,
\nu_{xy}:=\nu_x \boxtimes \nu_y.
\end{align}
\begin{theorem}\label{lem_rigiditydistance}  
For some constants $C, C_1>0$ and any small fixed constants $\epsilon>0$ and $\vartheta>\epsilon$, when $n$ is sufficiently large, with probability at least $1-O(n^{-1/2})$, we have  
\begin{equation*}
\Big |\lambda_i(\tilde{\mathbf{W}}_x)-\gamma_{\nu_x}(i) \Big | \leq 
\begin{cases}
C n^{-1/9+2\vartheta} & 1\leq i \leq C_1 n^{5/6+3\vartheta/2} \,; \\
C n^{1/12+\epsilon } i^{-1/3} &  C_1 n^{5/6+3\vartheta/2} \leq i \leq n/2 \, .
\end{cases}
\end{equation*}
Similar results hold true for $i \geq n/2$. {Specifically, with probability at least $1-O(n^{-1/2})$, we have  
\begin{equation} \label{eq_anotherhalf}
\Big |\lambda_i(\tilde{\mathbf{W}}_x)-\gamma_{\nu_x}(i) \Big | \leq 
\begin{cases}
C n^{1/12+\epsilon } i^{-1/3} &   n/2+1 \leq i \leq n- C_1 n^{5/6+3\vartheta/2}  \,; \\
C n^{-1/9+2\vartheta} & n- C_1 n^{5/6+3\vartheta/2}+1 \leq i \leq n  \, .
\end{cases}
\end{equation}

}
\end{theorem}
We then have the eigenvalue rigidity of a kernel affinity matrix.
\begin{corollary}[Eigenvalue rigidity of kernel affinity matrix] \label{coro_lesspertub} 
For some constants $C, C_1>0$ and small fixed constants $\epsilon>0$ and $\vartheta>\epsilon$, when $n$ is sufficiently large, with probability at least $1-O(n^{-1/2})$, we have 
\begin{equation*}
\Big |\lambda_{i+1}(\mathbf{W}_x)-\gamma_{\nu_x}(i) \Big | \leq 
\begin{cases}
C n^{-1/9+2 \vartheta} & 1\leq i \leq C_1 n^{5/6+3\vartheta /2} \,; \\
C n^{1/12+\epsilon} i^{-1/3} &  C_1 n^{5/6+3\vartheta/2} \leq i \leq  n/2\,. 
\end{cases}
\end{equation*}
Similar results hold true for $i \geq n/2$ {in the form of (\ref{eq_anotherhalf}).}
\end{corollary}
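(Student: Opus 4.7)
The plan is to deduce the rigidity of $\mathbf{W}_x$ from that of $\tilde{\mathbf{W}}_x$ in Theorem \ref{lem_rigiditydistance} by a finite-rank perturbation argument. Set $\mathbf{P} := \mathbf{W}_x - \tilde{\mathbf{W}}_x = \mathrm{Sh}_0 + \mathrm{Sh}_1 + \mathrm{Sh}_2$. From the explicit formulas in \eqref{eq_sh0}, the range of each summand lies in $\mathrm{span}\{\mathbf{1}, \psi, \psi \circ \psi\}$, so $\mathrm{rank}(\mathbf{P}) \leq 3$ deterministically. Moreover, a direct computation using the standard $\chi^2$ concentration estimates $\|\psi\|^2 = O_\prec(1)$ and $\mathbf{1}^\top\psi = O_\prec(1)$ gives the Rayleigh quotient $\mathbf{1}^\top \mathbf{P}\, \mathbf{1}/\|\mathbf{1}\|^2 = f(2) n + O_\prec(1)$, which is strictly positive for $n$ large since $f(2) > 0$ by \eqref{eq_fassum}. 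The variational characterization then yields $\lambda_1(\mathbf{P}) \geq f(2) n + O_\prec(1) > 0$, so $\mathbf{P}$ has at least one positive eigenvalue on a high-probability event. Letting $p$ and $q$ denote the numbers of positive and negative eigenvalues of $\mathbf{P}$, we therefore have $p \geq 1$ and $p + q \leq 3$.

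Cauchy's interlacing theorem for Hermitian rank-$r$ perturbations then yields $\lambda_{j+q}(\tilde{\mathbf{W}}_x) \leq \lambda_j(\mathbf{W}_x) \leq \lambda_{j-p}(\tilde{\mathbf{W}}_x)$ for all admissible $j$. Taking $j = i+1$ and inserting $p \geq 1$, $q \leq 2$ produces the sandwich
\begin{equation*}
\lambda_{i+3}(\tilde{\mathbf{W}}_x) \leq \lambda_{i+1+q}(\tilde{\mathbf{W}}_x) \leq \lambda_{i+1}(\mathbf{W}_x) \leq \lambda_{i+1-p}(\tilde{\mathbf{W}}_x) \leq \lambda_i(\tilde{\mathbf{W}}_x),
\end{equation*}
which localizes $\lambda_{i+1}(\mathbf{W}_x)$ within a window of at most three consecutive eigenvalues of $\tilde{\mathbf{W}}_x$ around index $i$. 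I would then invoke Theorem \ref{lem_rigiditydistance} at the two outer indices $j = i$ and $j = i+3$ to replace them by the typical locations $\gamma_{\nu_x}(i)$ and $\gamma_{\nu_x}(i+3)$ up to the stated rigidity rate, on the common high-probability event.

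It remains to control the quantile displacement $|\gamma_{\nu_x}(i) - \gamma_{\nu_x}(i+3)|$. Since $\nu_x = T_{\varsigma} \mu_{c_1, -2f'(2)}$ is a shift of the Marchenko--Pastur law, its density is bounded above and below by positive constants throughout the bulk and vanishes like a square root at each spectral edge; therefore this displacement is $O(1/n)$ inside the bulk and $O(n^{-2/3})$ near the edges, both absorbed into the rigidity rates $Cn^{-1/9+2\vartheta}$ and $Cn^{1/12+\epsilon} i^{-1/3}$ in their respective ranges. Combining the sandwich, Theorem \ref{lem_rigiditydistance}, and this spacing estimate yields the Corollary with probability $1 - O(n^{-1/2})$ inherited from Theorem \ref{lem_rigiditydistance}, and the symmetric regime $i \geq n/2$ is handled by reading the interlacing from below. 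The only substantive---but essentially routine---step is to verify that the index shift by at most $3$ induced by the interlacing is dominated by the rigidity rate at every index considered; this is exactly what permits the cleaner single-index shift ``$+1$'' asserted in the Corollary statement.
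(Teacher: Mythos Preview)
Your approach is correct and takes a genuinely different route from the paper. The paper does not treat the full perturbation $\mathbf{P}=\mathrm{Sh}_0+\mathrm{Sh}_1+\mathrm{Sh}_2$ by rank-interlacing; instead it separates the pieces by size. It first argues that $\|\mathrm{Sh}_1\|+\|\mathrm{Sh}_2\|=O_\prec(n^{-1/2})$, so that the operator-norm Weyl inequality gives $|\lambda_i(\mathbf{W}_x-\mathrm{Sh}_0)-\lambda_i(\tilde{\mathbf{W}}_x)|\le \|\mathrm{Sh}_1\|+\|\mathrm{Sh}_2\|$, which is already below the rigidity scale; then the single rank-one, positive semidefinite piece $\mathrm{Sh}_0=f(2)\mathbf{1}\mathbf{1}^\top$ produces exactly the index shift $i\mapsto i+1$. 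This explains cleanly why the Corollary carries the precise shift $+1$. Your rank-only argument trades that explanation for robustness: it never needs $\|\mathrm{Sh}_1\|$ or $\|\mathrm{Sh}_2\|$ to be small, only that the total rank is at most $3$, and the residual index ambiguity is absorbed into the rigidity rate, as you correctly note.

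One small slip in your display: the rightmost inequality $\lambda_{i+1-p}(\tilde{\mathbf{W}}_x)\le \lambda_i(\tilde{\mathbf{W}}_x)$ holds (eigenvalues in decreasing order) only when $p\le 1$, whereas you have established only $p\ge 1$. If $p=2$ or $3$ the upper endpoint of the sandwich is $\lambda_{i-1}(\tilde{\mathbf{W}}_x)$ or $\lambda_{i-2}(\tilde{\mathbf{W}}_x)$ instead. This is harmless for the conclusion---those indices are still within $2$ of $i$, and the associated quantile displacement $|\gamma_{\nu_x}(i)-\gamma_{\nu_x}(i\pm k)|$ for $k\le 3$ is absorbed into the rigidity rate exactly as you describe in your final paragraph---but the chain as written overstates what interlacing alone delivers from the information $p\ge 1$, $p+q\le 3$.
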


{
\begin{remark} \label{remk_rates}
Since our proof relies on an entry-wisely expansion, we mention that the above rates of the eigenvalue rigidity may not be optimal. To identity the optimal rate, we provide the following lower bound for the convergent rate. With probability at least $1-O(n^{-1/2}),$ for some constant $C>0,$ we have 
\begin{equation}\label{eq_rates}
 \left| \lambda_{i+1}(\Wb_x)-\gamma_{\nu_x}(i) \right| \geq \frac{C \log n}{n}, \ 1 \leq i \leq n-1. 
\end{equation}
See Section \ref{Section validation remark eq_firstthree} for the proof.
We take an edge eigenvalue, i.e., $\lambda_{i+1}(\Wb_x)$ for $1 \leq i \leq C_1 n^{5/6+3 \vartheta/2},$ as an example for further discussion. By Theorem \ref{lem_rigiditydistance} and \eqref{eq_rates}, the optimal rate for $\left| \lambda_{i+1}(\Wb_x)-\gamma_{\nu_x}(i) \right| $ should lie in the interval $[C' n^{-1}\log n,C n^{-1/9+2 \vartheta}]$ with probability at least $1-O(n^{-1/2})$. Inspired by the optimal rates of some other models in Random Matrix Theory literature (c.f., see Lemma \ref{lem_rigidity}), we hypothesize that the optimal rate should be of order $n^{-2/3}.$ While finding the optimal convergence rate is out of the scope of this work, we perform some Monte-Carlo simulations to empirically study the optimal rates when $i=1.$ In our simulations below, we take $f(x)=\exp(-\frac{x}{2})$.  Consequently, we compute that $\gamma_{\nu_x}(1)=e^{-1}(1+\sqrt{n/p})^2+1-2e^{-1}.$ In Figure \ref{fig_conjecture}, we record our results of $n^{2/3}(\lambda_2(\Wb_x)-\gamma_{\nu_x}(1))$ in the y-axis for a variety of $p$ in the x-axis. In this simulation study, for simplicity, we consider $c=0.5,1,2$, where we recall that $c=p/n$. Empirically, we conclude from Figure \ref{fig_conjecture} that $n^{2/3}(\lambda_2(\Wb_x)-\gamma_{\nu_x}(1))$ may depend on $p/n,$ but it is independent of $p$. 
This empirical evidence supports our hypothesis that the optimal rates of Theorem \ref{lem_rigiditydistance} and Corollary \ref{coro_lesspertub} should be similar to those in Lemma \ref{lem_rigidity}. We will pursue this direction in our future work. 
\begin{figure}[ht]
\centering
\includegraphics[width=0.4\textwidth]{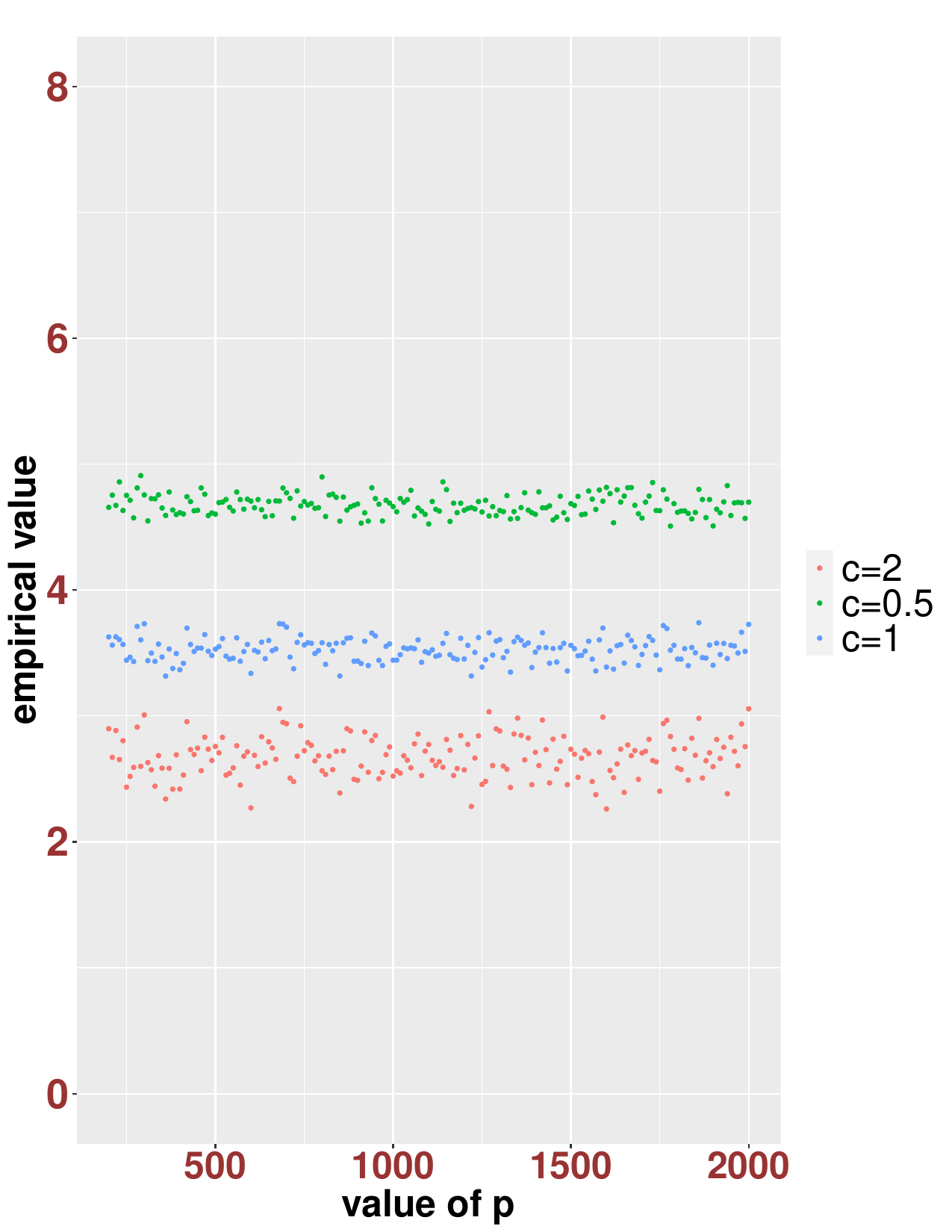}
\caption{Empirical values of $n^{2/3}(\lambda_{2}(\Wb_x)-\gamma_{\nu_x}(1)).$ For each $p,$ we run 1,000 simulations. }\label{fig_conjecture}
\end{figure}    
\end{remark}

}

\subsection{Eigenvalues of the NCCA matrix}\label{Subsection:eigenvalue convergence rate of NCCA matrix}

Based on the affinity matrix analysis, we have the following main theorem.
\begin{theorem}[Edge eigenvalues of the NCCA and AD matrices]\label{thm_edgevalue}  
For any fixed $L \in \mathbb{N}$ and some constant $C>0$, with probability at least $1-O(n^{-1/2})$ we have 
\begin{equation*}
\left| \lambda_i(n^2 \mathbf{S}_{xy})-\frac{\gamma_{\nu_{xy}}(1)}{f^2(2)} \right| \leq  Cn^{-1/9},  
\end{equation*}
for all $3 \leq i \leq L$  when $n$ is sufficiently large. The same result holds for the AD matrix $\mathbf{A}_{xy}$.
\end{theorem}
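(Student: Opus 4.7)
The plan is to reduce the question about the non-Hermitian matrix $n^2\mathbf{S}_{xy}$ to an $\mathbf{A}\mathbf{U}\mathbf{B}\mathbf{U}^\top$-type Haar-conjugation product (the model \eqref{model H=AUBUt}), whose rigidity at the spectral edge is governed by $\nu_{xy}=\nu_x\boxtimes\nu_y$. Everything will be assembled on top of Theorem~\ref{lem_rigiditydistance}, which already gives the required edge rigidity of each individual affinity matrix.

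First I would show that $n^2\mathbf{S}_{xy}=f(2)^{-2}\mathbf{W}_x\mathbf{W}_y+\mathcal{E}$ in operator norm with $\|\mathcal{E}\|=O_{\prec}(n^{-1/2})$. This is the diagonalizable approximation flagged in the introduction: Taylor expanding $f(\|\mathbf{x}_i-\mathbf{x}_j\|^2)$ around the high-probability value $f(2)$, together with Gaussian concentration of $\|\mathbf{x}_i-\mathbf{x}_j\|^2$ near $2$, gives $\mathbf{D}_x=nf(2)(\mathbf{I}+E_x)$ with $\|E_x\|=O_{\prec}(n^{-1/2})$, and likewise for $\mathbf{D}_y$. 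Next, since $\mathbf{W}_x=\tilde{\mathbf{W}}_x+\mathrm{Sh}_0+\mathrm{Sh}_1+\mathrm{Sh}_2$ with shifts of ranks $1,2,3$, and likewise for $\mathbf{W}_y$, one has
\[
\mathbf{W}_x\mathbf{W}_y=\tilde{\mathbf{W}}_x\tilde{\mathbf{W}}_y+\mathcal{R},\qquad \rank(\mathcal{R})\le 12.
\]
A rank-$r$ perturbation can move only $O(r)$ eigenvalues out of the bulk (applying singular-value interlacing after embedding the product into a symmetric $2n\times 2n$ dilation). The two exceptional slots $i=1,2$ excluded in the theorem absorb the genuine outliers created by the two rank-$1$ terms $f(2)\mathbf{1}\mathbf{1}^\top$; the remaining pieces $\mathrm{Sh}_1,\mathrm{Sh}_2$ carry eigenvalues that are typically $o(n)$ and cannot produce outliers above $\gamma_{\nu_{xy}}(1)$, so for $3\le i\le L$ the eigenvalue $\lambda_i(n^2\mathbf{S}_{xy})$ is within $O_{\prec}(n^{-1/2})$ of an eigenvalue of $f(2)^{-2}\tilde{\mathbf{W}}_x\tilde{\mathbf{W}}_y$.

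The core probabilistic step is the local law for $\tilde{\mathbf{W}}_x\tilde{\mathbf{W}}_y$. Orthogonal invariance of the Gaussian distribution combined with independence of $\mathcal{X}$ and $\mathcal{Y}$ yields the distributional identity $\tilde{\mathbf{W}}_y\stackrel{d}{=}\mathbf{U}\mathbf{\Lambda}_y\mathbf{U}^\top$, where $\mathbf{U}$ is Haar orthogonal and independent of $\tilde{\mathbf{W}}_x$ and $\mathbf{\Lambda}_y=\mathrm{diag}(\lambda_i(\tilde{\mathbf{W}}_y))$. Conditioning on $(\tilde{\mathbf{W}}_x,\mathbf{\Lambda}_y)$ places $\tilde{\mathbf{W}}_x\mathbf{U}\mathbf{\Lambda}_y\mathbf{U}^\top$ squarely inside the model \eqref{model H=AUBUt}, so the local law of \cite{JHC} applies with data $(\mu_{\tilde{\mathbf{W}}_x},\mu_{\tilde{\mathbf{W}}_y})$. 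Plugging in the rigidity from Theorem~\ref{lem_rigiditydistance} — which states that these empirical measures agree with $\nu_x,\nu_y$ at the $n^{-1/9+o(1)}$ level at the edge — together with continuity of $\boxtimes$ via the $S$-transform, I expect
\[
m_{\tilde{\mathbf{W}}_x\tilde{\mathbf{W}}_y}(z)=m_{\nu_{xy}}(z)+O_{\prec}\!\Big(\tfrac{1}{n\eta}+n^{-1/9+o(1)}\Big),\qquad z\in\mathsf{S}(1,\tau).
\]
A Helffer--Sj\"ostrand contour deformation just above the right edge of $\nu_{xy}$ then converts this Stieltjes closeness into the eigenvalue bound $|\lambda_i(f(2)^{-2}\tilde{\mathbf{W}}_x\tilde{\mathbf{W}}_y)-\gamma_{\nu_{xy}}(1)|=O_{\prec}(n^{-1/9})$ for $3\le i\le L$; combined with the reductions above, this gives the claim for $n^2\mathbf{S}_{xy}$. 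Reality of these eigenvalues follows because they are trapped in an $n^{-1/9}$-neighborhood of the real support of $\nu_{xy}$. The AD case is identical: $n^2\mathbf{A}_{xy}=f(2)^{-2}\mathbf{W}_x\mathbf{W}_y+O_{\prec}(n^{-1/2})$ as well, the transposes playing no role once both degree matrices are absorbed.

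The main obstacle is the non-Hermiticity of $\tilde{\mathbf{W}}_x\tilde{\mathbf{W}}_y$, which makes both the finite-rank reduction and the edge-to-Stieltjes transfer subtler than in the symmetric setting: Weyl interlacing must be replaced by singular-value and pseudospectrum estimates, and one must rule out that eigenvalues escape into $\mathbb{C}\setminus\mathbb{R}$ near the edge. Quantitatively, the exponent $n^{-1/9}$ is forced by the edge-rigidity rate of Theorem~\ref{lem_rigiditydistance} filtered through the $S$-transform stability, and it is the non-Hermitian stability analysis in propagating this rate unchanged through the product that I expect to be where the bulk of the technical work lives.
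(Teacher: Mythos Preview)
Your overall architecture matches the paper's, but there is a genuine gap at the Haar-conjugation step. You write that ``orthogonal invariance of the Gaussian distribution\ldots yields the distributional identity $\tilde{\mathbf W}_y\stackrel{d}{=}\mathbf U\mathbf\Lambda_y\mathbf U^\top$ with $\mathbf U$ Haar orthogonal and independent of $\tilde{\mathbf W}_x$ and $\mathbf\Lambda_y$.'' This is false. The $O(n)$-invariance that makes the eigenvectors of a Gaussian Gram matrix Haar is the right action $\mathbf Y\mapsto \mathbf Y\mathbf O$ on the sample index; the kernel matrix $\mathbf W_y(i,j)=f(\|\mathbf y_i-\mathbf y_j\|^2)$ is \emph{not} equivariant under that action (already its diagonal, which is identically $f(0)$, fails to be preserved by conjugation), and neither is $\tilde{\mathbf W}_y$. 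So you cannot apply the $\mathbf A\mathbf U\mathbf B\mathbf U^\top$ local law directly to $\tilde{\mathbf W}_x\tilde{\mathbf W}_y$.

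The paper closes this gap by inserting an additional reduction: before any Haar argument, it passes from $\tilde{\mathbf W}_x\tilde{\mathbf W}_y$ to $\breve{\mathbf K}_x\breve{\mathbf K}_y$, where $\breve{\mathbf K}_x=-2f'(2)\mathbf X^\top\mathbf X+\varsigma\mathbf I$ is a genuine shifted Gram matrix whose eigenvectors \emph{are} Haar. This passage (Lemma~\ref{lem_2.4.2}) is done at the level of Stieltjes transforms using a Hoffman--Wielandt bound, not via operator norm, and costs $O(n^{-1/2}\eta^{-2})$; only then does one diagonalize and invoke the Haar structure (Lemma~\ref{lem_2.4.3}). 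A second point you skate over is that the local law of \cite{JHC} as stated requires the empirical measures $\mu_{\mathbf A},\mu_{\mathbf B}$ to be within $n^{-1}$ in L\'evy distance of their limits, whereas rigidity of the Gram eigenvalues only gives $n^{-2/3+\epsilon}$; the paper has to re-derive the product local law under this weaker hypothesis (Proposition~\ref{pro_key} and the surrounding subordination analysis). Your proposal, by feeding Theorem~\ref{lem_rigiditydistance}'s $n^{-1/9}$ edge rate directly into the $S$-transform stability, would need the same machinery in any case, but the missing $\tilde{\mathbf W}\to\breve{\mathbf K}$ step is the essential obstruction.
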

Note that in general the eigenvalues of $n^2 \mathbf{S}_{xy}$ are not necessary real and $\nu_{xy}$ is defined on the real axis. Theorem \ref{thm_edgevalue} implies that the imaginary part of the $i$-th eigenvalue of $n^2 \mathbf{S}_{xy}$ is negligible for all $3\leq i \leq L$ when $n$ is sufficiently large. 

{
\begin{remark} \label{eq_firstthree}
Due to the independence and the structure of $n^2\mathbf{S}_{xy}$, we claim that the first eigenvalue is with high probability a trivial one; that is, for some constant $C>0$, with probability at least $1-O(n^{-1/2})$
\begin{equation} \label{eq_first}
|\lambda_1(\mathbf{S}_{xy}) - 1|\leq
 C n^{-1/2}.
\end{equation}
Similar discussion holds for the AD matrix $\mathbf{A}_{xy}$, while for the AD matrix, the first eigenvalue is always a trivial one, since $\mathbf{A}_x \mathbf{A}_y \mathbf{1}=\mathbf{1}$. 
Finally, we remark that due to the nature of the perturbation argument in the proof of Theorem \ref{thm_edgevalue}, we cannot characterize the convergent limit of the second eigenvalue. We conjecture that $\lambda_2(n^2 \mathbf{S}_{xy})$ will also converge to $\gamma_{\nu_{xy}}(1)$ with high probability. We will study this problem in the future work. 
\end{remark}
}

{
The AD algorithm has a natural extension to multiple sensors \cite{Lederman_Talmon_Wu_Lo_Coifman:2015}. Suppose that we can observe $r>2$ point clouds, 
\begin{equation*}
\mathcal{X}_k:=\{\xb^k_i\}_{i=1}^n \subset \mathbb{R}^{p_k}, \ 1 \leq k \leq r.  
\end{equation*} 
The AD in Section \ref{sec:alog} can be generalized to handle multiple sensors by considering 
$\prod_{i=1}^r \Ab_{x_i}$,
where $\Ab_{x_i}$ is the normalized kernel affinity matrix for the point cloud $\mathcal{X}_i.$ 
It is thus natural to ask if we can extend the results of Theorem \ref{thm_edgevalue} to the case when we have a fixed number of multiple sensors. { Unfortunately, we cannot directly generalize our analysis policy to this case. First, the proof of Theorem \ref{thm_edgevalue} relies on the conclusion that the imaginary parts of the eigenvalues of $n^2 \Ab_{xy}$ are negligible with high probability, which comes from the fact that $n^2 \Ab_{xy}$ is close to $f^{-2}(2) \Wb_x \Wb_y$ (c.f. (\ref{eq_diagonacontrol})) in the sense of matrix norm. Moreover, the later matrix $f^{-2}(2) \Wb_x \Wb_y$  is clearly similar to the positive semi-definite matrix $f^{-2}(2) \Wb_y^{1/2} \Wb_x^{1/2} (\Wb_y^{1/2} \Wb_x^{1/2})^\top$. Consequently, with high probability, the eigenvalues of $n^2 \Ab_{xy}$ can be well approximated by some nonnegative real numbers. However, such a relation does not hold even for a product of three matrices, say $\prod_{i=1}^3 \Ab_{\xb_i}$. In general, the eigenvalues of a product of more than two kernel affinity matrices are complex numbers. In Figure \ref{fig_comparematrix}, we report some numerical simulations to illustrate this phenomenon by exploring the ratios between the imaginary and real parts of the eigenvalues. It can be observed that the imaginary parts of the eigenvalues of the AD matrices of three sensors are not negligible compared with the real parts. Second, as we will see from the discussion of the proof strategy in Section \ref{sec:proofstrategy}, the study of AD matrices of  three sensors will be reduced to the form of  $\mathbf{U}_1 \Sigma_1 \mathbf{U}_1^\top \mathbf{U}_2 \Sigma_2 \mathbf{U}_2^\top \mathbf{U}_3 \Sigma_3 \mathbf{U}_3^\top,$ where $\mathbf{U}_i$, $i=1,2,3,$ are Haar orthogonal random matrices. Such a matrix product cannot be reduced to the free multiplication of random matrix model (\ref{model H=AUBUt}). To our knowledge, the products of random matrices involving more than two Haar orthogonal random matrices have not been well studied yet. To sum up, the study of the eigenvalues of more than two sensors is out of the scope of this paper, and we will pursue this direction in our future work.  

\begin{figure}[ht]
\centering
	\includegraphics[width=0.35\textwidth]{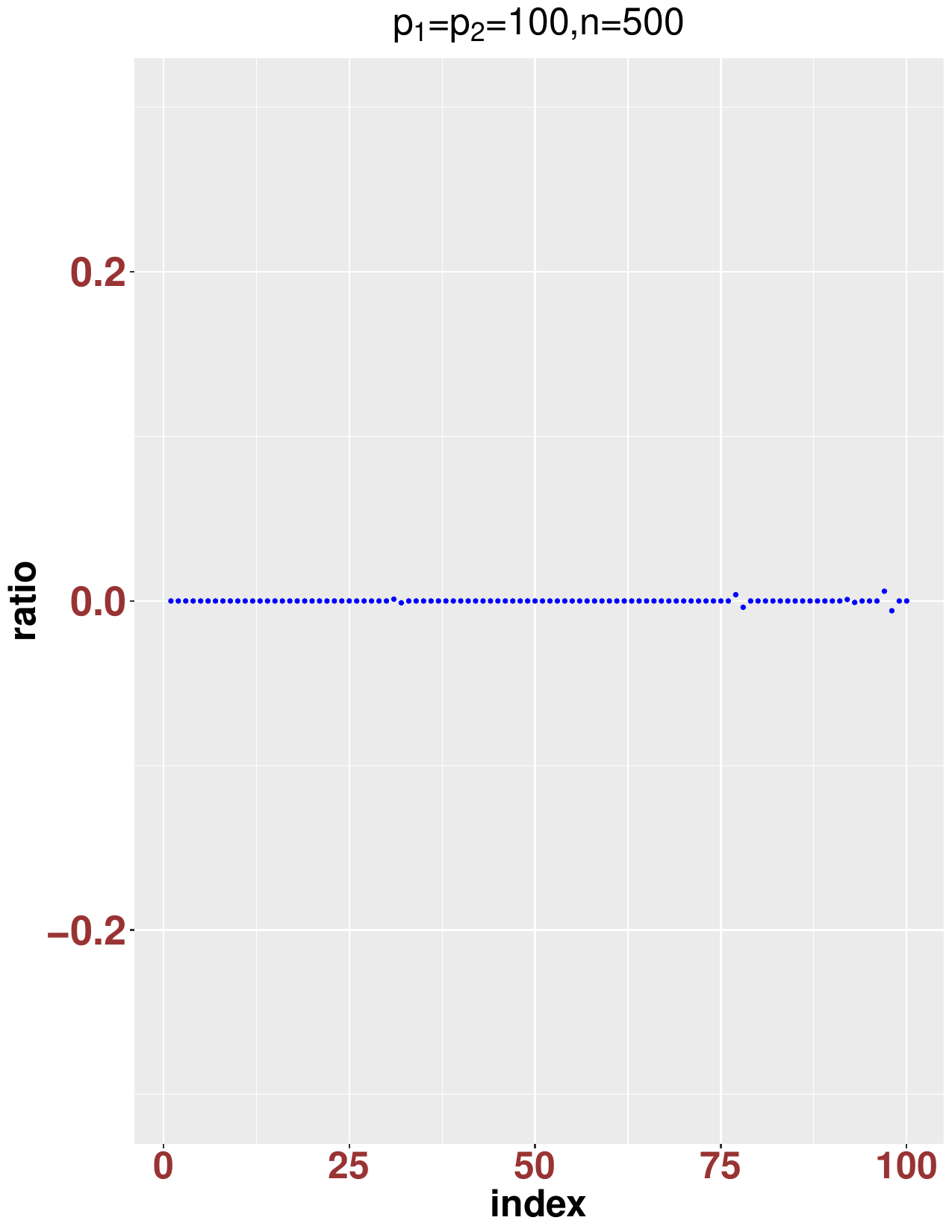}
		\includegraphics[width=0.35\textwidth]{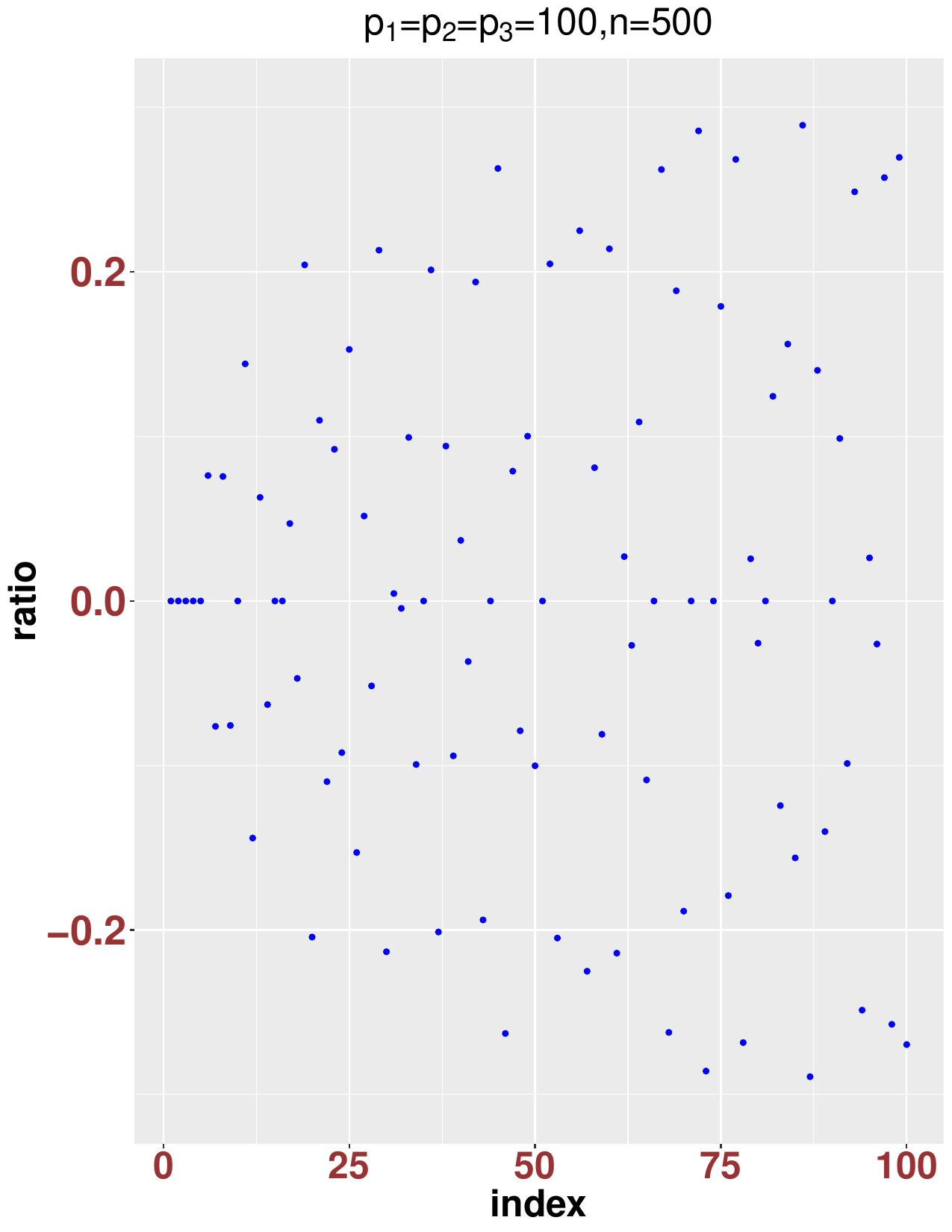}\\
	\includegraphics[width=0.35\textwidth]{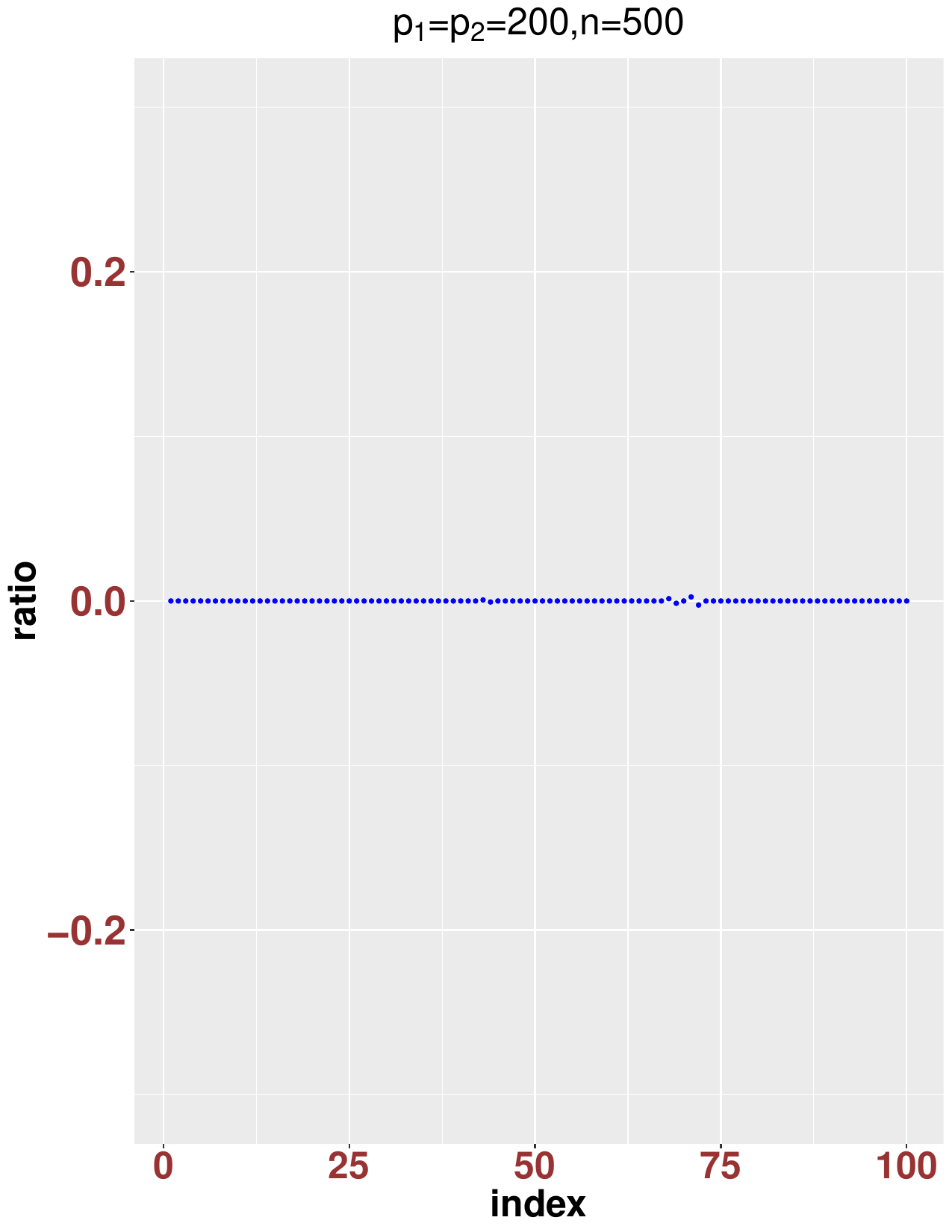}
		\includegraphics[width=0.35\textwidth]{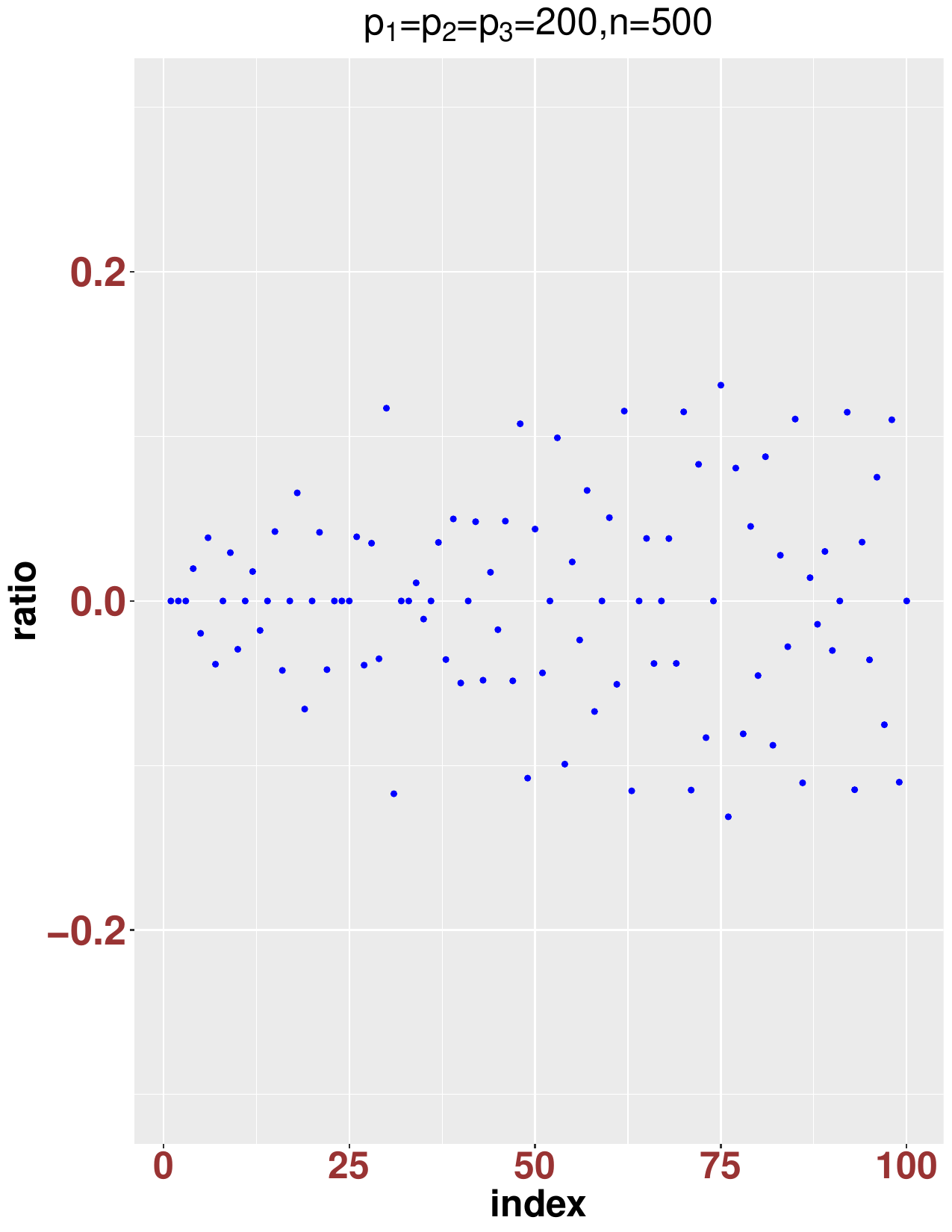}
	\caption{{Left column: the ratios of the imaginary parts and real parts of the eigenvalues of the AD matrices from two independent point clouds. The upper panel reports results when $p_1=p_2=100, n=500,$ whereas the lower panel reports results when $p_1=p_2=200, n=500.$ Right column: the ratios of the imaginary parts and real parts of the eigenvalues of the AD matrices from three independent point clouds. The upper panel reports results when $p_1=p_2=p_3=100, n=500,$ whereas the lower panel reports results when $p_1=p_2=p_3=200, n=500.$ It can be seen that when there are two sensors, the imaginary parts are negligible compared with their corresponding real parts, whereas the imaginary parts are not negligible when there are three sensors. In the simulations, we only record the results of the first 100 eigenvalues.}}\label{fig_comparematrix}
\end{figure}
 
}

}
On the other hand, {due to the mutual information nature of NCCA, an extension of NCCA to multiple sensors is different from that of} AD {and a different treatment is needed}. In general, we may ask the following natural question. Take a random subset $\mathcal{I}\subset \{1,\ldots,r\}$ and an arbitrary order denoted as $\pi$.
Define $\Bb_{\mathcal{I},\pi}$ to be a product of $\{\Ab_{x_i}\}_{i\in  \{1,\ldots,r\}\backslash \mathcal{I}}$ and $\{\Ab_{x_i}^\top\}_{i\in \mathcal{I}}$ according to the order $\pi$. We may ask under the pair of $\mathcal{I}$ and $\pi$, what is the spectral behavior of $\Bb_{\mathcal{I},\pi}$, or even $\sum_{l=1}^L(-1)^{v(l)}\Bb_{\mathcal{I}_l,\pi_l}$ for $L$ different pairs of $\mathcal{I}$ and $\pi$, where $v$ is a $L$-dim vector with $1$ or $-1$ in its entries. This particularly interesting problem is a natural extension of symmetrized or anti-symmetrized AD considered in \cite{shnitzer2018recovering}.
While it is out of the scope of this paper, we will explore it in our future work.

\begin{remark}\label{Remark:nonHaar}
In this paper, we obtain our main results under the assumption of white noise. This is mainly because we need the fact that the eigenvectors of the Wishart matrix  (i.e., sample covariance matrix with zero-mean Gaussian random vectors with isotropic covariance matrix) are Haar distributed. For the general covariance matrices of $\{\xb_i\},$ even though a lot of literature claims that the eigenvectors are asymptotically Haar distributed, for instance, \cite{Bourgade2017}, to our knowledge there is no rigorous work characterizing or proving it. Since it has been shown that the entries of the eigenvectors of sample covariance matrix have magnitudes of order $n^{-1/2}$ \cite{ding_2019} for a general class of covariance matrices, we conjecture that our results hold true for general covariance matrices satisfying certain regularity assumption (c.f. Assumption \ref{assu_regularity}).   
\end{remark}

\subsection{Proof strategy} \label{sec:proofstrategy}  
We briefly describe our proof strategy. For the convergence rates of eigenvalues of kernel affinity matrix in Section \ref{Subsection:eigenvalue convergence rate of kernel matrix}, we will employ a discussion similar to \cite{cheng2013,elkaroui2010} to provide convergent rates for the Stieltjes transforms of the ESDs of $\mathbf{W}_x$ and $\mathbf{W}_y$. This is done using the Taylor expansion and concentration inequalities. In particular, we use a Chernoff bound to control the off-diagonal entries and use the Bernstein's inequality to control the diagonal entries of $\mathbf{W}_x$ and $\mathbf{W}_y$. Then, the spectral convergent rate is obtained using the Hoffman-Wielandt inequality.  As a byproduct, we show that
\begin{equation}\label{eq_diagonacontrol}
\norm{n(\mathbf{D}_x)^{-1}-f^{-1}(2)}=O_{\prec}(n^{-1/2})\,.
\end{equation}
For the edge eigenvalues of the NCCA matrix in Section \ref{Subsection:eigenvalue convergence rate of NCCA matrix}, we approximate $n^2\mathbf{S}_{xy}$ by matrices that we can control. In light of (\ref{eq_diagonacontrol}), we only need to study the product of affinity matrices $f^{-2}(2)\mathbf{W}_x \mathbf{W}_y$.
Next, we approximate the spectrum of $\mathbf{W}_x$ locally by that of $\mathbf{K}_x+\mathrm{Sh}_1+\mathrm{Sh}_2$, where $\mathbf{K}_x \in \mathbb{R}^{n \times n}$ is defined as    
\begin{equation}\label{eq_kmatrix}
\mathbf{K}_x(i,j)=
\begin{cases}
f(2)-2f'(2)\xb_i^\top \xb_j  & i \neq j \,,\\
f(0) & i=j\,.
\end{cases}
\end{equation} 
Since a finite-rank perturbation will not influence the spectrum (c.f. (ii) of Lemma \ref{lem_collecttion}), it suffices to study the product of shifted sample Gram matrices in the form of $\Zb_{xy}=(\alpha+\beta \Xb^\top \Xb)(\alpha+\beta \Yb^\top \Yb)$, where $\alpha, \beta>0$.
 It is clear that the ESD of $\alpha+\beta \Xb^\top\Xb$ satisfies a shifted MP law. Due to the independence, the LSD of $\Zb_{xy}$ is the free multiplicative convolution of two shifted MP laws. To obtain the rigidity of eigenvalues, we keep track of the closeness of the Stieltjes transforms of $\mu_{\Zb_{xy}}$ and its deterministic counterpart. Using the rigidity of eigenvalues for sample Gram matrices (c.f. Lemma \ref{lem_rigidity}), we construct a high probability event $\Omega_1$ such that when conditional on $\Omega_1$,  there exists a diagonal deterministic positive definite matrix $\Sigma_y$ (c.f. equation (\ref{defn_sign})) so that the eigenvalues of $\Zb_{xy}$ are close to those of   
\begin{equation*}
\tilde{\Zb}_{xy}:=\big( \alpha+\beta \Xb^\top \Xb \big) \Ub_y\Sigma_y\Ub_y^\top\,.
\end{equation*}
As a result, we have reduced analyzing $n^2\mathbf{S}_{xy}$ to analyzing $\tilde{\Zb}_{xy}$ with the convergence rate tracked. Since the  difference between $n^2 \mathbf{S}_{xy}$ and $\tilde{\Zb}_{xy}$ is of finite rank, Weyl's inequality can be applied to control the spectral discrepancy. 
Finally, we study the eigenvalues of $\tilde{\Zb}_{xy}$. By using the rigidity of sample Gram matrices again, we find another high probability event $\Omega_2$ such that when conditional on $\Omega_2$, there exists a diagonal matrix $\Sigma_x$ (c.f. equation (\ref{eq_lambdan})) so that the eigenvalues of $\tilde{\Zb}_{xy}$ are close to those of
\begin{equation*}
\breve{\Zb}_{xy}:=\Ub_x \Sigma_x \Ub_x^\top \Ub_y\Sigma_y\Ub_y^\top\,,
\end{equation*} 
where $\Ub_x\in \mathrm{O}(n)$ is the eigenvector matrix for $\Xb^\top \Xb$. Since the null data follows the isotropic Gaussian random vector, $\mathbf{U}_x$ is a random orthogonal matrix Haar distributed on $\mathrm{O}(n)$ \cite{mehta2004random}. As a result, with high probability, $\breve{\Zb}_{xy}$ is similar to a product of two symmetric matrices with a Haar orthogonal conjugation, $\Sigma_x \Ub \Sigma_y\Ub^\top$, where $\Ub$ is Haar distributed. The local laws and rigidity of this kind of multiplicative random matrix model will be studied by taking the Levy distance between the pair $(\Sigma_x, \Sigma_y)$ and its deterministic counterpart into account. Due to the square root behavior of the MP law, the Levy distance between $\Sigma_x$ and $\Sigma_y$ and their deterministic counter parts are of order $n^{-2/3}$. This leads to the local laws and rigidity of $\breve{\Zb}_{xy}$ and we conclude our proof.
It is remarkable that due to (\ref{eq_diagonacontrol}), the study of the AD matrix is essentially the same as the above analysis of the NCCA matrix. 

{
\subsection{Statistical applications} One main purpose of the NCCA and the AD is detecting whether the point clouds $\mathcal{X}$ and $\mathcal{Y}$ contain common information. A fundamental related problem is checking if $\mathcal{X}$ and $\mathcal{Y}$ have common information, or are independent. Below we propose a solution to this question.

Consider the hypothesis test: 
\begin{equation}
\mathbf{H}_0: \ \mathcal{X}\ \text{and} \ \mathcal{Y} \ \text{are independent}\,.  
\end{equation}
Motivated by Theorem \ref{thm_edgevalue}, we find that the edge eigenvalues are close to each other starting from the third one.   Denote the real parts of the eigenvalues of $n^2 \mathbf{S}_{xy}$ as $\text{Re} \ \lambda_1 \geq \text{Re} \ \lambda_2 \ldots \geq \text{Re} \ \lambda_n$. For a given threshold $L >3$,  we define the statistic
\begin{equation}\label{Definition:statistic T} 
\mathbb{T}(L):=\max_{3 \leq i \leq L} \left| \frac{ \text{Re} \ \lambda_{i}}{\text{Re} \ \lambda_{i+1}} \right|.
\end{equation}
Note that a similar statistic has been used in estimating the number of factors in the factor model in \cite{lam2012}, the number of signals in \cite{ding2017a}, and the number of spikes of deformed random matrices in  \cite{DYS, PASSEMIER2014173}. 
Together with Theorem \ref{thm_edgevalue}, we have the following property of the statistic $\mathbb{T}(L)$.
\begin{theorem}\label{thm_ratio}
For a fixed $L  \geq 3$, 
\begin{equation}
\mathbb{P} ( |\mathbb{T}(L)-1| \leq \theta ) \rightarrow 1\,,
\end{equation}
when $\theta=\theta(n)>0$, $\theta \rightarrow 0$ and $\theta n^{1/9} \rightarrow \infty$ when $n\to\infty$.
\end{theorem}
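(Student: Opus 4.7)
The plan is to deduce the theorem as a short corollary of Theorem~\ref{thm_edgevalue}, using the latter to show that every ratio appearing in the definition of $\mathbb{T}(L)$ will be within $O(n^{-1/9})$ of $1$ on a common high-probability event. No new probabilistic ingredient will be needed beyond what Theorem~\ref{thm_edgevalue} already supplies; the remainder is a deterministic perturbation calculation that propagates eigenvalue rigidity to the ratio statistic.

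I would proceed as follows. First, I would apply Theorem~\ref{thm_edgevalue} with the index upper bound $L+1$ in place of $L$, producing an event $\Omega_n$ with $\mathbb{P}(\Omega_n)\ge 1-O(n^{-1/2})$ on which
\[
\max_{3\le i\le L+1}\bigl|\lambda_i(n^2\mathbf{S}_{xy})-\gamma_{\nu_{xy}}(1)\bigr|\le Cn^{-1/9}.
\]
Taking real parts, the same bound would hold for $|\mathrm{Re}\,\lambda_i-\gamma_{\nu_{xy}}(1)|$ on $\Omega_n$. Second, I would verify that $\gamma_{\nu_{xy}}(1)$ is a strictly positive constant depending only on $\gamma$ and $f$, using the fact that $\nu_x$ and $\nu_y$ in~\eqref{Definition special measure for kernel matrix} are shifted Marchenko--Pastur laws whose supports lie in a bounded subinterval of $(0,\infty)$ under~\eqref{eq_fassum}, so that $\nu_{xy}=\nu_x\boxtimes\nu_y$ is again supported in a bounded subinterval of $(0,\infty)$. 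Third, on $\Omega_n$ and for $n$ large enough that $Cn^{-1/9}<\gamma_{\nu_{xy}}(1)/2$, I would combine these two facts to obtain
\[
\biggl|\frac{\mathrm{Re}\,\lambda_i}{\mathrm{Re}\,\lambda_{i+1}}-1\biggr|\le\frac{|\mathrm{Re}\,\lambda_i-\mathrm{Re}\,\lambda_{i+1}|}{|\mathrm{Re}\,\lambda_{i+1}|}\le\frac{2Cn^{-1/9}}{\gamma_{\nu_{xy}}(1)/2}\le C'n^{-1/9}
\]
for every $3\le i\le L$, and taking the maximum over $i$ would give $|\mathbb{T}(L)-1|\le C'n^{-1/9}$ on $\Omega_n$.

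At this stage the deterministic work will be finished, and translating to the probabilistic conclusion will only require a comparison of scales. Under the stated asymptotic regime in which $\theta$ is chosen to eclipse the deviation scale $n^{-1/9}$, the event $\Omega_n$ would force $|\mathbb{T}(L)-1|\le\theta$ for all large $n$, so the probability that $|\mathbb{T}(L)-1|$ escapes the $\theta$-window would be bounded above by $\mathbb{P}(\Omega_n^c)=O(n^{-1/2})\to 0$. This vanishing probability is the quantitative concentration of $\mathbb{T}(L)$ at $1$ at the rate $n^{-1/9}$ that underlies the statement of the theorem.

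The one nontrivial point beyond the direct invocation of Theorem~\ref{thm_edgevalue} would be the second step, verifying that $\gamma_{\nu_{xy}}(1)$ is uniformly bounded below by a positive constant depending only on $\gamma$ and $f$. This reduces to a support analysis for $\nu_x\boxtimes\nu_y$; I would base it on the bounded support of each factor, together with the standard fact that the free multiplicative convolution of two compactly supported measures on $(0,\infty)$ is again supported in a compact subinterval of $(0,\infty)$ whose lower edge is bounded below by the product of the infima of the two factor supports. Once this support estimate is in hand, the rest of the argument is a one-line Weyl-type bound combined with the high-probability event produced by Theorem~\ref{thm_edgevalue}.
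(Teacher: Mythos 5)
Your argument is essentially the paper's own proof: invoke Theorem~\ref{thm_edgevalue} to pin all of $\mathrm{Re}\,\lambda_3,\ldots,\mathrm{Re}\,\lambda_{L+1}$ within $Cn^{-1/9}$ of $\gamma_{\nu_{xy}}(1)$ on a $1-O(n^{-1/2})$ event, use the fact that $\gamma_{\nu_{xy}}(1)$ is of order $1$ (the paper cites \cite[Theorem 2.6]{JHC}; you reconstruct the same conclusion from the bounded support of $\nu_x\boxtimes\nu_y$), and then bound each ratio by $C'n^{-1/9}$ deterministically. The only differences are cosmetic — you are slightly more explicit about taking real parts, about needing the index $L+1$ in Theorem~\ref{thm_edgevalue} to control the last denominator, and about the lower bound $\gamma_{\nu_{xy}}(1)\geq c>0$ (the paper compresses this into ``$\gamma_{\nu_{xy}}(1)=O(1)$ so the eigenvalues are of order $1$'') — but these are exactly the steps the paper performs and do not change the structure of the argument.
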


\begin{proof}[Proof of Theorem \ref{thm_ratio}]  
The proof follows a direct computation based on Theorem \ref{thm_edgevalue}. 
Since $\gamma_{\nu_{xy}}(1)=O(1)$ (See \cite[Theorem 2.6]{JHC}, which is restated in Theorem \ref{lem_bao2}), the eigenvalues $\lambda_3,\ldots,\lambda_{L+1}$ are of order $1$ when $n$ is sufficiently large. Thus, by Theorem \ref{thm_edgevalue}, for $i=1,\ldots,L$, there exists some constant $C>0$ such that with probability at least $1-O(n^{-1/2})$,
\begin{equation}
\left| \frac{\lambda_i}{\lambda_{i+1}}-1 \right|=\left|\frac{\lambda_i-\lambda_{i+1}}{\lambda_{i+1}}\right| \leq C n^{-1/9}\,,
\end{equation}
which leads to $\mathbb{T}(L)=1+O(n^{-1/9})$ holds with probability at least $1-O(n^{-1/2})$.
This concludes our proof. 
\end{proof}

Based on Theorem \ref{thm_ratio}, we should reject the null hypothesis if $\mathbb{T}(L)$ is ``much'' larger than $1$.  For the practical implementation, we have two parameters to choose: the number of eigenvalues $L$ and the threshold $\theta$. Since a larger $L$ yields a (possibly) larger value of $\mathbb{T}(L)$, we prefer a larger $L$ to make the statistic sensitive to the alternatives. Furthermore, for the purpose of hypothesis testing, we need to provide a significance level when $\mathbf{H}_0$ is rejected. To achieve these goals, we can employ a resampling technique introduced in \cite{PASSEMIER2014173}. A systematic treatment of statistical inference for AD or NCCA will be explored in our future work, including the non-null case and power analysis.

}

\section{Proofs of main theorems}\label{sec:mainproof}

We now prove our main theorems -- Theorems \ref{lem_rigiditydistance} and \ref{thm_edgevalue} and Corollary \ref{coro_lesspertub}. The key ingredients are the \emph{averaged local laws} for $\mathbf{W}_x$ and $\mathbf{W}_y$ and their products. Such results state that, for instance, $m_{\mathbf{W}_x}(z)$ is close to that of $m_{\mathbf{K}_x}(z)$, where $\mathbf{K}_x$ is defined in (\ref{eq_kmatrix}), when $z=E+\mathrm{i} \eta \in \mathsf{S}(1/4,\tau)$.

\begin{remark}
In the literature of Random Matrix Theory on the derivation of local laws, the parameter set is usually chosen as $\mathsf{S}(1,\tau)$, while the results in this paper come with $\mathsf{S}(1/4,\tau)$. While it is sufficient for our purpose, it may not be optimal in the sense of local law. We will strengthen this local law in future work. 
\end{remark}

\subsection{Proof of Theorem \ref{lem_rigiditydistance} and Corollary \ref{coro_lesspertub}}
We introduce some key lemmata.
The first lemma leads to the local law for $\mathbf{W}_x$ and $\mathbf{W}_y$ for {$z=E+\mathrm{i}\eta \in \mathsf{S}(1/4,\tau)$.} We will focus our discussion on $\mathbf{W}_x$ and similar results hold for $\mathbf{W}_y$. 
Denote 
\begin{equation}\label{eq_ktilde}
\widetilde{\Kb}_x=\mathbf{K}_x-\mathrm{Sh}_0\,,
\end{equation}
where $\mathbf{K}_x$ is defined in \eqref{eq_kmatrix} and
\begin{equation}\label{eq_matkotherform}
\breve{\Kb}_x=-2f'(2)\Xb^\top \Xb+ \varsigma \mathbf{I}\,.
\end{equation}
\begin{lemma}  \label{lem_localkernel}   
{For $z=E+\mathrm{i} \eta \in \mathsf{S}(1/4,\tau)$ and a large integer $q \geq 1$, when $n$ is sufficiently large, we have}
\begin{align*}
&{\mathbb{E}\left|m_{\mathbf{W}_x}(z)-m_{\mathbf{K}_x}(z) \right|^q \leq C\left( \frac{ \log n}{\sqrt{n} \eta^2} \right)^q ,} \\
&\mathbb{E} \left| m_{\mathbf{K}_x}(z)-m_{\widetilde{\Kb}_x}(z) \right|^q \leq \left( \frac{2}{ n \eta} \right)^q,
\end{align*}
and 
\begin{equation*}
 \mathbb{E} \left| m_{\tilde{\mathbf{K}}_x}(z)-m_{\breve{\mathbf{K}}_x}(z)  \right|^q \leq C\left(\frac{\log n}{\sqrt{n} \eta^2} \right)^q. 
\end{equation*}
where $C>0$ is a constant.  
\end{lemma}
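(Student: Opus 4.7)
The argument will rest on two general deterministic tools. The first is the \emph{Hilbert--Schmidt trace bound}: for Hermitian $\Ab,\Bb$ and $z=E+\mathrm{i}\eta\in\mathbb{C}^+$, the resolvent identity $G_\Ab(z)-G_\Bb(z)=G_\Ab(z)(\Bb-\Ab)G_\Bb(z)$ combined with the trace Cauchy--Schwarz inequality $|\text{tr}(XY)|\le\|X\|_F\|Y\|_F$ and $\|G_\Ab G_\Bb\|_F\le \sqrt{n}/\eta^2$ gives
\[
|m_\Ab(z)-m_\Bb(z)| \le \frac{\|\Ab-\Bb\|_F}{\sqrt{n}\,\eta^2}.
\]
The second is the \emph{finite-rank bound}: if $\mathrm{rank}(\Ab-\Bb)\le r$, then Weyl's interlacing gives $\|F_\Ab-F_\Bb\|_\infty\le r/n$, and integration by parts in the Stieltjes representation against $(\lambda-z)^{-2}$ yields
\[
|m_\Ab(z)-m_\Bb(z)|\le \frac{\pi\, r}{n\,\eta}.
\]

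For the first estimate, I would insert $\mathbf{K}_x+\mathrm{Sh}_1+\mathrm{Sh}_2$ as an intermediate matrix and split
\[
m_{\mathbf{W}_x}-m_{\mathbf{K}_x}=\bigl(m_{\mathbf{W}_x}-m_{\mathbf{K}_x+\mathrm{Sh}_1+\mathrm{Sh}_2}\bigr)+\bigl(m_{\mathbf{K}_x+\mathrm{Sh}_1+\mathrm{Sh}_2}-m_{\mathbf{K}_x}\bigr).
\]
Since $\mathrm{rank}(\mathrm{Sh}_1+\mathrm{Sh}_2)\le 5$, the second piece is bounded by $C/(n\eta)$ via the finite-rank tool, and this is absorbed by the claimed rate on $\mathsf{S}(1/4,\tau)$. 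For the first piece, I would perform a second-order Taylor expansion of $f$ at $2$, using $\|\xb_i-\xb_j\|^2=2+\delta_{ij}$ with $u_i:=\|\xb_i\|^2-1$ and $\delta_{ij}:=u_i+u_j-2\xb_i^\top\xb_j$, so that $\mathrm{Sh}_1$ and $\mathrm{Sh}_2$ are recognized as precisely the first- and second-order shift terms. The residual $R:=\mathbf{W}_x-\mathbf{K}_x-\mathrm{Sh}_1-\mathrm{Sh}_2$ then has off-diagonal entries of the form $2f''(2)(\xb_i^\top\xb_j)^2-2f''(2)(u_i+u_j)\xb_i^\top\xb_j+O(\delta_{ij}^3)$ and diagonal entries of order $u_i+u_i^2$. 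Applying Chernoff's inequality to each off-diagonal $\xb_i^\top\xb_j$ and Bernstein's inequality (equivalently $\chi^2$ concentration) to each $u_i$, together with a union bound over the $O(n^2)$ pairs, yields $\|R\|_F\le C\log n$ with probability at least $1-n^{-D}$ for any $D>0$, and the Hilbert--Schmidt tool then produces the first estimate on this event.

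The second estimate is immediate: $\mathbf{K}_x-\widetilde{\mathbf{K}}_x=\mathrm{Sh}_0=f(2)\mathbf{1}\mathbf{1}^\top$ has rank one, so the finite-rank bound produces a \emph{deterministic} estimate of order $1/(n\eta)$. For the third estimate, a direct entrywise computation using $\varsigma=f(0)+2f'(2)-f(2)$ shows that the off-diagonals of $\widetilde{\mathbf{K}}_x$ and $\breve{\mathbf{K}}_x$ agree while the diagonal discrepancy reduces exactly to $2f'(2)u_i$, so $\widetilde{\mathbf{K}}_x-\breve{\mathbf{K}}_x=2f'(2)\,\mathrm{diag}(u_1,\ldots,u_n)$. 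The Hilbert--Schmidt tool then yields $|m_{\widetilde{\mathbf{K}}_x}-m_{\breve{\mathbf{K}}_x}|\le 2|f'(2)|\bigl(\sum_i u_i^2\bigr)^{1/2}/(\sqrt{n}\,\eta^2)$, and Bernstein (union bounded over $i$) gives $\sum_i u_i^2\le C\log n$ with high probability.

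To convert these high-probability Frobenius controls into the stated $L^p$ moment bounds, I would split each expectation across the good event $\Omega$ and its complement, bounding the latter by the trivial estimate $|m_\Ab(z)-m_\Bb(z)|\le 2/\eta\le 2n^{1/4-\tau}$ together with $\mathbb{P}(\Omega^c)\le n^{-D}$ for $D=D(p,\tau)$ chosen sufficiently large. The main technical obstacle is the uniform concentration of the $O(n^2)$ inner products $\xb_i^\top\xb_j$ with the correct logarithmic loss and matching control of the cubic Taylor remainder $\delta_{ij}^3$ in $\|R\|_F$, for which a Hanson--Wright-style argument (or the Chernoff/Bernstein bounds indicated in Section~\ref{sec:proofstrategy}) with the proper uniform-in-$i,j$ dependence is required.
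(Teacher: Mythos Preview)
Your proposal is correct and follows essentially the same approach as the paper: the paper also bounds $|m_\Ab(z)-m_\Bb(z)|$ by $\|\Ab-\Bb\|_F/(\sqrt{n}\,\eta^2)$ (arriving at it via Cauchy--Schwarz plus Hoffman--Wielandt rather than your resolvent identity, but with identical output), controls the relevant Frobenius norms by Chernoff/Bernstein concentration on $\xb_i^\top\xb_j$ and $\|\xb_i\|^2-1$, splits the expectation over the good event and its complement, and handles the second claim by the rank-one bound and the third by the same diagonal computation $\widetilde{\Kb}_x-\breve{\Kb}_x=2f'(2)\,\mathrm{diag}(u_1,\dots,u_n)$. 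The only cosmetic differences are the choice of intermediate matrices (the paper routes through auxiliary matrices $\check{\Wb}_x$ and $\Wb_x^c$ instead of your $\Kb_x+\mathrm{Sh}_1+\mathrm{Sh}_2$) and that your Taylor remainder written as $O(\delta_{ij}^3)$ tacitly assumes $f\in C^3$, whereas the paper keeps the Lagrange form $f''(\zeta_{ij})\delta_{ij}^2$ consistent with the stated $C^2$ hypothesis---a minor point since your Frobenius bound on $R$ goes through with $f\in C^2$ once you write the second-order remainder that way.
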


The proof is postponed to Appendx \ref{app:local}.
In light of Lemma \ref{lem_localkernel}, it suffices to 
study the local law of the matrix $\breve{\mathbf{K}}_x$. Since $f$ is decreasing and satisfies (\ref{eq_fassum}),  $\breve{\mathbf{K}}_x$ is positive definite.  As discussed in Section \ref{sec:preliminary}, the local laws of $-2f'(2)\Xb^\top \Xb$ has been established (c.f. Lemma \ref{lem_locallaw}). Since $\varsigma \mathbf{I}$ is an isotropic shift, we directly obtain the LSD of $\breve{\mathbf{K}}_x$. 
Indeed, we have 
\begin{equation*}
m_{\breve{\mathbf{K}}_x}(z)=m_{\breve{\mathbf{K}}_x-\varsigma}(z-\varsigma)=m_{-2f'(2)\Xb^\top \Xb}(z-\varsigma)\,,
\end{equation*}
and by Lemma \ref{lem_locallaw}, we have  
\begin{equation*}
m_{-2f'(2)\Xb^\top \Xb}(z-\varsigma) = m_{\mu_{c_1,-2f'(2)}}(z-\varsigma)+O_{\prec}( (n \eta)^{-1})\,.
\end{equation*}
By the inversion formula, we see that the LSD of $\breve{\mathbf{K}}_x$ is 
$\nu_x$ defined in \eqref{Definition special measure for kernel matrix}.

We also need the following two propositions about $\widetilde{\mathbf{W}}_x$ defined in \eqref{eq_defntildem}. The first proposition states that $\widetilde{\mathbf{W}}_x$ is bounded with high probability.  

\begin{proposition}\label{prop_norm} Denote $\gamma_+$ as the right-most endpoint of the support of $\nu_x$. Then for some constant $C>0$, with probability $1-O(n^{-1/2})$, we have
\begin{equation*}
\left|\norm{\widetilde{\mathbf{W}}_x} - \gamma_+\right|\leq C n^{-1/9}\,.
\end{equation*}
\end{proposition}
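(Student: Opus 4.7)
The plan is to leverage the Stieltjes-transform closeness from Lemma~\ref{lem_localkernel} together with the local MP law for sample Gram matrices and convert it to a spectral-norm estimate through a Helffer--Sjostrand argument. First, I observe that $\breve{\mathbf{K}}_x=-2f'(2)\mathbf{X}^\top\mathbf{X}+\varsigma\mathbf{I}$ is an isotropic shift of a scaled sample Gram matrix, so the classical MP/Wishart edge theory yields $\|\breve{\mathbf{K}}_x\|=\gamma_++O_\prec(n^{-2/3})$ with overwhelming probability. Chaining the three estimates in Lemma~\ref{lem_localkernel} with the local MP law for $\breve{\mathbf{K}}_x$, and applying Markov to the $p$-th moment bound with $p$ moderately large so that the failure probability becomes $O(n^{-1/2})$, I obtain, uniformly on $\mathsf{S}(1/4,\tau)$,
\begin{equation}
\left|m_{\widetilde{\mathbf{W}}_x}(z)-m_{\nu_x}(z)\right|\le \frac{Cn^{\epsilon}\log n}{\sqrt{n}\,\eta^{2}}+\frac{C}{n\eta},
\end{equation}
noting that $\widetilde{\mathbf{W}}_x$ and $\mathbf{W}_x$ differ only by the low-rank matrix $\mathrm{Sh}_0+\mathrm{Sh}_1+\mathrm{Sh}_2$, which perturbs Stieltjes transforms by at most $O(1/(n\eta))$.

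Next, to bound the largest eigenvalue from above, I would apply the Helffer--Sjostrand representation to a smooth cutoff $\chi_\delta\in C_c^\infty(\mathbb{R})$ with $\chi_\delta\equiv 1$ on $[\gamma_++2\delta,\gamma_++K]$ and vanishing outside $[\gamma_++\delta,\gamma_++2K]$, taking $\delta=Cn^{-1/9}$. Since $\chi_\delta$ vanishes on $\mathrm{supp}(\nu_x)$, we have $n\int\chi_\delta\,d\nu_x=0$, and hence
\begin{equation}
\text{tr}\,\chi_\delta(\widetilde{\mathbf{W}}_x)=\frac{n}{\pi}\int_{\mathbb{C}}\partial_{\bar z}\widetilde{\chi}_\delta(z)\bigl(m_{\widetilde{\mathbf{W}}_x}(z)-m_{\nu_x}(z)\bigr)\,dA(z),
\end{equation}
where $\widetilde{\chi}_\delta$ is an almost-analytic extension of sufficiently high order $N$. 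Splitting the $\mathbb{C}$-integral at $|\text{Im}\,z|=n^{-1/4+\tau}$ (local law above this scale, trivial bound $|m|\le 1/|\text{Im}\,z|$ below), and balancing $N$ against $\delta$, both pieces can be made $o(1)$, so $\text{tr}\,\chi_\delta(\widetilde{\mathbf{W}}_x)<1$ on the high-probability event, ruling out eigenvalues of $\widetilde{\mathbf{W}}_x$ above $\gamma_++\delta$. A symmetric argument using a test function that equals $1$ on $[\gamma_+-\delta/2,\infty)$ yields the lower bound $\|\widetilde{\mathbf{W}}_x\|\ge\gamma_+-Cn^{-1/9}$, exploiting that the square-root density of $\nu_x$ near its edge guarantees $n\int\chi\,d\nu_x\gtrsim n\delta^{3/2}\gg 1$ once $\delta\gg n^{-2/3}$.

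The main obstacle is the suboptimality of the local-law error $\log n/(\sqrt{n}\,\eta^{2})$ from Lemma~\ref{lem_localkernel} --- markedly worse than the $(n\eta)^{-1}$ available for Wigner or sample-covariance ensembles, and confined to the comparatively coarse scales $\eta\ge n^{-1/4+\tau}$. This is precisely what forces the rate $n^{-1/9}$ rather than the classical $n^{-2/3}$, and the Helffer--Sjostrand integration must be orchestrated carefully: the order $N$ of the almost-analytic extension, the width $\delta$ of the bump, and the threshold scale $n^{-1/4+\tau}$ must all be tuned simultaneously to extract the sharpest polynomial rate this local law permits while keeping the failure probability at the claimed $O(n^{-1/2})$ level.
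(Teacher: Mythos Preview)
Your approach diverges from the paper's and has a genuine gap in the upper-bound direction. You propose to show $\operatorname{tr}\chi_\delta(\widetilde{\mathbf W}_x)<1$ via Helffer--Sj\"ostrand, which requires bounding $n\int\chi_\delta\,d(\mu_{\widetilde{\mathbf W}_x}-\nu_x)$ by $o(1)$. With the local-law error $n^{-1/2+\epsilon}/\eta^{2}$ available only on $\eta\ge n^{-1/4+\tau}$, this is not achievable. Raising the order $N$ of the almost-analytic extension does control the inner region $|\eta|<q$ (its contribution is $\sim n(q/\delta)^{N}$, which tends to $0$ for $N\ge 8$), but in the outer region $|\partial_{\bar z}\tilde\chi_\delta|\sim|\eta|^{N}/\delta^{N+1}$ near $\eta\sim 1$, and against $n^{-1/2}/\eta^{2}$ this yields $\sim n^{1/2}/\delta^{N}=n^{1/2+N/9}$, which \emph{diverges} as $N$ grows. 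If instead you keep $N=1$ and integrate by parts in the outer region (as the paper does for Proposition~\ref{prop_largscaleset}), the best you recover is $|\mu_{\widetilde{\mathbf W}_x}(I)-\nu_x(I)|\le Cn^{-1/4+\epsilon}$; multiplied by $n$ this is $n^{3/4+\epsilon}$, far from the precision $1/n$ needed to exclude a single eigenvalue. Your lower-bound argument is sound (because $n\,\nu_x([\gamma_+-\delta,\gamma_+])\sim n\delta^{3/2}=n^{5/6}\gg n^{3/4}$), but the upper bound cannot be closed this way.

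The paper takes a different route that avoids any integrated functional-calculus estimate. First it invokes the entrywise approximation of Lemma~\ref{lem_matrixapproximation}, which you do not use, to obtain a crude a~priori bound $\lambda_1(\widetilde{\mathbf W}_x)\le\gamma_++n^{-\varsigma}$ for some small $\varsigma>0$ with probability $1-O(n^{-1/2})$. It then argues by contradiction at a \emph{single} spectral parameter: supposing $\lambda_1\in[\gamma_++n^{-1/9+C\epsilon},\gamma_++n^{-\varsigma}]$, set $z'=\lambda_1+\mathrm i\eta$ with $\eta=n^{-1/9-C_1\epsilon}\sqrt{\kappa_1}$, $\kappa_1:=\lambda_1-\gamma_+$. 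The local law together with the MP edge asymptotic $\operatorname{Im}m_{\nu_x}(z')\sim\eta/\sqrt{\kappa_1}$ forces $|\operatorname{Im}m_{\widetilde{\mathbf W}_x}(z')|\le Cn^{-1/9}$. On the other hand, a direct sum over the eigenvalues of $\breve{\mathbf K}_x$ (which by rigidity lie below $\gamma_+$) gives $\operatorname{Im}m_{\breve{\mathbf K}_x}(z')\gtrsim n^{-1/9}/\sqrt{\kappa_1}\ge n^{-1/9+\varsigma/2}$. Transferring via Lemma~\ref{lem_localkernel} produces the contradiction. The crude a~priori bound is the key missing ingredient in your plan: it guarantees $\kappa_1\le n^{-\varsigma}\to 0$, which is exactly what makes the pointwise lower bound blow up past the local-law upper bound.
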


The next proposition implies that the ESD of $\widetilde{\mathbf{W}}_x$ is ``close'' to $\nu_x$. 
\begin{proposition}\label{prop_largscaleset} 
For some constant $C>0$ and small constant $\epsilon>0$, with probability $1-O(n^{-1/2})$, we have uniformly 
\begin{equation*}
\left|\mu_{\widetilde{\mathbf{W}}_x}(I)-\nu_x(I)\right| \leq Cn^{-1/4+\epsilon}\,,
\end{equation*}
for any interval $I \subset \mathbb{R}$.
\end{proposition}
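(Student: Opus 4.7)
The plan is to translate the Stieltjes transform closeness provided by Lemma~\ref{lem_localkernel} and the local law for sample covariance matrices into a Kolmogorov-type distance bound between $\mu_{\widetilde{\mathbf{W}}_x}$ and $\nu_x$ via a Bai-type inequality at an appropriate scale $\eta$.

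First I would combine the three estimates in Lemma~\ref{lem_localkernel} with the triangle inequality to obtain, for $z=E+\mathrm{i}\eta \in \mathsf{S}(1/4,\tau)$ and a large integer $p$,
\[
\mathbb{E}\bigl|m_{\mathbf{W}_x}(z)-m_{\breve{\mathbf{K}}_x}(z)\bigr|^p \leq \Bigl(\frac{C\log n}{\sqrt{n}\,\eta^{2}}\Bigr)^{p}.
\]
Since $\breve{\mathbf{K}}_x=-2f'(2)\Xb^\top\Xb+\varsigma\mathbf{I}$ and the Stieltjes transform is translation-covariant, the local law for sample Gram matrices (Lemma~\ref{lem_locallaw}) gives $|m_{\breve{\mathbf{K}}_x}(z)-m_{\nu_x}(z)|=O_\prec((n\eta)^{-1})$. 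Applying Markov's inequality over a polynomial-cardinality net in $\mathsf{S}(1/4,\tau)$ and then extending via the trivial $\eta^{-2}$ Lipschitz bound on $m$, I obtain
\[
\sup_{z\in \mathsf{S}(1/4,\tau)}\bigl|m_{\mathbf{W}_x}(z)-m_{\nu_x}(z)\bigr|\leq \frac{C\log n}{\sqrt{n}\,\eta^{2}}
\]
with probability $1-n^{-D}$ for any fixed $D>0$. Because $\widetilde{\mathbf{W}}_x-\mathbf{W}_x=-(\mathrm{Sh}_0+\mathrm{Sh}_1+\mathrm{Sh}_2)$ has finite rank, the resolvent identity yields the deterministic bound $|m_{\widetilde{\mathbf{W}}_x}(z)-m_{\mathbf{W}_x}(z)|\lesssim (n\eta)^{-1}$, so the same uniform estimate persists with $\widetilde{\mathbf{W}}_x$ in place of $\mathbf{W}_x$.

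Next I would invoke a Bai-type inequality: for any fixed $\eta>0$ and $A$ so large that $\supp(\nu_x)\cup \supp(\mu_{\widetilde{\mathbf{W}}_x})\subset[-A,A]$,
\[
\sup_{I} \bigl|\mu_{\widetilde{\mathbf{W}}_x}(I)-\nu_x(I)\bigr| \leq C\Bigl[\,\eta + \int_{-A}^{A}\bigl|m_{\widetilde{\mathbf{W}}_x}(E+\mathrm{i}\eta)-m_{\nu_x}(E+\mathrm{i}\eta)\bigr|\,dE\,\Bigr].
\]
Proposition~\ref{prop_norm} together with the explicit boundedness of $\supp(\nu_x)$ furnishes such $A=O(1)$ on an event of probability $1-O(n^{-1/2})$. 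Choosing $\eta=n^{-1/4+\epsilon}$ with $\epsilon$ slightly exceeding $1/12$, the first term contributes $n^{-1/4+\epsilon}$, while the integrated term is bounded by $2AC\log n/n^{2\epsilon}$, which is of strictly smaller order; the conclusion for intervals then follows (from Kolmogorov closeness of the distribution functions, with at most a factor of $2$).

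The principal obstacle is the uniform-in-$E$ control of the Stieltjes transform difference required by the Bai inequality: Lemma~\ref{lem_localkernel} supplies only pointwise $p$-th moment bounds, so a careful net-plus-Lipschitz argument (standard in the local-law literature) is needed to upgrade them to a uniform high-probability statement. A secondary subtlety is that the restriction $z\in\mathsf{S}(1/4,\tau)$ forces $\eta\geq n^{-1/4+\tau}$, which caps the Kolmogorov rate attainable through this route at the $n^{-1/4+\epsilon}$ recorded in the statement; any improvement would require strengthening the kernel local law itself.
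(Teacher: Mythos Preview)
Your approach via a Bai-type inequality is valid and will yield a proof, but it differs from the paper's route and produces a quantitatively weaker exponent. The paper uses the Helffer--Sj\"ostrand formula: one writes $\int h\,d(\mu_{\widetilde{\Wb}_x}-\nu_x)$ for a smoothed indicator $h$ of $I$ at scale $q=n^{-1/4+\epsilon}$, then splits the resulting contour integral into a small-$|y|$ piece (controlled by the monotonicity of $y\mapsto y\,\mathrm{Im}\,m(x+\mathrm{i}y)$), a large-$|y|$ piece (handled via integration by parts in $y$), and a boundary piece where $|y|\sim 1$. This decoupling lets the $\eta^{-2}$ singularity in the Stieltjes-transform error $O(n^{-1/2+\epsilon_0}\eta^{-2})$ be \emph{integrated} over $\eta\ge q$, giving $\int_q^{O(1)} n^{-1/2+\epsilon_0}\eta^{-2}\,d\eta\sim n^{-1/2+\epsilon_0}q^{-1}=n^{-1/4+\epsilon_0-\epsilon}$, which is below the smoothing error $q=n^{-1/4+\epsilon}$ for \emph{every} small $\epsilon>\epsilon_0$.

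Your Bai bound, by contrast, evaluates the Stieltjes difference at a single height $\eta=n^{-1/4+\epsilon}$ and pays the full $n^{-1/2}\eta^{-2}=n^{-2\epsilon}$ on the $E$-integral; balancing against the $\eta$ term forces $\epsilon>1/12$, so the best Kolmogorov rate you can extract this way is $n^{-1/6+}$ rather than $n^{-1/4+}$. Since the proposition is existential in $\epsilon$ this still proves the statement as worded, but the downstream rigidity rates in Theorem~\ref{lem_rigiditydistance} would degrade accordingly. The gain of your route is brevity and avoiding the multi-regime Helffer--Sj\"ostrand bookkeeping; the gain of the paper's is that integrating over scales recovers the full exponent permitted by the kernel local law on $\mathsf{S}(1/4,\tau)$.
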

The proofs of Propositions \ref{prop_norm} and \ref{prop_largscaleset} are postponed to Appendices \ref{app:local} and \ref{section proof of propositions prop_largscaleset}.
We now finish the proofs of Theorem \ref{lem_rigiditydistance}. We basically follow the proof of \cite[Lemma 3.2]{ding2018} except that we prove the averaged local law on $\mathsf{S}(1/4,\tau)$. 

\begin{proof}[Proof of Theorem \ref{lem_rigiditydistance}] 
We only deal with the case $i \leq  n/2$ and the case when $i > n/2$ can be dealt with in a similar way. 
Denote the eigenvalues of $\widetilde{\mathbf{W}}_x$ by $\lambda_1\geq\lambda_2\ldots\geq\lambda_n$. We first prepare the following  claims.

\underline{Claim 1.} We connect the stochastic quantity $\lambda_i$ and deterministic quantity $\gamma_{\nu_x}(i)$, where $\nu_x$ is defined in \eqref{Definition special measure for kernel matrix}.
Clearly, we have by definition
\begin{align}\label{eq_fbound0}
\mu_{\widetilde{\mathbf{W}}_x}([\lambda_i,\infty))=\frac{i}{n}.
\end{align} 
For $E \geq 0$, define the function  
\begin{equation}\label{eq_deff}
F(E):=\nu_x([E, \infty))\,.
\end{equation}
Recall the typical location, $\gamma_{\nu_x}(i)$, in Definition \ref{Definition typical locations}; that is, we have
\begin{align}\label{eq_fbound1}
F(\gamma_{\nu_x}(i))+\frac{1}{2n}=\frac{i}{n}.
\end{align} 
By \eqref{eq_fbound0} and \eqref{eq_fbound1}, the stochastic quantity $\lambda_i$ and deterministic quantity $\gamma_{\nu_x}(i)$ are connected via $i/n$ and we find that 
\begin{align}\label{eq_fbound}
F(\gamma_{\nu_x}(i))+\frac{1}{2n}=\mu_{\widetilde{\mathbf{W}}_x}([\lambda_i,\infty))\,.
\end{align} 
By Proposition \ref{prop_largscaleset}, we have $|F(\lambda_i)-\mu_{\widetilde{\mathbf{W}}_x}([\lambda_i,\infty))|=O(n^{-1/4+\epsilon})$ for some small $\epsilon>0$ with probability $1-O(n^{-1/2})$, and hence for some constant $C>0$ and small constant $\epsilon>0$,
\begin{align}\label{eq_fbound}
|F(\gamma_{\nu_x}(i))-F(\lambda_i)| \leq C n^{-1/4+\epsilon}
\end{align} 
with probability $1-O(n^{-1/2})$.

Therefore, by Propositions \ref{prop_norm} and \ref{prop_largscaleset}, there exists  some constant $C>0$ so that we can find an event $\Omega$ with probability at least $1-O(n^{-1/2})$ such that when conditional on $\Omega$, 
\begin{equation}\label{eq_expand}
|\norm{\widetilde{\mathbf{W}}_x}- \gamma_+|\leq Cn^{-1/9}\ \text{ and } \ |F(\gamma_{\nu_x}(i))-F(\lambda_i)| \leq Cn^{-1/4+\epsilon}\,.
\end{equation}

\underline{Claim 2.} We control the typical location. We claim that for a fixed small $\vartheta>\epsilon$, $\gamma_{\nu_x}(i)\geq \gamma_+-n^{-1/9+\vartheta}$ if and only if $i \leq C_0 n^{5/6+3\vartheta/2}$ for some constant $C_0>0$. Indeed, when $\gamma_{\nu_x}(i)\in[\gamma_+-n^{-1/9+\vartheta},\gamma_+]$, we have
\begin{equation}\label{eq_contra}
F(\gamma_{\nu_x}(i))=\int_{\gamma_{\nu_x}(i)}^{\gamma_+}{\nu_x}(dx)\sim \frac{2}{3}(\gamma_+-\gamma_{\nu_x}(i))^{3/2}\leq \frac{2}{3}n^{-1/6+3\vartheta/2}\,,
\end{equation}
where $\sim$ holds by the square root behavior of the MP law near the edge (see, e.g., equation (\ref{eq)squareroot})). 
Therefore, the only if part holds for some constant $C_0>0$ by taking \eqref{eq_fbound1} into consideration. The if part is straightforward by reversing the argument.

With the above {two} claims, 
we consider two cases to finish the proof. 
First, suppose that $i \leq C_0n^{5/6+3\vartheta/2}$.
By the claim, we have
\begin{equation}\label{eq_firstregion}
\gamma_{\nu_x}(i)\in [\gamma_+-n^{-1/9+\vartheta}, \gamma_+].
\end{equation}
If $\lambda_i\geq \gamma_+-n^{-1/9+2\vartheta}$ holds,
when combined with \eqref{eq_expand},
we have for some constant $C>0$,
\begin{equation*}
|\lambda_i-\gamma_{\nu_x}(i)|\leq n^{-1/9+2\vartheta}+n^{-1/9}\leq C n^{-1/9+2\vartheta}\,,
\end{equation*}
when $n$ is sufficiently large and when conditional on $\Omega$. 
If $\lambda_i<\gamma_+-n^{-1/9+2\vartheta}$ holds, we show a contradiction.  
By the definition of $F$, we have that 
\begin{align*}
F(\lambda_i)  \geq F(\gamma_+-n^{-1/9+2 \vartheta })& = \int_{\gamma_+-n^{-1/9+2 \vartheta}}^{\gamma_+}{\nu_x}(dx) \\
& \sim \frac{2}{3}(n^{-1/9+2\vartheta})^{3/2}= \frac{2}{3} n^{-1/6+3 \vartheta}\,.
\end{align*}
We hence conclude from (\ref{eq_expand}) that 
\begin{equation*}
F(\gamma_{\nu_x}(i)) \geq C n^{-1/6+3\vartheta} \,,
\end{equation*}
for some constant $C>0$. This is a contradiction to (\ref{eq_contra}) when $n$ is sufficiently large. As a result, we conclude that when $i \leq C_0n^{5/6+3\vartheta/2}$, with probability at least $1-O(n^{-1/2})$, for some constant $C>0$,
 \begin{equation}
|\lambda_i-\gamma_{\nu_x}(i)| \leq C n^{-1/9+2\vartheta}\,.
\end{equation}

Second, we consider the case $C_0n^{5/6+3\vartheta/2} \leq i \leq n/2$. When conditional on $\Omega$, by the {second} claim we have  $\gamma_{\nu_x}(i)<\gamma_+-n^{-1/9+\vartheta}$. Also, we have 
\begin{align*}
F(\gamma_{\nu_x}(i)) \geq \int_{\gamma_+-n^{-1/9+3 \vartheta/2}}^{\gamma_+}{\nu_x}(dx)  & \sim \frac{2}{3}(n^{-1/9+3\vartheta/2})^{3/2}  \\
& = \frac{2}{3}n^{-1/6+9\vartheta/4}\,,
\end{align*}
where $\sim$ again comes from the square root behavior of the MP law near the edge; that is, there exists some constant $c>0$ so that when $n$ is sufficiently large,   
\begin{equation}\label{F(gammanux) bounded from below}
F(\gamma_{\nu_x}(i))  \geq c n^{-1/6+9 \vartheta /4}\,,
\end{equation}
which asymptotically dominates $n^{-1/4+\epsilon}$. 
Again, due to the square root behavior of the MP law near the edge, we deduce that 
\begin{equation}\label{proof MPlaw edge eq_expand}
F(\lambda) \sim (\gamma_+-\lambda)^{3/2} \,,
\end{equation} 
when $0<\lambda<\gamma_+$.  
Therefore, when $n$ is sufficiently large, by \eqref{eq_expand} and \eqref{F(gammanux) bounded from below}, we know $F(\gamma_{\nu_x}(i))\sim F(\lambda_i)$, which by \eqref{proof MPlaw edge eq_expand} leads to
\begin{equation}\label{proof theorem lambdai gammai the same}
\gamma_+-\lambda_i \sim \gamma_+-\gamma_{\nu_x}(i)\,.
\end{equation} 
To continue, note that since $F'(\lambda) \sim (\gamma_+-\lambda)^{1/2}$ by \eqref{eq)squareroot}, \eqref{proof theorem lambdai gammai the same} leads to 
 $F'(\lambda_i) \sim F'(\gamma_{\nu_x}(i))$. 
As a result, by (\ref{eq_fbound1}), the square root behavior of the MP law \eqref{proof MPlaw edge eq_expand} and $F(\gamma_{\nu_x}(i))\sim F(\lambda_i)$, we have 
\begin{equation}\label{proof theorem lambdai right most edge gap}
\gamma_+-\gamma_{\nu_x}(i) \sim i^{2/3} n^{-2/3},
\end{equation} 
which leads to 
\[
F'(\gamma_{\nu_x}(i)) \sim i^{1/3} n^{-1/3}
\] 
since $F'(\lambda) \sim (\gamma_+-\lambda)^{1/2}$ and $F'(\lambda_i) \sim F'(\gamma_{\nu_x}(i))$. 
Finally, for any $\xi$ between $\lambda_i$ and $\gamma_{\nu_x}(i)$, we have $\gamma_+-\xi \sim \gamma_+-\gamma_{\nu_x}(i)$ and hence 
\begin{equation}\label{proof theorem control F' in xi}
F'(\xi) \sim F'(\gamma_{\nu_x}(i))\sim i^{1/3} n^{-1/3}.
\end{equation} 
Now we put everything together. By the mean value theorem, we have
\begin{equation}\label{eq_comprae}
|\lambda_i-\gamma_{\nu_x}(i)| = \frac{|F(\lambda_i)-F(\gamma_{\nu_x}(i))|}{|F'(\xi)|},
\end{equation}
where $\xi$ is between $\lambda_i$ and $\gamma_{\nu_x}(i)$. 
When conditional on $\Omega$, by (\ref{eq_expand}) and \eqref{proof theorem control F' in xi},
\begin{equation}\label{eq_comprae}
\frac{|F(\lambda_i)-F(\gamma_{\nu_x}(i))|}{|F'(\xi)|} \leq C (i/n)^{-1/3} n^{-1/4+\epsilon},
\end{equation}
for some constant $C>0$. This concludes our proof. 

\end{proof}

Before concluding this section, we finish the proof of  
Corollary \ref{coro_lesspertub}. 
\begin{proof}[Proof of Corollary \ref{coro_lesspertub}] Note that by Weyl's inequality, we have  
\begin{align*}
\left| \lambda_i(\mathbf{W}_x-\mathrm{Sh}_0-\mathrm{Sh}_1)-\gamma_{\nu_x}(i) \right| \leq \left| \lambda_i(\widetilde{\mathbf{W}}_x)-\gamma_{\nu_x}(i) \right| +\norm{\mathrm{Sh}_2}\,. 
\end{align*}
The proof follows from Theorem \ref{lem_rigiditydistance} and the claim that
\begin{equation}\label{Bound of Sh1+Sh1}
\norm{\mathrm{Sh}_1}+\norm{\mathrm{Sh}_2}=O_{\prec} \big( n^{-1/2} \big)\,. 
\end{equation}
To prove \eqref{Bound of Sh1+Sh1}, since $\psi \mathbf{1}^\top$ and $\mathbf{1}\psi^\top$ are rank-one matrices,  we have 
\begin{equation}
\norm{\mathrm{Sh}_1} \leq |2f'(2)| |\psi^\top \mathbf{1}|=|2f'(2)| \Big|\sum_{i=1}^n \psi_i\Big|\,. 
\end{equation}
By definition, $\psi_i$, $i=1,2,\cdots,n$, are centered and independent and $\psi_i=\frac{1}{p_1} \sum_{i=1}^{p_1} z_i^2-1 $ with $\{z_i\} $ are  i.i.d. % (!!) sub-Gaussian 
Gaussian random variables. Therefore, we get from Bernstein's inequality (c.f. Lemma \ref{lem_bernstein}) that 
\begin{equation}
\Big| \sum_{i=1}^n \psi_i \Big|=O_{\prec}\big (n^{-1/2} \big)\,.
\end{equation}
Second, we have 
\begin{equation}
\norm{\mathrm{Sh}_2} \leq \Big| \frac{f^{''}(2)}{2} \left( 2|\mathbf{1}^\top (\psi \circ \psi)|+2\psi^\top \psi \right) \Big|=| 2f^{''}(2) | \sum_{i=1}^n \psi_i^2\,,
\end{equation}
where in the second step we use the definition of Hadamard product. We claim that when $n$ is large enough, we have 
\begin{equation}\label{eq_cccclaim}
\Big|\sum_{i=1}^n \psi_i^2\Big|=O_{\prec}\big( n^{-1/2} \big)\,,
\end{equation}
and this proves Corollary \ref{coro_lesspertub}. The proof of (\ref{eq_cccclaim}) is similar to the \textit{truncation and centralization} step in the proof of \cite[Theorem 2.2]{elkaroui2010}. For some small $\epsilon>0$, since we have sufficient high absolute moments, we can truncate $\psi^2_i, i=1,2,\cdots, n$ at the level $B_p=p^{-\delta}$ with $\delta=1/2-\epsilon$ such that a.s. the data matrix is not changed. Then we conclude (\ref{eq_cccclaim}) using \cite[Lemma A.3]{elkaroui2010} by setting $2/m=-\delta$.

\end{proof}

\subsection{Proof of Theorem \ref{thm_edgevalue}}
We introduce some notations. 
Denote $\Sigma_x$ and $\Sigma_y$ as $n \times n$ diagonal matrices satisfying
\begin{equation}\label{eq_lambdan}
\Sigma_x=
\begin{cases}
\text{diag}\{ \gamma_{\nu_x}(1), \ldots, \gamma_{\nu_x}(n) \} & \ \text{if} \ n \leq p_1\\
\text{diag}\{\gamma_{\nu_x}(1), \ldots, \gamma_{\nu_x}(p_1), \varsigma, \ldots, \varsigma \} & \ \text{if} \ n > p_1
\end{cases},
\end{equation} 
and
\begin{equation}\label{defn_sign}
\Sigma_y=
\begin{cases}
\text{diag}\{ \gamma_{\nu_y}(1), \ldots, \gamma_{\nu_y}(n) \} & \ \text{if} \ n \leq p_2\\
\text{diag}\{\gamma_{\nu_y}(1), \ldots, \gamma_{\nu_y}(p_2), \varsigma, \ldots, \varsigma \} & \ \text{if} \ n > p_2
\end{cases},
\end{equation}
where  $\nu_x$ and $\nu_y$ are defined in \eqref{Definition special measure for kernel matrix}.
Clearly, $\Sigma_x$ and $\Sigma_y$ are deterministic and positive definite.  
Denote 
\begin{equation}\label{eq_defnd}
\Qb_{xy}=f^{-2}(2)\Ub \Sigma_x \Ub^\top \Sigma_y.
\end{equation}
Let $\Ub \in O(n)$ be a Haar distributed orthonormal random matrix. 
It will be seen from the proof of Theorem \ref{thm_edgevalue} that $n^2 \mathbf{S}_{xy}$ can be effectively reduced to $\Qb_{xy}$, which follows the free multiplication of random matrices model in \eqref{model H=AUBUt}. 

\begin{proof}[Proof of Theorem \ref{thm_edgevalue}] 

The proof is based on a series of reductions. We start with the proof for the NCCA matrix $\mathbf{S}_{xy}$. First of all, we show that the eigenvalues of $n^2 \mathbf{S}_{xy}$ are actually close to those of a diagonalizable matrix. We first study $\widetilde{\mathbf{W}}_x$. By Weyl's inequality (see (v) in Lemma \ref{lem_collecttion}), we have %
\begin{align}\label{eq_weylkeybound}
\Big | \lambda_i(\widetilde{\mathbf{W}}_x)-\lambda_i(\widetilde{\mathbf{W}}_x+\mathrm{Sh}_1 &+\mathrm{Sh}_2) \Big | \leq  \norm{\mathrm{Sh}_1}+\norm{\mathrm{Sh}_2} =O_{\prec}( n^{-1/2})\,,
\end{align}
%\begin{align}\label{eq_weylkeybound}
%\Big | \lambda_i(\widetilde{\mathbf{W}}_x)-\lambda_i(\widetilde{\mathbf{W}}_x+\mathrm{Sh}_1 &+\mathrm{Sh}_2) \Big | \leq  \norm{\mathrm{Sh}_1}+\norm{\mathrm{Sh}_2} \nonumber \\
%& =O_{\prec}( n^{-1/2})\,,
%\end{align}
where the last inequality comes from the bound shown in \eqref{Bound of Sh1+Sh1}.
Since $\mathbf{W}_x=\widetilde{\mathbf{W}}_x+\mathrm{Sh}_0+\mathrm{Sh}_1+\mathrm{Sh}_2$, by Weyl's inequality, the fact that $\mathrm{Sh}_0$ is of rank one, and (\ref{eq_weylkeybound}), we obtain that for $1\leq i \leq n-1$, 
\begin{equation}\label{eq_secondpertub}
\left| \lambda_{i+1}(\widetilde{\mathbf{W}}_x)-\lambda_i(\mathbf{W}_x) \right|=O_{\prec} \big( n^{-1/2}\big)\,.
\end{equation} 
Denote 
\begin{equation} \label{eq_sxy1}
\mathbf{S}^{(1)}_{xy}:=(n^{-1} \mathbf{D}_x)^{-1} \widetilde{\mathbf{W}}_x \widetilde{\mathbf{W}}_y(n^{-1} \mathbf{D}_y)^{-1}. 
\end{equation}
Using a discussion similar to (\ref{eq_secondpertub}), we conclude that for $1\leq  i \leq n-2$,
\begin{equation}\label{eq_1}
\left| \lambda_{i+2}(n^2 \mathbf{S}_{xy})-\lambda_{i}(\mathbf{S}^{(1)}_{xy}) \right|=O_{\prec} \big( n^{-1/2} \big)\,. 
\end{equation}
Therefore, it suffices to study the eigenvalues of $\mathbf{S}^{(1)}_{xy}$. Denote 
\begin{equation}\label{eq_s2xy}
\mathbf{S}^{(2)}_{xy}:=f^{-2}(2) \widetilde{\mathbf{W}}_x \widetilde{\mathbf{W}}_y\,. 
\end{equation}
We have
\begin{align}
\mathbf{S}^{(1)}_{xy}-\mathbf{S}^{(2)}_{xy}=&\, \left( (n^{-1} \mathbf{D}_x)^{-1}-f^{-1}(2) \right) \widetilde{\mathbf{W}}_x  \widetilde{\mathbf{W}}_y (n^{-1} \mathbf{D}_y)^{-1}\label{Equation Difference S1 and S2}\\
&+f^{-1}(2) \widetilde{\mathbf{W}}_x \widetilde{\mathbf{W}}_y \left((n^{-1} \mathbf{D}_y)^{-1} -f^{-1}(2)\right)\,.\nonumber
\end{align} 
For any fixed large constant $C_1>0$, denote $\Omega$ to be the event, where there exist $ i, j$ so that either of the followings happens
\begin{equation}\label{eq_largderivationbound}
|\xb_i^\top \xb_j-\delta_{ij}|>\sqrt{\frac{C_1 \log p_1}{p_1}}, \quad  |\yb_i^\top \yb_j-\delta_{ij}|>\sqrt{\frac{C_1 \log p_2}{p_2}}.
\end{equation}
By the large deviation inequality (see Lemmata \ref{lem_bernstein} and \ref{lem_chernoff} for details), we know that the probability of the event $\Omega$ is less than $n^{-C_1/2}$. Hence, it suffices to consider the set $\Omega^c$. We have the following results.
\begin{lemma}\label{lem_diagonal}
When conditional on $\Omega^c$, there exists some constant $C>0$ so that when $n$ is sufficiently large, we have 
\begin{equation*}
\norm{(n\mathbf{D}_x)^{-1}-f^{-1}(2) \mathbf{I}_n}  \leq C \frac{\log n}{\sqrt{n}}, 
\end{equation*}
\begin{equation*}
 \norm{(n\mathbf{D}_y)^{-1}-f^{-1}(2) \mathbf{I}_n} \leq C \frac{\log n}{\sqrt{n}}.
\end{equation*}
\end{lemma}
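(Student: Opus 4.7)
The plan is to reduce to a uniform entry-wise diagonal estimate. Since $n\mathbf{D}_x^{-1} - f^{-1}(2)\mathbf{I}_n$ is diagonal, its operator norm equals $\max_i |n/\mathbf{D}_x(i,i) - f^{-1}(2)|$, so it suffices to show $n^{-1}\mathbf{D}_x(i,i) = f(2) + O(\sqrt{\log n/n})$ uniformly in $i$ on $\Omega^c$ and then take reciprocals. (I read $(n\mathbf{D}_x)^{-1}$ in the lemma as $n\mathbf{D}_x^{-1}$, consistent with \eqref{eq_diagonacontrol}; the argument will in fact deliver the sharper rate $\sqrt{\log n/n}$, which is dominated by the stated $\log n/\sqrt{n}$.)

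First I would translate the $\Omega^c$ hypothesis into a pointwise bound on $\|\xb_i-\xb_j\|^2$. Specializing \eqref{eq_largderivationbound} to $j=i$ gives $|\|\xb_i\|^2 - 1| \leq \sqrt{C_1\log p_1/p_1}$, and for $i\ne j$ it gives $|\xb_i^\top\xb_j| \leq \sqrt{C_1\log p_1/p_1}$. Expanding $\|\xb_i-\xb_j\|_2^2 = \|\xb_i\|_2^2+\|\xb_j\|_2^2-2\xb_i^\top\xb_j$ and using $p_1\asymp n$ from \eqref{eq_boundc1c2} yields the uniform bound $|\|\xb_i-\xb_j\|_2^2 - 2| \leq C\sqrt{\log n/n}$ for all $i\ne j$, placing every such squared distance in a shrinking neighborhood of $2$ on which $f$ is $C^2$. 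A second-order Taylor expansion around $2$ then gives
\begin{equation*}
f(\|\xb_i-\xb_j\|^2) = f(2) + f'(2)(\|\xb_i-\xb_j\|^2-2) + O(\log n/n),
\end{equation*}
uniformly in $i\ne j$, with remainder controlled by $\sup|f''|$ on that neighborhood.

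Writing $\mathbf{D}_x(i,i) = f(0) + \sum_{j\ne i} f(\|\xb_i-\xb_j\|^2)$ and summing the Taylor expansion over $j\ne i$ gives
\begin{equation*}
\mathbf{D}_x(i,i) = nf(2) + f'(2)\sum_{j\ne i}(\|\xb_i-\xb_j\|^2 - 2) + O(\log n),
\end{equation*}
where the $O(\log n)$ error absorbs both the constant $f(0)-f(2)$ and the summed Taylor remainder of size $(n-1)\cdot O(\log n/n)$. The first-order sum is at most $4(n-1)\sqrt{C_1\log p_1/p_1} = O(\sqrt{n\log n})$ by the triangle inequality alone --- no concentration step is needed, since we have already conditioned on $\Omega^c$. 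Dividing by $n$ yields $n^{-1}\mathbf{D}_x(i,i) = f(2) + O(\sqrt{\log n/n})$ uniformly in $i$, and taking reciprocals (valid since $f(2)>0$ by \eqref{eq_fassum}) gives $|n/\mathbf{D}_x(i,i) - f^{-1}(2)| = O(\sqrt{\log n/n})$. The matching estimate for $\mathbf{D}_y$ follows from the identical argument. There is no genuine obstacle in this lemma; the only point requiring care is verifying that the Taylor remainder is controlled by constants depending only on $f$ and a fixed neighborhood of $2$, which is what makes the estimate uniform in $i,j$.
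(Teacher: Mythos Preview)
Your proof is correct and follows essentially the same approach as the paper's: reduce to the maximum diagonal entry, Taylor-expand $f$ around $2$, bound the linear and quadratic contributions via the pointwise estimates defining $\Omega^c$ together with $p_1\asymp n$, and then take reciprocals using $f(2)>0$. Your parenthetical reading of $(n\mathbf D_x)^{-1}$ as $n\mathbf D_x^{-1}=(n^{-1}\mathbf D_x)^{-1}$ matches the paper's own proof and \eqref{eq_diagonacontrol}, and your obtained rate $\sqrt{\log n/n}$ agrees with what the paper actually derives (which is indeed dominated by the stated $\log n/\sqrt{n}$).
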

The proof of Lemma \ref{lem_diagonal} can be found in Appendix \ref{app:local}. By Proposition \ref{prop_norm},  when $n$ is sufficiently large, for some constant $C>0$, with probability at least $1-O(n^{-1/2})$, we have 
\begin{equation}
\norm{\widetilde{\mathbf{W}}_x} \leq C, \,\,  \norm{\widetilde{\mathbf{W}}_y} \leq C\,.
\end{equation}
Therefore, when conditional on $\Omega^c$, by Weyl's inequality, Lemma \ref{lem_diagonal} and \eqref{Equation Difference S1 and S2}, there exists some constant $C>0$ so that when $n$ is sufficiently large, we have
\begin{equation}\label{eq_2}
\left| \lambda_i(\mathbf{S}^{(1)}_{xy})-\lambda_i(\mathbf{S}^{(2)}_{xy}) \right| \leq C \frac{\log n}{\sqrt{n}}.
\end{equation} 
Hence, it suffices to study the eigenvalues of $\mathbf{S}^{(2)}_{xy}$. 
Recall (\ref{eq_matkotherform}) and define 
\begin{equation}
\breve{\Kb}_y:=-2f'(2) \Yb^\top \Yb+\varsigma \mathbf{I}\,,
\end{equation}
and define 
\begin{equation}\label{eq_s2}
 \mathbf{S}^{(3)}_{xy}=f^{-2}(2)\breve{\mathbf{K}}_x \breve{\Kb}_y\,, 
\end{equation}
which is clearly diagonalizable.
The following lemma says that the Stieltjes transforms of $\mathbf{S}^{(2)}_{xy}$ and $\mathbf{S}^{(3)}_{xy}$ are close.
\begin{lemma} \label{lem_2.4.2}
For any $z=E+ \mathrm{i}\eta \in \mathsf{S}(1/4,\tau)$ and a large  integer $p$, there exists some constant $C>0$ so that when $n$ is sufficiently large, we have
\begin{equation}
 \mathbb{E}\left|m_{\mathbf{S}^{(2)}_{xy}}(z)-m_{\mathbf{S}^{(3)}_{xy}}(z)\right|^p \leq  \left(\frac{C \log n}{\sqrt{n} \eta^2} \right)^p\,. 
\end{equation}
\end{lemma}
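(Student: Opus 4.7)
The strategy is to reduce the comparison to two symmetric matrices via similarity, then apply the standard Stieltjes--Frobenius estimate for symmetric matrices. Under hypothesis \eqref{eq_fassum}, $-2f'(2)>0$ and $\varsigma>0$, so $\breve{\mathbf{K}}_x=-2f'(2)\mathbf{X}^\top\mathbf{X}+\varsigma\mathbf{I}$ is deterministically positive definite with spectrum bounded below by $\varsigma$, and likewise for $\breve{\mathbf{K}}_y$. Combined with Proposition \ref{prop_norm} and the entrywise closeness of $\widetilde{\mathbf{W}}_x$ to $\breve{\mathbf{K}}_x$, I work on a high-probability event $\Omega_*$ on which all four matrices $\widetilde{\mathbf{W}}_x,\widetilde{\mathbf{W}}_y,\breve{\mathbf{K}}_x,\breve{\mathbf{K}}_y$ are positive definite with spectra contained in a fixed compact interval $[a,b]\subset(0,\infty)$. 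On $\Omega_*$, the product matrices $\mathbf{S}^{(2)}_{xy}$ and $\mathbf{S}^{(3)}_{xy}$ are similar, by conjugation with the left square roots, to the symmetric matrices
$$T^{(2)}:=f^{-2}(2)\widetilde{\mathbf{W}}_x^{1/2}\widetilde{\mathbf{W}}_y\widetilde{\mathbf{W}}_x^{1/2},\qquad T^{(3)}:=f^{-2}(2)\breve{\mathbf{K}}_x^{1/2}\breve{\mathbf{K}}_y\breve{\mathbf{K}}_x^{1/2},$$
so that $m_{\mathbf{S}^{(j)}_{xy}}(z)=m_{T^{(j)}}(z)$ for $j=2,3$.

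For the symmetric pair $(T^{(2)},T^{(3)})$, the resolvent identity combined with $\|(T^{(j)}-z)^{-1}\|_F\le\sqrt{n}/\eta$ and $\|(T^{(j)}-z)^{-1}\|_{\mathrm{op}}\le 1/\eta$ gives $|m_{T^{(2)}}(z)-m_{T^{(3)}}(z)|\le\|T^{(2)}-T^{(3)}\|_F/(\sqrt{n}\,\eta^2)$. A telescoping identity
$$T^{(2)}-T^{(3)}=f^{-2}(2)\bigl[(\widetilde{\mathbf{W}}_x^{1/2}-\breve{\mathbf{K}}_x^{1/2})\widetilde{\mathbf{W}}_y\widetilde{\mathbf{W}}_x^{1/2}+\breve{\mathbf{K}}_x^{1/2}\widetilde{\mathbf{W}}_y(\widetilde{\mathbf{W}}_x^{1/2}-\breve{\mathbf{K}}_x^{1/2})+\breve{\mathbf{K}}_x^{1/2}(\widetilde{\mathbf{W}}_y-\breve{\mathbf{K}}_y)\breve{\mathbf{K}}_x^{1/2}\bigr],$$
together with Lipschitz continuity of the matrix square root in Frobenius norm on $[a,b]$, reduces the problem to estimating $\|\widetilde{\mathbf{W}}_x-\breve{\mathbf{K}}_x\|_F$ and the analogous $y$-quantity.

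These Frobenius bounds can be established by the same entrywise Taylor-expansion scheme used in the proof of Lemma \ref{lem_localkernel}. Split $\widetilde{\mathbf{W}}_x-\breve{\mathbf{K}}_x=(\widetilde{\mathbf{W}}_x-\widetilde{\mathbf{K}}_x)+(\widetilde{\mathbf{K}}_x-\breve{\mathbf{K}}_x)$: the second summand is diagonal with entries $2f'(2)\psi(i)=O_\prec(n^{-1/2})$ by Bernstein's inequality (Lemma \ref{lem_bernstein}), contributing Frobenius norm $O_\prec(1)$; for the first, expanding $f$ around $2$ and applying the Chernoff bound (Lemma \ref{lem_chernoff}) to $\mathbf{x}_i^\top\mathbf{x}_j$ shows that the explicit subtraction of $\mathrm{Sh}_1,\mathrm{Sh}_2$ cancels the leading terms, leaving off-diagonal entries of size $O_\prec(n^{-1})$ and diagonal entries of size $O_\prec(n^{-1/2})$, again giving Frobenius norm $O_\prec(1)$. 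Substituting yields the pointwise bound on $\Omega_*$; the $L^p$ expectation bound then follows by absorbing the super-polynomially small event $\Omega_*^c$ using the deterministic estimate $|m_{T^{(j)}}(z)|\le 1/\eta$. The main obstacle is the non-Hermiticity of $\mathbf{S}^{(2)}_{xy}$ and $\mathbf{S}^{(3)}_{xy}$: for general non-Hermitian $M$ the bound $\|(M-z)^{-1}\|_{\mathrm{op}}\le 1/\eta$ fails. The similarity reduction to $T^{(j)}$ circumvents this, but depends crucially on the positivity of the left factors $\widetilde{\mathbf{W}}_x$ and $\breve{\mathbf{K}}_x$, which is secured on $\Omega_*$ precisely because \eqref{eq_fassum} forces $\varsigma>0$; this explains why the positivity condition in \eqref{eq_fassum} cannot be weakened within the present approach.
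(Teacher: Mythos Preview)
Your route is sound and genuinely different from the paper's. The paper never passes to the symmetric conjugates $T^{(j)}$; instead it invokes the Hoffman--Wielandt/Cauchy--Schwarz bound \eqref{eq_finalcontrolbound} directly on the pair $(\mathbf{S}^{(2)}_{xy},\mathbf{S}^{(3)}_{xy})$ and then estimates $\operatorname{tr}\bigl((\mathbf{S}^{(2)}_{xy}-\mathbf{S}^{(3)}_{xy})^{2}\bigr)$ via the decomposition $\mathbf{S}^{(2)}_{xy}-\mathbf{S}^{(3)}_{xy}=f^{-2}(2)\bigl(\mathcal{E}_x\widetilde{\mathbf{W}}_y+\breve{\mathbf{K}}_x\mathcal{E}_y\bigr)$, with $\mathcal{E}_x:=\widetilde{\mathbf{W}}_x-\breve{\mathbf{K}}_x$, together with trace inequalities such as $\operatorname{tr}(\mathcal{E}_x\widetilde{\mathbf{W}}_y\mathcal{E}_x\widetilde{\mathbf{W}}_y)\le\lambda_1(\widetilde{\mathbf{W}}_y)^{2}\operatorname{tr}(\mathcal{E}_x^{2})$. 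This bypasses the matrix square root and its Lipschitz estimate entirely, but the appeal to \eqref{eq_finalcontrolbound} (which rests on Hoffman--Wielandt for Hermitian matrices) is left informal for the non-Hermitian products; your explicit similarity reduction to $T^{(2)},T^{(3)}$ is precisely what legitimizes that step, so in this respect your argument is cleaner. Both routes ultimately rest on the same Frobenius control $\operatorname{tr}(\mathcal{E}_x^{2}),\operatorname{tr}(\mathcal{E}_y^{2})=O(\log n)$ on the concentration event.

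One point needs attention. You call $\Omega_*^{c}$ ``super-polynomially small'', but as you have defined $\Omega_*$ it includes the event that $\widetilde{\mathbf{W}}_x,\widetilde{\mathbf{W}}_y$ are positive definite, and the operator-norm closeness $\|\widetilde{\mathbf{W}}_x-\breve{\mathbf{K}}_x\|<\varsigma$ needed for this is supplied by Proposition~\ref{prop_norm} (equivalently Lemma~\ref{lem_matrixapproximation}) only with probability $1-O(n^{-1/2})$. Entrywise concentration alone, which does carry arbitrarily fast polynomial tails, yields at best $\|\mathcal{E}_x\|=O_{\prec}(1)$ through Gershgorin, not $o(1)$, so it does not by itself force positivity. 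Hence the $L^{p}$ estimate for arbitrarily large $p$ is not immediate from your event split as written. (The paper's argument has an analogous informality: it bounds $|m_{\mathbf{S}^{(2)}_{xy}}(z)|$ by $1/\eta$ on the bad Chernoff event via \eqref{eq_stitrivial}, which again presupposes real spectrum.) A clean fix within your framework is to upgrade the operator-norm bound in Lemma~\ref{lem_matrixapproximation} to probability $1-O(n^{-D})$ for arbitrary $D$; for Gaussian data this is accessible by replacing the Chebyshev step in the proof of \cite[Theorem~2.2]{elkaroui2010} with a high-moment Markov inequality, after which your argument goes through verbatim.
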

The proof of Lemma \ref{lem_2.4.2} is postponed to Appendix \ref{app:local}. Next, we state the local law for the product of random matrices, and its proof is postponed to Appendix \ref{app:bao}. 

\begin{lemma} \label{lem_2.4.3} 
For any $z=E+ \mathrm{i}\eta \in \mathsf{S}(1/4,\tau)$ and a large integer $p$, there exists some constant $C>0$ so that when $n$ is sufficiently large, we have
\begin{equation}
\mathbb{E}\left|m_{\mathbf{S}^{(3)}_{xy}}(z)-m_{\mathbf{Q}_{xy}}(z)\right|^p \leq \left(\frac{C \log n}{ \sqrt{n} \eta^2} \right)^p\,,
\end{equation}
where $\mathbf{Q}_{xy}$ is defined in \eqref{eq_defnd}. Furthermore, we have that
\begin{equation}
m_{\mathbf{Q}_{xy}}(z)=m_{\Sigma_x  \boxtimes \Sigma_y}(z)+O_{\prec}\Big(\frac{1}{n \eta}\Big). 
\end{equation}
\end{lemma}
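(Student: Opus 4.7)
The plan is to reduce $\mathbf{S}^{(3)}_{xy}$ to an instance of the Haar--multiplicative model by a similarity transform, estimate the Frobenius norm of the resulting perturbation using the Marchenko--Pastur eigenvalue rigidity, and then deduce the Stieltjes transform comparison via Hoffman--Wielandt.

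First I would spectrally decompose $\breve{\mathbf{K}}_x=\Ub_x D_x\Ub_x^\top$ and $\breve{\mathbf{K}}_y=\Ub_y D_y\Ub_y^\top$. Because the columns of $\Xb$ and $\Yb$ are isotropic Gaussians, the eigenvector matrices $\Ub_x,\Ub_y$ are independent Haar distributed on $\mathrm{O}(n)$, independent of $D_x,D_y$. Conjugating by $\Ub_y^\top$ (which preserves eigenvalues) shows that $\mathbf{S}^{(3)}_{xy}$ is similar to $f^{-2}(2)\,\Ub D_x\Ub^\top D_y$ with $\Ub:=\Ub_y^\top\Ub_x$ again Haar distributed and independent of $(D_x,D_y)$. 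Both $\mathbf{S}^{(3)}_{xy}$ and $\mathbf{Q}_{xy}$ have the same spectra as the \emph{symmetric} positive semidefinite matrices
\begin{equation*}
\tilde{\mathbf{S}}^{(3)}_{xy}:=f^{-2}(2)D_y^{1/2}\Ub D_x\Ub^\top D_y^{1/2},\qquad \tilde{\mathbf{Q}}_{xy}:=f^{-2}(2)\Sigma_y^{1/2}\Ub\Sigma_x\Ub^\top\Sigma_y^{1/2},
\end{equation*}
so it suffices to compare the Stieltjes transforms of this symmetric pair.

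Next I would bound $\|\tilde{\mathbf{S}}^{(3)}_{xy}-\tilde{\mathbf{Q}}_{xy}\|_F$. A telescoping decomposition produces three terms, each of the schematic shape $A\,\Ub M\Ub^\top B$ with one of the factors replaced by a small increment like $D_y^{1/2}-\Sigma_y^{1/2}$ or $D_x-\Sigma_x$. Since the $\Sigma$'s and $D$'s are $O(1)$ in operator norm, $\|AMB\|_F$ reduces to the Frobenius norm of the increment. By the MP rigidity (Lemma \ref{lem_rigidity}) together with $\breve{\mathbf{K}}_x=-2f'(2)\Xb^\top\Xb+\varsigma\Ib$, the diagonal increment $D_x-\Sigma_x$ satisfies $|(D_x)_{ii}-(\Sigma_x)_{ii}|\prec n^{-2/3}(\min(i,n-i+1))^{-1/3}$ (with the trivial zero contribution on the degenerate eigenvalues matching $\varsigma$); summing squares gives $\|D_x-\Sigma_x\|_F\prec n^{-1/2}$, and analogously for $D_y-\Sigma_y$. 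Hence $\|\tilde{\mathbf{S}}^{(3)}_{xy}-\tilde{\mathbf{Q}}_{xy}\|_F=O_\prec(n^{-1/2})$.

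Applying Hoffman--Wielandt to the symmetric pair and then using
\begin{equation*}
\bigl|m_{\tilde{\mathbf{S}}^{(3)}_{xy}}(z)-m_{\tilde{\mathbf{Q}}_{xy}}(z)\bigr|\leq\frac{1}{n\eta^2}\sum_i|\lambda_i(\tilde{\mathbf{S}}^{(3)}_{xy})-\lambda_i(\tilde{\mathbf{Q}}_{xy})|\leq\frac{1}{\sqrt{n}\,\eta^2}\|\tilde{\mathbf{S}}^{(3)}_{xy}-\tilde{\mathbf{Q}}_{xy}\|_F,
\end{equation*}
I would conclude the first bound in the lemma. To convert a ``with high probability'' estimate to the $p$-th moment bound, I would combine this estimate on the rigidity event with the deterministic bound $|m(z)|\leq 1/\eta$ (as in \eqref{eq_stitrivial}) on the complementary event, whose probability is $\leq n^{-D}$ for any $D>0$; the small $n^{\varepsilon}$ factor in the stochastic domination is absorbed into the $\log n$ factor in the statement. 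Finally, for the second assertion, $\mathbf{Q}_{xy}$ has the same spectrum as $\tilde{\mathbf{Q}}_{xy}$, which up to cyclic permutation is of the form $\Ab\Ub\Bb\Ub^\top$ with deterministic diagonal $\Ab,\Bb$; invoking the local law for the multiplicative free convolution model on $\mathsf{S}(1,\tau)\supset\mathsf{S}(1/4,\tau)$ (\cite{JHC, DJ}, cited in Section~\ref{sec:preliminary}) yields the claimed $O_{\prec}(1/(n\eta))$ approximation by $m_{\Sigma_x\boxtimes\Sigma_y}$.

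The main obstacle will be the passage from high-probability rigidity bounds to genuine $p$-th moment control uniformly on $\mathsf{S}(1/4,\tau)$: the rigidity estimates come with an $n^\varepsilon$ slack, and the $p$-th moment has to be controlled by patching the good event with the deterministic $1/\eta$ bound on the bad event, while ensuring the net loss is no worse than a $\log n$ factor. A secondary care point is that the MP-rigidity scale $n^{-2/3}\min(i,n-i+1)^{-1/3}$ is delicate near both the soft and hard edges, so the $\ell^2$-summation must be carried out carefully, particularly in the regime $n>p_1$ where the hard edge contributes zero eigenvalues that are exactly matched by the $\varsigma$-block of $\Sigma_x$.
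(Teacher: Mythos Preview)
Your proposal is correct and follows essentially the same route as the paper: spectrally decompose $\breve{\mathbf K}_x,\breve{\mathbf K}_y$, pass by similarity to the form $\Ub\Lambda_x\Ub^\top\Lambda_y$ with $\Ub=\Ub_y^\top\Ub_x$ Haar, replace $(\Lambda_x,\Lambda_y)$ by the deterministic $(\Sigma_x,\Sigma_y)$ on the MP-rigidity event, bound the resulting Frobenius perturbation, and apply the Hoffman--Wielandt/Cauchy--Schwarz Stieltjes comparison (the paper's (\ref{eq_finalcontrolbound})), finishing with the trivial bound \eqref{eq_stitrivial} on the complement.

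Two small comparative remarks. First, your explicit symmetrization to $D_y^{1/2}\Ub D_x\Ub^\top D_y^{1/2}$ is cleaner than the paper's sketch, which invokes ``a discussion similar to (\ref{eq_stilebound})'' on the non-Hermitian pair $\mathbf S^{(4)}_{xy},\mathbf S^{(5)}_{xy}$ without making the symmetrization explicit; your version makes the use of Hoffman--Wielandt legitimate. The paper instead replaces $\Lambda_y\to\Sigma_y$ and then $\Lambda_x\to\Sigma_x$ in two steps rather than your single telescoping, but this is cosmetic. Also note that the Frobenius bound you obtain is $O_\prec(n^{-1/2})$, which after the $\frac{1}{\sqrt n\,\eta^2}$ factor gives $O_\prec(n^{-1}\eta^{-2})$, comfortably inside the stated $(\log n)\,n^{-1/2}\eta^{-2}$; so your worry about trading $n^\varepsilon$ for $\log n$ is moot here.

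Second, for the final assertion you cite the multiplicative local law of \cite{JHC,DJ} directly. The paper does not quite do this: it formulates and proves Proposition~\ref{pro_key} (an adaptation of \cite{bao20171,JHC}) because the off-the-shelf edge result in \cite{DJ} is stated for Haar \emph{unitary} $\Ub$ and for ESDs at Levy distance $O(n^{-1})$ from their limits, whereas here $\Ub$ is Haar orthogonal and $\mathcal L(\mu_{\Sigma_x},\nu_x)+\mathcal L(\mu_{\Sigma_y},\nu_y)\le n^{-2/3+\epsilon}$. The paper therefore explicitly verifies Assumptions~\ref{assu_product1}--\ref{ass_product2} for $(\Sigma_x,\Sigma_y,\nu_x,\nu_y)$ before invoking Proposition~\ref{pro_key}. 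Your citation is morally right on $\mathsf S(1/4,\tau)$, but you should flag that these hypotheses need checking rather than appeal to the literature verbatim.
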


It clear that  from Lemmata \ref{lem_2.4.2} and \ref{lem_2.4.3}, the Stieltjes transforms of $\mathbf{S}^{(2)}_{xy}$ and $\mathbf{Q}_{xy}$ are close; that is, for any $z=E+\mathrm{i}\eta \in \mathsf{S}(1/4,\tau)$, when $n$ is sufficiently large
\begin{equation}\label{diff:S2xy and Qxy}
\mathbb{E}\left| m_{\mathbf{S}^{(2)}_{xy}}(z)-m_{\mathbf{Q}_{xy}}(z) \right|^p \leq \left(\frac{C \log n}{\sqrt{n} \eta^2} \right)^p,
\end{equation}
for some constant $C>0$.
With the above reduction, we follow the argument similar to that of Theorem \ref{lem_rigiditydistance} to conclude our proof.  We prove analogous results of Propositions \ref{prop_norm} and \ref{prop_largscaleset} for the free multiplicative convolution.  We summarize them as the following Proposition \ref{pro_ncca_main} and leave the proof to Appendix \ref{section proof of propositions prop_largscaleset}.

\begin{proposition}\label{pro_ncca_main} 
For any interval $I \subset \mathbb{R}$, with probability at least $1-O(n^{-1/2})$,  we have uniformly that
\begin{equation}\label{eq_fffbb}
\left| \mu_{\mathbf{S}^{(2)}_{xy}}(I)-\mu_{\Sigma_x \boxtimes \Sigma_y}(I) \right| \leq Cn^{-1/4+\epsilon}\,,
\end{equation}
where $\epsilon>0$ is a small constant and $C>0$.
Denote the right-most endpoint of the support of $\mu_{\Sigma_x \boxtimes \Sigma_y}$ by $\gamma_{+}$. We have with probability at least $1-O(n^{-1/2})$ 
\begin{equation} \label{eq_bound}
\left|\norm{\mathbf{S}^{(2)}_{xy}}-\gamma_{+} \right| \leq Cn^{-1/9}\,. 
\end{equation}
\end{proposition}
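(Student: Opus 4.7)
The plan is to mirror the strategy used for Propositions \ref{prop_norm} and \ref{prop_largscaleset}, with the averaged local law for $\mathbf{W}_x$ replaced by the combined bound derived from chaining Lemmata \ref{lem_2.4.2} and \ref{lem_2.4.3}, namely
\begin{equation*}
\mathbb{E}\bigl|m_{\mathbf{S}^{(2)}_{xy}}(z) - m_{\Sigma_x \boxtimes \Sigma_y}(z)\bigr|^p \leq \left(\frac{C\log n}{\sqrt{n}\,\eta^2}\right)^p
\end{equation*}
uniformly for $z=E+\mathrm{i}\eta \in \mathsf{S}(1/4,\tau)$. We also exploit the fact that $\mathbf{S}^{(2)}_{xy}$ differs from the diagonalizable matrix $\mathbf{S}^{(3)}_{xy}$ by a finite-rank and small-norm perturbation (by the same discussion as in \eqref{eq_weylkeybound}--\eqref{eq_2}), so Weyl-type arguments let us pass to real spectral data up to a negligible error.

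For the interval bound \eqref{eq_fffbb}, I would first upgrade the $p$-th moment control to a pointwise stochastic bound: by Markov's inequality with $p$ sufficiently large and a standard net argument on $\mathsf{S}(1/4,\tau)$, one obtains $|m_{\mathbf{S}^{(2)}_{xy}}(z) - m_{\Sigma_x\boxtimes\Sigma_y}(z)| = O_\prec(\log n/(\sqrt{n}\,\eta^2))$ uniformly in $z$. Then a Bai-type inequality (or a Helffer--Sjöstrand calculus estimate applied to the indicator $\chi_I$) converts this into a Kolmogorov-distance estimate: choosing $\eta = n^{-1/4+\tau}$ and integrating the Stieltjes difference along the contour $\{E+\mathrm{i}\eta : E \in [\tau,\tau^{-1}]\}$ yields $\sup_I |\mu_{\mathbf{S}^{(2)}_{xy}}(I) - \mu_{\Sigma_x\boxtimes\Sigma_y}(I)| \leq C n^{-1/4+\epsilon}$ with probability at least $1-O(n^{-1/2})$, matching Proposition \ref{prop_largscaleset} verbatim.

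For the edge bound \eqref{eq_bound}, I would combine two ingredients. First, the limit measure $\mu_{\Sigma_x\boxtimes\Sigma_y}$ exhibits the usual square-root decay $d\mu_{\Sigma_x\boxtimes\Sigma_y}/dx \sim \sqrt{\gamma_+-x}$ near its right edge $\gamma_+$; this follows from the regularity theory of free multiplicative convolution established in \cite{JHC} applied to the model $\mathbf{Q}_{xy}$, since $\Sigma_x, \Sigma_y$ are supported on a fixed compact interval with the right edges of $\nu_x,\nu_y$ themselves exhibiting square-root behavior inherited from the MP law. Second, setting $E_0 = \gamma_+ + n^{-1/9}$ and $\eta_0 = n^{-2/27+\delta}$ (for a small $\delta>0$), the imaginary part of $m_{\Sigma_x\boxtimes\Sigma_y}(E_0+\mathrm{i}\eta_0)$ is of order $\eta_0/\sqrt{n^{-1/9}} \sim n^{1/18}\eta_0$. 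The above pointwise bound gives $\mathrm{Im}\, m_{\mathbf{S}^{(2)}_{xy}}(E_0+\mathrm{i}\eta_0) \leq C n^{1/18}\eta_0 + o(1/(n\eta_0))$, so that the eigenvalue-counting estimate
\begin{equation*}
\#\{\lambda : |\lambda - E_0| \leq \eta_0\} \leq C\, n\,\eta_0 \,\mathrm{Im}\, m_{\mathbf{S}^{(2)}_{xy}}(E_0+\mathrm{i}\eta_0) \ll 1
\end{equation*}
forces no eigenvalue of $\mathbf{S}^{(2)}_{xy}$ (up to the negligible perturbation to $\mathbf{S}^{(3)}_{xy}$) to exceed $\gamma_+ + n^{-1/9}$. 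A matching lower bound on $\|\mathbf{S}^{(2)}_{xy}\|$ is obtained from the interval bound \eqref{eq_fffbb} by testing against an interval just below $\gamma_+$, where $\mu_{\Sigma_x\boxtimes\Sigma_y}$ has mass of order $n^{-1/6}\gg n^{-1/4+\epsilon}$.

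The main obstacle will be ingredient (i) for the edge bound: establishing the square-root decay of $\mu_{\Sigma_x\boxtimes\Sigma_y}$ near $\gamma_+$ when $\Sigma_x,\Sigma_y$ are $n$-dependent discrete measures that only asymptotically approximate shifted MP laws. One must verify that the regularity at the edge of the free convolution is preserved under this discretization, which requires checking that the quantiles $\gamma_{\nu_x}(j), \gamma_{\nu_y}(j)$ populate the supports of $\nu_x,\nu_y$ densely enough for the subordination functions governing $\boxtimes$ to inherit the square-root behavior from their arguments. A secondary technical point is that $\mathbf{S}^{(2)}_{xy}$ is not Hermitian, so passage from Stieltjes transform control to spectral statements must be routed through the diagonalizable approximation $\mathbf{S}^{(3)}_{xy}$ (whose spectrum is real and positive since $\breve{\mathbf{K}}_x,\breve{\mathbf{K}}_y$ are positive definite) and Weyl's inequality is then used to transfer the eigenvalue bounds, absorbing the $O_\prec(n^{-1/2})$ discrepancy into the $n^{-1/9}$ and $n^{-1/4+\epsilon}$ error terms.
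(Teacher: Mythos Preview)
Your overall strategy matches the paper's exactly: the paper's entire proof reads ``repeat the proofs of Propositions \ref{prop_norm} and \ref{prop_largscaleset} by using Lemma \ref{lem_addtivemeasurelemma}''. The obstacle you single out---square-root decay of $\mu_{\Sigma_x\boxtimes\Sigma_y}$ near $\gamma_+$ for the $n$-dependent discrete measures $\Sigma_x,\Sigma_y$---is precisely what Lemma \ref{lem_addtivemeasurelemma}(ii) supplies (it transfers the edge regularity of $\nu_x\boxtimes\nu_y$ to $\mu_{\Sigma_x}\boxtimes\mu_{\Sigma_y}$ under Assumptions \ref{assu_product1}--\ref{ass_product2}, verified at the end of the proof of Lemma \ref{lem_2.4.3}).

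One execution issue in your sketch of \eqref{eq_bound}: the direct eigenvalue-counting argument you propose does not close with the weak local-law error $\log n/(\sqrt n\,\eta^2)$. For your choice $\eta_0=n^{-2/27+\delta}$ one has $n\eta_0\cdot\log n/(\sqrt n\,\eta_0^2)=\sqrt n\log n/\eta_0\gg 1$, so the claimed bound $n\eta_0\,\mathrm{Im}\,m_{\mathbf S^{(2)}_{xy}}\ll 1$ fails; in fact no $\eta_0$ simultaneously makes both $n\eta_0^2/\sqrt\kappa$ and $\sqrt n\log n/\eta_0$ small. The paper's Proposition \ref{prop_norm} argument, which you say you will mirror, avoids this: it first obtains a crude a priori bound $\lambda_1\le\gamma_++\delta$ with $\delta=n^{-\varsigma}$ (for the multiplicative case this comes by transferring the strong $O_\prec(1/(n\eta))$ local law for $\mathbf Q_{xy}$ through the small-norm perturbations $\mathbf Q_{xy}\to\mathbf S^{(3)}_{xy}\to\mathbf S^{(2)}_{xy}$ via Weyl), and then runs a contradiction using a \emph{lower} bound on $\mathrm{Im}\,m$ contributed by the bulk eigenvalues, not a counting upper bound. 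If you follow that template literally---as you say you intend to---the proof goes through.
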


With Lemma \ref{lem_2.4.3}, Proposition \ref{pro_ncca_main} and the square root behavior of free multiplicative convolution stated in (ii) of Lemma \ref{lem_addtivemeasurelemma}, we can repeat the proofs heading from (\ref{eq_deff}) to (\ref{eq_comprae}) to show that for $1 \leq i \leq n/2$, there exist some constants $C,C_1>0$ and small fixed constants $\epsilon>0$ and $\vartheta>\epsilon$, { 
\begin{align}\label{Proof theorem 3.4 Qxy gamma difference}
\left| \lambda_i(\mathbf{Q}_{xy})-\frac{\gamma_{\Sigma_x \boxtimes \Sigma_y}(i)}{f^2(2)} \right|  \leq
\begin{cases}
\ C n^{-1/9+2\vartheta} &  1 \leq i \leq C_1 n^{5/6+3\vartheta/2} ; \\
\ C n^{1/12+\epsilon} i^{-1/3} &  C_1 n^{5/6+3\vartheta/2} \leq i \leq n/2 \,.
\end{cases}
\end{align}
}
We omit details here.  
To complete the proof for the NCCA matrix, we need the following rigidity estimate, which is an analog of \cite[Lemma 3.13]{bao20171}. We leave its proof to Appendix \ref{OS:Proof:Lemma5.9}.
\begin{lemma}\label{lem_proofrigidityest} 
For any fixed $L \in \mathbb{N}$, we have 
\begin{equation}
|\gamma_{\Sigma_x \boxtimes \Sigma_y}(i)-\gamma_{\nu_{xy}}(i)| \leq  n^{-4/9+\delta}, \ \text{for all} \ 3 \leq i \leq L\,, 
\end{equation}
where $\delta>0$ is a small constant. 
\end{lemma}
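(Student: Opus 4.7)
The plan is to view $\mu_{\Sigma_x}$ and $\mu_{\Sigma_y}$ as quantile discretizations of $\nu_x$ and $\nu_y$ and to propagate this closeness through the free multiplicative convolution to the right edge of $\nu_{xy} = \nu_x \boxtimes \nu_y$. The overall structure parallels the edge rigidity argument \cite[Lemma 3.13]{bao20171} in the additive free convolution setting, adapted here to $\boxtimes$.

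The first move is to make the discretization quantitative. By the very definitions \eqref{eq_lambdan} and \eqref{defn_sign}, the diagonal of $\Sigma_x$ consists of the typical locations $\gamma_{\nu_x}(i)$ of $\nu_x$, with the trailing $\varsigma$ entries in the case $n>p_1$ matching the atom of $\nu_x$ at the endpoint $\varsigma$; an analogous statement holds for $\Sigma_y$. The definition \eqref{eq_typical} of typical locations yields a Kolmogorov bound $\sup_{t\in\mathbb{R}} |F_{\mu_{\Sigma_x}}(t) - F_{\nu_x}(t)| \leq C/n$, and a standard integration-by-parts then gives
\[
|m_{\mu_{\Sigma_x}}(z) - m_{\nu_x}(z)| + |m_{\mu_{\Sigma_y}}(z) - m_{\nu_y}(z)| \leq \frac{C}{n \eta}
\]
for $z = E + \mathrm{i}\eta$ with $\eta > 0$.

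Next I would propagate these bounds through $\boxtimes$ using the subordination characterization of the free multiplicative convolution (cf.\ \cite[Section 2]{JHC}). Letting $(\omega_1,\omega_2)$ and $(\tilde\omega_1,\tilde\omega_2)$ denote the subordination pairs associated with $\nu_x \boxtimes \nu_y$ and $\mu_{\Sigma_x} \boxtimes \mu_{\Sigma_y}$ respectively, by adapting the stability analysis of \cite[Lemma 3.13]{bao20171, Bao2017cmp} from the additive to the multiplicative setting, and using the square-root edge behavior of $\nu_{xy}$ recorded in Lemma \ref{lem_addtivemeasurelemma}(ii), one argues that the linearized subordination system is uniformly invertible on the region $\{E + \mathrm{i}\eta : \eta \geq n^{-2/3 + \delta}\}$ near the upper edge. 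This transfers the previous bound to
\[
|m_{\mu_{\Sigma_x} \boxtimes \mu_{\Sigma_y}}(z) - m_{\nu_{xy}}(z)| \leq \frac{C}{n \eta}
\]
on the same region.

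Finally I would convert this into a quantile-level statement exactly as in the proof of Theorem \ref{lem_rigiditydistance}: since both measures share a common upper edge up to error $n^{-2/3+\delta}$ and exhibit a square-root density nearby, the $j$-th quantile of either, for fixed $j \in \{3,\dots,L\}$, lies within $\sim n^{-2/3}$ of the edge, and a Helffer--Sj\"ostrand comparison at spectral scale $\eta = n^{-2/3+\delta}$ converts the Stieltjes bound into the desired quantile displacement $|\gamma_{\nu_{xy}}(j) - \gamma_{\Sigma_x \boxtimes \Sigma_y}(j)| \leq n^{-4/9+\delta}$. The main obstacle will be the multiplicative analog of \cite[Lemma 3.13]{bao20171} used in the second step: one has to track the inverse of the linearized subordination map all the way down to the edge scale $\eta = n^{-2/3+\delta}$, where it degenerates, and this requires a multiplicative rewriting of the additive subordination stability rather than a direct invocation of it. Once this ingredient is in place, the remainder is essentially a repetition of the edge-rigidity argument already carried out for Theorem \ref{lem_rigiditydistance}.
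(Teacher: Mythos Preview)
Your approach would work, but it is considerably more elaborate than what the paper does. The paper bypasses the subordination stability analysis entirely by invoking the Bercovici--Voiculescu Levy distance inequality for free multiplicative convolution (item (vi) of Lemma \ref{lem_collecttion}):
\[
\mathcal{L}(\mu_{\Sigma_x} \boxtimes \mu_{\Sigma_y},\, \nu_x \boxtimes \nu_y) \;\leq\; \mathcal{L}(\mu_{\Sigma_x},\nu_x) + \mathcal{L}(\mu_{\Sigma_y},\nu_y) \;\leq\; n^{-2/3+\epsilon}.
\]
This immediately gives a CDF comparison $|\mu_{\Sigma_x}\boxtimes\mu_{\Sigma_y}((x,\infty)) - \nu_{xy}((x,\infty))| \leq C n^{-2/3+\epsilon}$ with no Stieltjes-level work at all. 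The proof then compares both quantiles to the right edge $\lambda_+$ of $\nu_{xy}$: for fixed $j \leq L$ one has $|\gamma_{\nu_{xy}}(j)-\lambda_+| \lesssim n^{-2/3}$ directly from the square-root edge behavior (Theorem \ref{lem_bao2}), while for $\gamma_{\Sigma_x\boxtimes\Sigma_y}(j)$ the CDF bound combined with the same square-root behavior gives $(\lambda_+ - \gamma_{\Sigma_x\boxtimes\Sigma_y}(j))^{3/2} \lesssim n^{-2/3+\epsilon}$, hence $|\gamma_{\Sigma_x\boxtimes\Sigma_y}(j) - \lambda_+| \lesssim n^{-4/9+\delta}$. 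The triangle inequality finishes.

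Your route via subordination stability and Helffer--Sj\"ostrand is essentially the machinery developed for the harder local law (Proposition \ref{pro_key} and Lemma \ref{lem_addtivemeasurelemma}); importing it here would recover the same $n^{-4/9}$ bound but at the cost of reproving or reusing that full stability analysis. The paper's shortcut exploits the fact that for a \emph{quantile} statement at the edge, Levy-distance continuity of $\boxtimes$ plus the square-root density is already enough---no linearized subordination system, no Stieltjes comparison, no spectral-scale argument is needed.
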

Therefore, together with (\ref{eq_1}), (\ref{eq_2}), {(\ref{diff:S2xy and Qxy})} and \eqref{Proof theorem 3.4 Qxy gamma difference}, we conclude that 
\begin{align} 
|\lambda_i(n^2 \mathbf{S})-\gamma_{\nu_{xy}}(i) |=&\,|\lambda_i(\mathbf{Q}_{xy})-\gamma_{\Sigma_x \boxtimes \Sigma_y}(i)| \nonumber\\
&+ |\gamma_{\nu_{xy}}(i)-\gamma_{\Sigma_x \boxtimes \Sigma_y}(i)| +O_{\prec}(n^{-1/2})\,,  \label{eq_3}
\end{align}
when $3 \leq i \leq n/2$;
by combining (\ref{eq_3}) and  Lemma \ref{lem_proofrigidityest} with $j=1$, we conclude our proof for the NCCA matrix.

To finish the proof, we prove the same result for the AD algorithm. Denote 
\begin{equation}
\Ab^{(1)}_{xy}:=(n\Db_x)^{-1} \widetilde{\Wb}_x (n \Db_y)^{-1} \widetilde{\Wb}_y\,.
\end{equation}
Then by a discussion similar to (\ref{eq_1}), we find that for $ i \geq 3$, \begin{equation}
\left| \lambda_i(n^2 \Ab_{xy})-\lambda_{i-2} (\Ab_{xy}^{(1)})  \right|=O_{\prec}\big(n^{-1/2}\big)\,. 
\end{equation}
Recall (\ref{eq_s2xy}). Note  
\begin{align}
\mathbf{A}^{(1)}_{xy}-\mathbf{S}^{(2)}_{xy}=&\, \left( (n \mathbf{D}_x)^{-1}-f^{-1}(2) \right) \widetilde{\mathbf{W}}_x (n \mathbf{D}_y)^{-1} \widetilde{\mathbf{W}}_y \nonumber \\
&+f^{-1}(2) \widetilde{\mathbf{W}}_x \left((n \mathbf{D}_y)^{-1} -f^{-1}(2)\right) \widetilde{\mathbf{W}}_y . \label{eq_addecom}
\end{align} 
Recall $\Omega$ in the proof of Theorem \ref{thm_edgevalue} satisfying (\ref{eq_largderivationbound}) so that $\Omega^c$ is a high probability event. By (\ref{eq_addecom}), Lemma \ref{lem_diagonal}, and a discussion similar to (\ref{eq_2}), when conditional on $\Omega^c$, for some constant $C>0$, we have
\begin{equation}
\left| \lambda_i(\Ab^{(1)}_{xy})-\lambda_i(\Sb_{xy}^{(2)}) \right| \leq C \frac{\log n}{\sqrt{n}} \,,
\end{equation}
when $n$ is sufficiently large. Then, the proof for the AD matrix follows from Proposition \ref{pro_ncca_main} and Lemma \ref{lem_proofrigidityest}. The proof of Theorem \ref{thm_edgevalue} is now completed.
\end{proof}

\section{Discussion}\label{sec:discussion}

We study the spectral properties of two kernel-based sensor fusion algorithms, including NCCA and AD under the null case in the high dimensional regime. The local laws of random matrices and the free probability framework are applied for this purpose.
Moreover, we provide the convergence rates of the eigenvalues of the affinity matrix commonly encountered in the machine learning society; for example, the spectral clustering algorithm or nonlinear dimensional reduction.
To the best of our knowledge, this is the first work applying the local laws of random matrices and the free probability framework to study nonlinear kernel-based machine learning algorithms. Since the sensor fusion problem gets more and more attention in scientific fields, we expect to have more opportunities to apply the free probability framework and local laws of random matrices in our future research. 

The current study opens a door to several interesting problems. 
First, the number of sensor fusion algorithms is growing, ranging from linear to nonlinear and two to multiple sensors, and it might not be reasonable to analyze each algorithm separately. For example, while the symmetric and anti-symmetric AD look similar to AD, the analysis in the current paper cannot be directly applied to study them. May we develop a unified framework to study those algorithms? 
Second, in the non-null case, that is, when the data is contaminated by noise, to the best of our knowledge, there is limited work studying even the single sensor case. Specifically, while we have some knowledge about how the kernel-based affinity matrix behaves when the dataset is noisy \cite{ElKaroui:2010a,ElKaroui_Wu:2016b}, a full understanding of the distribution of its eigenvalues and eigenvectors is still missing. A new analysis scheme to study the non-null case is in urgent demand.  
Third, recall that the AD matrix is a row stochastic matrix. As a product of two kernel-based affinity matrices, it is in general not diagonalizable. We can view it as a kernel-based affinity matrix with an asymmetric kernel. 
This viewpoint leads to a natural question -- if the kernel is not symmetric, how much can we say about the spectrum, or pseudospectrum, of the kernel-based affinity matrix? This problem is not unique to the sensor fusion problem. It appears more naturally in other algorithms, like the locally linear embedding (LLE) \cite{Wu_Wu:2019}. In LLE, the established ``affinity matrix'' is in general not symmetric due to the asymmetric data geometric structure. This naturally links LLE to an asymmetric kernel. See Figure 2.1 in \cite{Wu_Wu:2019} for an example of the spectrum of LLE under the null case.
Fourth, the whole argument in the current paper can be carried over to the subgaussian, or more general setup. However, as is discussed in Remark \ref{Remark:nonHaar}, to our knowledge, while people in general agree that the eigenvector of the Gram matrix asymptotically follow the Haar distribution, we cannot find a paper providing a precise proof of this statement. Once we establish this proof, the whole statement in this paper can be carried over directly.
Finally, there are several technical challenges we encountered in this work that deserve further explorations. Due to the nature of the entrywise perturbation argument when we study the boundedness of the affinity matrix norm, we can only achieve an $O_p$ statement. A different approach is needed if we want to establish a stochastic dominant statement. The spectral parameter set considered in the microscopic spectral analysis is $\mathsf{S}(1/4,\tau)$, which might not be optimal. This limitation comes from the Hoffman-Wielandt inequality employed to approximate the Stieltjer transform. A more efficient bound is needed in order to achieve the expected optimal set, $\mathsf{S}(1,\tau)$. 
We will explore these problems in future work.

\section*{Acknowledgments} 
The authors  would like to thank Hong Chang Ji  for helpful discussions on the local laws of free multiplicative convolution. X. Ding would like to thank the support and hospitality of CRM when he attended the second part of the workshop ``New Developments in Free Probability and Applications''. { The authors also want to thank the Associated Editor and two anonymous referees for their comments, which have improved the paper significantly. }

\bibliographystyle{abbrv}
\bibliography{ncca}

\appendix

	\setcounter{page}{1}
	\renewcommand{\thepage}{SI.\arabic{page}}
	\renewcommand{\thesection}{\Alph{section}}

\section{Preliminary results}\label{sec:app:pre}
In this section, we summarize some preliminary results that will be used in the proofs. First of all, we record the Bernstein's inequality for Gaussian random variables. It is used for controlling the norm of a Gaussian random vector in Appendix \ref{app:local}.   

\begin{lemma}(Bernstein's concentration inequality, \cite[Example 2.11]{mw2019high}) \label{lem_bernstein}
Let $x_1, \ldots, x_n$ be independent standard Gaussian random variables. Then for every $0<t<1$, we have 
\begin{equation}
\mathbb{P}\left\{ \Big|\frac{1}{n} \sum_{i=1}^n x^2_i-1 \Big| \geq t \right\} \leq  2\exp \left(-nt^2/8 \right).
\end{equation}
\end{lemma}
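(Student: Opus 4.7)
The plan is to establish the bound via the Chernoff method using the exponential moment of the chi-square distribution, and then to optimize the parameter.

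First I would apply Markov's inequality in exponential form. For the upper tail, write
\begin{equation*}
\mathbb{P}\!\left(\sum_{i=1}^n (x_i^2-1) \geq nt\right) \leq e^{-\lambda nt}\,\mathbb{E}\!\left[e^{\lambda\sum_{i=1}^n (x_i^2-1)}\right] = \left(e^{-\lambda}\mathbb{E}[e^{\lambda x_1^2}]\right)^n e^{-\lambda n t}
\end{equation*}
for any $\lambda>0$ for which the moment generating function exists. Since $x_1$ is standard Gaussian, a direct Gaussian integral gives $\mathbb{E}[e^{\lambda x_1^2}]=(1-2\lambda)^{-1/2}$ for $\lambda<1/2$. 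A parallel bound for the lower tail uses the MGF at $-\lambda$, namely $(1+2\lambda)^{-1/2}$.

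Next I would estimate the logarithm of the per-variable factor. Using the elementary inequality $-\tfrac12\log(1-2\lambda)-\lambda \leq 2\lambda^2$ valid on $\lambda\in[0,1/4]$ (which follows by Taylor-expanding $-\tfrac12\log(1-2\lambda) = \lambda+\lambda^2+\tfrac{4\lambda^3}{3}+\cdots$ and bounding the tail geometrically), we obtain
\begin{equation*}
\mathbb{P}\!\left(\sum_{i=1}^n (x_i^2-1) \geq nt\right) \leq \exp\!\left(n(2\lambda^2-\lambda t)\right).
\end{equation*}
The corresponding estimate for the lower tail is the same, since $-\tfrac12\log(1+2\lambda)+\lambda \leq 2\lambda^2$ on $[0,1/4]$ by a similar Taylor argument.

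Finally, I would optimize over $\lambda$ by minimizing $2\lambda^2-\lambda t$ in $\lambda$, which is minimized at $\lambda^\ast=t/4$ with optimal value $-t^2/8$. For $0<t<1$ this choice satisfies $\lambda^\ast\in(0,1/4)$, so the preceding Taylor estimates are valid. Combining the two one-sided bounds by the union bound yields
\begin{equation*}
\mathbb{P}\!\left(\Big|\tfrac{1}{n}\sum_{i=1}^n x_i^2 - 1\Big| \geq t\right) \leq 2\exp(-nt^2/8),
\end{equation*}
as claimed. There is no real obstacle here; the only care needed is to verify the Taylor-type inequality $|{-\tfrac12\log(1\mp 2\lambda)\mp\lambda}|\leq 2\lambda^2$ on a range of $\lambda$ large enough to accommodate the optimizer $\lambda^\ast=t/4$ for all $t\in(0,1)$, which is why the constant in the exponent is $1/8$ rather than the sharper $1/4$ attainable by more refined arguments.
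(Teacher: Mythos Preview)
Your proof is correct; the Chernoff argument with the chi-square MGF $(1-2\lambda)^{-1/2}$, the cumulant bound $-\tfrac12\log(1-2\lambda)-\lambda\le 2\lambda^2$ on $[0,1/4]$, and the choice $\lambda^\ast=t/4$ all go through as written, and the lower-tail estimate is even easier since $\lambda-\tfrac12\log(1+2\lambda)\le\lambda^2\le 2\lambda^2$ for all $\lambda\ge0$. The paper itself does not supply a proof of this lemma but simply records it with a citation to \cite[Example~2.11]{mw2019high}; the argument you give is precisely the standard derivation one finds in that reference, so there is nothing to compare.
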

Further, we need the following Chernoff bound to control $|\xb_i \xb_j^\top|, i \neq j$.
\begin{lemma} \label{lem_chernoff} 
For an i.i.d. sequence $\{\xb_i\}_{i=1}^n\subset \mathbb{R}^{p_1}$ satisfying 
\begin{equation} \label{eq_xyassum}
\mathbb{E} \xb_i= \mathbf{0}_{p_1},\, \text{Cov}(\xb_i) =p_1^{-1}\Ib_{p_1}
\end{equation}
and
\begin{equation}\label{eq_suggaussian}
\mathbb{E}(\exp(s\sqrt{p_1} x_{ij})) \leq \exp(s^2/2)\,,
\end{equation}
where $\xb_i=(x_{i1},\ldots,x_{ip_1})^\top$, we have for $i \neq j$ and $t>0$ that 
\begin{equation}\label{eq_offcontrol}
\mathbb{P}\big( |\xb_i^\top \xb_j|> t \big) \leq \exp(-p_1 t^2 /2)\,. 
\end{equation}
\end{lemma}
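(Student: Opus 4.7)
My plan is to prove this as a standard Chernoff-type tail bound, exploiting the independence of $\xb_i$ and $\xb_j$ (which holds because $i\neq j$ and the vectors are i.i.d.) and the sub-Gaussian moment generating function assumption \eqref{eq_suggaussian}. First, I would condition on $\xb_j$. Given $\xb_j$, the quantity
\[
\xb_i^\top \xb_j \;=\; \sum_{k=1}^{p_1} x_{jk}\, x_{ik}
\]
is a weighted sum of the independent random variables $x_{i1},\ldots,x_{ip_1}$ with weights $x_{j1},\ldots,x_{jp_1}$. The assumption \eqref{eq_suggaussian}, rewritten by substituting $s \mapsto t/\sqrt{p_1}$, says exactly that $\mathbb{E}[\exp(t x_{ik})] \leq \exp(t^2/(2p_1))$ for every $t\in\mathbb{R}$. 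Taking $t = s x_{jk}$ and multiplying over $k$ gives the conditional MGF bound
\[
\mathbb{E}\bigl[\exp(s\, \xb_i^\top \xb_j)\,\big|\,\xb_j\bigr] \;\leq\; \prod_{k=1}^{p_1}\exp\!\Bigl(\frac{s^2 x_{jk}^2}{2p_1}\Bigr) \;=\; \exp\!\Bigl(\frac{s^2\|\xb_j\|^2}{2p_1}\Bigr).
\]

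Next, I would apply the exponential Markov (Chernoff) inequality: optimizing in $s>0$ yields, conditionally on $\xb_j$,
\[
\mathbb{P}\bigl(\xb_i^\top \xb_j > t \,\big|\, \xb_j\bigr) \;\leq\; \exp\!\Bigl(-\frac{p_1 t^2}{2\|\xb_j\|^2}\Bigr),
\]
and the symmetric argument with $-\xb_i$ (which has the same law as $\xb_i$ under the sub-Gaussian and covariance assumptions) gives the two-sided tail. It remains to remove the dependence on $\|\xb_j\|^2$. Since the covariance assumption \eqref{eq_xyassum} forces $\mathbb{E}\|\xb_j\|^2 = 1$, and since $p_1\|\xb_j\|^2 = \sum_k (\sqrt{p_1}\,x_{jk})^2$ is a sum of independent sub-exponential random variables, a standard Bernstein concentration bound shows $\|\xb_j\|^2 \leq 1$ holds with exponentially high probability. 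I would then split
\[
\mathbb{P}\bigl(|\xb_i^\top \xb_j|>t\bigr) \;\leq\; \mathbb{P}\bigl(|\xb_i^\top \xb_j|>t,\ \|\xb_j\|^2\leq 1\bigr) \;+\; \mathbb{P}\bigl(\|\xb_j\|^2>1\bigr),
\]
bound the first term by $\exp(-p_1 t^2/2)$ via the conditional Chernoff estimate, and absorb the second (exponentially small) term into the stated bound.

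The main technical obstacle is handling $\|\xb_j\|^2$ cleanly so as to recover the clean constant $1/2$ in the exponent $\exp(-p_1t^2/2)$; the conditional Chernoff calculation itself is routine. A more refined route, if the norm control above is not sharp enough, is to run the Chernoff argument directly on $\mathbb{E}[\exp(s\xb_i^\top\xb_j)]$ (without conditioning) by bounding the unconditional MGF $\mathbb{E}\exp(s^2\|\xb_j\|^2/(2p_1))$ via the sub-exponential tail of $\|\xb_j\|^2$; in either case, the bound follows because on the typical event the conditional sub-Gaussian parameter is at most $1/p_1$, which is precisely what produces the exponent $p_1 t^2/2$.
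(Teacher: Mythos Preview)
Your conditional Chernoff step is fine, and it matches what the paper does as well: conditioning on one vector and using \eqref{eq_suggaussian} gives
\[
\mathbb{E}\bigl[\exp(s\,\xb_i^\top\xb_j)\mid \xb_j\bigr]\le \exp\!\Bigl(\tfrac{s^2\|\xb_j\|_2^2}{2p_1}\Bigr).
\]
The gap is in how you remove $\|\xb_j\|^2$. You claim that Bernstein concentration shows $\|\xb_j\|^2\le 1$ with exponentially high probability, but this is false: since $\mathbb{E}\|\xb_j\|^2=1$ is the \emph{mean}, not an upper bound, the event $\{\|\xb_j\|^2>1\}$ has probability of order $1$ (for Gaussian entries it is essentially $1/2$). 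Bernstein only gives $\|\xb_j\|^2\le 1+\delta$ with high probability, and splitting on that event would produce an exponent $p_1 t^2/(2(1+\delta))$ plus an additive error $\mathbb{P}(\|\xb_j\|^2>1+\delta)$, neither of which recovers the clean $\exp(-p_1 t^2/2)$.

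The paper instead follows what you call the ``more refined route'' but which is really the only viable route for the stated constant: it takes the full expectation and bounds the MGF of $\|\xb_i\|_2^2$ directly. Using the Gaussian integral identity $\exp(tp_1\|\xb_i\|_2^2/2)=(2\pi t)^{-p_1/2}\int e^{-\|\ab\|^2/(2t)}e^{\sqrt{p_1}\,\ab^\top\xb_i}\,d\ab$ together with \eqref{eq_subgaussianseconddefintion} yields $\mathbb{E}\exp(t p_1\|\xb_i\|_2^2/2)\le (1-t)^{-p_1/2}$ for $t\in(0,1)$; plugging this into the unconditional MGF of $\xb_i^\top\xb_j$ and optimizing (set $t=\epsilon$, use $\log(1-t)\le -t$) gives exactly $\exp(-p_1 t^2/2)$. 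You should replace your norm-truncation step with this MGF bound.
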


Note that in general a random variable satisfying \eqref{eq_xyassum} and \eqref{eq_suggaussian} is called sub-Gaussian. We call a random vector whose entries are sub-Gaussian random variables a sub-Gaussian random vector. If $\mathbf{x}_i\in \mathbb{R}^{p_1}$ is a sub-Gaussian random vector, we also mean that \cite{HKZ} 
\begin{equation}
\mathbb{E} (\exp( \sqrt{p}_1 \mathbf{a}^\top \mathbf{x}_i)) \leq \exp(\| \mathbf{a} \|_2^2/2)\,.\label{eq_subgaussianseconddefintion}
\end{equation}
A Gaussian random variable is clearly sub-Gaussian and a Gaussian random vector is also sub-Gaussian. For the sake of self-containedness, we provide the proof.

\begin{proof}[Proof of Lemma \ref{lem_chernoff}]
Note that \eqref{eq_suggaussian} means the entries of $\xb_i$ are sub-Gaussian random variables. First of all, for $t  \in (0,1)$, we have that 
\begin{align}
\mathbb{E}\Big( \exp\left(\frac{t p_1 \norm{\xb_i}_2^2}{2}\right)\Big) \nonumber 
&=\frac{1}{(2 \pi t)^{p_1/2}} \int_{\mathbb{R}^{p_1}} \exp(-\norm{\ab}_2^2/2t) \mathbb{E} (\exp(\sqrt{p}_1\ab^\top \xb_i)) d \ab\nonumber \\
& \leq \frac{1}{(2 \pi t)^{p_1/2}} \int_{\mathbb{R}^{p_1}} \exp(-\norm{\ab}_2^2/2t+\norm{\ab}_2^2/2) d \ab\label{eq_preboundXXX} \\
&=\frac{1}{(2 \pi(1-t))^{p_1/2}}\,,\nonumber 
\end{align}
where the first identity comes from $$(2\pi t)^{-p_1/2}\int_{\mathbb{R}^{p_1}}e^{-\|\ab\|^2_2/(2t)+\sqrt{p_1}\ab^\top \xb_i-tp_1\|\xb_i\|_2^2/2}d\ab=1.$$
By (\ref{eq_subgaussianseconddefintion}), we find that 
\begin{equation}\label{eq_bbb222XXX}
\mathbb{E}[\exp(t p_1 \xb_i^\top \xb_j)\vert \mathbf{x}_i ] \leq \exp(t^2 p_1 \| \mathbf{x}_i \| /2 ).
\end{equation}
As a consequence, we find that
\begin{align}
\mathbb{E}[\exp(t p_1 |\xb_i^\top \xb_j|)] & \leq \mathbb{E}[\exp(t^2 p_1 \norm{\xb_i}_2^2)] \nonumber\\
& \leq \frac{1}{(2 \pi (1-t^2))^{p_1/2}}\,,\nonumber
\end{align} 
where for the first inequality we use the independence of $\xb_i$ and $\xb_j$ when $i \neq j$ and conditional expectation in light of (\ref{eq_bbb222XXX}) and the second inequality follows from (\ref{eq_preboundXXX}).
Therefore, together with Markov's inequality, for $\epsilon>0$  and $t\in(0,1)$, we have that
\begin{align}
\mathbb{P} \Big( |\xb_i^\top \xb_j|>\epsilon \Big) &=\mathbb{P} \Big(\exp(tp_1  |\xb_i^\top \xb_j|)>\exp(t p_1 \epsilon) \Big) \nonumber\\
& \leq \min_{t \in (0,1)}    \frac{\exp(-t p_1 \epsilon)}{(1-t^2)^{p_1/2}}  \,.\nonumber
\end{align}
Observe that
\begin{align}
\frac{1}{e^{t p_1 \epsilon}} \Big( \frac{1}{\sqrt{1-t^2}} \Big)^{p_1}&=\exp(-tp_1 \epsilon-[p_1 \log (1-t^2)]/2) \nonumber\\
&\leq \exp(-t p_1\epsilon+p_1 t^2/2)\,,\nonumber
\end{align} 
where we use the fact that 
\begin{equation*} 
\log (1-t) \leq -t 
\end{equation*}
when $t \in (0,1)$.
By setting $t=\epsilon$,  we have 
\begin{equation*}
\min_{t \in (0,1)} \exp(-t p_1 \epsilon) \Big(\frac{1}{\sqrt{1-t^2}} \Big)^{p_1} \leq \exp(-p_1 \epsilon^2/2).
\end{equation*}
This finishes our proof. 
\end{proof}

Next, we summarize the {\em averaged local law} for sample Gram matrices as described in Section \ref{sec:preliminary}. Let $\Sigma \in \mathbb{R}^{p \times p}$ be a positive definite matrix with eigenvalues $\lambda_1 \geq \ldots \geq \lambda_p$. We assume that for some (small) constant $\tau>0$, 
\begin{equation}
\lambda_1 \leq \tau^{-1}\,\mbox{  and  }\,\mu_\Sigma([0,\tau]) \leq 1- \tau\,.
\end{equation}
This means that the spectrum of $\Sigma$ is not concentrated in $0$. 
It is known \cite[Lemmata 2.4 and 2.5]{Knowles2017} that for the $g$ defined in (\ref{eq_defnf}), there exist $2q$ critical points for some positive integer $q$; that is, there exist $x_1 \geq \ldots \geq x_{2q}$ so that $g'(x_i)=0$ for $i=1,\ldots,2q$. Set $a_i:=g(x_i)$ for $i=1,\ldots,2q$. We say that there exist \emph{$q$ bulk components} for $\Sigma$ so that the $i$-th bulk is supported on $ [a_{2i}, a_{2i-1}]$. Next, we introduce the key assumptions which make the local law valid. 
\begin{assumption} [Regularity condition, Definition 2.7 of \cite{Knowles2017}] \label{assu_regularity} 
Fix $\tau>0$. 
\begin{enumerate}
\item[(i)] We say that the edge $a_k$, $k=1,2,\ldots,2q$, is regular if 
\begin{equation}
a_{k} \geq \tau, \,\, \min_{l \neq k} |a_k-a_l| \geq \tau, \,\, \min_i|x_k+\lambda_i^{-1}| \geq \tau.
\end{equation}
\item[(ii)] We say that the $k$-th bulk component, $k=1,\ldots, q$, is regular if for any fixed $\tau'>0$ there exists a constant $c \equiv c_{\tau, \tau'}$, such that the asymptotical probability density associated with $m_{(\Sigma_{p_1}^{1/2} \Xb)^\top\Sigma_{p_1}^{1/2}\Xb}(z)$ in $[a_{2k}+\tau', a_{2k-1}-\tau']$ is bounded from below by $c$.
\end{enumerate}
\end{assumption}
Recall the spectral parameter set $\mathsf{S}(\alpha, \tau)$ defined in (\ref{eq_sprime}).
\begin{lemma}[Averaged local laws of sample covariance matrices \cite{Knowles2017}] \label{lem_locallaw} 
Suppose Assumption \ref{assu_regularity} holds true. Then
\begin{equation}
m_{(\Sigma_{p_1}^{1/2} \Xb)^\top\Sigma_{p_1}^{1/2}\Xb}(z)=m_{\mu_{c_1,\texttt{gMP}}}(z)+O_{\prec}\Big(\frac{1}{n \eta}\Big)\,,
\end{equation}
where $z \in \mathsf{S}(1,\tau)$.
\end{lemma}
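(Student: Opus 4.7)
The plan is to apply the standard resolvent/self-consistent equation method of modern random matrix theory, which was carried out in the generality needed here in \cite{Knowles2017}. Since the lemma is quoted from that reference, the cleanest execution in the paper is to verify that our setup matches theirs (Gaussian $\Xb$, population covariance $\Sigma_{p_1}$ satisfying Assumption \ref{assu_regularity}, and the bounded aspect-ratio condition \eqref{eq_boundc1c2}) and invoke their averaged local law theorem directly. I would identify our $(\Sigma_{p_1}^{1/2}\Xb)^\top\Sigma_{p_1}^{1/2}\Xb$ as a sample covariance model of the form $Y^\top Y$ with $Y=\Sigma_{p_1}^{1/2}\Xb$, and note that our regularity conditions on edges and bulks are precisely a rewording of their Definition 2.7.

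To prove the result directly, I would proceed in four steps. First, introduce the Green function $G(z)=((\Sigma_{p_1}^{1/2}\Xb)^\top\Sigma_{p_1}^{1/2}\Xb - z)^{-1}$ and express its diagonal entries $G_{ii}(z)$ via the Schur complement identity in terms of a quadratic form $\xb_i^\top\Sigma_{p_1}^{1/2} G^{(i)} \Sigma_{p_1}^{1/2}\xb_i$, where $G^{(i)}$ is the Green function with the $i$-th column removed. Second, use large-deviation estimates for Gaussian quadratic forms (in the spirit of Lemmata \ref{lem_bernstein}--\ref{lem_chernoff}) to replace this quadratic form by its conditional mean $n^{-1}\mathrm{tr}(\Sigma_{p_1} G^{(i)})$ up to a fluctuation of size $O_{\prec}((n\eta)^{-1/2})$. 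Combining with the Schur identity and summing over $i$ would produce an approximate self-consistent equation of the form $g(m(z))=z+\mathcal{E}(z)$, with $g$ given by \eqref{eq_defnf}.

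Third, I would invoke the stability of the self-consistent equation $g(m)=z$ near its solution $m_{\mu_{c_1,\texttt{gMP}}}(z)$: in the bulk $|g'|$ is bounded away from zero by Assumption \ref{assu_regularity}(ii), while at regular edges $g'$ vanishes like a square root, so perturbations $\mathcal{E}(z)$ translate into controllable perturbations of the solution. Fourth, upgrade the entrywise estimate $(n\eta)^{-1/2}$ to the optimal averaged estimate $(n\eta)^{-1}$ using a fluctuation-averaging lemma that exploits cancellations in the sum over $i$ defining the normalized trace.

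The main obstacle is the edge analysis, since near the endpoints $a_k=g(x_k)$ of each bulk component the stability of the self-consistent equation degenerates. Controlling this degeneration requires the part of Assumption \ref{assu_regularity}(i) that separates critical points of $g$ from the singularities $-\lambda_i^{-1}$, and is what restricts the estimate to $z\in\mathsf{S}(1,\tau)$. A continuity/bootstrap argument in $\eta$, starting from large $\eta$ where the trivial bound \eqref{eq_stitrivial} applies and descending progressively toward $\eta\geq n^{-1+\tau}$, is needed to close the estimate; this is technically delicate and is the reason I would lean on the reference rather than rederive everything.
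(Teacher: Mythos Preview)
Your proposal is correct and matches the paper's treatment: the paper does not give its own proof of this lemma but simply quotes it from \cite{Knowles2017} as a known result in the appendix of preliminary lemmata. Your additional sketch of a direct proof via Schur complement, large-deviation bounds, stability of the self-consistent equation, and fluctuation averaging is the standard route and is accurate in outline, but it goes beyond what the paper itself provides.
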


\begin{remark} \label{rem:localawlarge} In the proofs of the present paper, we will mainly consider the following two different diagonal matrices:
\begin{equation}\label{exam_sigma0}
\Sigma:= \sigma^2 \mathbf{I}_p
\end{equation}
for some $\sigma>0$, or
\begin{equation}\label{exam_sigma}
\Sigma:=\Sigma_x \ \text{or} \ \Sigma_y\,,
\end{equation}
where $\Sigma_x$ and $\Sigma_y$ are defined in (\ref{eq_lambdan}) and (\ref{defn_sign}) respectively. 
For these two cases, it is easy to check that $(i)$ and $(ii)$ of Assumption \ref{assu_regularity} are satisfied with $q=1$. See, for instance, \cite[Example 2.9]{Knowles2017}. Furthermore, we are indeed dealing with a sample Gram matrix with an isotropic shift and it is clear that the averaged local law still holds true for such a shifted matrix. 
\end{remark}
The next result states that the eigenvalues of a sample Gram matrix are close to the quantiles of the generalized MP law in the setting of (\ref{exam_sigma}). The main idea is to control the randomness of the sample Gram matrix by the quantiles of the generalized MP law, which are deterministic quantities.

\begin{lemma}(Rigidity of eigenvalues, \cite[Theorems 3.14 and 3.15]{Knowles2017}) \label{lem_rigidity} 
In the settings of \eqref{exam_sigma0} or \eqref{exam_sigma}, denote the eigenvalues of $(\Sigma \Xb)^\top \Sigma\Xb $ as $\lambda_1 \geq \ldots \geq \lambda_n$, where $\Xb\in \mathbb{R}^{p\times n}$. 
Then we have for $1 \leq j \leq \min\{p,n\}$,
\begin{equation*}
|\lambda_j-\gamma_{\mu_{c,\texttt{gMP}}}(j)| =O_{\prec} (\tilde{j}^{-1/3} n^{-2/3})\,,
\end{equation*}
where $c=p/n$ and $\tilde{j}=\min\{n \wedge p+1-j,j\}$. 
\end{lemma}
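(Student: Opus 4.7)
The plan is to derive the rigidity estimate as a consequence of the averaged local law (Lemma \ref{lem_locallaw}). As already noted in Remark \ref{rem:localawlarge}, in both settings \eqref{exam_sigma0} and \eqref{exam_sigma} the population covariance satisfies Assumption \ref{assu_regularity} with a single regular bulk component, so Lemma \ref{lem_locallaw} applies and gives
\begin{equation*}
m_n(z) := m_{(\Sigma\Xb)^\top \Sigma\Xb}(z) = m_\infty(z) + O_\prec\!\Big(\tfrac{1}{n\eta}\Big), \qquad z \in \mathsf{S}(1,\tau),
\end{equation*}
where $m_\infty := m_{\mu_{c,\texttt{gMP}}}$. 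The strategy is the standard two-step passage from a Stieltjes-transform comparison to an eigenvalue-rigidity estimate: first bounding the ESD counting function, then inverting via the square-root edge and bulk density bounds.

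First I would convert the averaged local law into a bound on the counting function $\mathcal{N}(E) := \#\{i : \lambda_i \leq E\}$. Using a Helffer--Sjöstrand-type argument (choosing a smooth cutoff on scale $\eta_0 := n^{-1+\tau}$ and integrating $m_n - m_\infty$ against it), one obtains
\begin{equation*}
\big| \mathcal{N}(E) - n F(E)\big| \prec 1, \qquad \tau \leq E \leq \tau^{-1},
\end{equation*}
where $F(E) := \mu_{c,\texttt{gMP}}((-\infty, E])$. This is the pointwise counting bound that one gets whenever the local law holds down to scale $n^{-1+\tau}$.

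Next I would translate this counting bound into the pointwise rigidity estimate. Recall the typical location $\gamma_j := \gamma_{\mu_{c,\texttt{gMP}}}(j)$ satisfies $F(\gamma_j) = (j-1/2)/n$, and by definition $\mathcal{N}(\lambda_j) = j$. Subtracting gives $|F(\lambda_j) - F(\gamma_j)| = O_\prec(1/n)$. I would then invert via the mean-value theorem, $|\lambda_j - \gamma_j| \leq |F(\lambda_j) - F(\gamma_j)| / \inf_\xi F'(\xi)$, where $\xi$ lies between $\lambda_j$ and $\gamma_j$. The density $F'$ has the square-root edge behavior $F'(\lambda) \sim (\gamma_+ - \lambda)^{1/2}$ near the upper edge (and symmetrically near the lower edge), and by \eqref{eq_fbound1}-type computations one gets $\gamma_+ - \gamma_j \sim (j/n)^{2/3}$, hence $F'(\gamma_j) \sim \tilde{j}^{1/3} n^{-1/3}$ with $\tilde{j} = \min\{j, n\wedge p + 1 - j\}$. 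Plugging this into the MVT yields $|\lambda_j - \gamma_j| = O_\prec(\tilde{j}^{-1/3} n^{-2/3})$, which is the claimed rate.

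The main obstacle will be the rigorous verification of the two substeps near the edges. Upgrading the trivial Markov-type estimate on $|\mathcal{N}(E) - nF(E)|$ to be uniform in $E$ (and truly of order $n^{\epsilon}$, not $n^\epsilon/\eta$) requires a careful Helffer--Sjöstrand contour with the cutoff tuned to $\eta_0 = n^{-1+\tau}$ and the estimate $\mathrm{Im}\, m_\infty(E+i\eta) \lesssim \sqrt{\eta}$ near the edge to absorb the singularity of $\partial_\eta$; this is essentially the content of the proof in \cite{Knowles2017} that we are appealing to. The second delicate point is controlling $F'$ uniformly along the segment from $\lambda_j$ to $\gamma_j$: one needs a preliminary (weaker) eigenvalue-location bound from the counting estimate to show $\gamma_+ - \xi \sim \gamma_+ - \gamma_j$, after which the MVT can be closed. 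Both refinements are precisely what make Theorems 3.14--3.15 of \cite{Knowles2017} go through; since the present paper only invokes Lemma \ref{lem_rigidity} as a black box, no further work beyond citing it is needed here.
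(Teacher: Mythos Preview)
Your proposal is correct, and you have already identified the key point in your final paragraph: the paper does not prove Lemma \ref{lem_rigidity} at all but simply imports it as a black box from \cite[Theorems 3.14 and 3.15]{Knowles2017}. Your sketch of the local-law $\Rightarrow$ counting-function $\Rightarrow$ rigidity argument is the standard route taken in \cite{Knowles2017}, so there is nothing to compare---the paper's ``proof'' is the citation.
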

Note that the results hold true immediately for the sample Gram matrix with an isotropic shift. In the next lemma, we summarize some relevant properties of the MP law, which have been stated in several random matrix literature; for instance, see \cite[equations (1.9) and (3.32)]{ding_2019}. %

\begin{lemma}[Properties of the Marchenko-Pastur law] \label{lem_mplaw} Denote the right-most edge for the MP law as $\lambda_+$. 
Take $z \in \{z=E+\mathrm{i} \eta: |E-\lambda_+| \leq \tau,  \ n^{-1+\tau}\leq  \eta \leq \tau^{-1}\}$, where $\tau>0$ is a small constant defined in $\mathsf{S}(\alpha, \tau)$. Denote $\kappa=|E-\lambda_+|$. We have 
\begin{equation*}
\operatorname{Im}m_{\mu_{c_1,\sigma^2}}(z) \sim
\left\{\begin{array}{ll}
\sqrt{\kappa+\eta}& \ \text{if} \ E \leq \lambda_+, \\
\frac{\eta}{\sqrt{\kappa+\eta}}& \ \text{if} \ E \geq \lambda_+\,.
\end{array}\right.
\end{equation*} 
Denote the density function associated with $\mu_{c_1,\sigma^2}$ as $\rho_{c_1,\sigma^2}$. Then, for some small constant $\delta>0$, we have that for all $x \in [\lambda_+-\delta, \lambda_+]$,
\begin{equation}\label{eq)squareroot}
\rho_{c_1,\sigma^2}(x) \sim \sqrt{\lambda_+-x}. 
\end{equation} 
\end{lemma}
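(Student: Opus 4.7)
\textbf{Proof plan for Lemma \ref{lem_mplaw}.}

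Both claims are local statements about the Marchenko--Pastur measure near its right edge $\lambda_+$, so the plan is to reduce everything to an explicit local expansion of $m_{\mu_{c_1,\sigma^2}}$ in powers of $\sqrt{z-\lambda_+}$, with the branch chosen so that $\text{Im}\sqrt{z-\lambda_+}>0$ whenever $\eta=\text{Im}\,z>0$. Once such an expansion is in hand, statements (a) and (b) follow by inspection.

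First, I would dispose of the density assertion \eqref{eq)squareroot}. Starting from the explicit Marchenko--Pastur density
\[
\rho_{c_1,\sigma^2}(x)=\frac{1}{2\pi\sigma^2 c_1 x}\sqrt{(\lambda_+-x)(x-\lambda_-)}\,,
\]
valid on $[\lambda_-,\lambda_+]$, I observe that for $x\in[\lambda_+-\delta,\lambda_+]$ with $\delta$ small enough, both the prefactor $1/(2\pi\sigma^2 c_1 x)$ and the factor $\sqrt{x-\lambda_-}$ are bounded above and below by positive constants depending only on $c_1,\sigma^2,\tau$. Hence $\rho_{c_1,\sigma^2}(x)\sim\sqrt{\lambda_+-x}$, which is exactly \eqref{eq)squareroot}.

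Next, for the Stieltjes transform, I would use the self-consistent equation recorded in the paper right after \eqref{eq_mp}, namely
\[
m(z)+\bigl(z-(1-c_1^{-1})+zc_1^{-1}m(z)\bigr)^{-1}=0,
\]
which is a quadratic in $m(z)$ and yields $m(z)=A(z)+B(z)\sqrt{(z-\lambda_-)(z-\lambda_+)}$ with $A,B$ analytic and nonvanishing in a neighbourhood of $\lambda_+$, and with the branch of the square root fixed by the Herglotz property. Expanding near $\lambda_+$ and using that $\lambda_+-\lambda_-$ is a positive constant, I would obtain the local expansion
\[
m_{\mu_{c_1,\sigma^2}}(z)=m_{\mu_{c_1,\sigma^2}}(\lambda_+)+c_*\sqrt{z-\lambda_+}+O(|z-\lambda_+|),
\]
for some nonzero constant $c_*$, with $\text{Im}\sqrt{z-\lambda_+}>0$ for $z\in\mathbb{C}^+$. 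Alternatively, the same expansion can be derived by a direct local analysis of $\int\rho_{c_1,\sigma^2}(x)/(x-z)\,dx$ using the square-root behaviour of $\rho_{c_1,\sigma^2}$ just established, substituting $x=\lambda_+-u$ and extracting the singular part of $\int_0^\delta \sqrt{u}/(u+(z-\lambda_+))\,du$.

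Finally, writing $z-\lambda_+=-\kappa+i\eta$ in the in-spectrum regime $E\le\lambda_+$ and $z-\lambda_+=\kappa+i\eta$ in the outside regime $E\ge\lambda_+$, and taking the imaginary part of the expansion, I get respectively
\[
\text{Im}\,m_{\mu_{c_1,\sigma^2}}(z)\sim\text{Im}\sqrt{-\kappa+i\eta}\sim\sqrt{\kappa+\eta}
\]
and
\[
\text{Im}\,m_{\mu_{c_1,\sigma^2}}(z)\sim\text{Im}\sqrt{\kappa+i\eta}\sim\frac{\eta}{\sqrt{\kappa+\eta}},
\]
which are precisely the two asserted estimates. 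The only genuinely substantive step is identifying the correct branch and sign of $c_*$, which I would pin down by matching to $\text{Im}\,m(E+i0^+)=\pi\rho_{c_1,\sigma^2}(E)$ on $E<\lambda_+$ via the Stieltjes inversion formula and the density expansion above; the remaining work is routine asymptotic bookkeeping of $\sqrt{\pm\kappa+i\eta}$ in the two regimes $\eta\lesssim\kappa$ and $\eta\gtrsim\kappa$. I do not anticipate any real obstacle, since the lemma only summarises classical edge behaviour of the MP law.
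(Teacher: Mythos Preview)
The paper does not actually prove Lemma~\ref{lem_mplaw}; it is recorded in the appendix as a known fact with a citation to \cite{ding_2019} (``equations (1.9) and (3.32)'') and no argument is given. Your outline is a correct and standard way to establish these estimates: the density claim is immediate from the explicit MP density, and the behaviour of $\operatorname{Im} m_{\mu_{c_1,\sigma^2}}$ follows from the closed-form solution of the self-consistent quadratic, whose square-root branch point at $\lambda_+$ produces exactly the two regimes after splitting $z-\lambda_+=\pm\kappa+\mathrm{i}\eta$. One small point worth tightening: the Taylor expansion $m(\lambda_+)+c_*\sqrt{z-\lambda_+}+O(|z-\lambda_+|)$ is only useful when $|z-\lambda_+|$ is small, whereas the stated domain allows $\eta$ up to $\tau^{-1}$; to cover the full range uniformly you should work directly with the exact expression $A(z)+B(z)\sqrt{(z-\lambda_-)(z-\lambda_+)}$ rather than its truncation, but this is a cosmetic adjustment and the mechanism is the same.
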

In the literature, \eqref{eq)squareroot} is usually referred to the {\em square root behavior near the edge}.
Again, the above results hold true for isotropic shifted sample Gram matrix with a slight modification. We next summarize the results for the functional calculus, which is used to prove the rigidity of eigenvalues.

\begin{lemma} [Helffer-Sj{\H o}strand formula, Appendix C of \cite{locallawnote}]\label{lem_funcal} Let $n \in \mathbb{N}$ and $h \in \mathcal{C}^{(n+1)}(\mathbb{R})$. Define the {\em almost analytic extension of $h$ of degree $n$} through 
\begin{equation}
\tilde{h}_n(x+\mathrm{i}y):=\sum_{k=0}^n \frac{1}{k!} (\mathrm{i} y)^k h^{(k)}(x)\,,
\end{equation}
where $x,y\in \mathbb{R}$ and $h^{(k)}$ means the $k$-th derivative of $h$.
Let $\chi \in \mathcal{C}_c^{\infty}(\mathbb{C}, [0,1])$ be a smooth cutoff function. Then for any $\lambda \in \mathbb{R}$ satisfying $\chi(\lambda)=1$ we have 
\begin{equation}
h(\lambda)=\frac{1}{\pi} \int_{\mathbb{C}} \frac{\bar{\partial} (\tilde{h}_n(z) \chi(z))}{\lambda-z} d z\,,
\end{equation}
where $d z$ denotes the Lebesgue measure on $\mathbb{C}$ and $
\bar{\partial}:=\frac{1}{2}(\partial_x+\mathrm{i} \partial_y)$ is the antiholomorphic derivative. Specifically, for a Hermitian matrix $\mathbf H$, we have 
\begin{equation}\label{rem:d4}
h(\mathbf H)=\frac{1}{\pi} \int_{\mathbb{C}} \bar{\partial}(\tilde{h}_n(z) \chi(z)) G_{\mathbf H}(z) d z,
\end{equation}
provided $\chi$ is chosen such that $\chi=1$ on the spectrum of $\mathbf H$. 
\end{lemma}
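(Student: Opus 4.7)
The plan is to establish the scalar identity first and then lift it to matrices via the spectral theorem. I would begin by checking that the almost analytic extension $\tilde{h}_n$ satisfies
\begin{equation}
\bar{\partial}\tilde{h}_n(x+\mathrm{i}y)=\frac{1}{2}(\partial_x+\mathrm{i}\partial_y)\tilde{h}_n(x+\mathrm{i}y)=\frac{(\mathrm{i}y)^n}{2\,n!}h^{(n+1)}(x),
\end{equation}
which is a direct telescoping computation: the $\partial_x$ derivative produces $\sum_{k=0}^n (\mathrm{i}y)^k h^{(k+1)}(x)/k!$, while the $\mathrm{i}\partial_y$ derivative produces the same sum with index shifted by one, and all but the top term cancel. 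The crucial consequence is that $\bar{\partial}\tilde{h}_n$ vanishes to order $n$ on the real line, so that $\bar{\partial}(\tilde{h}_n\chi)/(\lambda-z)$ remains locally integrable near $\mathbb{R}$ despite the Cauchy kernel's singularity.

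Next, I would apply the Cauchy--Pompeiu formula to the compactly supported $C^{n+1}$ function $F:=\tilde{h}_n\chi$. For any $\lambda\in\mathbb{R}$ one has
\begin{equation}
F(\lambda)=\frac{1}{\pi}\int_{\mathbb{C}}\frac{\bar{\partial}F(z)}{\lambda-z}\,dz,
\end{equation}
by viewing $F$ as a test function and using the standard identity $\bar{\partial}\big((\lambda-z)^{-1}\big)=-\pi\delta_\lambda$ in the sense of distributions (equivalently, integrate by parts against a shrinking disk around $\lambda$ and let the radius tend to zero, using that $\bar{\partial}F$ is continuous and that $\int_{|z-\lambda|=\epsilon}F(z)/(\lambda-z)\,dz$ has a computable limit by Stokes' theorem). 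Since $\tilde{h}_n(x)=h(x)$ on the real line and $\chi(\lambda)=1$ by assumption, the left-hand side equals $h(\lambda)$, proving the scalar formula.

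For the matrix statement, I would invoke the spectral decomposition $\mathbf{H}=\sum_i\lambda_i\mathbf{P}_i$, assumed Hermitian so all $\lambda_i\in\mathbb{R}$. Because $\chi$ is chosen to be identically one on $\mathrm{spec}(\mathbf{H})$, the scalar formula applies at every $\lambda_i$, and linearity gives
\begin{equation}
h(\mathbf{H})=\sum_i h(\lambda_i)\mathbf{P}_i=\frac{1}{\pi}\int_{\mathbb{C}}\bar{\partial}(\tilde{h}_n\chi)(z)\sum_i\frac{\mathbf{P}_i}{\lambda_i-z}\,dz=\frac{1}{\pi}\int_{\mathbb{C}}\bar{\partial}(\tilde{h}_n\chi)(z)\,G_{\mathbf{H}}(z)\,dz,
\end{equation}
where the interchange of sum and integral is justified by the operator-norm bound $\|G_\mathbf{H}(z)\|\le 1/|\mathrm{Im}\,z|$ combined with the order-$n$ vanishing of $\bar{\partial}\tilde{h}_n$ on $\mathbb{R}$, which makes the integrand norm-bounded by $|\mathrm{Im}\,z|^{n-1}$ times a bounded factor coming from $\chi$ and derivatives of $h$.

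The main technical point is the integrability near the real axis: one needs the factor $|\mathrm{Im}\,z|^n$ from $\bar{\partial}\tilde{h}_n$ to beat the $1/|\mathrm{Im}\,z|$ from $(\lambda-z)^{-1}$ or from $G_\mathbf{H}(z)$. I expect $n\ge 1$ suffices for absolute convergence; the contribution from the derivative of $\chi$ (whose support stays away from $\mathbb{R}$) is trivially integrable and does not contribute after using that $\chi=1$ at the point of evaluation and on the spectrum. Everything else is routine bookkeeping.
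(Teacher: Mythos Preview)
The paper does not prove this lemma at all; it is stated as a citation from Appendix~C of \cite{locallawnote} and used as a black box in the proof of Proposition~\ref{prop_largscaleset}. Your proof is correct and is in fact the standard argument for the Helffer--Sj{\H o}strand formula: compute $\bar\partial\tilde h_n$ by telescoping, invoke Cauchy--Pompeiu for the compactly supported $F=\tilde h_n\chi$, and then diagonalize to pass to the matrix version. One minor remark: for a finite Hermitian matrix the spectral sum is finite, so the interchange of sum and integral is trivial and you do not actually need the $|y|^{n-1}$ integrability bound on $\|G_{\mathbf H}(z)\|$---the scalar integrability of $1/|z-\lambda_i|$ in two dimensions already suffices eigenvalue by eigenvalue.
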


Next, we collect some important matrix identities and inequalities for reference.

\begin{lemma}[Collection of matrix identities and inequalities]\label{lem_collecttion} Take two $n \times
 n$ Hermitian matrices $\Ab$, $\Bb$ and $z=E+\mathrm{i} \eta \in \mathbb{C}^+$. Let $\{\lambda_i(\Ab)\}$ and $\{\lambda_i(\Bb)\}$ be the eigenvalues of $\Ab$ and $\Bb$ respectively and $\mu_{\Ab}$ and $\mu_{\Bb}$ be the ESDs of $\Ab$ and $\Bb$ respectively. 
\begin{enumerate}
\item[(i)] (Hoffman-Wielandt inequality \cite[equation (1.67)]{taotopics})   We have 
\begin{equation*}
\sum_{i=1}^n |\lambda_i(\Ab)-\lambda_i(\Bb)|^2 \leq \text{tr}\{ (\Ab-\Bb)^2 \}\,.  
\end{equation*}
\item[(ii)] (Bound Stieltjes transform using rank \cite[Lemma F.5]{locallawnote}) We have  
\begin{equation*}
|m_\Ab(z)-m_\Bb(z)| \leq \frac{\text{rank}\{(\Ab-\Bb)\}}{n}  \min \left \{ \frac{2}{\eta}, \ \frac{\norm{\Ab-\Bb}}{\eta^2} \right \}\,.
\end{equation*}
\item[(iii)] (Bound Levy distance of ESDs of matrices \cite[Theorem A.45]{bai2010spectral}) Denote 
the Levy distance of $\mu_\Ab$ and $\mu_\Bb$ as $\mathcal{L}(\mu_\Ab, \mu_\Bb)$. We have 
\begin{equation*}
\mathcal{L}(\mu_\Ab, \mu_\Bb) \leq \norm{\Ab-\Bb}\,.
\end{equation*}
\item[(iv)] (Bound trace of product of matrices \cite[equation (1)]{FLF}) Suppose that $\Ab$ and $\Bb$ are positive semi-definite. We have 
\begin{equation*}
\text{tr}(\Ab \Bb)\leq \lambda_1(\Ab) \text{tr} \Bb\,. 
\end{equation*} 
\item[(v)] (Weyl's inequality for perturbed Hermitian matrices, \cite[Section 1.3]{taotopics}) For $1 \leq i,j \leq n$ so that $i+j-1 \leq n$, we have 
\begin{equation*}
\lambda_{i+j-1}(\Ab+\Bb) \leq \lambda_i(\Ab)+\lambda_j(\Bb)\,. 
\end{equation*}
\item[(vi)] (Bound Levy distance \cite[Proposition 4.13]{Ber93}) For any probability measures $\mu_{\Ab}$, $\mu_{\Bb}$, $\mu_{\alpha}$, $\mu_{\beta}$ on $\mathbb{R}$, we have
\begin{align*}
&\mathcal L(\mu_{\alpha \boxplus \beta}, \mu_{\Ab \boxplus \Bb}) \leq \mathcal L(\mu_\Ab, \mu_{\alpha})+\mathcal L(\mu_\Bb, \mu_{\beta})\,,\\
&\mathcal L(\mu_{\alpha \boxtimes \beta}, \mu_{\Ab \boxtimes \Bb}) \leq \mathcal L(\mu_\Ab, \mu_{\alpha})+\mathcal L(\mu_\Bb, \mu_{\beta})\,.\nonumber
\end{align*}
\end{enumerate}
\end{lemma}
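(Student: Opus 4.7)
Since Lemma \ref{lem_collecttion} is a collection of six classical matrix-analytic inequalities, all with explicit references, my plan is to outline the short standard argument for each in turn, deferring the longest proof (item (vi)) to the cited source.

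For (i), the plan is to diagonalize $\Ab = U_A \Lambda_A U_A^*$ and $\Bb = U_B \Lambda_B U_B^*$, set $U = U_A^* U_B$, and expand
\begin{equation*}
\text{tr}((\Ab-\Bb)^2) = \text{tr}(\Lambda_A^2) + \text{tr}(\Lambda_B^2) - 2\sum_{i,j} |u_{ij}|^2 \lambda_i(\Ab)\lambda_j(\Bb).
\end{equation*}
Because $[|u_{ij}|^2]$ is doubly stochastic, Birkhoff's theorem expresses it as a convex combination of permutation matrices, and a rearrangement inequality then shows that the identity permutation minimizes the cross term, giving the claim. Weyl's inequality in (v) follows immediately from the Courant-Fischer min-max characterization applied to suitable nested subspaces of dimensions $i-1$ and $j-1$.

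For (ii), I would apply the resolvent identity $G_{\Ab}(z) - G_{\Bb}(z) = G_{\Ab}(z)(\Bb - \Ab)G_{\Bb}(z)$ and take normalized traces. Using $\|G_{\Ab}(z)\|, \|G_{\Bb}(z)\| \leq \eta^{-1}$ together with the matrix trace inequality $|\text{tr}(CD)| \leq \text{rank}(D) \|C\| \|D\|$ yields the bound $\text{rank}(\Ab-\Bb)\|\Ab-\Bb\|/(n\eta^2)$, while the alternative $2\,\text{rank}(\Ab-\Bb)/(n\eta)$ follows from Cauchy interlacing: a rank-$r$ perturbation changes at most $r$ eigenvalues, and each contributes at most $2/\eta$ to the trace of the resolvent. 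For (iii), the Levy-distance bound is an immediate consequence of Weyl's inequality: the cumulative distribution functions of $\mu_{\Ab}$ and $\mu_{\Bb}$ agree after a horizontal shift of at most $\|\Ab - \Bb\|$, which is the definition of Levy distance. For (iv), with $\Ab, \Bb \succeq 0$, I would use $\text{tr}(\Ab\Bb) = \text{tr}(\Bb^{1/2}\Ab\Bb^{1/2})$ together with the operator inequality $\Bb^{1/2}\Ab\Bb^{1/2} \preceq \lambda_1(\Ab)\Bb$ to conclude.

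The main obstacle is (vi), whose proof I would cite directly from the Bercovici-Voiculescu framework rather than reprove. The key ingredient is that the $R$-transform and the $S$-transform are continuous with respect to weak convergence of compactly supported probability measures; combined with the characterizations $R_{\mu\boxplus\nu} = R_\mu + R_\nu$ and $S_{\mu\boxtimes\nu} = S_\mu \cdot S_\nu$, this yields joint continuity of $\boxplus$ and $\boxtimes$, and a careful analysis of the associated subordination functions promotes this to a $1$-Lipschitz estimate in each argument under the Levy metric. All five preceding items are one-line consequences of resolvent calculus or variational principles for eigenvalues, so no serious technical difficulty is expected outside (vi).
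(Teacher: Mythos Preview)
Your sketches are correct standard arguments for these classical inequalities. The paper itself gives no proof of Lemma~\ref{lem_collecttion} at all---it simply states the six items with citations to the literature---so there is nothing to compare against beyond noting that your outlines match the usual textbook proofs.
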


\begin{lemma}(Approximation of distance kernel matrix)\label{lem_matrixapproximation} Consider the $n \times n$ matrix $\mathbf{W}_x$ defined in (\ref{eq_defnmxmy}).  Recall $\psi=(\psi_1,\ldots, \psi_n)$ with $\psi_i=\norm{\xb_i}_2^2-1, i=1,2,\cdots, n$. Let 
\begin{align}
\mathbf{K}^d_x&=\breve{\mathbf K}_x+\mathrm{Sh}_0+\mathrm{Sh}_1+\mathrm{Sh}_2\,.
\end{align} 
Then for some small constants  $c>0$ and $\varsigma>0$, when $n$ is sufficiently large,  we have that with probability at least $1-O(n^{-1/2-c})$
\begin{equation}
\norm{\mathbf{W}_x-\Kb^d_x} \leq n^{-\varsigma} \,. 
\end{equation}
\end{lemma}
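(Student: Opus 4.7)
The plan is to proceed by entrywise Taylor expansion of $f$ at the base value $2$ (the typical squared pairwise distance under the null) and to translate the resulting entrywise bounds into operator-norm bounds through standard concentration estimates and structural decompositions of each error piece. First, I would restrict to the high-probability event
\[
\Omega := \Bigl\{\max_{i\neq j}|\mathbf{x}_i^\top\mathbf{x}_j| \leq \sqrt{C\log n/p_1}\Bigr\} \cap \Bigl\{\max_i |\psi_i| \leq \sqrt{C\log n/p_1}\Bigr\},
\]
which, by Lemma~\ref{lem_chernoff} applied to each of the $O(n^2)$ off-diagonal inner products and Lemma~\ref{lem_bernstein} applied to each of the $n$ self-norms, has probability at least $1-n^{-1/2-c}$ for sufficiently large $C$; this matches the probability statement in the lemma. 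All subsequent bounds are deterministic given $\Omega$.

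Second, for each $i\neq j$ I would write $\Delta_{ij}:=\|\mathbf{x}_i-\mathbf{x}_j\|_2^2 - 2 = \psi_i+\psi_j-2\mathbf{x}_i^\top\mathbf{x}_j$, uniformly $o(1)$ on $\Omega$, and apply Taylor's theorem:
\[
f(2+\Delta_{ij}) = f(2)+f'(2)\Delta_{ij}+\tfrac{1}{2}f''(2)\Delta_{ij}^2+R_{ij},
\]
with $R_{ij} = \tfrac{1}{2}[f''(\xi_{ij})-f''(2)]\Delta_{ij}^2 = o(\Delta_{ij}^2)$ uniformly by continuity of $f''$. Matching with the definition of $\mathbf{K}^d_x$: the constant $f(2)$ equals $\mathrm{Sh}_0(i,j)$, the piece $f'(2)(\psi_i+\psi_j)$ equals $\mathrm{Sh}_1(i,j)$, the piece $-2f'(2)\mathbf{x}_i^\top\mathbf{x}_j$ is the off-diagonal of $\breve{\mathbf{K}}_x$, and $\tfrac{1}{2}f''(2)(\psi_i+\psi_j)^2$ equals $\mathrm{Sh}_2(i,j)$. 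The residual off-diagonal entries of $E := \mathbf{W}_x - \mathbf{K}^d_x$ are therefore
\[
E_{ij} = -2f''(2)(\psi_i+\psi_j)\mathbf{x}_i^\top\mathbf{x}_j + 2f''(2)(\mathbf{x}_i^\top\mathbf{x}_j)^2 + R_{ij},\qquad i\neq j,
\]
while a direct calculation on the diagonal gives $E_{ii} = -2f''(2)\psi_i^2=O_\prec(\log n/n)$.

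Third, I would estimate each contribution in operator norm. The cross term equals, up to a diagonal correction of order $n^{-1}$, $-2f''(2)(D_\psi\mathbf{X}^\top\mathbf{X} + \mathbf{X}^\top\mathbf{X} D_\psi)$ with $D_\psi = \mathrm{diag}(\psi)$, whose operator norm is at most $4|f''(2)|\,\|D_\psi\|\,\|\mathbf{X}^\top\mathbf{X}\| = O_\prec(\sqrt{\log n}\,n^{-1/2})$ by the Marchenko--Pastur edge bound for $\|\mathbf{X}^\top\mathbf{X}\|$. The remainder matrix $(R_{ij})$, with entries uniformly $o_\prec(n^{-1})$ on $\Omega$ via the modulus of continuity of $f''$, is bounded by $o(n^{-\varsigma_1})$ for some small $\varsigma_1>0$ by combining a Frobenius estimate with the pointwise decay.

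The principal obstacle is the Hadamard-square piece $M:=((\mathbf{x}_i^\top\mathbf{x}_j)^2)_{i\neq j}$. Although $|M_{ij}|=O_\prec(\log n/n)$ pointwise, the naive Frobenius bound only yields $\|M\|=O(1)$; indeed $\mathbb{E}M = p_1^{-1}(\mathbf{1}\mathbf{1}^\top-\mathbf{I})$ is a rank-one matrix of operator norm $\sim n/p_1 \sim 1$. I would split $M = \mathbb{E}M + (M-\mathbb{E}M)$, control the centered part via a Hanson--Wright-type concentration for Gaussian quadratic forms by moment bounds on $\mathbb{E}\mathrm{tr}[(M-\mathbb{E}M)^{2q}]$, obtaining $\|M-\mathbb{E}M\| = O_\prec(n^{-1/2+\epsilon})$, and absorb the rank-one deterministic mean $\mathbb{E}M$ into $\mathrm{Sh}_0$ at the cost of an $O(p_1^{-1})$ correction to the coefficient of $\mathbf{1}\mathbf{1}^\top$. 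This absorption, which is the reason the final bound must be of the form $n^{-\varsigma}$ with $\varsigma>0$ only a small unspecified constant rather than an explicit $n^{-1/2}$, is the subtlest part of the argument; combining all the contributions then yields the stated estimate on $\Omega$.
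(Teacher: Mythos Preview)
Your overall strategy---entrywise Taylor expansion at $2$ on the high-probability event $\Omega$, followed by a structural decomposition of $E=\mathbf W_x-\mathbf K_x^d$ into a diagonal piece, the cross term $D_\psi\mathbf X^\top\mathbf X+\mathbf X^\top\mathbf X D_\psi$, the Hadamard-square piece, and a Taylor remainder---is exactly the route El~Karoui takes and which the paper's proof invokes. Your bounds on the diagonal, the cross term, and the moment argument for the centered Hadamard square $M-\mathbb E M$ are all sound.

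The gap is in the ``absorption'' step for $\mathbb E M$. You correctly compute $\mathbb E M = p_1^{-1}(\mathbf 1\mathbf 1^\top-\mathbf I)$, but an $O(p_1^{-1})$ shift in the coefficient of $\mathbf 1\mathbf 1^\top$ is an $O(n/p_1)=O(c_1^{-1})$ change in operator norm, not $O(n^{-\varsigma})$. Since $\mathbf K_x^d$ is fixed by the lemma, you cannot move this term into $\mathrm{Sh}_0$; and indeed the direct lower bound
\[
\|M\|\ \ge\ \frac{\mathbf 1^\top M\mathbf 1}{n}\ =\ \frac{1}{n}\sum_{i\ne j}(\mathbf x_i^\top\mathbf x_j)^2\ =\ \frac{1}{n}\Big(\mathrm{tr}\big((\mathbf X^\top\mathbf X)^2\big)-\sum_i\|\mathbf x_i\|^4\Big)\ \longrightarrow\ c_1^{-1}
\]
shows $2f''(2)M$ contributes an order-one term to $E$ whenever $f''(2)\neq 0$. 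No other piece of $E$ cancels this: the cross term has norm $O_\prec(n^{-1/2})$ and the remainder is smaller still.

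This is in fact a discrepancy between the lemma as written and El~Karoui's Theorem~2.2, whose approximating matrix carries the additional rank-one term $\tfrac{2f''(\tau)\,\mathrm{tr}(\Sigma_p^2)}{p^2}\mathbf 1\mathbf 1^\top$ (here $\tfrac{2f''(2)}{p_1}\mathbf 1\mathbf 1^\top$) precisely to absorb $\mathbb E M$. The paper's proof defers wholesale to El~Karoui without addressing the point. Downstream this is harmless---$\widetilde{\mathbf W}_x-\breve{\mathbf K}_x$ then differs from an $n^{-\varsigma}$-small matrix by one extra rank-one piece, and every subsequent use (Proposition~\ref{prop_norm}, the Weyl interlacing in~\eqref{eq_secondpertub}) already peels off finite-rank perturbations---but as a proof of the lemma \emph{as stated} your absorption does not close. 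The honest fix is to add $\tfrac{2f''(2)}{p_1}\mathbf 1\mathbf 1^\top$ to $\mathbf K_x^d$, or to conclude only that $\mathbf W_x-\mathbf K_x^d$ is within $n^{-\varsigma}$ of a fixed rank-one matrix. A secondary issue: your Frobenius estimate for the remainder $(R_{ij})$ gives only $\|R\|_F=o(\log n)$ under the $C^2$ assumption on $f$; El~Karoui's truncation-and-centralization device (which the paper invokes) is what actually forces this piece to be $o(1)$ in operator norm.
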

In \cite{elkaroui2010}, the author stated the result without keeping track of the probability under a more general setup. Our proof replies on the proof of \cite[Theorem 2.2]{elkaroui2010} by keeping track of the detailed probability. 
\begin{proof} [Proof of Lemma \ref{lem_matrixapproximation}] First of all, since we are dealing with Gaussian random variables, in the \emph{truncation and centralization} step of the proof of \cite[Theorem 2.2]{elkaroui2010}, we shall choose the level $B_p=p^{-\delta}$. Now we plug $B_p$ into the equations regarding the computation of probabilities, for instance, the last equation on Page 25 and the first four equations on Page 26 of \cite{elkaroui2010}. We can conclude the proof by using Chebyshev's inequality. 
\end{proof}

\section{A brief summary of free probability theory}\label{sec:appen:freeprob}
For the sake of self-containedness, in this section, we summarize some notations and preliminary results from the free probability theory that are used in this paper. For more details, we refer the readers with interest to \cite{freebook,Rao2008,Tulino2004}. We first introduce some definitions.  
\begin{definition}[$\eta$-transform] For a probability measure $\mu$ supported on $(0,\infty)$, the $\eta$-transform of $\mu$ is defined as 
\begin{equation}
\eta_{\mu}(z)=\int \frac{1}{1+z\lambda} \mu(d\lambda), \ z \in \mathbb{C}^+.
\end{equation} 
\end{definition}
It is easy to see that $\eta$-transform is closely related to the Stieltjes transform via
\begin{equation*}
\eta_{\mu}(z)=\frac{1}{z}m_{\mu}\Big(-\frac{1}{z}\Big)\mbox{  and  }m_{\mu}(z)=-\frac{1}{z}\eta_{\mu}\Big(-\frac{1}{z}\Big)\,.
\end{equation*} 
\begin{definition}[$S$-transform] For a probability measure $\mu$ supported on $(0,\infty)$, the $S$-transform of $\mu$ is defined as 
\begin{equation*}
S_{\mu}(z)=-\frac{z+1}{z} \eta_{\mu}^{-1} (z+1), \ z \in \mathbb{C}^+\,.
\end{equation*}
 \end{definition}
 It is remarkable that the $S$-transform of $\mu_{c,1}$ is (see, e.g., \cite{Tulino2004}) 
 \begin{equation*}
 S_{\mu_{c,1}}(z)=\frac{1}{1+cz}\,.
\end{equation*}  

In the proof of local laws for products of random matrices, it is more convenient to use the \emph{$M$-transform}.  We first introduce its definition. 
\begin{definition}[$M$-transform]\label{defn_mtransform}  For a probability measure $\mu$ supported on the positive real line, the $M$-transform of $\mu$, denoted as $M_\mu:\mathbb{C}\backslash [0,\infty)\to\mathbb{C}$, is defined as  
\begin{equation}
M_{\mu}(z):=1-\left( \int \frac{x}{x-z} \mu(dx) \right)^{-1}\,.
\end{equation}
\end{definition}

It is clear that the $M$-transform and the Stieltjes transform are related via
\begin{equation}
M_{\mu}(z)=\frac{z m_{\mu}(z)}{1+z m_{\mu}(z)}\, , \ z \in \mathbb{C}^+\,.
\end{equation}

Next, we introduce the additive and  multiplicative subordination functions, which can be used to characterize the system of Stieltjes transforms of  addition and product of two random matrices. For details, we refer readers with interest  to \cite{Belinschi2007, CDM}. 
Denote by $F_{\mu}$  the negative reciprocal of the Stieltjes transform, i.e., 
\begin{equation}\label{negative reciprocal relationship between F and m}
F_{\mu}(z):=-\frac{1}{m_{\mu}(z)},  \ z \in \mathbb{C}^+\,. 
\end{equation}
Note that $F_{\mu}: \ \mathbb{C}^+ \rightarrow \mathbb{C}^+$ is analytic such that 
\begin{equation}\label{eq_fovereta}
\lim_{\eta \uparrow \infty} \frac{F_{\mu}(\mathrm{i} \eta )}{\mathrm{i} \eta}=1\,. 
\end{equation}
Conversely, if $F: \mathbb{C}^+ \rightarrow \mathbb{C}^+$ is an analytic function such that (\ref{eq_fovereta}) holds, then $F$ is the negative reciprocal Stieltjes transform of a probability measure (see, e.g.  \cite{akhiezer1965classical}). We first  introduce the results on the free additive convolution. 

\begin{proposition}\label{prop_add}(Additive subordination property, Theorem 4.1 of \cite{Belinschi2007}) Given two Borel probability measures, $\mu$ and $\nu$, on $\mathbb{R}$, there exist unique analytic functions, $\omega_\mu, \omega_\nu: \mathbb{C}^+ \rightarrow \mathbb{C}^+$, such that 
\begin{enumerate}
\item[(i)] for all $z \in \mathbb{C}^+$, we have $\operatorname{Im} \omega_\mu(z)\geq \operatorname{Im} z$, $\operatorname{Im} \omega_\nu(z) \geq \operatorname{Im} z$, and 
\begin{equation}
\lim_{\eta \uparrow \infty} \frac{\omega_\mu(\mathrm{i} \eta)}{\mathrm{i} \eta}=\lim_{\eta \uparrow \infty} \frac{\omega_\nu( \mathrm{i} \eta)}{\mathrm{i} \eta}=1\,; 
\end{equation}
\item[(ii)] for all $z \in \mathbb{C}^+$, we have 
\begin{equation}\label{eq_subordination}
\omega_\nu(z)+\omega_\mu(z)-z=F_{\mu}(\omega_\nu(z))=F_{\nu}(\omega_\mu(z))\,. 
\end{equation}
\end{enumerate}
\end{proposition}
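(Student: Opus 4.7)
The plan is to prove existence and uniqueness of the subordination pair via a fixed-point argument on the upper half-plane, following the Belinschi--Bercovici strategy.

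First, I would introduce the reciprocal Cauchy transforms $F_\mu(z) := -1/m_\mu(z)$ and $F_\nu(z) := -1/m_\nu(z)$, and the auxiliary functions
\begin{equation}
H_\mu(w) := F_\mu(w) - w, \qquad H_\nu(w) := F_\nu(w) - w.
\end{equation}
Since $F_\mu$ is a Nevanlinna function admitting a representation $F_\mu(z) = z + b + \int (1+xz)/(x-z)\, d\sigma(x)$ for some $b \in \mathbb{R}$ and a finite positive measure $\sigma$, one has $\operatorname{Im} F_\mu(z) \geq \operatorname{Im} z$ on $\mathbb{C}^+$, so $H_\mu$ maps $\mathbb{C}^+$ into $\overline{\mathbb{C}^+}$, and similarly for $H_\nu$.

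Second, I would rephrase the sought subordination relation (\ref{eq_subordination}) as a fixed-point problem. If such $\omega_\mu, \omega_\nu$ exist, then $\omega_\mu(z) = z + H_\mu(\omega_\nu(z))$ and $\omega_\nu(z) = z + H_\nu(\omega_\mu(z))$, so $\omega_\nu(z)$ is a fixed point of the map
\begin{equation}
\Phi_z(w) := z + H_\nu\bigl(z + H_\mu(w)\bigr), \qquad w \in \mathbb{C}^+.
\end{equation}
Because $\operatorname{Im}(z + H_\mu(w)) \geq \operatorname{Im} z > 0$, $\Phi_z$ is an analytic self-map of $\mathbb{C}^+$, and in fact $\operatorname{Im} \Phi_z(w) \geq \operatorname{Im} z$.

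Third, for each fixed $z \in \mathbb{C}^+$ I would establish the existence of a unique fixed point of $\Phi_z$ in $\mathbb{C}^+$ via the Denjoy--Wolff theorem. Since $\Phi_z$ strictly increases the imaginary part (the lower bound $\operatorname{Im} \Phi_z(w) \geq \operatorname{Im} z$ prevents iterates from escaping to $\mathbb{R} \cup \{\infty\}$), the Denjoy--Wolff point cannot lie on the boundary of $\mathbb{C}^+$; hence $\Phi_z$ has a unique fixed point $\omega_\nu(z) \in \mathbb{C}^+$, and the Kakutani/Denjoy--Wolff dichotomy gives $\Phi_z^{\circ n}(w_0) \to \omega_\nu(z)$ for any starting $w_0$. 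Define $\omega_\mu(z) := z + H_\mu(\omega_\nu(z))$; by construction the pair $(\omega_\mu, \omega_\nu)$ satisfies (\ref{eq_subordination}).

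Fourth, I would verify the regularity and boundary conditions in $(i)$. Analyticity of $\omega_\mu, \omega_\nu$ in $z$ follows from the implicit-function theorem applied to the system (using that the derivative of $w - \Phi_z(w)$ at the fixed point is nonzero by the strict Schwarz--Pick inequality for non-automorphisms of $\mathbb{C}^+$), or equivalently from the uniform convergence of the iterates $\Phi_z^{\circ n}$ on compact subsets in $z$. The bound $\operatorname{Im} \omega_\mu(z) \geq \operatorname{Im} z$ is immediate from $\omega_\mu(z) = z + H_\mu(\omega_\nu(z))$ and $\operatorname{Im} H_\mu \geq 0$ (and symmetrically for $\omega_\nu$). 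The asymptotic $\omega_\mu(\mathrm{i}\eta)/(\mathrm{i}\eta) \to 1$ follows from the Nevanlinna growth $F_\mu(\mathrm{i}\eta)/\mathrm{i}\eta \to 1$, hence $H_\mu(\mathrm{i}\eta) = o(\eta)$; bootstrapping through the fixed-point equation gives first $\omega_\nu(\mathrm{i}\eta)/\mathrm{i}\eta \to 1$ and then $\omega_\mu(\mathrm{i}\eta)/\mathrm{i}\eta \to 1$.

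The main obstacle is the third step: ruling out a boundary Denjoy--Wolff point and thereby producing an interior fixed point. One must show that $\Phi_z$ is neither an automorphism of $\mathbb{C}^+$ nor parabolic with Wolff point on $\mathbb{R} \cup \{\infty\}$; this is where the strict inequality $\operatorname{Im} \Phi_z(w) \geq \operatorname{Im} z > 0$ (valid unless $\mu$ or $\nu$ is a point mass, a case handled separately) is decisive. Uniqueness of the pair then follows because any solution of (\ref{eq_subordination}) must give a fixed point of $\Phi_z$, and Denjoy--Wolff forces uniqueness.
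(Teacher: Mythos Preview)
The paper does not prove this proposition; it is quoted verbatim as Theorem~4.1 of \cite{Belinschi2007} in the appendix on free probability and used only as background input. Your proposal is the standard Belinschi--Bercovici argument from that reference: rewrite the subordination system as a fixed-point equation for an analytic self-map $\Phi_z$ of $\mathbb{C}^+$, invoke Denjoy--Wolff to produce a unique interior fixed point, and then read off regularity and the asymptotics from the Nevanlinna representation of $F_\mu$. So there is nothing to compare against within the paper, and your outline matches the cited source.

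One minor caution on your third step: the inequality $\operatorname{Im}\Phi_z(w)\ge\operatorname{Im} z$ immediately excludes a Denjoy--Wolff point on $\mathbb{R}$, but excluding the point at $\infty$ requires an additional check on the angular derivative of $\Phi_z$ there (equivalently, showing $\limsup_{\eta\to\infty}\Phi_z(\mathrm{i}\eta)/(\mathrm{i}\eta)<1$ cannot hold in the parabolic sense). This is where the growth $H_\mu(\mathrm{i}\eta)=o(\eta)$ and the strict positivity of $\operatorname{Im} z$ are used together; you allude to this but it deserves one explicit line in a full write-up.
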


The functions $\omega_\mu$ and $\omega_\nu$ are referred to as the \emph{subordination functions}.  Note that the analytic function $F: \mathbb{C}^+ \rightarrow \mathbb{C}^+$ defined by  
\begin{equation}
F(z):=F_{\mu}(\omega_\nu(z))=F_{\nu}(\omega_\mu(z)),
\end{equation}
satisfies \eqref{eq_fovereta}, so it 
is the negative reciprocal of the Stieltjes transform of a probability measure, which is defined as the \emph{free additive convolution} of $\mu$ and $\nu$ and denoted as $\mu \boxplus \nu$.
Based on \eqref{negative reciprocal relationship between F and m} and \eqref{eq_subordination}, we  get the following system of subordination equations
\begin{align} 
& m_{\mu}(\omega_\nu(z))=m_{\nu}(\omega_\mu(z)) = m_{\mu \boxplus \nu}(z)\,,\label{eq_defnsub}  \\
& \omega_\mu (z)+\omega_\nu(z)-z=-\frac{1}{m_{\mu \boxplus \nu}(z)}\,.\nonumber 
\end{align}

Analogously, we have the results for the free multiplicative convolution of random matrices. 
Define the {\em $\psi$-transform} of a probability measure $\mu $ supported on $[0, \infty)$ as 
\begin{equation*}
\psi_{\mu}(z)=\int \frac{z \lambda}{1-z\lambda} \mu (d \lambda), \ z \in \mathbb{C}^+\,. 
\end{equation*}
We also define 
\begin{equation*}
\varphi_{\mu}(z)=\frac{\psi_{\mu}(z)}{1+\psi_{\mu}(z)}\,.
\end{equation*}

\begin{proposition} [Multiplicative subordination property, Section 2.2.2 of \cite{CDM}]\label{prop_muloriginal} Given two Borel probability measures $\mu, \nu$ supported on $[0, +\infty)$, there exist two unique analytic functions $\omega_\mu, \omega_\nu:  \ \mathbb{C} \backslash [0, +\infty) \rightarrow \mathbb{C} \backslash [0, +\infty)$ so that   
\begin{enumerate}
\item[(i)] $\pi>\operatorname{arg} \omega_\mu(z) \geq \operatorname{arg} z$ and $\pi>\operatorname{arg} \omega_\nu(z) \geq \operatorname{arg} z$ for $z \in \mathbb{C}^+$ \,;
\item[(ii)] $\omega_\mu(z)$ and $\omega_{\nu}(z)$ satisfy the following relation
\begin{equation}\label{eq_f1f2}
\frac{\omega_\mu(z)\omega_\nu(z)} {z}=\varphi_{\mu}(\omega_\nu(z))=\varphi_{\nu}(\omega_\mu(z)).
\end{equation}
\end{enumerate}
The functions $\omega_\mu$ and $\omega_\nu$ are called the \emph{subordination functions}. 
\end{proposition}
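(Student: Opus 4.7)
Since Proposition \ref{prop_muloriginal} is a classical result in free probability (originally due to Voiculescu and refined by Biane and Bercovici--Voiculescu, as presented in \cite{CDM}), the plan is to follow the standard approach via a Denjoy--Wolff-type fixed point argument for analytic self-maps of hyperbolic domains, rather than invent something new.

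First I would establish the basic mapping properties of the $\varphi_\mu$-transform. For a Borel probability measure $\mu$ on $[0,\infty)$ with $\mu\neq \delta_0$, one checks that $\varphi_\mu$ is analytic on $\mathbb{C}\setminus[0,\infty)$, that it maps $\mathbb{C}^+$ into $\mathbb{C}^+$, and that $\arg \varphi_\mu(z)\ge \arg z$ on $\mathbb{C}^+$. Near $0$ one uses the expansion $\varphi_\mu(z)=\big(\int x\,d\mu(x)\big)z+O(z^2)$ to understand boundary behavior, and near $\infty$ one uses that $\varphi_\mu(z)\to 1$ as $z\to \infty$ non-tangentially from $\mathbb{C}\setminus[0,\infty)$. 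These properties are what ultimately produce condition (i).

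Next I would recast the subordination system (\ref{eq_f1f2}) as a fixed-point equation in one variable. Fix $z\in \mathbb{C}^+$ and treat $w=\omega_\nu(z)$ as unknown. The relation $\omega_\mu\omega_\nu/z=\varphi_\mu(\omega_\nu)$ forces $\omega_\mu(z)=z\varphi_\mu(w)/w$, and substituting into $\omega_\mu\omega_\nu/z=\varphi_\nu(\omega_\mu)$ gives a single equation
\begin{equation}
\varphi_\mu(w)=\varphi_\nu\!\left(\tfrac{z\varphi_\mu(w)}{w}\right).
\end{equation}
Equivalently, define $F_z(w):=z\,\varphi_\mu(w)\varphi_\nu(z\varphi_\mu(w)/w)/\big(w\varphi_\mu(w)\big)$ (or the analogous composition used in \cite{CDM}), and check using the mapping properties above that $F_z$ is an analytic self-map of $\mathbb{C}^+$ (or of a suitable hyperbolically convex subdomain). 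By Denjoy--Wolff together with the Schwarz--Pick lemma, such a self-map has a unique attracting fixed point $w=\omega_\nu(z)$ provided it has no boundary Denjoy--Wolff point.

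Finally I would verify the conclusions of the proposition. Holomorphic dependence of Denjoy--Wolff fixed points on parameters yields the analyticity of $z\mapsto \omega_\nu(z)$, and defining $\omega_\mu(z):=z\varphi_\mu(\omega_\nu(z))/\omega_\nu(z)$ produces the second subordination function; by construction both satisfy (\ref{eq_f1f2}), and the argument inequalities in (i) follow from the angle-increasing properties of $\varphi_\mu,\varphi_\nu$ on $\mathbb{C}^+$. Uniqueness is inherited from uniqueness of the Denjoy--Wolff fixed point. Extension from $\mathbb{C}^+$ to the whole cut plane $\mathbb{C}\setminus[0,\infty)$ is then obtained by Schwarz reflection across $(-\infty,0)$.

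The main obstacle is showing that the iterating map $F_z$ is genuinely a strict self-map without a boundary Denjoy--Wolff point: this requires careful use of the Julia--Carath\'eodory theorem at $0$ and control of $\varphi_\mu,\varphi_\nu$ near infinity, and it is the step where the hypothesis $\mu,\nu\neq \delta_0$ (and that they are supported on $[0,\infty)$) is genuinely used. A secondary technical issue is verifying that the constructed $\omega_\mu,\omega_\nu$ take values in $\mathbb{C}\setminus[0,\infty)$ and do not accidentally land on the cut; this is handled by combining the argument inequality in (i) with the open mapping theorem.
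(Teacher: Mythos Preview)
The paper does not actually prove Proposition \ref{prop_muloriginal}; it is stated as a known result with a citation to \cite[Section 2.2.2]{CDM} and is used as a black box in the appendix on free probability. Your sketch via the Denjoy--Wolff fixed-point argument is indeed the standard route in the literature (as in Belinschi--Bercovici and the references you allude to), so there is nothing to compare against here beyond noting that your outline is consistent with the cited source.
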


The analytic function $\varphi: \mathbb{C}^+ \rightarrow \mathbb{C}^+$ defined by  
\begin{equation}
\varphi(z):=\varphi_{\mu}(\omega_\nu(z))=\varphi_{\nu}(\omega_\mu(z)),
\end{equation}
is related to the Stieltjes transform of a probability measure, which is defined as the \emph{free multiplicative convolution} of $\mu$ and $\nu$ and denoted as $\mu \boxtimes \nu$. 
To make clear the relationship between Proposition \ref{prop_muloriginal} and $M$-transform, we need the following facts (see also \cite{JHC}).
Suppose the assumptions of Proposition \ref{prop_muloriginal} hold true. 
\begin{proposition}[Proposition 2.8 of \cite{JHC}]\label{prop_muloriginal 2about M} 
When neither $\mu$ nor $\nu$ is $\delta_0$, we define two analytic functions $\Omega_{\mu}(z), \ \Omega_{\nu}(z)$ as 
\begin{equation}\label{eq_omega}
\Omega_{\mu}(z):=\frac{1}{\omega_{\mu}(1/z)}, \ \Omega_{\nu}(z):=\frac{1}{\omega_{\nu}(1/z)}. 
\end{equation}
The analytic functions $\Omega_{\mu}$ and $\Omega_{\nu}$ map $\mathbb{C}^+$ into itself and satisfy:
\begin{enumerate}
\item[(i)] $\text{arg} \ \Omega_{\mu}(z) \geq \text{arg} \ z$ and $\text{arg} \ \Omega_{\nu}(z) \geq \text{arg} \ z$; 
\item[(ii)] $M_{\mu}(\Omega_{\nu}(z))=M_{\nu}(\Omega_{\mu})(z)=M_{\mu \boxtimes \nu}(z)$; 
\item[(iii)] $\Omega_{\mu}(z) \Omega_{\nu}(z)=z M_{\mu \boxtimes \nu}(z)$. 
\end{enumerate}
\end{proposition}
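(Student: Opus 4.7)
Proposition \ref{prop_muloriginal 2about M} is essentially a reformulation of Proposition \ref{prop_muloriginal} under the reciprocal change of variable $z\mapsto 1/z$ combined with the switch from the $\varphi$-transform to the $M$-transform. My plan is to push the two statements of Proposition \ref{prop_muloriginal} through these substitutions. The key bookkeeping step is a single elementary identity relating $\varphi_\mu$ and $M_\mu$, after which (ii) and (iii) fall out of the multiplicative subordination relation \eqref{eq_f1f2}, and (i) follows from the half-plane property in Proposition \ref{prop_muloriginal} together with Schwarz reflection.

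For (i), fix $z\in\mathbb{C}^+$, so $1/z\in\mathbb{C}^-$. Because $\omega_\mu$ is analytic on the conjugation-symmetric domain $\mathbb{C}\setminus[0,\infty)$ and is uniquely determined by the subordination system in Proposition \ref{prop_muloriginal}, it satisfies the Schwarz reflection identity $\omega_\mu(\bar w)=\overline{\omega_\mu(w)}$; hence $\omega_\mu(1/z)=\overline{\omega_\mu(1/\bar z)}$. Applying property (i) of Proposition \ref{prop_muloriginal} at the point $1/\bar z\in\mathbb{C}^+$ yields $\arg\omega_\mu(1/\bar z)\ge\arg(1/\bar z)=\arg z$, and conjugating and reciprocating gives $\arg\Omega_\mu(z)=-\arg\omega_\mu(1/z)\ge\arg z$. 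The same argument applies verbatim to $\Omega_\nu$, and in particular $\Omega_\mu(z),\Omega_\nu(z)\in\mathbb{C}^+$.

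The crux is the following short lemma: for any Borel probability measure $\mu$ on $[0,\infty)$ with $\mu\ne\delta_0$,
\[
\varphi_\mu\!\left(\tfrac{1}{z}\right)\;=\;\frac{1}{M_\mu(z)},\qquad z\in\mathbb{C}\setminus[0,\infty).
\]
This is immediate from the definitions: direct manipulation of the integrand in $\psi_\mu(1/z)=\int(\lambda/z)/(1-\lambda/z)\,\mu(d\lambda)$ gives $\psi_\mu(1/z)=-1-zm_\mu(z)$ and therefore $1+\psi_\mu(1/z)=-zm_\mu(z)$; taking the ratio and comparing with $M_\mu(z)=zm_\mu(z)/(1+zm_\mu(z))$ yields the identity. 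The same identity of course holds for $\nu$ and for $\mu\boxtimes\nu$.

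For (ii) and (iii), substitute $z\mapsto 1/z$ in \eqref{eq_f1f2} and use $\omega_\mu(1/z)=1/\Omega_\mu(z)$ and $\omega_\nu(1/z)=1/\Omega_\nu(z)$ together with the lemma to rewrite both sides in terms of $M$ and $\Omega$. The left-hand side becomes $z/(\Omega_\mu(z)\Omega_\nu(z))$ while the right-hand side becomes $\varphi_\mu(1/\Omega_\nu(z))=1/M_\mu(\Omega_\nu(z))$ and $\varphi_\nu(1/\Omega_\mu(z))=1/M_\nu(\Omega_\mu(z))$; inverting gives
\[
M_\mu(\Omega_\nu(z))\;=\;M_\nu(\Omega_\mu(z))\;=\;\frac{\Omega_\mu(z)\Omega_\nu(z)}{z}.
\]
Finally, the definition of $\mu\boxtimes\nu$ given right after Proposition \ref{prop_muloriginal} states that $\varphi_\mu(\omega_\nu(\cdot))=\varphi_\nu(\omega_\mu(\cdot))=\varphi_{\mu\boxtimes\nu}(\cdot)$; evaluating this at $1/z$ and applying the lemma once more to $\mu\boxtimes\nu$ identifies the common value above with $M_{\mu\boxtimes\nu}(z)$, which yields (ii) and (iii) simultaneously. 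The proof is a clean change of variables, and the only delicate point is the Schwarz-reflection step underpinning (i); I do not anticipate any substantive obstacle.
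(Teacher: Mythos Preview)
The paper does not supply its own proof of this proposition; it is quoted verbatim from \cite{JHC} (Proposition~2.8). Your argument is correct and is in fact the standard derivation: the identity $\varphi_\mu(1/z)=1/M_\mu(z)$ is precisely the dictionary that converts Proposition~\ref{prop_muloriginal} into the $M$-transform formulation, and the paper itself records the reverse direction of this computation in the display immediately following the proposition.

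The one place worth tightening is the justification of Schwarz reflection in (i). Arguing via ``uniqueness of the subordination system'' is a little loose, because the characterizing properties in Proposition~\ref{prop_muloriginal} are only stated for $z\in\mathbb{C}^+$. The clean route is to note (as is shown in the sources \cite{Belinschi2007,CDM}) that $\omega_\mu$ and $\omega_\nu$ are real-valued on $(-\infty,0)$; Schwarz reflection then follows immediately from analyticity on the symmetric domain $\mathbb{C}\setminus[0,\infty)$. With that in place your argument for (i) goes through exactly as written.
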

Based on Propositions \ref{prop_muloriginal} and \ref{prop_muloriginal 2about M}, we have 
\begin{equation*}
\varphi(z)=\frac{\omega_\mu(z)\omega_\nu(z)} {z}=\frac{1}{z\Omega_\mu(1/z)\Omega_\nu(1/z)}=\frac{1}{z^2M_{\mu\boxtimes \nu}(1/z)}\,,
\end{equation*}
which is linked back to the Stieltjes transform of $\mu\boxtimes \nu$ by the definition of the $M$-transform. Furthermore, when $z\in \mathbb{C}\backslash [0,\infty)$, we denote 
\begin{equation}\label{NotationL definition}
L_{\mu \boxtimes \nu}(z):=\frac{M_{\mu \boxtimes \nu}(z)}{z}\,. 
\end{equation}
It is easy to see that 
\begin{equation*}
\varphi(1/z)=\frac{z}{L_{\mu \boxtimes \nu}(z)}\,.
\end{equation*}

%%%%%%%%%%%%%%%%%%

\section{Technical proofs of Lemmata \ref{lem_localkernel}, \ref{lem_diagonal} and  \ref{lem_2.4.2} and Proposition \ref{prop_norm}}\label{app:local}

\subsection{Proof of Lemma \ref{lem_localkernel}}\label{section: proof of lemma lem_localkernel}

Define two matrices $\mathbf{O},\mathbf{P} \in \mathbb{R}^{ n \times n}$ by
\begin{equation*}
\mathbf{O}(i,j):=\norm{\xb_i}_2^2+\norm{\xb_j}_2^2-2\,, \ \mathbf{P}(i,j):=\xb_i^\top \xb_j
\end{equation*}
for $i,j=1,\ldots,n$. Note that $\mathbf{P}=\Xb^\top\Xb$.
By the Taylor expansion, when $i \neq  j$, we have 
\begin{align}
\mathbf{W}_x(i,j)=&\,f(2)+f'(2)(\mathbf{O}(i,j)-2\mathbf{P}(i,j))\label{eq_decompositionw}\\
&+f''(\zeta_{ij})(\mathbf{O}(i,j)-2\mathbf{P}(i,j))^2\,,\nonumber
\end{align}
where $\zeta_{ij} \in ( \Wb_x(i,j) \wedge 2, \Wb_x(i,j) \vee 2) $. Clearly, $\mathbf{W}_x(i,i)=f(0)$ for $i=1,\ldots,n$. First of all, we notice that the matrix $\mathbf{O}$ can be written as 
\begin{equation*}
\mathbf{O}=\mathbf{1} \mathbf{v}^\top+\mathbf{v} \mathbf{1}^\top,
\end{equation*}
where $\mathbf{v}\in \mathbb{R}^n$ with the $i$-th entry $\norm{\xb_i}_2^2-1$ for $i=1,2,\ldots, n$.  We hence conclude that $\mathbf{O}$ is a rank-2 matrix. Define $\check{\Wb}_{x}\in \mathbb{R}^{n\times n}$ to be  
\begin{equation*}
\check{\Wb}_{x}(i,j):=\left\{
\begin{array}{ll}
f(2)-2f'(2)\mathbf{P}(i,j)+4f''(\zeta_{ij})\mathbf{P}(i,j)^2 &i\neq j\\
f(0)& i=j\,.
\end{array}
\right.
\end{equation*}
We first start with the control of the difference between $m_{\check{\Wb}_x}$ and $m_{\Kb_x}(z)$. The control of the difference between $m_{\Wb_x}$ and $m_{\check{\Wb}_x}$ enjoys a similar discussion.
 Denote the sequence of eigenvalues of $\check{\Wb}_{x}$ and $\mathbf{K}_x$ as $\{\lambda_i(\check{\Wb}_{x})\}_{i=1}^n$ and $\{\lambda_i(\mathbf{K}_x)\}_{i=1}^n$ in the decreasing order respectively. For $z=E+\mathrm{i}\eta \in \mathsf{S}(1/4,\tau)$, we have
\begin{align} 
&|m_{\check{\Wb}_{x}}(z)-m_{\Kb_x}(z)|^p\label{eq_stilebound}\\
\leq &\, \left( \frac{1}{n} \sum_{i=1}^n \frac{|\lambda_i(\check{\Wb}_{x})-\lambda_i(\mathbf{K}_x)|}{|\lambda_i(\check{\Wb}_{x})-z||\lambda_i(\mathbf{K}_x)-z|} \right)^{p}\,, \nonumber 
\end{align}
which, by the Cauchy-Schwarz inequality, is bounded by 
\begin{align}
&\, \frac{1}{n^p} \bigg[ \Big(\sum_{i=1}^n |\lambda_i(\check{\Wb}_{x})-\lambda_i(\mathbf{K}_x)|^2 \Big)\times \Big( \sum_{i=1}^n \frac{1}{|\lambda_i(\check{\Wb}_{x})-z|^2|\lambda_i(\mathbf{K}_x)-z|^2} \Big) \bigg]^{p/2} \nonumber \\
\leq &\, \frac{1}{n^{p/2} \eta^{2p}} ( \text{tr} \{(\check{\Wb}_{x}-\mathbf{K}_x)^2\} )^{p/2}, \label{eq_finalcontrolbound}
\end{align}
where the last inequality comes from the Hoffman-Wielandt inequality (see (i) of Lemma \ref{lem_collecttion}) and the trivial bound $|\lambda-z|^{-1}\leq \eta^{-1}$ for any $\lambda\in \mathbb{R}$. The rest of the proof leaves to control the right-hand side of (\ref{eq_finalcontrolbound}).

% (!!) Note that the square of two sub-Gaussian random variables  is sub-exponential distributed (for instance, see \cite[Lemma 2.7.6]{vershynin2018high}).

By Lemma \ref{lem_bernstein}, for $\epsilon>0$, we have  
\begin{equation}\label{eq_diagcontrol}
\mathbb{P} \big( \big| \norm{\xb_i}_2^2-1 \big| >\epsilon \big) \leq 2 \exp(-p_1 \epsilon^2/8)\,.
\end{equation}
We can therefore bound $\norm{\xb_i}, i=1,2,\ldots, n$, using the above inequality. To control $|\xb_i^\top \xb_j|, 1 \leq i \neq j \leq n$, we apply Lemma \ref{lem_chernoff}.  
With (\ref{eq_diagcontrol}) and (\ref{eq_offcontrol}), we now head to conclude our proof. Denote $\Omega$ to be the event such that there exists $ i \neq j $ such that the following event happens
\begin{equation} \label{Condition on the boundedness of inner product of xi and xj}
|\xb_i^\top \xb_j|>\sqrt{\frac{C_1 \log p_1}{p_1}}\,,  
\end{equation}
where $C_1\geq p$. 
We clearly have 
\begin{align}
\mathbb{E} \Big| m_{\check{\Wb}_{x}}(z)-m_{\Kb_x}(z) \Big|^p  =\, \mathbb{E} \left[ 1_{\Omega} |m_{\check{\Wb}_{x}}(z)-m_{\Kb_x}(z)|^p \right]
\,+\mathbb{E} \left[ 1_{\Omega^c} |m_{\check{\Wb}_{x}}(z)-m_{\Kb_x}(z)|^p \right].\label{eq_headstart}
\end{align}
By (\ref{eq_offcontrol}), we have that $\mathbb{P}(\Omega)=O(p_1^{-C_1/2})$  when $n$ is sufficiently large. Therefore, by the trivial bound (\ref{eq_stitrivial}),  we have 
\begin{align}
 \mathbb{E} \big[ 1_{\Omega} |m_{\check{\Wb}_{x}}(z)-m_{\Kb_x}(z)|^p \big]
 \leq\,  \mathbb{P}(\Omega)\times 2^p (\|m_{\check{\Wb}_{x}}(z)\|_\infty^p+\|m_{\Kb_x}(z)\|_\infty^p) \leq \frac{{C_2}2^{p}}{p_1^{{C_1}/2}\eta^{p}} \label{eq_inequality1}
\end{align}
{for some constant $C_2>0$.}
By (\ref{eq_finalcontrolbound}), we have that
\begin{align}
&\mathbb{E} \Big[ 1_{\Omega^c} |m_{\check{\Wb}_{x}}(z)-m_{\Kb_x}(z)|^p \Big] \label{eq_inequality2}\\
\leq&\, \mathbb{E} \Big[ 1_{\Omega^c} \frac{1}{n^{p/2} \eta^{2p}} \left( \sum_{i,j} (\check{\Wb}_{x}(i,j)-\Kb_x(i,j))^2 \right)^{p/2} \Big] \nonumber  \\
 \leq&\, \mathbb{E} \Big[1_{\Omega^c} \frac{4 {C_3}}{n^{p/2} \eta^{2p}}  \Big( \sum_{i \neq j} |\xb_i^\top \xb_j|^4 \Big)^{p/2} \Big] \nonumber\\
 \leq&\, \frac{4{C_4} (\log p_1)^{p} }{n^{p/2}  \eta^{2p}}\,,\nonumber
\end{align}
for some constants {$C_3,C_4>0$}, where the second inequality comes from the fact that when $i=j$, $\check{\Wb}_{x}(i,j)-\Kb_x(i,j)=0$, when $i\neq j$, $\check{\Wb}_{x}(i,j)-\Kb_x(i,j)=4f''(\zeta_{ij})(\mathbf{P}\circ \mathbf{P})_{ij}=4f''(\zeta_{ij})|\xb_i^\top \xb_j|^2$, and $f$ is twice continuously differentiable, and the last inequality comes from \eqref{Condition on the boundedness of inner product of xi and xj}.

We next discuss the control of the difference between $m_{\Wb_x}$ and $m_{\check{\Wb}_x}.$ Denote a $n\times n$ matrix $\Wb_{x}^c$ by
\begin{equation*}
\Wb^c_x(i,j):=\left\{
\begin{array}{ll}
\check{\Wb}_x(i,j)-4f^{''}(\zeta_{ij}) \Ob(i,j) \Pb(i,j) & i \neq j\\
f(0) & i=j\,.
\end{array}
\right.
\end{equation*}
First of all, since $\Ob$ is a rank-2 matrix, by using the rank bound, that is, $\text{rank}(\Ab \circ \Bb) \leq \text{rank}(\Ab) \text{rank}(\Bb)$, and (ii) of Lemma \ref{lem_collecttion}, we have that 
\begin{equation}\label{eq_rrrr}
|m_{\Wb_x}(z)-m_{\Wb_x^c}(z)| \leq \frac{12}{n \eta}.
\end{equation}
Hence, when $n$ is sufficiently large, we only need to control the difference between $m_{\Wb_x^c}(z)$ and $m_{\check{\Wb}_x}(z)$. By a discussion similar to (\ref{eq_finalcontrolbound}), we have 
\begin{equation}
|m_{\check{\Wb}_{x}}(z)-m_{\Wb^c_x}(z)|^p \leq \frac{1}{n^{p/2} \eta^{2p}}  ( \text{tr} \{(\check{\Wb}_{x}-\mathbf{W}^c_x)^2\} )^{p/2}\,,
\end{equation}
and by a similar calculation, we have
\begin{align}
&\mathbb{E} \Big[ 1_{\Omega^c} |m_{\check{\Wb}_{x}}(z)-m_{\Wb^c_x}(z)|^p \Big] \nonumber\\
\leq&\, \mathbb{E} \Big[ 1_{\Omega^c} \frac{1}{n^{p/2} \eta^{2p}} \left( \sum_{i,j} (\check{\Wb}_{x}(i,j)-\Wb^c_x(i,j))^2 \right)^{p/2} \Big] \nonumber \\
 \leq&\, \mathbb{E} \Big[1_{\Omega^c} \frac{4 {C_5}}{n^{p/2} \eta^{2p}}  \Big( \sum_{i \neq j} (\Ob(i,j)\Pb(i,j))^2 \Big)^{p/2} \Big] \nonumber\\
 \leq&\, \frac{4{C_6} (\log p_1)^{p} }{n^{p/2}  \eta^{2p}}\,,\nonumber
\end{align}
for some constants {$C_5,C_6>0$}, where in the last inequality we use the fact that $\Ob(i,j)=O_{\prec}(n^{-1/2})$ and $\Pb(i,j)=O_{\prec}(n^{-1/2})$ by Lemmata \ref{lem_bernstein} and \ref{lem_chernoff}.
Together with a discussion similar to (\ref{eq_headstart}) and (\ref{eq_inequality1}), we conclude that
\begin{equation}\label{eq_transfereq}
{\mathbb E}\,|m_{\Wb^c_x}-m_{\check{\Wb}_x}|^p \leq {C_7} \frac{(\log p_1)^p}{n^{p/2} \eta^{2p}}\,,
\end{equation}
{for some constants $C_7>0$}.
When combined with the assumption (\ref{eq_boundc1c2}), (\ref{eq_rrrr})  and (\ref{eq_transfereq}), we have proved the first claim in Lemma \ref{lem_localkernel}.

The second claim comes from the fact that $\text{rank}(\mathbf{K}_x-\widetilde{\Kb}_x)\leq 1$. By (ii) of Lemma \ref{lem_collecttion}, we conclude the second claim.

The third claim comes from a similar strategy as that for the first claim. We first rewrite $\tilde{\Kb}_x$ as 
\begin{equation*}
\tilde{\Kb}_x=-2f'(2) \Xb^\top \Xb+(f(0)-f(2)) \Ib+2f'(2) \Mb\,,
\end{equation*}
where $\Mb$ is a diagonal matrix with
\begin{equation*}
\Mb=\text{diag} \{\norm{\xb_1}_2^2, \ldots, \norm{\xb_n}_2^2\}\,.
\end{equation*}
By (\ref{eq_stilebound}) and a discussion similar to (\ref{eq_inequality2}), since $\breve{\Kb}_x-\tilde{\Kb}_x=2f'(2)(\mathbf{M}-\mathbf{I})$, we readily obtain that 
\begin{align*}
\mathbb{E} & \left[ 1_{\Omega^c} |m_{\tilde{\Kb}_x}(z)-m_{\breve{\Kb}_x}(z)  |^p \right] \nonumber \\
& \leq \mathbb{E} \Big[ 1_{\Omega^c} \frac{1}{n \eta^4} \sum_{i,j} (\tilde{\Kb}_{x}(i,j)-\breve{\Kb}_x(i,j))^2 \Big]^{p/2} \nonumber  \\
 & \leq \mathbb{E} \Big[1_{\Omega^c} \frac{C_1}{n \eta^4}  \Big( \sum_{i} | \norm{\xb_i}_2^2-1|^2 \Big) \Big]^{p/2}  \\
& \leq \Big( \frac{{C_2} n \log p_1}{n^2 \eta^4} \Big)^{p/2}\,,\nonumber
\end{align*}
{for some constants $C_1,C_2>0$}, where in the second inequality we use (\ref{eq_diagcontrol}). This concludes our proof by the assumption (\ref{eq_boundc1c2}).

\subsection{Proof of Lemma \ref{lem_diagonal}}\label{section: proof of lemma lem_diagonal}

We only prove the results for $\Db_x$.
We adapt the same notations used in the proof of Theorem \ref{thm_edgevalue}, particularly the $\Omega$ defined in \eqref{eq_largderivationbound}.
Note that 
\begin{align}
&\norm{n(\Db_x)^{-1}-f^{-1}(2)\mathbf{I}_n)}\nonumber\\
=\,&\max_i \Big| \Big(n^{-1}\sum_{j=1}^n \Wb_x(i,j)\Big)^{-1}-f^{-1}(2) \Big|\nonumber\\
=\,&\max_i \Big| \frac{n^{-1}\sum_{j=1}^n \Wb_x(i,j)-f(2)}{f(2)n^{-1}\sum_{j=1}^n \Wb_x(i,j)} \Big| \,.\nonumber
\end{align}
By the expansion (\ref{eq_decompositionw}), on $\Omega^c$, we have that for any $i$, 
\begin{align}
\Big| \frac{1}{n} \sum_{j=1}^n \Wb_x(i,j)-f(2) \Big|
&\leq \, \frac{1}{n} \sum_{j \neq  i}^n \Big| f'(2)(\mathbf{O}(i,j)-2\mathbf{P}(i,j))+f^{''}(\zeta_{ij})(\mathbf{O}(i,j)-2\mathbf{P}(i,j))^2 \Big|\label{eq_boundmatrixdiag}\\
&\,+\frac{|\Wb_x(i,i)|+|f(2)|}{n}\,.\nonumber 
\end{align}
First, by (\ref{eq_largderivationbound}) and the assumption (\ref{eq_boundc1c2}), on $\Omega^c$, we readily obtain that for some constant $C>0$,
\begin{align}
& \frac{1}{n} \sum_{j \neq i}^n |\mathbf{P}(i,j)| \leq  \sqrt{\frac{C \log n}{n}}, \nonumber\\
& \frac{1}{n} \sum_{j \neq i}^n|\mathbf{O}(i,j)| \leq \sqrt{\frac{C \log n}{n}}\,,\label{Proof SI13 Rij difference equation} \\ 
& \frac{1}{n} \sum_{j \neq i} (\mathbf{O}(i,j)-2\mathbf{P}(i,j))^2 \leq C \frac{\log n}{n}.\nonumber
\end{align}
Moreover, since $\Wb_x(i,i)=f(0)$ and $f(2)$ is bounded, we conclude that on $\Omega^c$,
\begin{equation*}
\Big| \frac{1}{n} \sum_{j=1}^n \Wb_x(i,j)-f(2) \Big| \leq \sqrt{\frac{C_1 \log n}{n}} \,,
\end{equation*}
for some constant $C_1>0$. As $f(2)>0$, when $n$ is large enough, we find that on $\Omega^c$,
\begin{equation*}
\Big| \frac{1}{n} \sum_{j=1}^n \Wb_x(i,j) \Big|>\frac{f(2)}{2}\,.
\end{equation*}
Then the proof follows from (\ref{eq_boundmatrixdiag}).

\subsection{Proof of Lemma  \ref{lem_2.4.2}}\label{Section proof of lem_2.4.2}

We adapt the same notations used in the proof of Theorem \ref{thm_edgevalue}, particularly the $\Omega$ defined in \eqref{eq_largderivationbound}.
We will  use a discussion similar to the equations heading from (\ref{eq_headstart}) to (\ref{eq_inequality2}) to conclude our proof. 
Similar to the decomposition in (\ref{eq_headstart}), we have  
\begin{align*}
\mathbb{E} \Big| m_{\mathbf{S}^{(2)}_{xy}}(z)-m_{\Sb^{(3)}_{xy}}(z) \Big|^p & =  \mathbb{E} \Big[ 1_{\Omega} |m_{\mathbf{S}^{(2)}_{xy}}(z)-m_{\Sb^{(3)}_{xy}}(z) |^p \Big] \\
& +\mathbb{E} \Big[ 1_{\Omega^c} |m_{\mathbf{S}^{(2)}_{xy}}(z)-m_{\Sb^{(3)}_{xy}}(z) |^p \Big].
\end{align*}
By the assumption (\ref{eq_boundc1c2}),  we have that $\mathbb{P}(\Omega)=p_1^{-C_1/2}$.
Using (\ref{eq_stitrivial}) and the Cauchy-Schwarz inequality, together with a discussion similar to (\ref{eq_inequality1}), we obtain that 
\begin{equation*}
\mathbb{E}\left[ 1_{\Omega}\big| m_{\mathbf{S}^{(2)}_{xy}}(z)-m_{\Sb^{(3)}_{xy}}(z)  \big|^p \right] \leq \frac{2^{p}}{p_1^{C/2} \eta^{p}}\,. 
\end{equation*}
On the other hand, by the same bound like that of \eqref{eq_finalcontrolbound}, for some constant $C>0$, we have
\begin{align*}
\mathbb{E} & \Big[ 1_{\Omega^c}|m_{\mathbf{S}^{(2)}_{xy}}(z)-m_{\Sb^{(3)}_{xy}}(z) |^p \Big]  \nonumber \\
& \leq C \mathbb{E} \Big[ 1_{\Omega^c} \frac{1}{n \eta^4} \operatorname{tr}\big\{(\mathbf{S}_{xy}^{(2)}-\mathbf{S}_{xy}^{(3)})^2\big\} \Big]^{p/2} \nonumber\,.
\end{align*}
To control $\operatorname{tr}\big\{(\mathbf{S}_{xy}^{(2)}-\mathbf{S}_{xy}^{(3)})^2\big\}$, we denote $\mathcal{E}_x:=\widetilde{\mathbf{W}}_x-\breve{\mathbf{K}}_x$ and $\mathcal{E}_y:= \widetilde{\Wb}_y-\breve{\Kb}_y$ for brevity. Since
\begin{align}\label{eq_keydecom}
& \mathbf{S}^{(2)}_{xy}- \mathbf{S}^{(3)}_{xy} = \frac{1}{f^2(2)} ( \mathcal{E}_x \widetilde{\mathbf{W}}_y+ \breve{\mathbf{K}}_x \mathcal{E}_y )\,,
\end{align}
we have
\begin{align} 
&\big( \mathbf{S}^{(2)}_{xy}- \mathbf{S}^{(3)}_{xy} \big)^2 \,\frac{1}{f^4(2)} \Big( \mathcal{E}_x  \widetilde{\Wb}_y  \mathcal{E}_x  \widetilde{\Wb}_y+\mathcal{E}_x  \widetilde{\Wb}_y \breve{\Kb}_x \mathcal{E}_y +\breve{\Kb}_x \mathcal{E}_y \mathcal{E}_x  \widetilde{\Wb}_y+ \breve{\Kb}_x \mathcal{E}_y \breve{\Kb}_x \mathcal{E}_y \Big)\,.\label{eq_expansion}
\end{align}
Due to the similarity of the items in (\ref{eq_expansion}), we only discuss how to control the trace of the first and second terms on its right-hand side.  By (iv) of Lemma \ref{lem_collecttion}, we have 
\begin{align}
\text{tr} \mathcal{E}_x  \widetilde{\Wb}_y  \mathcal{E}_x  \widetilde{\Wb}_y &\,\leq \lambda_1(\widetilde{\Wb}_y) \text{tr}  ( \mathcal{E}_x  \widetilde{\Wb}_y  \mathcal{E}_x )\label{tracekeybound} \\
&\,=\lambda_1(\widetilde{\Wb}_y) \text{tr} (\mathcal{E}_x^2 \widetilde{\Wb}_y)  \leq (\lambda_1(\widetilde{\Wb}_y))^2 \text{tr} (\mathcal{E}_x^2),\nonumber
\end{align}
and 
\begin{align}
\text{tr} \mathcal{E}_x  \widetilde{\Wb}_y \breve{\Kb}_x \mathcal{E}_y & \leq \lambda_1(\widetilde{\Wb}_y) \text{tr}  ( \breve{\Kb}_x \mathcal{E}_y  \mathcal{E}_x )\label{tracekeybound1} \\
& \leq \lambda_1(\widetilde{\Wb}_y) \lambda_1(\breve{\Kb}_x)  \text{tr} (\mathcal{E}_y \mathcal{E}_x)  \nonumber \\
& \leq \lambda_1(\widetilde{\Wb}_y) \lambda_1(\breve{\Kb}_x)  (\text{tr} (\mathcal{E}_x^2)+\text{tr} (\mathcal{E}_y^2) )\,,\nonumber
\end{align}
where in the last step we use the Cauchy-Schwarz inequality (Recall that for any two matrices $\Ab, \Bb \in \mathbb{R}^{n \times n}$, the standard inner product is defined as $\text{tr}(\Ab^\top \Bb)$.)
With (\ref{tracekeybound}) and (\ref{tracekeybound1}), we know that for some constant $C>0$,
\begin{align}
\mathbb{E} \Big[ 1_{\Omega^c}|  m_{\mathbf{S}^{(2)}_{xy}}(z)-m_{\Sb^{(3)}_{xy}}(z) |^p \Big] 
\leq\, C \mathbb{E}\Big[&1_{\Omega^c} \frac{1}{n\eta^4} \Big( \text{tr}(\mathcal{E}_x^2) (\lambda_1(\widetilde{\mathbf{W}}_y))^2\nonumber+\text{tr}(\mathcal{E}_y^2) (\lambda_1(\widetilde{\mathbf{W}}_x))^2\nonumber\\
&\qquad+2\lambda_1(\widetilde{\Wb}_y) \lambda_1(\breve{\Kb}_x)  (\text{tr} (\mathcal{E}_x^2)+\text{tr} (\mathcal{E}_y^2 ) \Big) )  \Big]^{p/2}\label{eq_decompose111}\,.
\end{align}
To finish the proof, we bound $\lambda_1(\breve{\Kb}_x)$, $\lambda_1(\widetilde{\Wb}_x)$, $\lambda_1(\widetilde{\Wb}_y)$, $\text{tr} (\mathcal{E}_x^2)$ and $\text{tr} (\mathcal{E}_y^2)$. Recall the definition of $\widetilde{\Wb}_x$ in (\ref{eq_defntildem}). Define
\begin{equation*}
\widetilde{\Wb}_x^{(1)}:=\widetilde{\Wb}_x+\mathrm{Sh}_2\,.
\end{equation*}
By (\ref{Bound of Sh1+Sh1}), we find that 
\begin{equation*}
\| \widetilde{\Wb}_x^{(1)} -\widetilde{\Wb}_x \|=O_{\prec}\big( n^{-1/2} \big).
\end{equation*}
Moreover, by the Taylor expansion (\ref{eq_decompositionw}) and the fact $\widetilde{\Wb}_x^{(1)}=\Wb_x-\mathrm{Sh}_0-\mathrm{Sh}_1$, we find that
\begin{align*}
&\widetilde{\Wb}_x^{(1)}(i,j)\\
=&\,\left\{
\begin{array}{ll}
-2f'(2)\mathbf{P}(i,j)+f^{''}(\zeta_{ij})(\mathbf{O}(i,j)-2\mathbf{P}(i,j))^2 & i\neq j\\
f(0)-f(2)-2f'(2)[\|\xb_i\|^2-1] & i=j\,.
\end{array}
\right.
\end{align*}
Let $\mathbf{R}_x$ be the matrix with entries 
\begin{equation*}
\mathbf{R}_x(i,j)=\left\{
\begin{array}{ll}
f^{''}(\zeta_{ij})(\mathbf{O}_{ij}-2\mathbf{P}_{ij})^2 & i\neq j\\
0&i=j\,,
\end{array}
\right.
\end{equation*} 
and $\mathbf{S}_x$ be a diagonal matrix so that 
\begin{equation*}
\mathbf{S}_x(i,i)=-2f'(2) \mathbf{I}\,,
\end{equation*} 
for $i=1,2,\cdots, n$. Then we have 
\begin{equation}\label{eq_2w}
 \widetilde{\Wb}_x^{(1)}=-2f'(2)\Xb^\top \Xb-\Sb_x+\mathbf{R}_x+(f(0)-f(2)) \mathbf{I}\,.
\end{equation}
First, when conditional on $\Omega$, clearly we have that $\| \Sb_x \|=O(1)$. Second, using Lemma \ref{lem_rigidity} with $\Sigma=\mathbf{I}$, we find $\| -2f'(2) \Xb^\top \Xb \|=O_{\prec}(1)$. Next, we control the norm of $\mathbf{R}_x$. 
Note that when $i\neq j$, 
\begin{equation*}
|\mathbf{R}_x(i,j)|=O_{\prec}(n^{-1}), 
\end{equation*}
since $\Rb_x(i,j)=f^{''}(\zeta_{ij}) \Ob_{ij}^2-2\Ob_{ij} \Pb_{ij}+\Pb_{ij}^2$, $\Ob_{ij}=O_{\prec}(n^{-1/2})$ and $\Pb_{ij}=O_{\prec}(n^{-1/2})$.
Specifically, note that $\log(n)$ is controlled by $n^\epsilon$ for any $\epsilon>0$, so we have this stochastic dominant control. By the Gershgorin circle theorem, we have 
\begin{equation*}
\| \mathbf{R}_x  \| \leq \max_i \Big(\sum_{j \neq i} |\mathbf{R}_x(i,j)| \Big)\,,
\end{equation*}
and hence 
\begin{equation*}
\| \mathbf{R}_x \|=O_{\prec}(1)\,. 
\end{equation*}
By (\ref{eq_2w}), we readily obtain that 
\begin{equation}\label{eq_matrixbound}
\lambda_1(\widetilde{\mathbf{W}}_x) =O_{\prec}(1)\,.  
\end{equation} 
By a similar argument, we have $\lambda_1(\widetilde{\Wb}_{y}) =O_{\prec}(1)$.
Moreover, by a discussion similar to (\ref{eq_inequality2}),  for some constant $C_1>0$,
\begin{align}
&\mathbb{E} \left[ 1_{\Omega^c } \operatorname{tr} \left(\mathcal{E}_x^2 \right) \right] \leq C_1 \log p_1\,, \label{eq_weilsimilar}\\
&\mathbb{E} \left[ 1_{\Omega^c } \operatorname{tr} \left(\mathcal{E}_y^2 \right) \right] \leq C_1 \log p_2\,.\nonumber
\end{align}
By combining (\ref{eq_decompose111}), (\ref{eq_matrixbound}) and (\ref{eq_weilsimilar}), we therefore conclude the proof using the assumption (\ref{eq_boundc1c2}).

\subsection{Proof of Proposition \ref{prop_norm}}\label{section proof of propositions prop_norm}

Recall the definition of $\widetilde{\mathbf{W}}_x$ in (\ref{eq_defntildem}). By Lemma \ref{lem_matrixapproximation}, for some small constants $c>0$ and $\varsigma>0$, with probability at least $1-O(n^{-1/2-c})$, we have
\begin{equation} \label{eq_converge}
\norm{\widetilde{\mathbf{W}}_x-\breve{\mathbf{K}}_x} \leq  n^{-\varsigma}/2\,, 
\end{equation}
where $\breve{\mathbf{K}}_x$ is defined in (\ref{eq_matkotherform}).  By Remark \ref{rem:localawlarge} and Lemma \ref{lem_rigidity},  we have with probability greater than $1-n^{-D}$ for some large $D>0$ that 
\begin{equation*}
\lambda_1(\breve{\mathbf{K}}_x) \leq \gamma_+ + n^{-\varsigma}/2\,.
\end{equation*}
As a consequence, together with (\ref{eq_converge}), we conclude that with probability at least $1-O(n^{-1/2-c})$,
\begin{equation}\label{Bound of tildeWx norm with high probability}
\lambda_1(\tilde{\mathbf{W}}_x) \leq \gamma_++n^{-\varsigma}\,.
\end{equation}
Hence, for {any given} small $\epsilon>0,$ 
it suffices to prove that with probability greater than $1-O(n^{-1/2-c})$, there is no eigenvalue of $\widetilde{\mathbf{W}}_x$ in 
\begin{equation}
\mathrm{I}:=[\gamma_++n^{-1/9+{C_0}\epsilon}, \gamma_++n^{-\varsigma}]\,,\label{Definition:Proof Proposition5.3 I}
\end{equation}
where {$C_0>0$} is a constant.

To achieve this goal, we prepare some quantities. Since $\text{rank}(\widetilde{\mathbf{W}}_x-\mathbf{W}_x) = \text{rank}(\mathrm{Sh}_0+\mathrm{Sh}_1+\mathrm{Sh}_2)\leq 6$, we get from (ii) of Lemma \ref{lem_collecttion} that 
\begin{equation}\label{eq_control}
|m_{\widetilde{\mathbf{W}}_x}(z)-m_{\mathbf{W}_x}(z)| \leq \frac{C}{n \eta}\,,
\end{equation}
for some constant $C>0$. Furthermore, by Lemma \ref{lem_localkernel} and the Markov inequality, we conclude that 
{we can find a small} $\epsilon_0>0$ {satisfying $\epsilon_0<1/18$} and a large $p_0\geq 1$ so that with probability greater than $1-n^{-\epsilon_0 p_0}$,
\begin{equation}\label{proof prop 5.3 Wx cKx Stieljes difference}
|m_{\widetilde{\mathbf{W}}_x}(z)-m_{\breve{\Kb}_x}(z)| \leq \frac{1}{n^{1/2-\epsilon_0} \eta^{2}} \,,
\end{equation}
for any $z\in \mathsf{S}(1/4,\tau)$. Note that {we can choose} $p_0$ sufficiently large so that $D:=\epsilon_0 p_0>0$ is sufficiently large. Denote $\Omega$ to be the associated event.
Below, we focus our discussion on $\Omega$. Since $\mathsf{S}(1/4,\tau)\subset \mathsf{S}(1,\tau)$, by combining Lemma \ref{lem_locallaw} and \eqref{proof prop 5.3 Wx cKx Stieljes difference}, we have 
\begin{equation}\label{eq1}
|m_{\widetilde{\mathbf{W}}_x}(z)-m_{\nu_x}(z)| \leq \frac{1}{n^{1/2-\epsilon_0} \eta^2},
\end{equation} 
when conditional on $\Omega$, where recall that $\nu_x$ is the LSD of $\breve{\mathbf{K}}_x$ defined in \eqref{Definition special measure for kernel matrix}. 
We now choose $z=E+\mathrm{i}\eta \in \mathbb{C}^+$ so that $E\in\mathrm{I}$ defined in \eqref{Definition:Proof Proposition5.3 I} and $\eta$ satisfies 
\begin{equation}\label{eta_use}
\eta=\eta(E):=n^{-1/9-C_1 \epsilon}\sqrt{\kappa(E)}\,,
\end{equation}
where $\kappa(E):=|E-\gamma_+|$ and $C_1>0$ is chosen so that $z \in \mathsf{S}(1/4,\tau)$ {and $C_1\leq (\frac{1}{54}-\frac{\epsilon_0}{3})\frac{1}{\epsilon}+\frac{C_0}{3}$. Note that $C_1$ exists since $\epsilon_0$ satisfies $\epsilon_0<1/18$}.

By Lemma \ref{lem_mplaw}, for any such $z$, for some constant $C_2>0$, we have
\begin{align}
|\operatorname{Im} m_{\nu_x}(z)| \leq C_2 \frac{\eta}{\sqrt{\kappa (E)+\eta}}
 \leq\, C_2 \frac{\eta}{\sqrt{\kappa (E)}}=C_2 n^{-1/9-C_1 \epsilon}\,.\label{eq2}
\end{align}
Hence, by (\ref{eq1}) and (\ref{eq2}), when $n$ is sufficiently large, we have
\begin{align}
|\operatorname{Im} m_{\widetilde{\mathbf{W}}_x}(z)| 
\leq & \,|m_{\widetilde{\mathbf{W}}_x}(z)-m_{\nu_x}(z)|+|\operatorname{Im} m_{\nu_x}(z)| \label{eq_contro}\\
\leq &\, n^{-1/2+\epsilon_0} \eta^{-2}+C_2 n^{-1/9-C_1 \epsilon}\nonumber\\
\leq &\,2C_2 n^{-1/9-C_1\epsilon}\nonumber\,,
\end{align}
where the last inequality holds since $n^{-1/2+\epsilon_0} \eta^{-2}\leq n^{-1/6+(2C_1-{C_0})\epsilon+\epsilon_0}$, which holds due to \eqref{eta_use} and the fact that $n^{-1/9+{C_0}\epsilon} \leq \kappa(E) \leq n^{-\varsigma}$ by the assumption that $E\in\mathrm{I}$, and {the choice of $C_1$ that satisfies $C_1\leq (\frac{1}{54}-\frac{\epsilon_0}{3})\frac{1}{\epsilon}+\frac{C_0}{3}$}.

With the above preparation, now we assume that there exists an eigenvalue in $\mathrm{I}$ when conditional on $\Omega$. Without loss of generality, suppose $\lambda_1(\widetilde{\mathbf{W}}_x) \in \mathrm{I}$. Set $z'=\lambda_1(\widetilde{\mathbf{W}}_x)+\mathrm{i} \eta$, where $\eta$ is chosen in  (\ref{eta_use}). We have
\begin{align}\label{eq_imktildedecom}
\operatorname{Im} m_{\breve{\mathbf{K}}_x}(z')&\,=\frac{1}{n} \sum_{j} \frac{\eta}{(\lambda_j(\breve{\mathbf{K}}_x)-\lambda_1(\widetilde{\mathbf{W}}_x))^2+\eta^2} \nonumber \\
&\,=\frac{1}{n} \sum_{j} \frac{\eta}{(\lambda_j(\breve{\mathbf{K}}_x)-\gamma_+-\kappa_1)^2+\eta^2}\,,  
\end{align}
where $\kappa_1:=\kappa(\lambda_1(\widetilde{\mathbf{W}}_x))$.
Note that since $n^{-1/9+{C_0}\epsilon}\leq \kappa_1\leq n^{-\varsigma}$, $n^{-\varsigma}<1$, and $\eta=n^{-1/9-C_1\epsilon}\sqrt{\kappa_1}$, we know 
\begin{equation}\label{eq_flu0000}
\kappa_1 \geq \eta,
\end{equation} 
on $\mathrm{I}$. Since $n^{-\varsigma}<1$, we have $ \kappa_1^{3/2}< 1$.
By the same argument for \eqref{proof theorem lambdai right most edge gap} based on the square root behavior of $\nu_x$ (c.f.. (\ref{eq)squareroot})), Lemma \ref{lem_rigidity} and \eqref{proof theorem lambdai gammai the same}, we deduce that for all $j=1,\ldots,n$, 
\begin{equation}\label{eq_flu}
|\gamma_+-\lambda_j(\breve{\mathbf{K}}_x)| \sim j^{2/3} n^{-2/3}\,,
\end{equation}
which leads to
\begin{equation*}
|\gamma_+-\lambda_j(\breve{\mathbf{K}}_x)| \leq \sqrt{C_3} \sqrt{\kappa_1}\,,
\end{equation*}
when $j < \kappa_1^{3/4}n$, where $C_3>0$ is  some fixed constant. On the other hand, when $j \geq  \kappa_1^{3/4} n$, it is easy to see from (\ref{eq_flu}) that when $n$ is sufficiently large 
\begin{equation}\label{eq_gegekap1}
|\gamma_+-\lambda_j(\breve{\Kb}_x)|^2 >  C_3\kappa_1. 
\end{equation}
Note that when $n$ is sufficiently large, we have $(n^{-\varsigma})^{3/4}<1$. Hence, when $n$ is sufficiently large, by (\ref{eq_imktildedecom}), (\ref{eq_flu}) and \eqref{eq_gegekap1}, we have 
\begin{align*}
\left|\operatorname{Im} m_{\breve{\mathbf{K}}_x}(z')\right| \geq &\,\frac{1}{n} \sum_{j= \kappa_1^{3/4} n}^{ n} \frac{\eta}{2(\lambda_j(\breve{\mathbf{K}}_x)-\gamma_+)^2+2\kappa_1^2+\eta^2}\nonumber\\
\geq &\,  \frac{\eta}{(3+2C_3) n} n(1-\kappa_1^{3/4}) \frac{1}{\kappa_1}   \\
= &\, \frac{1}{(3+2C_3)} n^{-1/9-C_1 \epsilon} (1-\kappa_1^{3/4})  \frac{1}{\sqrt{\kappa_1}}  \,.\nonumber
\end{align*}
Since $\kappa_1 \leq n^{-\varsigma}$, when $n$ is large, together with (\ref{proof prop 5.3 Wx cKx Stieljes difference}), we find that this is a contradiction to (\ref{eq_contro}) and hence there is no eigenvalue lying in $\mathrm{I}$. {Indeed, by the choice of $z'$, we have $|\operatorname{Im} m_{\widetilde{\mathbf{W}}_x}(z')-\operatorname{Im}m_{\breve{\Kb}_x}(z')| \leq n^{-1/6+(2C_1-C_0)\epsilon}$. Also, by the choice of $C_1$, we know $n^{-1/6+(2C_1-C_0)\epsilon}\leq n^{-1/9-C_1\epsilon-\epsilon_0}$. As a result, by the triangular inequality, we should have $|\operatorname{Im} m_{\widetilde{\mathbf{W}}_x}(z')|\geq|\operatorname{Im}m_{\breve{\Kb}_x}(z')|-|\operatorname{Im} m_{\widetilde{\mathbf{W}}_x}(z')-\operatorname{Im}m_{\breve{\Kb}_x}(z')|\geq \frac{1}{2(3+2C_3)}n^{-1/9-C_1\epsilon+\varsigma/2}-n^{-1/9-C_1\epsilon-\epsilon_0}$, which asymptotically is greater than $2C_2n^{-1/9-C_1\epsilon}$, which contradicts (\ref{eq_contro}).} This concludes our proof.

\section{Technical proofs of Propositions \ref{prop_largscaleset} and \ref{pro_ncca_main}}\label{section proof of propositions prop_largscaleset}

We follow the proof backbone of \cite{ERDOS2012,locallawnote} but provide details for our setup for the self-containedness.
  
\begin{proof}[Proof of Proposition \ref{prop_largscaleset}]  

Denote 
\begin{equation}
\hat{\nu}:=\mu_{\widetilde{\Wb}_x}-\nu_x, \ 
\hat{m}:=m_{\widetilde{\Wb}_x}-m_{\nu_x}\,.
\end{equation}
Let $\gamma_-$ and $\gamma_+$ be the left-most and right-most edges of $\nu_x$ respectively.  Clearly, $[\gamma_-,\gamma_+]$ is included in the range of the support of $\widetilde{\Wb}_x$. Therefore, we have $\supp(\mu_{\tilde{\Wb}_x}-\nu_x)\subset \supp \mu_{\tilde{\Wb}_x}$. By the control of the largest eigenvalue of $\tilde{\Wb}_x$ in \eqref{Bound of tildeWx norm with high probability} and the smallest eigenvalue of $\tilde{\Wb}_x$ by a similar argument, we know that for a fixed $\theta>0$, when $n$ is sufficiently large, $\supp \mu_{\tilde{\Wb}_x}\subset [\gamma_--\theta,\gamma_++\theta]$ holds with probability at least $1-O(n^{-1/2})$.
Thus, from now on, we focus our discussion on this probability event and call it $\Omega_1$. Hence, the support of $\hat{\nu}$ is in $[\gamma_--\theta,\gamma_++\theta]$.

To the end of the proof, fix a small $\epsilon>0$ and define $q=q(n):=n^{-1/4+\epsilon}$. Then, for any interval $I \subset [\gamma_--\theta, \gamma_++\theta]$, we choose a smoothed indicator function $h \equiv h_{I, \eta} \in \mathcal{C}_c^{\infty}(\mathbb{R}; [0,1])$ satisfying $h(x)=1$ for $x \in I$, and $h(x)=0$ for $\text{dist}(x, I) \geq q$, $\norm{h'}_{\infty} \leq B q^{-1}$ and $\norm{h^{''}}_\infty \leq B q^{-2}$ for $B>0$.  Next, we choose a smooth and even cutoff function $\chi \in \mathcal{C}_c^{\infty}(\mathbb{R};[0,1])$ satisfying $\chi (y)=1$ for $|y| \leq 1,\ \chi (y)=0$ for $|y| \geq 2$, and $\norm{\chi'}_{\infty} \leq B. $ 
Using the Helffer-Sj{\"o}strand formula from Lemma \ref{lem_funcal} and \eqref{rem:d4} with $n=1$, we obtain
\begin{equation}
h(\lambda)=\frac{1}{2 \pi} \int_{\mathbb{R}^2} \frac{\mathrm{i} y h^{''}(x) \chi (y)+\mathrm{i}(h(x)+\mathrm{i} yh'(x)) \chi' (y)}{\lambda-x-\mathrm{i} y} dx dy
\end{equation}
for any $\lambda\in \mathbb{R}$ so that $\chi(\lambda)=1$. 
By a direct calculation, we have 
\begin{align*}
\int h(\lambda) \hat{\nu}(d \lambda)=\frac{1}{2 \pi}  \int_{\mathbb{R}^2}\big[\mathrm{i} y h^{''}(x) \chi (y)+\mathrm{i}(h(x)+\mathrm{i} yh'(x)) \chi' (y)\big] \hat{m}(x+\mathrm{i} y)  dx dy\,.
\end{align*}
Note that $\mathrm{i} y h^{''}(x) \chi (y)+\mathrm{i}(h(x)+\mathrm{i} yh'(x)) \chi' (y)=0$ when $y=0$. Since $\int h(\lambda) \hat{\nu}(d \lambda)$ on the left hand side is real, by a direct expansion, we obtain that (for instance see \cite[equations (8.2)-(8.4)]{locallawnote})
\begin{align}
&\int h(\lambda) \hat{\nu}(d\lambda)\nonumber  \\
=&-\frac{1}{2 \pi}\iint_{|y| \leq q}  h^{''}(x) \chi (y) y \operatorname{Im} \hat{m}(x+\mathrm{i} y) dx dy \label{eq_bb1} \\ 
&-\frac{1}{2 \pi} \iint_{|y|>q} h''(x) \chi(y) y \operatorname{Im} \hat{m}(x+\mathrm{i}y) dx dy \label{eq_bb2} \\
& +\frac{\mathrm{i}}{2 \pi} \iint (h(x)+\mathrm{i} yh'(x)) \chi'(y) \hat{m} (x+\mathrm{i}y) dx dy. \label{eq_bb3}
\end{align}
We now control (\ref{eq_bb1})-(\ref{eq_bb3}). By \eqref{eq1}, we find that there exists a high probability event $\Omega_2$ such that for small $\epsilon_0>0$,
\begin{equation}
|\hat{m}(x+\mathrm{i} y)| \leq \frac{1}{n^{1/2-\epsilon_0} y^2},\label{Proof proposition 5.4 bound hat m}
\end{equation}
for $\epsilon_0\leq x\leq\epsilon_0^{-1}$ and $y \in [q, \epsilon^{-1}]$. We restrict our discussion on the high probability event $\Omega_1\cap \Omega_2$. By definition, $\chi'$ is supported in $[-2,2]\backslash (-1,1)$. By the fact that the measure of the support of  $h'$ is at most $2 \eta$, we find that there exists some constant $C_1>0$ such that
\begin{align*}
|(\ref{eq_bb3})| 
 \leq & \, n^{-1/2+\epsilon_0} \int_{\mathbb{R}^2} (|h(x)| y^{-2} |\chi' (y)|+y^{-1} |h'(x) \chi'(y)|) dx dy  \\
 \leq\,& C_1 n^{-1/2+\epsilon_0}. 
\end{align*}
Next we control (\ref{eq_bb1}). First of all, since
\begin{equation*}
y \operatorname{Im}m_{\widetilde{\Wb}_x}(x+\mathrm{i} y)=\frac{1}{n}\sum_{i=1}^n \frac{y^2}{(\lambda_i(\widetilde{\Wb}_x)-x)^2+y^2}\,,  
\end{equation*}
it is easy to see that the map $y \rightarrow y \operatorname{Im} m_{\widetilde{\Wb}_x}(x+ \mathrm{i} y) $ is nondecreasing and nonnegative for all $x$ and $y>0$. Therefore, we have that for any $y \in (0, q]$,
\begin{align}
y \operatorname{Im} \hat{m}(x+\mathrm{i}y)  \leq y \operatorname{Im} m_{\widetilde{\Wb}_x}(x+\mathrm{i} y) \leq q \operatorname{Im} m_{\widetilde{\Wb}_x}(x+\mathrm{i} 
q) \leq C_1 q\,,\nonumber
\end{align} 
where in the first inequality, we use the fact that both $y \operatorname{Im}m_{\widetilde{\Wb}_x}(x+\mathrm{i}y) $ and $y \operatorname{Im}m_{\nu_x}(x+\mathrm{i}y)$ are non-negative, and in the third step, we use  Lemma \ref{lem_mplaw} and the fact that $\sqrt{\kappa+q}$ is of order $1$ since $\kappa$ is of order $1$ and $q=n^{-1/4+\epsilon}$. As a consequence, we obtain that for some constant $C_2>0$,
\begin{equation*}
|(\ref{eq_bb1})| \leq C_2 q\,,
\end{equation*}
where we use the fact that the support of $h''$ has Lebesgue measure of order $\eta$. 
Finally, we control (\ref{eq_bb2}). By a direct calculation, we have 
\begin{align}
(\ref{eq_bb2}) = &\,\mathrm{i}\int h'(x) \int_{|y|> q} \chi(y) y\partial_y \operatorname{Im} \hat{m}(x+\mathrm{i}y) dy dx\nonumber \\
=&\,\mathrm{i}\chi(q)q \int h'(x) \operatorname{Im} \hat{m}(x+\mathrm{i}q)dx\\
&\,- \mathrm{i}\iint_{|y|> q} h'(x) (y \chi'(y) +\chi(y)) \operatorname{Im}  \hat{m}(x+\mathrm{i}y)dx dy\,,\nonumber
\end{align}
where the first equality comes from the integration by parts in $x$, the fact that $\partial_x\operatorname{Im} m_{\nu_x}(x+\mathrm{i} q)=-i\partial_y\operatorname{Im} m_{\nu_x}(x+\mathrm{i} q)$, and the fact that $\hat{m}(x+iy)\to 0$ when $x\to \infty$ cancels the boundary terms, and the second equality comes from the integration by parts in $y$. Similar to the discussion of \cite[equation (8.7)]{locallawnote}, we thus have
\begin{align}
|(\ref{eq_bb2})| \leq & \iint_{y> q} |h'(x) \chi(y) \hat{m}(x+\mathrm{i}y)| dx dy+\iint_{y> q} |h'(x) y \chi'(y) \hat{m}(x+\mathrm{i}y)|dx dy  \nonumber \\
& + q\int  |h'(x)  \hat{m}(x+\mathrm{i} q)| dx\,.\label{Proof of Proposition 5.4 control the large eta part}
\end{align}
For the first part of \eqref{Proof of Proposition 5.4 control the large eta part}, for some constants $C_1, C>0$, by \eqref{Proof proposition 5.4 bound hat m}, it is bounded by
\begin{equation*}
C_1 \int_{q}^2 \frac{1}{n^{1/2-\epsilon_0} y^2} dy \leq C n^{-1/4}\,. 
\end{equation*}
Similarly, we can deal with the other items of \eqref{Proof of Proposition 5.4 control the large eta part} and conclude that 
\begin{equation*}
|(\ref{eq_bb2})| \leq C n^{-1/4}\,. 
\end{equation*}
We thus have proved that on the event $\Omega_1 \cap \Omega_2$,
\begin{equation}\label{Proof proposition bound 1 for final one side}
\left| \int h(\lambda) \widetilde{\nu}_x(d \lambda) \right| \leq C n^{-1/4+2\epsilon}\,. 
\end{equation}

With the above preparation, we now conclude our proof. For $I \subset [\gamma_-, \gamma_+]$ and $h=h_{I,q}$, clearly we have
\begin{equation}\label{Proof proposition bound 2 for final one side}
\nu_x(I) \leq \int h(\lambda) \nu_x ( d \lambda).
\end{equation}
Moreover, with high probability, we have  
\begin{equation}\label{Proof proposition bound 3 for final one side}
\int h(\lambda) \mu_{\widetilde{\Wb}_x} (d \lambda) \leq \mu_{\widetilde{\Wb}_x} (I)+O(n^{-1/4+2\epsilon}),
\end{equation}
where we use the fact that $\mu_{\widetilde{\Wb}_x}$ is bounded conditional on $\Omega_1 \cap \Omega_2$ and $q=n^{-1/4+\epsilon}$. As a result, by combining \eqref{Proof proposition bound 1 for final one side}, \eqref{Proof proposition bound 2 for final one side} and \eqref{Proof proposition bound 3 for final one side}, we obtain
\begin{equation*}
\nu_x(I) \leq\mu_{\widetilde{\Wb}_x} (I)+O(n^{-1/4+2\epsilon}).
\end{equation*}
On the other hand, by denoting $I':=\{x \in \mathbb{R}: \text{dist} (x, I^c) \geq q\}$, by a similar argument we have 
\begin{equation*}
\nu_x(I)  \geq \int h_{I', q}(\lambda)\nu_x(d \lambda) \,,
\end{equation*}
and
\begin{equation*}
\int h_{I', q}(\lambda) \mu_{\widetilde{\Wb}_x}(d \lambda) \geq \mu_{\widetilde{\Wb}_x}(I)+O(n^{-1/4+2\epsilon})\,.
\end{equation*}
Combined with \eqref{Proof proposition bound 1 for final one side}, we have
\begin{equation*}
\nu_x(I)  \geq\mu_{\widetilde{\Wb}_x}(I)+O(n^{-1/4+2\epsilon})\,,
\end{equation*}
and hence we have with probability at least $1-O(n^{-1/2})$, for some constant $C>0$,
\begin{equation}\label{Proof Proposition 5.4 when I is inside the bulk}
|\mu_{\widetilde{\Wb}_x}(I)-\nu_x(I) | \leq C n^{-1/4+2 \epsilon}\,,
\end{equation}
for any $I\subset [\gamma_--\theta,\gamma_++\theta]$.

To finish the proof, we extend the results to any $I \subset \mathbb{R}. $ Since $\nu_x([\gamma_-, \gamma_+])=1$, by (\ref{Proof Proposition 5.4 when I is inside the bulk}) with $I=[\gamma_-, \gamma_+]$, we find that with probability at least $1-O(n^{-1/2}),$ for some constant $C>0,$
\begin{equation}\label{Proof Proposition 5.4 eq_small}
\mu_{\widetilde{\Wb}_x} (\mathbb{R} \backslash [\gamma_-, \gamma_+])  \leq C n^{-1/4+2 \epsilon}\,.
\end{equation}

For arbitrary $I \subset \mathbb{R}$, we have 
\begin{align}
\nu_x(I)=&\,\nu_x (I \cap [\gamma_-, \gamma_+])=\mu_{\widetilde{\Wb}_x} (I \cap [\gamma_-, \gamma_+])+O_{\prec}(n^{-1/4})\nonumber\\
=&\,\mu_{\widetilde{\Wb}_x} (I)+O(n^{-1/4+2 \epsilon})\,,\nonumber
\end{align}
where the first quality holds since $\nu_x (I \cap ( \mathbb{R} \backslash [\gamma_-, \gamma_+]) )=0$, the second equality comes from \eqref{Proof Proposition 5.4 when I is inside the bulk} and the final equality comes from \eqref{Proof Proposition 5.4 eq_small}. This concludes our proof. 
\end{proof}

Analogously, we can prove Proposition \ref{pro_ncca_main}. 

\begin{proof}[Proof of Proposition \ref{pro_ncca_main}] We can repeat the proofs of Propositions \ref{prop_norm} and \ref{prop_largscaleset}  by using  Lemma \ref{lem_addtivemeasurelemma}. We omit further details. 

\end{proof}

\section{Local laws for multiplication  of random matrices and proof of Lemma  \ref{lem_2.4.3}}\label{app:bao} 

Take two {\em deterministic} real diagonal matrices, $\Ab=\Ab(n):=\text{diag}\{a_1, \ldots, a_n\}$ and $\Bb=\Bb(n):=\text{diag}\{b_1,\ldots, b_n\}$, where $a_i,b_i\in \mathbb{R}$ for $i=1,\ldots,n$.  As we have seen from Section \ref{sec:proofstrategy}, the arguments boil down to study the local laws of multiplication of random matrices for the model 
\begin{equation}\label{Definition H=UAUB}
\Hb=\Ub \Ab \Ub^\top \Bb\,,
\end{equation}
where $\Ub$ is Haar distributed on $\mathcal{U}(n)$ (or $\mathrm{O}(n))$. Specifically, we are interested in \eqref{eq_defnd}, where $\Ab=\Sigma_x$ and $\Bb=\Sigma_y$ are deterministic matrices determined by the typical locations of $\nu_x$ and $\nu_y$. { We mention that the main result, Proposition \ref{pro_key}, in the section is quite general and can be of independent interests to free probabilists.}

We also assume that there are two $n$-independent absolutely continuous probability measures $\mu_{\alpha}$ and $\mu_{\beta}$ associated with $\mu_{\Ab}$ and $\mu_{\Bb}$.  We denote their densities as $\rho_{\alpha}$ and $\rho_{\beta}$.
We start by introducing the assumptions. The first assumption discusses some quantitative properties of $\mu_\alpha$ and $\mu_\beta$ and the second assumption demonstrates the relationship between $\mu_{\Ab}$ and $\mu_\alpha$ as well as $ \mu_{\Bb}$ and $\mu_\beta$. 

\begin{assumption}\label{assu_product1} 
We assume the following conditions for $\mu_\alpha$ and $\mu_\beta$: 
\begin{enumerate}
\item[(i)] Each measure has one non-trivial interval support, denoted as $[\lambda_-^{\alpha}, \lambda_+^{\alpha}]\subset (0,\infty)$ and $[\lambda_-^{\beta},\lambda_+^{\beta}]\subset (0,\infty)$ respectively,
where $|\lambda_+^{\alpha}| \leq C$ and $|\lambda_+^{\beta}| \leq C$ for some constant $C>0$.
Further, $\rho_{\alpha}$ and $\rho_{\beta}$ are strictly positive in the interior of their supports. 
\item[(ii)] In a small $\delta$-neighborhood of upper edges of the supports, these measures have a power law behavior; that is, there exists a small constant $\delta>0$ and exponents $-1<t^{\alpha}_+,t^{\beta}_+<1$ such that 
\begin{align}
& C^{-1} \leq \frac{\rho_{\alpha}(x)}{(\lambda_+^{\alpha}-x)^{t_+^{\alpha}}} \leq C, \ \forall x \in [\lambda_+^{\alpha}-\delta, \lambda_+^{\alpha}]\,, \nonumber\\
& C^{-1} \leq \frac{\rho_{\beta}(x)}{(\lambda_+^{\beta}-x)^{t_+^{\beta}}} \leq C, \ \forall x \in [\lambda_+^{\beta}-\delta, \lambda_+^{\beta}]\nonumber 
\end{align} 
hold for some constant $0<C\leq 1$. 
\end{enumerate}
\end{assumption}  

Note that for the property (ii) in Assumption \ref{assu_product1}, if $\mu_{\alpha}$ and $\mu_{\beta}$ are of types similar to the MP law, $t_+^{\alpha}=t^{\beta}_+=\frac{1}{2}$ according to the well-known square-root behavior (see, e.g., (\ref{eq)squareroot})) near the right-most edge.

\begin{assumption}\label{ass_product2} 
The relationship between the deterministic $n$-dependent ESDs $\mu_{\Ab}$ and $\mu_{\Bb}$ and the $n$-independent measures $\mu_\alpha$ and $\mu_\beta$ satisfies: 
\begin{enumerate}
\item[(iii)]  
When $n$ is sufficiently large, we have  
\begin{equation}\label{eq_levy}
\mathcal L(\mu_{\Ab}, \mu_\alpha)+\mathcal L(\mu_{\Bb}, \mu_\beta) \leq n^{-2/3+\epsilon_L},
\end{equation}
where $\mathcal{L}(\cdot, \cdot)$ is the Levy distance and $\epsilon_L>0$ is a small constant. 
\item[(iv)] For any $\delta>0$, we have 
\begin{equation*}
\sup (\text{supp} \ \mu_{\Ab}) \leq \lambda_+^{\alpha}+\delta, \ \ \sup (\text{supp} \ \mu_{\Bb}) \leq \lambda_{+}^{\beta}+\delta
\end{equation*}
and
\begin{equation}
\inf (\text{supp} \ \mu_{\Ab}) \geq \lambda_-^{\alpha}-\delta, \ \ \inf (\text{supp} \ \mu_{\Bb}) \geq \lambda_{-}^{\beta}-\delta\label{eq_edgedgedge}
\end{equation}
when $n$ is sufficiently large.
\end{enumerate}
\end{assumption}

\begin{remark} 
We mention that in \cite{DJ}, the authors derived the local laws for the same matrix model near the edge when the Levy distance between $\mu_{\Ab}$ and $\mu_{\Bb}$ and their asymptotic counterpart is of order $n^{-1}$, and 
the results are stated for the Haar unitary distributed $\Ub$.  
\end{remark}

For the sake of self-containedness, we provide the following lemma that translates the closeness of the Stieltjes' transforms and the $M$-transforms in terms of the associated Levy distance. 
\begin{lemma}\label{lem:rbound}
	Let $\delta>0$ be a fixed constant and $f: \mathbb{R}\rightarrow \mathbb{C}$ be continuously differentiable in $(\lambda_{-}^{\alpha}-\delta,\lambda_{+}^{\alpha}+\delta)$. Suppose there exists $n_{0}\in \mathbb{N}$, independent of $f$, such that $\mathcal L(\mu_{\Ab}, \mu_\alpha) \leq \delta/2$ and $\operatorname{supp} \mu_{\Ab} \in[\lambda_{-}^{\alpha}-\delta/2,\lambda_{+}^{\alpha}+\delta/2]$ for all $n \geq n_{0}$. 
	Then there exists $C>0$, independent of $f$, such that we have
	\begin{align*}
		\Big| \int_{\mathbb{R}_{+}} f(x)\ d\mu_{\alpha}(x)-\int_{\mathbb{R}_{+}} f(x) d\mu_{\Ab}(x) \Big|\leq C \norm{f'}_{\mathrm{Lip},\delta}\mathcal L(\mu_{\Ab}, \mu_\alpha),
	\end{align*}
	for all $n \geq n_{0}$, where 
	\begin{align*}
		\norm{f'}_{\mathrm{Lip},\delta}:=\sup_{{x\in[\lambda_{-}^{\alpha}-\delta,\lambda_{+}^{\alpha}+\delta]}}  |f'(x)| +\sup\Big\{ \left|\frac{f'(x)-f'(y)}{x-y} \right| 		:x,y\in[\lambda_{-}^{\alpha}-\delta,\lambda_{+}^{\alpha}+\delta], x\neq y\Big\}.
	\end{align*}
	The same result holds if we replace $\alpha$ and $\Ab$ by $\beta$ and $\Bb$ respectively.
\end{lemma}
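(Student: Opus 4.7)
The plan is to reduce the problem to an $L^1$ bound on the difference of distribution functions via integration by parts, and then to exploit the Levy-distance hypothesis together with the fact that both measures are supported in a bounded interval.

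\emph{Step 1 (Truncation and normalization).} First, I would choose a cutoff $\chi \in C_c^\infty(\mathbb{R})$ with $\chi \equiv 1$ on $I':=[\lambda_-^{\alpha}-\delta/2,\lambda_+^{\alpha}+\delta/2]$ and $\operatorname{supp}\chi \subset (\lambda_-^{\alpha}-\delta,\lambda_+^{\alpha}+\delta)=:\mathring I$, with $\|\chi'\|_\infty \lesssim \delta^{-1}$. By the two support hypotheses, both $\mu_\alpha$ and $\mu_{\Ab}$ live inside $I'$, so replacing $f$ by $g:=f\chi$ changes neither integral, and $g\in C_c^1(\mathring I)$. Since $\int \chi\,d\mu_\alpha=\int\chi\,d\mu_{\Ab}=1$, the difference $\int f\,d\mu_\alpha-\int f\,d\mu_{\Ab}$ is invariant under the shift $f\mapsto f+c$, so I normalize $f$ so that $f$ vanishes at some point of $\mathring I$; the mean value theorem then gives $\|f\|_{L^\infty(\mathring I)} \leq |\mathring I|\,\|f'\|_{L^\infty(\mathring I)}$.

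\emph{Step 2 (Integration by parts).} Since $g$ is $C^1$ with compact support, integration by parts yields $\int g\,d\nu=-\int F_\nu\,g'\,dx$ for any probability measure $\nu$ (the boundary terms vanish). Subtracting the two identities,
\begin{equation}
\int f\,d\mu_\alpha-\int f\,d\mu_{\Ab} \;=\; -\int_{\mathring I}\bigl(F_{\mu_\alpha}(x)-F_{\mu_{\Ab}}(x)\bigr)\,g'(x)\,dx.
\end{equation}
From $g'=f'\chi+f\chi'$ and the normalization of Step 1, $\|g'\|_\infty \leq (1+|\mathring I|\,\|\chi'\|_\infty)\,\|f'\|_\infty \leq C\,\|f'\|_{\mathrm{Lip},\delta}$, with $C$ depending only on $\delta$ and the interval length.

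\emph{Step 3 (Controlling the $L^1$ CDF difference).} Setting $\epsilon:=\mathcal L(\mu_{\Ab},\mu_\alpha)$, the two Levy-distance inequalities $F_{\mu_\alpha}(x-\epsilon)-\epsilon \leq F_{\mu_{\Ab}}(x) \leq F_{\mu_\alpha}(x+\epsilon)+\epsilon$ combine to give the pointwise bound
\begin{equation}
|F_{\mu_\alpha}(x)-F_{\mu_{\Ab}}(x)| \;\leq\; \epsilon + \bigl[F_{\mu_\alpha}(x+\epsilon)-F_{\mu_\alpha}(x-\epsilon)\bigr].
\end{equation}
Integrating over $\mathring I$ and changing variables, the telescoping integral on the right collapses to $\int_{b-\epsilon}^{b+\epsilon}F_{\mu_\alpha}-\int_{a-\epsilon}^{a+\epsilon}F_{\mu_\alpha} \leq 2\epsilon$ because $0\leq F_{\mu_\alpha}\leq 1$, yielding $\int_{\mathring I}|F_{\mu_\alpha}-F_{\mu_{\Ab}}|\,dx \leq (|\mathring I|+2)\epsilon$. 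Combining with Step 2 gives the claimed bound with a constant $C$ depending only on $\delta$, $\lambda_\pm^{\alpha}$, and the cutoff $\chi$. The argument for $\beta,\Bb$ is identical.

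\emph{Main obstacle.} The only delicate point is the boundary handling: $f$ is only assumed $C^1$ on the open neighborhood $\mathring I$, so the cutoff $\chi$ is indispensable to put us in the $C_c^1$ regime where integration by parts is clean, and the constant thereby picks up the harmless factor $\delta^{-1}$. Strictly speaking the $\sup|f'|$ part of $\|f'\|_{\mathrm{Lip},\delta}$ already suffices for the argument above; the Lipschitz seminorm of $f'$ is stated in the lemma only to package all $C^1$ quantities into a single norm, and is automatically dominated by it.
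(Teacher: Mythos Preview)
Your proof is correct. Both your approach and the paper's begin with integration by parts to reduce to an integral of $f'(x)\bigl(F_{\mu_\alpha}(x)-F_{\mu_{\Ab}}(x)\bigr)$, and both invoke the same pointwise Levy bound $|F_{\mu_\alpha}(x)-F_{\mu_{\Ab}}(x)|\leq \epsilon+[F_{\mu_\alpha}(x+\epsilon)-F_{\mu_\alpha}(x-\epsilon)]$. The divergence is in how the increment $F_{\mu_\alpha}(x+\epsilon)-F_{\mu_\alpha}(x-\epsilon)$ is handled afterwards. You integrate it in $L^1$ over $\mathring I$, telescoping to a quantity bounded by $2\epsilon$, and then pull $\|g'\|_\infty$ outside; this uses only $\sup|f'|$, and your closing remark that the Lipschitz seminorm of $f'$ is not actually needed is accurate. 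The paper instead keeps the weight $|f'(x)|$ inside and shifts variables to obtain $\int\bigl(|f'(x-\epsilon)|-|f'(x+\epsilon)|\bigr)F_{\mu_\alpha}(x)\,dx$, which it controls via the Lipschitz constant of $f'$; this is why that seminorm appears in the statement. Your route is marginally sharper in its dependence on $f$; the paper's route avoids the cutoff $\chi$ altogether, since $F_{\mu_\alpha}-F_{\mu_{\Ab}}$ already vanishes at the endpoints of $I'$ and the boundary terms drop out directly. Either way the constant depends only on $\delta$ and $\lambda_\pm^\alpha$.
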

\begin{proof}
	Denote the cumulative distribution functions of $\mu_{\alpha}$ and $\mu_{\Ab}$ by $F_{\alpha}$ and $F_{\Ab}$ respectively. Then by the definition of the Levy distance, we have
	\begin{align}
	|F_{\alpha}(x)-F_{\Ab}(x)|
		\leq \mathbf{1}_{[\lambda_{-}^{\alpha/2}-\delta/2,\lambda_{+}^{\alpha}+\delta/2]}(x)
		\times \big[F_{\alpha}(x+\mathcal L(\mu_{\Ab}, \mu_\alpha))-F_{\alpha}(x-\mathcal L(\mu_{\Ab}, \mu_\alpha))+2\mathcal L(\mu_{\Ab}, \mu_\alpha)\big]\,.\nonumber
	\end{align}
	By an integrating by parts, we have
	\begin{equation*}
	\int_{\mathbb{R}_{+}}f(x)d(\mu_{\alpha}-\mu_{\Ab})(x)=\int_{\lambda_{-}^{\alpha}-\delta/2}^{\lambda_{+}^{\alpha}+\delta/2}f'(x)(F_{\alpha}(x)-F_{\Ab}(x)) d x\,,
	\end{equation*}
	which leads to 
	\begin{align}
		&\left|\int_{\mathbb{R}_{+}}f(x)d(\mu_{\alpha}-\mu_{\Ab})(x)\right|\\
		\leq&\,2 (\lambda_{+}^{\alpha}-\lambda_{-}^{\alpha}+\delta) \mathcal L(\mu_{\Ab}, \mu_\alpha){\sup_{x\in [\lambda_{-}^{\alpha}-\delta,\lambda_{+}^{\alpha}+\delta]}}|f'(x)|+\int_{\lambda_{-}^{\alpha}-\delta/2}^{\lambda_{+}^{\alpha}+\delta/2}|f'(x)| [F_{\alpha}(x+\mathcal L(\mu_{\Ab}, \mu_\alpha))\nonumber\\
		&\qquad\qquad-F_{\alpha}(x-\mathcal L(\mu_{\Ab}, \mu_\alpha))]d x\nonumber	\\
		\leq\,& 2 (\lambda_{+}^{\alpha}-\lambda_{-}^{\alpha}+\delta)\mathcal L(\mu_{\Ab}, \mu_\alpha)\sup_{x\in [\lambda_{-}^{\alpha}+\delta,\lambda_{+}^{\alpha}+\delta]}|f'(x)| +{\int_{\lambda_{-}^{\alpha}-\delta}^{\lambda_{+}^{\alpha}+\delta}}\big[|f'(x-\mathcal L(\mu_{\Ab}, \mu_\alpha))|\nonumber\\
		&\qquad\qquad-|f'(x+\mathcal L(\mu_{\Ab}, \mu_\alpha))|\big]F_{\alpha}(x)d x\nonumber	\\
		\leq\,& 2(\lambda_{+}^{\alpha}-\lambda_{-}^{\alpha}+\delta)\norm{f'}_{\mathrm{Lip},\delta}\mathcal L(\mu_{\Ab}, \mu_\alpha)\,,\nonumber
	\end{align}	
	where {the second inequality comes from the assumption and} the last inequality we use the definition of $\norm{f'}_{\mathrm{Lip},\delta}$.
This concludes the proof.
\end{proof}

We introduce the following proposition and show how Lemma \ref{lem_2.4.3} follows from it. 
To this end, recall the first part of \cite[Theorem 2.6]{JHC}.

\begin{theorem}[The first part of Theorem 2.6 of \cite{JHC}]\label{lem_bao2QQ} Suppose that  Assumption \ref{assu_product1} holds. Then $\mu_{\alpha} \boxtimes \mu_{\beta} $ is absolutely continuous and supported on a single non-empty compact interval on $(0, \infty)$. Denote the lower and upper edges of the support of $\mu_{\alpha} \boxtimes \mu_{\beta}$ by 
\begin{equation}\label{eq_defnedgemul}
\lambda_-:=\inf (\text{supp} \mu_{\alpha} \boxtimes \mu_{\beta}), \ \lambda_+:= \sup (\text{supp} \mu_{\alpha} \boxtimes \mu_{\beta})\,.
\end{equation}
\end{theorem}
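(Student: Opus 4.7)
The approach rests on the subordination description (Propositions \ref{prop_muloriginal} and \ref{prop_muloriginal 2about M}) of the $M$-transform of $\mu:=\mu_\alpha \boxtimes \mu_\beta$. For compactness and strict positivity of the support, I would use the multiplicative property of the $S$-transform: since both $\mu_\alpha$ and $\mu_\beta$ are compactly supported in $(0,\infty)$ by Assumption \ref{assu_product1}(i), their $S$-transforms are analytic in a neighborhood of $0$, and the identity $S_\mu(z)=S_{\mu_\alpha}(z)S_{\mu_\beta}(z)$ transfers this property to $S_\mu$. Equivalently, all moments of $\mu$ are finite and grow at most geometrically, which gives a compactly supported $\mu$ whose support is bounded away from $0$ since $\lambda_-^\alpha,\lambda_-^\beta>0$ by (i).

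For absolute continuity, the plan is to show that $m_\mu$ extends continuously from $\mathbb{C}^+$ up to the real axis away from the edges, and then invoke the Stieltjes inversion formula. Via the multiplicative subordination $\Omega_\alpha(z)\Omega_\beta(z)=zM_\mu(z)$, this reduces to proving that $\Omega_\alpha$ and $\Omega_\beta$ extend continuously to $\mathbb{R}$ over the interior of $\supp(\mu)$. The power-law behavior in Assumption \ref{assu_product1}(ii) with $t_+^\alpha,t_+^\beta\in(-1,1)$ is precisely the regularity needed: the lower bound $t_+^\alpha,t_+^\beta>-1$ guarantees integrability of $\rho_\alpha,\rho_\beta$ near the upper edges so that the Stieltjes transforms admit finite boundary values, while the upper bound $t_+^\alpha,t_+^\beta<1$ gives enough regularity to locally invert the subordination equation and extract an $L^1_{\mathrm{loc}}$ density for $\mu$ via Stieltjes inversion.

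The hardest step is the single-interval claim. My plan would be to parametrize boundary candidates of $\supp(\mu)$ using the real traces of the subordination equation $\varphi_{\mu_\alpha}(\omega_\beta(z))=\varphi_{\mu_\beta}(\omega_\alpha(z))=\omega_\alpha(z)\omega_\beta(z)/z$, restricted to the locus where $\omega_\alpha,\omega_\beta$ are real. Each edge of $\supp(\mu)$ should correspond to a critical point of an auxiliary real-analytic map $z\mapsto z(\omega_\alpha,\omega_\beta)$ built from Proposition \ref{prop_muloriginal}. I would then analyze the monotonicity/convexity of this map using strict positivity of $\rho_\alpha,\rho_\beta$ on the interiors of their supports (from (i)), together with the boundedness of $\lambda_+^\alpha,\lambda_+^\beta$ (also from (i)), to show there are exactly two such critical points. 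The main obstacle is ruling out additional interior critical points that would produce gaps in the support; this likely requires a careful monotonicity analysis of the inverse $\psi$-type maps on the real axis and may need to exploit uniqueness properties of subordination functions on half-planes, in the spirit of the Belinschi--Bercovici framework for free multiplicative convolution.
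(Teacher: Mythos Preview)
The paper does not give its own proof of this theorem: it is quoted verbatim as ``The first part of Theorem 2.6 of \cite{JHC}'' and used as a black box, so there is no argument in the paper to compare your proposal against. The surrounding lemmas the paper imports from \cite{JHC} (Lemmas \ref{lem_bao}, \ref{lem_imim}, \ref{lem:reprM}, and Theorem \ref{lem_bao2}) indicate that the proof in \cite{JHC} proceeds through the multiplicative subordination functions $\Omega_\alpha,\Omega_\beta$ and the $M$/$L$-transform machinery, which is exactly the framework you propose; in that sense your overall strategy is well aligned with the source.

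On the substance of your sketch: the compactness and strict positivity of the support via the $S$-transform is fine, and the absolute continuity argument via boundary extension of the subordination functions and Stieltjes inversion is the standard route. Your honest flag on the single-interval claim is warranted: the way \cite{JHC} handles it is not by counting critical points of an implicit map $z(\omega_\alpha,\omega_\beta)$ directly, but by establishing the stability estimates for $\Omega_\alpha,\Omega_\beta$ (their boundedness away from $\supp\mu_\beta,\supp\mu_\alpha$; cf.\ Lemma \ref{lem_bao}) and then proving the square-root expansion of $\Omega_\alpha,\Omega_\beta$ at each edge (cf.\ Lemma \ref{lem_imim}), from which connectedness of the support follows by a continuity argument on the density (Theorem \ref{lem_bao2}(i)). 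Your plan to rule out interior critical points by monotonicity of the inverse $\psi$-type maps is plausible but would need the precise edge characterization analogous to the paper's cited Propositions 6.13 and 6.15 of \cite{JHC}; without that, the ``exactly two critical points'' claim is the genuine gap in your outline.
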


For some same $\tau>0$ used in $\mathsf{S}(1/4,\tau)$, denote another set of spectral parameter 
\begin{equation}\label{eq_spectralpara}
\mathcal{S}':=\{z=E+\mathrm{i} \eta \in \mathbb{C}^+: |E-\lambda_+| \leq \tau, \ n^{-2/3+\tau} \leq \eta \leq \tau^{-1}\}\,.
\end{equation}

\begin{proposition} \label{pro_key}
Suppose that Assumptions \ref{assu_product1} and \ref{ass_product2} hold. For $z \in \mathcal{S}'$ defined in (\ref{eq_spectralpara}) and $\Hb$ defined in \eqref{Definition H=UAUB}, we have
\begin{equation*}
|m_{\Hb}(z)-m_{\mu_{\Ab} \boxtimes \mu_{\Bb}}(z)| =O_{\prec} \Big(\frac{1}{n \eta}\Big)\,.
\end{equation*} 
\end{proposition}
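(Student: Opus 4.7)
The plan is to adapt the subordination-based local law of \cite{JHC} for $\mathbf{H} = \mathbf{U} \mathbf{A} \mathbf{U}^\top \mathbf{B}$, pushing the comparison $|m_{\mathbf{H}} - m_{\mu_{\mathbf{A}} \boxtimes \mu_{\mathbf{B}}}| = O_{\prec}((n\eta)^{-1})$ down from the bulk scale $\eta \geq n^{-1+\tau}$ to the optimal edge scale $\eta \geq n^{-2/3+\tau}$ required in $\mathcal{S}'$. The role of Assumptions \ref{assu_product1} and \ref{ass_product2} is precisely to guarantee that the $n$-dependent measure $\mu_{\mathbf{A}} \boxtimes \mu_{\mathbf{B}}$ inherits, up to a controllable perturbation, the square-root edge regularity of its limit $\mu_\alpha \boxtimes \mu_\beta$ coming from Theorem \ref{lem_bao2QQ} and Assumption \ref{assu_product1}(ii).

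Concretely, the argument proceeds in three stages. First, I would invoke Proposition \ref{prop_muloriginal 2about M} to characterize $m_{\mu_{\mathbf{A}} \boxtimes \mu_{\mathbf{B}}}(z)$ via deterministic subordination functions $\Omega_{\mathbf{A}}(z), \Omega_{\mathbf{B}}(z)$, and introduce random counterparts $\widehat{\Omega}_{\mathbf{A}}(z), \widehat{\Omega}_{\mathbf{B}}(z)$ built from partial traces of $G_{\mathbf{H}}(z)$, $\mathbf{A}G_{\mathbf{H}}(z)$ and $\mathbf{B}G_{\mathbf{H}}(z)$, exactly as in \cite{JHC}; by construction these satisfy $\widehat{\Omega}_{\mathbf{A}}\widehat{\Omega}_{\mathbf{B}} = z M_{\mathbf{H}}(z)$. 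Second, I would establish approximate subordination identities $M_{\mu_{\mathbf{A}}}(\widehat{\Omega}_{\mathbf{B}}) - M_{\mathbf{H}} = O_{\prec}((n\eta)^{-1})$ and the symmetric one, uniformly on $\mathcal{S}'$. The essential ingredients are the Haar invariance of $\mathbf{U}$, exploited through integration-by-parts on $\mathrm{O}(n)$, together with concentration of Weingarten-weighted bilinear forms, exactly as in \cite[Section 3]{JHC} and the additive analogue of \cite{Bao2017cmp, bao20171}. These estimates extend to the edge scale $\eta \geq n^{-2/3+\tau}$ because the a priori Green-function bounds they rely on degrade only polynomially in $\eta$.

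The third and delicate stage is a stability analysis for the subordination system on $\mathcal{S}'$. Linearizing around $(\Omega_{\mathbf{A}}, \Omega_{\mathbf{B}})$, the resulting Jacobian is invertible with norm of order $(\kappa + \eta)^{-1/2}$, where $\kappa = |E - \lambda_+|$, provided $\mu_{\mathbf{A}} \boxtimes \mu_{\mathbf{B}}$ has a square-root edge at $\lambda_+$. Theorem \ref{lem_bao2QQ} combined with Assumption \ref{assu_product1}(ii) delivers this square-root behavior for the limit $\mu_\alpha \boxtimes \mu_\beta$, and Assumption \ref{ass_product2}(iii)--(iv) together with Lemma \ref{lem:rbound} transfer it to $\mu_{\mathbf{A}} \boxtimes \mu_{\mathbf{B}}$ up to a shift of order $n^{-2/3+\epsilon_L}$ in the edge location. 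Inverting the Jacobian and inserting the approximate-subordination error $(n\eta)^{-1}$ yields the desired comparison, provided the standard bootstrap in $\eta$ is initiated at $\eta = \tau^{-1}$ (where the bound is trivial from a resolvent expansion) and propagated downward by continuity and eigenvalue-isolation arguments.

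The main obstacle is this third stage. The Levy-distance discrepancy of order $n^{-2/3+\epsilon_L}$ between $(\mu_{\mathbf{A}}, \mu_{\mathbf{B}})$ and $(\mu_\alpha, \mu_\beta)$ is borderline comparable to the smallest $\eta$ allowed in $\mathcal{S}'$, so one cannot treat this perturbation as a negligible correction at the level of a direct Stieltjes-transform comparison; instead the stability analysis must be performed entirely with the $n$-dependent measures, using $\mu_\alpha, \mu_\beta$ only to extract qualitative square-root regularity. This is structurally parallel to the edge stability analysis for free additive convolution carried out in \cite{Bao2017cmp, bao20171}, and the multiplicative adaptation should proceed by replacing the $R$-transform by the $M$-transform (equivalently the $S$-transform) throughout, with Lemma \ref{lem:rbound} controlling the propagation of the Levy-distance error at each step.
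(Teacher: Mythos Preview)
Your proposal is correct and follows essentially the same approach as the paper. The paper likewise reduces the proof to two ingredients: (1) establishing that the regularity properties of $\mu_\alpha\boxtimes\mu_\beta$ (square-root edge, subordination-function bounds, stability of the linearized system) transfer to the $n$-dependent $\mu_{\mathbf A}\boxtimes\mu_{\mathbf B}$ via the Levy-distance control and Lemma~\ref{lem:rbound}, packaged as Lemma~\ref{lem_addtivemeasurelemma} and its auxiliary Lemma~\ref{lem_lasttwo}; and (2) invoking verbatim the subordination-based local-law machinery of \cite[Sections 5--9]{bao20171}, adapted from the additive to the multiplicative setting exactly by replacing the $R$-transform framework with the $M$-transform one, precisely as you describe in your third stage.
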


We leave the technical proof of Proposition \ref{pro_key} to Appendix \ref{SI section Proof of Proposition pro_key}. 

\begin{proof}[Proof of Lemma
\ref{lem_2.4.3}] 

Denote the spectral decomposition of $\breve{\Kb}_x=\Ub_x \Lambda_x\Ub_x^\top$ and similarly $\breve{\Kb}_y=\Ub_y\Lambda_y \Ub_y^\top$. Since $\breve{\Kb}_x$ and $\breve{\Kb}_y$ are isotropic shifts of $\Xb^\top \Xb$ and $\Yb^\top \Yb$, respectively, $\Ub_x$ and $\Ub_y$ are actually the eigenvectors of $\Xb^\top \Xb$ and $\Yb^\top \Yb$, respectively, and as a consequence, $\Ub_x$ is Haar distributed. Note that $\Sb_{xy}^{(3)}$ is similar to  
 \begin{equation*}
 \Sb_{xy}^{(4)}=f^{-2}(2) \Ub_y^\top \Ub_x  \Lambda_x \Ub_x^\top \Ub_y \Lambda_y=f^{-2}(2) \Ub \Lambda_x \Ub^\top \Lambda_y\,,
 \end{equation*}
where we set $\Ub:=\Ub_y^\top \Ub_x$. So $\Sb^{(3)}_{xy}$ and $\Sb^{(4)}_{xy}$ share the same eigenvalues and hence $m_{\Sb^{(3)}_{xy}}(z)=m_{\Sb^{(4)}_{xy}}(z)$. 

Denote the eigenvalues of $\breve{\Kb}_y$ as $\lambda_1^y \geq \lambda_2^y \geq \ldots \geq \lambda_n^y$.
To simplify the notation, denote $\gamma_i^y:=\gamma_{\mu_{c_2,-2f'(2)}}(i)$.
For some small $\epsilon>0$, denote the event $\Omega_y$ by
\begin{align}
\Omega_y:=\{\mbox{For all } &j \in 1,\ldots,n \wedge p_2 \mbox{ such that }|\lambda_j^y-\gamma_j^y| \leq \tilde{j}^{-1/3}n^{-2/3+\epsilon}  \}\,,\label{eq_defnxi}
\end{align}
where $\tilde{j}=\min\{n \wedge p_2+1-j,j\}$.
 By the rigidity of eigenvalues shown in Lemma \ref{lem_rigidity}, there exists some large constant { $D \equiv D(\epsilon)>0,$ where $\epsilon$ is defined in (\ref{eq_defnxi}),} such that the event $\Omega_y$ holds true with probability at least $1-n^{-D}$. Note that if we focus on $\Omega_y$, the quantities involving $\breve{\Kb}_y$ are well controlled. %
 Let
\begin{equation*}
\mathbf{S}_{xy}^{(5)}:=f^{-2}(2) \Ub \Lambda_x \Ub^\top \Sigma_y.
\end{equation*}
Note that 
\begin{align}
\mathbb{E}\big|m_{\Sb_{xy}^{(4)}}(z)-m_{\mathbf{S}_{xy}^{(5)}}(z)\big|^p
=\mathbb{E}[1_{\Omega_y}|m_{\Sb^{(4)}_{xy}}(z)-m_{\mathbf{S}_{xy}^{(5)}}(z)|^p]+\mathbb{E}[1_{\Omega_y^c}|m_{\Sb_{xy}^{(4)}}(z)-m_{\mathbf{S}_{xy}^{(5)}}(z)|^p]\,,\label{eq_wholecontrol}
\end{align}
where $\mathbb{E}[1_{\Omega_y}|m_{\Sb^{(4)}_{xy}}(z)-m_{\mathbf{S}_{xy}^{(5)}}(z)|^p]$ can be bounded with a discussion similar to (\ref{eq_stilebound}) using (\ref{eq_defnxi}) and $\mathbb{E}[1_{\Omega_y^c}|m_{\Sb_{xy}^{(4)}}(z)-m_{\mathbf{S}_{xy}^{(5)}}(z)|^p]$ can be easily bounded by the trivial bound (\ref{eq_stitrivial}) and the fact that $\mathbb{P}(\Omega_y^c) \leq n^{-D}$. We therefore have that
\begin{equation*}
\mathbb{E}\big|m_{\Sb_{xy}^{(4)}}(z)-m_{\mathbf{S}_{xy}^{(5)}}(z)\big|^p \leq \left( C \frac{\log n}{\sqrt{n} \eta^2} \right)^p\,.
\end{equation*}
Now, it suffices to bound the difference between $m_{\Sb_{xy}^{(5)}}(z)$ and $m_{\mathbf{Q}_{xy}}(z)$. 
Similarly, we can show
\begin{equation*}
\mathbb{E}\big|m_{\Sb_{xy}^{(5)}}(z)-m_{\mathbf{Q}_{xy}}(z)\big|^p \leq \left( C \frac{\log n}{\sqrt{n} \eta^2} \right)^p\,.
\end{equation*}
The final step is controlling the closeness of $m_{\mathbf{Q}_{xy}}(z)$ and $m_{\Sigma_x \boxtimes \Sigma_y}(z)$ by Proposition \ref{pro_key}.
To apply Proposition \ref{pro_key}, we now verify Assumptions \ref{assu_product1} and \ref{ass_product2} by setting $\Ab= f^{-2}(2) \Sigma_x$, $\Bb=\Sigma_y$, $\mu_\alpha=\mu_{c_1, \texttt{gMP}}$ and $\mu_\beta=\mu_{c_2,\texttt{gMP}}$ \eqref{eq_defnf}. 

Note that $\Ub$ is Haar distributed when $\Ub_y$ is fixed. Indeed, { we focus our discussion on the high probability event $\Omega_y$}, $\Ub_y$ is a deterministic orthonormal matrix. Since the limiting law of $\breve{\Kb}_y$ is a  shifted MP law, (i) of Assumption \ref{assu_product1}  holds immediately and (ii) follows from the square root behavior of MP law (c.f. (\ref{eq)squareroot})). For Assumption \ref{ass_product2}, from Lemma \ref{lem_rigidity} and (iii) of Lemma \ref{lem_collecttion}, we get the following bound : 
\begin{equation}\label{eq_epsilondefined}
\mathcal L(\mu_{\Ab}, \mu_{c_1,\texttt{gMP}})+\mathcal L(\mu_{\Bb}, \mu_{c_2,\texttt{gMP}}) \leq n^{-2/3+\epsilon}\,.
\end{equation}
Therefore, (iii) is satisfied with $\epsilon_L=\epsilon$. (iv) again follows from the basic properties of MP law in Lemma \ref{lem_mplaw}. 
This completes the proof of Lemma \ref{lem_2.4.3}. 

\end{proof}

\section{Technical proof of Proposition \ref{pro_key}}\label{SI section Proof of Proposition pro_key}

The proof strategy of Proposition \ref{pro_key} was first employed in \cite{bao20171} to investigate the local laws of addition of random matrices. 
The strategy contains two steps. First, we show that the properties of $\mu_{\alpha} \boxtimes \mu_{\beta}$ can be extended to $\mu_{\Ab} \boxtimes \mu_{\Bb}$ with some controls. This step is necessary since the result of Proposition \ref{pro_key} contains the Stieltjes transform of $\mu_{\Ab} \boxtimes \mu_{\Bb}$.  Second, we prove the local laws.  
Since the proof is essentially the same as that of \cite[equation (2.18)]{bao20171} except that we need to prove the statement on a smaller spectral domain $\mathcal{S}'$, we can take the verbatim of the proofs in \cite[Sections 5-9]{bao20171}. We omit the proofs and only show the key inputs for the first step, which is summarized in Lemma \ref{lem_addtivemeasurelemma} below, and refer readers with interest to \cite[Sections 5-9]{bao20171} for details. 
%\end{proof}

To state Lemma \ref{lem_addtivemeasurelemma}, we need the following notations. Denote $\Omega_{\Ab}:=\Omega_{\mu_{\Ab}}$, $\Omega_{\Bb}:=\Omega_{\mu_{\Bb}}$, $\Omega_{\alpha}:=\Omega_{\mu_\alpha}$, $\Omega_{\beta}:=\Omega_{\mu_\beta}$ (\ref{eq_omega}), $M_{\Ab}:=M_{\mu_{\Ab}}$, $M_{\Bb}:=M_{\mu_{\Bb}}$, $M_{\alpha}:=M_{\mu_{\alpha}}$ and $M_{\beta}:=M_{\mu_{\beta}}$ for the $M$-transforms in Definition \ref{defn_mtransform}. Furthermore, like \eqref{NotationL definition}, we denote 
\begin{equation}\label{Definition:LA and LB}
L_{\Ab}(z):=\frac{M_{\Ab}(z)}{z} \,\,\mbox{ and }\,\,L_{\Bb}(z):=\frac{M_{\Bb}(z)}{z}\,. 
\end{equation}
Consider the following functions:
\begin{align*}
& S_{\Ab \Bb}(z):=z^2 L_{\Bb}'(\Omega_{\Ab}(z)) L'_{\Ab}(\Omega_{\Bb}(z))-1,
\end{align*}
\begin{align*}
& \mathcal{T}_{\Ab}(z):=\frac{1}{2} [z L^{''}_{\Bb}(\Omega_{\Ab}(z)) L'_{\Ab}(\Omega_{\Bb}(z))+(z L'_{\Bb}(\Omega_{\Ab}(z)))^2 L_{\Ab}^{''}(\Omega_{\Bb}(z))], \\
& \mathcal{T}_{\Bb}(z):=\frac{1}{2} [z L^{''}_{\Bb}(\Omega_{\Ab}(z)) L'_{\Ab}(\Omega_{\Bb}(z))+(z L'_{\Bb}(\Omega_{\Ab}(z)))^2 L_{\Ab}^{''}(\Omega_{\Bb}(z))]\,.
\end{align*}
By replacing the pair $(\Ab, \Bb)$ with $(\alpha, \beta)$, we can define $L_{\alpha}, L_{\beta}, S_{\alpha \beta}, \mathcal{T}_{\alpha}$ and $\mathcal{T}_{\beta}$ analogously. 

\begin{remark}
We mention that the counterparts of $S_{\Ab \Bb}(z), \mathcal{T}_{\Ab}(z)$ and $\mathcal{T}_{\Bb}(z)$ for the addition of random matrices defined in \cite[equation (3.1)]{bao20171} look slightly different from what we define here. For the reader's convenience, we briefly discuss how we obtain the above quantities.  
Consider 
\begin{equation*}
\Delta:=\{(\omega_1, \omega_2, z) \in (\mathbb{C}^+)^3: \text{arg} \ \omega_1, \text{arg} \ \omega_2 \geq \text{arg} \ z \}\subset (\mathbb{C}^+)^3,
\end{equation*}
and define $\Phi_{\alpha},\,\Phi_{\beta}: \Delta \rightarrow \mathbb{C}$ so that 
\begin{align*}
&\Phi_{\alpha}(\omega_1, \omega_2, z):=\frac{M_{{\alpha}}(\omega_2)}{\omega_2}-\frac{\omega_1}{z}\,,  \\
&\Phi_{\beta}(\omega_1, \omega_2, z):=\frac{M_{{\beta}}(\omega_1)}{\omega_1}-\frac{\omega_2}{z}\,.\nonumber 
\end{align*}
First of all, (ii) and (iii) of Proposition \ref{prop_muloriginal 2about M} can be written as 
\begin{equation}\label{eq)system}
\Phi_{\alpha \beta}(\Omega_{\alpha}(z), \Omega_{\beta}(z),z)=0\,,
\end{equation}
where $\Phi_{\alpha \beta} =(\Phi_{\alpha}, \Phi_{\beta}): \Delta \rightarrow \mathbb{C}^2$. Indeed, denoting by $D$ the partial differential operator with respect to $\omega_{1}$ and $\omega_{2}$, the first derivative of $\Phi_{\alpha\beta}$ is given by
\begin{equation*}
	D\Phi_{\alpha\beta}(\omega_{1},\omega_{2},z)=\begin{pmatrix}
		-z^{-1} & L'_{\mu_{\alpha}}(\omega_{2}) \\
		L'_{\mu_{\beta}}(\omega_{1}) & -z^{-1}
	\end{pmatrix},
\end{equation*}
whose determinant is equal to $-z^{-2} S_{\alpha\beta}(z)$ at the point $(\Omega_{\alpha}(z),\Omega_{\beta}(z),z)$. Similarly, using $\Phi_{\alpha\beta}(\Omega_{\alpha}(z),\Omega_{\beta}(z),z)=0$, we find that
\begin{align*}
	\mathcal{T}_{\alpha}(z)&= z\left[\frac{\partial}{\partial\omega_{1}}\det D \Phi_{\alpha\beta}(\omega_{1},zL_{\mu_{\beta}}(\omega_{1}),z)\right]_{\omega_{1}=\Omega_{\alpha}(z)},\nonumber \\
	\mathcal T_{\beta}(z)&=z\left[\frac{\partial}{\partial\omega_{2}}\det D\Phi_{\alpha\beta}(zL_{\mu_{\alpha}}(\omega_{2}),\omega_{2},z)\right]_{\omega_{2}=\Omega_{\beta}(z)}\,.\nonumber
\end{align*}
By replacing the pair $(\alpha,\beta)$ with $(\Ab,\Bb)$, we can define $\Phi_{\Ab \Bb}$, $S_{\Ab \Bb}$, $\mathcal{T}_{\Ab}$, and $\mathcal{T}_{\Bb}$ analogously.
\end{remark}

We now state the lemma establishing the properties of $\mu_{\Ab} \boxtimes \mu_{\Bb}$. 

\begin{lemma} \label{lem_addtivemeasurelemma} 
Suppose Assumptions \ref{assu_product1} and \ref{ass_product2} hold true. 
Denote $\kappa:=\kappa(E):=|E-\lambda_+|$, where $\lambda_+$ is defined in \eqref{eq_defnedgemul}. Then, we have the following statements: 

 \begin{enumerate}
\item[(i)] There exist constants $k>0$ and $K>0$ such that
 \begin{align*}
 & \inf_{z \in \mathcal{S}'} \text{dist}( \Omega_{\Ab}(z), \supp\mu_{\Bb}) \geq k \,,\nonumber \\
 & \inf_{z \in \mathcal{S}'} \text{dist}( \Omega_{\Bb}(z), \supp\mu_{\Ab}) \geq k\,, \\
 & k \leq |\Omega_{\Ab}(z)| \leq K, \ k \leq |\Omega_{\Bb}(z)| \leq K\nonumber 
 \end{align*}
 hold uniformly on $z\in\mathcal{S}'$.
 
\item[(ii)]  
We have uniformly for $z=E+\mathrm{i}\eta \in \mathcal{S}'$ that
 \begin{equation*}
 \operatorname{Im} m_{\mu_{\Ab} \boxtimes \mu_{\Bb}}(z)  \sim
 \begin{cases}
 \sqrt{\kappa+\eta}, & \ \text{if} \ E \in \text{supp} \mu_{\Ab} \boxtimes \mu_{\Bb}, \\
 \frac{\eta}{\sqrt{\kappa+\eta}}, & \ \text{if} \ E > \text{supp} \mu_{\Ab} \boxtimes \mu_{\Bb}.
 \end{cases}
 \end{equation*}
 
\item[(iii)]  
We have uniformly for $z\in \mathcal{S}'$ that
\begin{align}
 &S_{\Ab\Bb}(z) \sim \sqrt{\kappa+\eta}, \nonumber\\ 
 &|\mathcal{T}_{\Ab}(z)| \leq C,  \ |\mathcal{T}_{\Bb}(z)| \leq C \,.\nonumber
\end{align}
Furthermore, when $|z-E_+| \leq \tau$, there exists some constant $c>0$ such that 
\begin{equation*}
|\mathcal{T}_{\Ab}(z)| \geq c, \ |\mathcal{T}_{\Bb}(z)| \geq c. 
\end{equation*}

\item[(iv)] For $\Omega_{\Ab}, \Omega_{\Bb}$ and $S_{\Ab\Bb}$, we have 
\begin{align*}
&|S_{\Ab\Bb}'(z)| \leq  \frac{C}{\sqrt{\kappa+\eta}}\,,\nonumber\\
&|\Omega_{\Ab}'(z)| \leq  \frac{C}{\sqrt{\kappa+\eta}}, \,\, |\Omega_{\Bb}'(z)| \leq  \frac{C}{\sqrt{\kappa+\eta}}\nonumber
\end{align*}
uniformly for $z =E+\mathrm{i}\eta\in \mathcal{S}'$ and some constant $C>0$. 
\end{enumerate}
\end{lemma}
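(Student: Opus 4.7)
The strategy is a perturbation argument: transfer the analogous conclusions already established in \cite[Theorem 2.6]{JHC} for the $n$-independent pair $(\mu_\alpha,\mu_\beta)$ across to $(\mu_\Ab,\mu_\Bb)$, using Assumption \ref{ass_product2}(iii) and Lemma \ref{lem:rbound} as the sole quantitative input. The essential scaling is that, on $\mathcal{S}'$ with $\tau>\epsilon_L$, the Levy-distance perturbation $n^{-2/3+\epsilon_L}$ is comfortably smaller than $\eta\geq n^{-2/3+\tau}$, and in particular smaller than $\sqrt{\kappa+\eta}$.

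First, I would record the $(\alpha,\beta)$-version of every statement of the lemma as a direct consequence of \cite[Theorem 2.6]{JHC} and Assumption \ref{assu_product1}: the subordination functions $\Omega_\alpha,\Omega_\beta$ stay at distance $\geq 2k$ from the opposite supports on $\mathcal{S}'$; $\operatorname{Im} m_{\mu_\alpha\boxtimes\mu_\beta}$ exhibits the claimed square-root profile; $S_{\alpha\beta}(z)\sim\sqrt{\kappa+\eta}$; and the derivatives of $\Omega_\alpha,\Omega_\beta,S_{\alpha\beta}$ obey the bounds of (iv). Then I would promote $L_\Ab,L_\Bb$ and their first two derivatives to be close to $L_\alpha,L_\beta$: for any $w$ with $\operatorname{dist}(w,\supp\mu_\alpha)\geq k$, the function $x \mapsto x(x-w)^{-(k+1)}$ is smooth on $\supp\mu_\alpha$ with $\|\cdot\|_{\mathrm{Lip},\delta}$-norm of order one, so Lemma \ref{lem:rbound} gives
\begin{equation}
\bigl|L_\Ab^{(k)}(w)-L_\alpha^{(k)}(w)\bigr|\lesssim \mathcal{L}(\mu_\Ab,\mu_\alpha)\leq n^{-2/3+\epsilon_L}, \quad k=0,1,2,
\end{equation}
uniformly in such $w$, with the analogous bound for $\Bb,\beta$.

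Second, I would invoke the implicit function theorem on $\Phi_{\Ab\Bb}(\omega_1,\omega_2,z)=0$, viewed as a perturbation of $\Phi_{\alpha\beta}=0$. The Jacobian of $\Phi_{\alpha\beta}$ at $(\Omega_\alpha(z),\Omega_\beta(z))$ has determinant $-z^{-2}S_{\alpha\beta}(z)$, which has modulus $\sim\sqrt{\kappa+\eta}\gtrsim n^{-1/3+\tau/2}$. Combining this with the $L^{(k)}$-closeness yields $\|\Phi_{\Ab\Bb}(\Omega_\alpha(z),\Omega_\beta(z),z)\|=O(n^{-2/3+\epsilon_L})$ and $\|D\Phi_{\Ab\Bb}-D\Phi_{\alpha\beta}\|=O(n^{-2/3+\epsilon_L})$ at the same point. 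Because $\epsilon_L<\tau$, the quantitative inverse function theorem produces a unique nearby solution $(\Omega_\Ab(z),\Omega_\Bb(z))$ of $\Phi_{\Ab\Bb}=0$ satisfying
\begin{equation}\label{eq_omegaclose}
|\Omega_\Ab(z)-\Omega_\alpha(z)|+|\Omega_\Bb(z)-\Omega_\beta(z)| \lesssim \frac{n^{-2/3+\epsilon_L}}{\sqrt{\kappa+\eta}}\ll 1,
\end{equation}
uniformly on $\mathcal{S}'$, and by uniqueness these coincide with the functions in Proposition \ref{prop_muloriginal 2about M}.

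The remaining deductions are then mechanical. Part (i) follows from \eqref{eq_omegaclose}, the triangle inequality, and the $(\alpha,\beta)$-version of (i). For (ii), use the subordination identity $M_{\mu_\Ab\boxtimes\mu_\Bb}(z)=M_\Ab(\Omega_\Bb(z))$ together with the square-root behavior of $\operatorname{Im} m_\Ab$ near the shifted edge, which is in turn transferred from $m_\alpha$ via Lemma \ref{lem:rbound} and Assumption \ref{assu_product1}(ii). Part (iii) follows by substituting $\Omega_\Ab(z)\approx\Omega_\alpha(z)$ into the defining formulas for $S_{\Ab\Bb},\mathcal{T}_\Ab,\mathcal{T}_\Bb$ and invoking the $L^{(k)}$-closeness plus the $(\alpha,\beta)$-version. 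Part (iv) is obtained by differentiating $\Phi_{\Ab\Bb}(\Omega_\Ab(z),\Omega_\Bb(z),z)=0$ in $z$ and inverting the Jacobian using the lower bound $|S_{\Ab\Bb}(z)|\gtrsim\sqrt{\kappa+\eta}$ from (iii). The main obstacle throughout is the tension between the perturbation size $n^{-2/3+\epsilon_L}$ and the vanishing Jacobian $\sqrt{\kappa+\eta}$ near the edge; the argument works only because the spectral set $\mathcal{S}'$ is restricted to $\eta\geq n^{-2/3+\tau}$ with $\tau>\epsilon_L$, which is exactly the regime in which \eqref{eq_omegaclose} is a genuine improvement over the $O(1)$ scale of the subordination functions themselves.
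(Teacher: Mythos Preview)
Your proposal is correct and follows the same perturbative strategy as the paper: establish every claim first for the $n$-independent pair $(\mu_\alpha,\mu_\beta)$ via \cite{JHC}, then transfer to $(\mu_\Ab,\mu_\Bb)$ using the Levy-distance control of Assumption \ref{ass_product2} together with Lemma \ref{lem:rbound}. Your key estimate \eqref{eq_omegaclose} is exactly the content of the paper's Lemma \ref{lem_lasttwo}, and your deductions of (i)--(iv) from it match the paper's proof almost verbatim, including the derivation of (iv) by differentiating $\Phi_{\Ab\Bb}=0$ and inverting the $2\times 2$ Jacobian with determinant $-z^{-2}S_{\Ab\Bb}(z)$.

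The one methodological difference is how \eqref{eq_omegaclose} itself is obtained. You apply a quantitative implicit function theorem directly at each $z\in\mathcal{S}'$, whereas the paper (following \cite{bao20171}) proves Lemma \ref{lem_lasttwo} by a bootstrapping argument: first establish closeness at some large fixed $\eta_0$ where the subordination system is uniformly elliptic, then propagate downward in $\eta$ by continuity. Your route is more direct once the IFT hypotheses are verified, but the paper's continuity argument automatically handles the point you pass over with ``by uniqueness these coincide'': the IFT only gives a \emph{local} solution near $(\Omega_\alpha(z),\Omega_\beta(z))$, and identifying it with the global $(\Omega_\Ab(z),\Omega_\Bb(z))$ of Proposition \ref{prop_muloriginal 2about M} still requires a connectedness/continuity argument from large $\eta$. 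One further remark on (ii): your phrasing ``square-root behavior of $\operatorname{Im} m_\Ab$ near the shifted edge'' is not quite what is used, since $\Omega_\Bb(z)$ stays bounded \emph{away} from $\supp\mu_\Ab$ by part (i). The paper instead writes $\operatorname{Im}(zm_{\mu_\Ab\boxtimes\mu_\Bb}(z))=\operatorname{Im}\Omega_\Bb(z)\int\frac{x}{|x-\Omega_\Bb(z)|^2}d\mu_\Ab(x)\sim\operatorname{Im}\Omega_\Bb(z)$, and then transfers the square-root profile from $\operatorname{Im}\Omega_\beta(z)$ to $\operatorname{Im}\Omega_\Bb(z)$ via \eqref{eq_omegaclose}.
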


The rest of the work is devoted to the proof of Lemma \ref{lem_addtivemeasurelemma}. Its proof is split into two steps. In the first step, we derive the properties for the $n$-independent measures $\mu_{\alpha}$ and $\mu_{\beta}$. Most of the results for this step have been proved in \cite{JHC}. We summarize them in Section \ref{sec:subsubalbeta} for the sake of self-containedness. In the second step, we show that the results about $\mu_{\alpha}$ and $\mu_{\beta}$ can be carried over to the $n$-dependent measures $\mu_{\Ab}$ and $\mu_{\Bb}$. The second step is thus the main ingredient of the proof of Lemma \ref{lem_addtivemeasurelemma}, which will be given in Section \ref{sec_prooflema416}. 

\subsection{Properties of $\mu_{\alpha} \boxtimes \mu_{\beta}$}\label{sec:subsubalbeta}
We summarize some properties of the subordination  functions and the free multiplication at the regular edges under Assumptions \ref{assu_product1} and \ref{ass_product2}. Most of these results have been proved in \cite{JHC} and will be used in the proof of Lemma \ref{lem_addtivemeasurelemma}. The following lemma collects the preliminary results on the $n$-independent measure $\mu_{\alpha} \boxtimes \mu_{\beta}$. It is known from \cite[Lemma 2.9]{JHC} that both $\Omega_{\alpha}(\cdot)$ and $\Omega_{\beta}(\cdot)$ can be continuously extended to $(\mathbb{R}\cup\{\infty\})\backslash \{0\}$, with values in $\overline{\mathbb{C}^+}$.

\begin{lemma}[Lemma 6.5 and Proposition 6.10  of \cite{JHC}]\label{lem_bao}  
Suppose Assumption \ref{assu_product1}  holds. Then the following statements hold true: 
\begin{enumerate}
\item[(i)] For any compact subset $K \subset \mathbb{C}^+ \cup (0, \infty)$, there exists some constant $C>1$ such that for all $z \in K$, 
\begin{align*}
&C^{-1} \leq |\Omega_{\alpha}(z)| \leq C \,,\nonumber\\
&C^{-1} \leq  |\Omega_{\beta}(z)| \leq C\,. \nonumber
\end{align*} 

\item[(ii)] Denote 
\begin{align*}
&k_{\alpha}:=\inf_{z\in \mathbb{C}_+} \text{dist}(\Omega_{\alpha}(z), \text{supp} \ \mu_{\beta}) \,,\\
&k_{\beta}:=\inf_{z \in \mathbb{C}_+} \text{dist} (\Omega_{\beta}(z), \text{supp} \ \mu_{\alpha}) \,.\nonumber
\end{align*}
There exists some constant $\varsigma>0$ such that
\begin{equation*}
\min\{k_{\alpha}, k_{\beta}\} \geq \varsigma\,. 
\end{equation*}

\item[(iii)] Furthermore, we have that \cite[equation (6.97)]{JHC}
\begin{align*}
&0< \Omega_{\alpha}(\lambda_-) < \lambda_-^{\beta}< \lambda_+^{\beta}<\Omega_{\alpha}(\lambda_+)\,,\nonumber\\
& 0<\Omega_{\beta}(\lambda_-)<\lambda_-^{\alpha}<\lambda_+^{\alpha}<\Omega_{\beta}(\lambda_+)\,,\nonumber
\end{align*}
where $\lambda_-$ and $\lambda_+$ are defined in \eqref{eq_defnedgemul}.

\end{enumerate}
\end{lemma}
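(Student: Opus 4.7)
The statement is attributed to Lemma 6.5 and Proposition 6.10 of \cite{JHC}, so a self-contained proof is not strictly needed; the plan here is to outline how one would establish each of the three items using only Propositions \ref{prop_muloriginal} and \ref{prop_muloriginal 2about M} together with Assumption \ref{assu_product1}, roughly following the structure of \cite{JHC}.

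For (i), the plan is to exploit that $\Omega_\alpha, \Omega_\beta:\mathbb{C}^+\cup(0,\infty)\to \overline{\mathbb{C}^+}$ are analytic continuations and satisfy $\arg\Omega_\alpha(z)\geq \arg z$ and $\arg \Omega_\beta(z)\geq \arg z$. Combined with the identity $\Omega_\alpha(z)\Omega_\beta(z)=z\,M_{\mu_\alpha\boxtimes\mu_\beta}(z)$ from Proposition \ref{prop_muloriginal 2about M}(iii), one gets two-sided control on $|\Omega_\alpha||\Omega_\beta|$ through the $M$-transform of $\mu_\alpha\boxtimes\mu_\beta$, which by Theorem \ref{lem_bao2QQ} is bounded and bounded away from $0$ on any compact subset of $\mathbb{C}^+\cup(0,\infty)$. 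Individual upper bounds on $|\Omega_\alpha(z)|$ then follow from the subordination identity $M_\beta(\Omega_\alpha(z))=M_{\mu_\alpha\boxtimes\mu_\beta}(z)$, using that $|M_\beta(w)|\to 1$ as $|w|\to\infty$ with $w\in \mathbb{C}^+\cup(0,\infty)$ while $|M_{\mu_\alpha\boxtimes\mu_\beta}(z)|$ stays bounded away from $1$ on compacta; lower bounds follow by the same argument applied to $1/\Omega_\alpha$, and symmetrically for $\Omega_\beta$.

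For (ii), which is the main regularity input, the plan is to argue by contradiction: if $\inf_{z\in\mathbb{C}^+}\operatorname{dist}(\Omega_\alpha(z),\operatorname{supp}\mu_\beta)=0$, then there is a sequence $z_n$ with $\Omega_\alpha(z_n)$ approaching $\operatorname{supp}\mu_\beta$. Since $M_\beta(w)\to 1$ as $w\to \lambda^\beta_+$ from above and blows up along $\operatorname{supp}\mu_\beta\setminus\{\lambda^\beta_+,\lambda^\beta_-\}$ (by Assumption \ref{assu_product1}(ii), the density of $\mu_\beta$ is strictly positive in the interior of its support), the relation $M_\beta(\Omega_\alpha(z_n))=M_{\mu_\alpha\boxtimes\mu_\beta}(z_n)$ forces $z_n$ to accumulate either where $M_{\mu_\alpha\boxtimes\mu_\beta}=1$ or where it blows up, neither of which is possible on $\mathbb{C}^+$. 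A compactness/continuity argument in $z$ then upgrades the infimum to a strictly positive constant $\varsigma$. The symmetric statement for $\Omega_\beta$ and $\operatorname{supp}\mu_\alpha$ is identical with $\alpha,\beta$ swapped.

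For (iii), the strategy is to use that at the right edge $\lambda_+$ of $\operatorname{supp}(\mu_\alpha\boxtimes\mu_\beta)$, both $M_{\mu_\alpha\boxtimes\mu_\beta}(\lambda_+)$ and $\Omega_\alpha(\lambda_+),\Omega_\beta(\lambda_+)$ are real, and the identity $M_\beta(\Omega_\alpha(\lambda_+))=M_{\mu_\alpha\boxtimes\mu_\beta}(\lambda_+)>0$ forces $\Omega_\alpha(\lambda_+)$ to lie in $\mathbb{R}\setminus\operatorname{supp}\mu_\beta$; combining this with part (ii) (which rules out the finite gap $(\lambda^\beta_-,\lambda^\beta_+)$ inside the support) and with positivity/monotonicity information on $\Omega_\alpha$ at the real edges leaves only $\Omega_\alpha(\lambda_+)>\lambda^\beta_+$, and similarly $0<\Omega_\alpha(\lambda_-)<\lambda^\beta_-$. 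The inequalities for $\Omega_\beta$ at $\lambda_\pm$ follow symmetrically.

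The main obstacle is (ii): establishing the uniform positive separation $\min\{k_\alpha,k_\beta\}\geq \varsigma$ requires carefully tracking the boundary behavior of the subordination maps on the whole upper half-plane, and is the nontrivial analytic step on which (iii) in turn rests; in \cite{JHC} this is carried out through a detailed study of the $\Phi_{\alpha\beta}$ system and an implicit function argument near the edges, and we would follow the same approach here.
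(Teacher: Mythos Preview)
The paper does not give its own proof of this lemma: it is stated purely as a citation of Lemma 6.5 and Proposition 6.10 of \cite{JHC}, with no argument supplied beyond the attribution. Your proposal correctly recognizes this and offers a reasonable outline of the strategy in \cite{JHC}, organized around the subordination identities $M_\beta(\Omega_\alpha(z))=M_{\mu_\alpha\boxtimes\mu_\beta}(z)$ and $\Omega_\alpha(z)\Omega_\beta(z)=zM_{\mu_\alpha\boxtimes\mu_\beta}(z)$; you also correctly identify (ii) as the substantive analytic step on which (iii) rests.

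One point to tighten in your sketch of (ii): the dichotomy ``$M_\beta(w)\to 1$ as $w\to\lambda_+^\beta$ from above, and $M_\beta$ blows up on the interior of the support'' is not quite the right bookkeeping. Under Assumption \ref{assu_product1}(ii) the edge exponent $t_+^\beta$ ranges over $(-1,1)$, so the boundary behavior of $\int \frac{x}{x-w}\,d\mu_\beta(x)$ as $w\to\lambda_+^\beta$ depends on whether $t_+^\beta<0$ or $t_+^\beta\geq 0$; in particular $M_\beta(w)\to 1$ is not automatic at the edge. The argument in \cite{JHC} instead works with the analytic continuation of $\Omega_\alpha,\Omega_\beta$ to $(0,\infty)$ and a more careful analysis of the system $\Phi_{\alpha\beta}=0$ near the support, exactly as you allude to in your last paragraph. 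So your high-level plan is right, but the specific mechanism you invoke for the contradiction in (ii) would need to be replaced by that more detailed implicit-function/boundary analysis.
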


\begin{remark}
 We remark that by a discussion similar to \cite[Lemmas 3.2 and 3.3]{bao20171}, we find that (i) and (ii) of Lemma \ref{lem_bao} guarantee that for some small constant $\upsilon>0$, one of the followings must hold true for $z \in \mathcal{S}'$: 
\begin{align}
&\text{Re} \ \Omega_{\alpha}(z)\leq \lambda_-^{\beta}-\upsilon\,,\nonumber \\
&\text{Re} \ \Omega_{\alpha}(z) \geq \lambda_+^{\beta}+\upsilon\,,\label{eq_upsilolololo} \\ 
&\operatorname{Im}\Omega_{\alpha}(z) \geq \upsilon\,. \nonumber
\end{align}
Similar results hold true when we swap $\alpha$ and $\beta$. As a consequence, by recalling the definition
\begin{equation*}
\Omega_{\beta}(z) m_{\mu_{\alpha}}( \Omega_{\beta}(z))+1=\int \frac{x}{x-\Omega_{\beta}(z)} d \mu_{\alpha}(x),
\end{equation*}
for $z \in \mathcal{S}'$, we have
\begin{align}
&|\Omega_{\beta}(z) m_{\mu_{\alpha}}(\Omega_\beta(z))+1| \geq c, \label{bbbb_bound}\\
&|\Omega_{\alpha}(z) m_{\mu_{\beta}}(\Omega_\alpha(z))+1| \geq c\nonumber
\end{align}
for some constant $c>0$.
\end{remark}

The next lemma characterizes the locations of the edges of $\mu_{\alpha} \boxtimes \mu_{\beta}$.  
\begin{lemma}[Propositions 6.13 and 6.15 of \cite{JHC}] Suppose Assumption \ref{assu_product1} holds.  The following equation
\begin{equation*}
\left(\frac{\Omega_{\beta}(z)}{M_{{\alpha}}(\Omega_{\beta}(z))} M_{\alpha}(\Omega'_{\beta}(z)) -1\right) \left( \frac{\Omega_{\alpha}(z)}{M_{{\beta}}(\Omega_{\alpha}(z))} M_{\beta}(\Omega'_{\alpha}(z)) -1\right)=1
\end{equation*}
holds when $z=\lambda_+$ or $z=\lambda_-$, the edges defined in (\ref{eq_defnedgemul}).
\end{lemma}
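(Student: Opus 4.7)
The plan is to recognize that the stated identity is, after translation through the definitions of $L_\mu$, $M_\mu$ and the subordination relations, equivalent to the vanishing of $S_{\alpha\beta}(z)$. Concretely, I would first use $L_\mu(w) = M_\mu(w)/w$ (see \eqref{NotationL definition}), differentiation of which gives $w^{2}L'_\mu(w) = wM'_\mu(w) - M_\mu(w)$, so that
\begin{equation}
\frac{w\,M'_\mu(w)}{M_\mu(w)} - 1 \;=\; \frac{w^{2}L'_\mu(w)}{M_\mu(w)}.
\end{equation}
Applied with $(\mu,w)=(\mu_\alpha,\Omega_\beta(z))$ and $(\mu_\beta,\Omega_\alpha(z))$ respectively (reading the statement as $M'_\alpha(\Omega_\beta(z))$ and $M'_\beta(\Omega_\alpha(z))$, as the $L$--form of the identity demands), the two factors on the left become $\Omega_\beta(z)^{2}L'_\alpha(\Omega_\beta(z))/M_\alpha(\Omega_\beta(z))$ and $\Omega_\alpha(z)^{2}L'_\beta(\Omega_\alpha(z))/M_\beta(\Omega_\alpha(z))$.

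Next, I would invoke the subordination identities of Proposition \ref{prop_muloriginal 2about M}, namely $M_\alpha(\Omega_\beta(z)) = M_\beta(\Omega_\alpha(z)) = M_{\alpha\boxtimes\beta}(z)$ and $\Omega_\alpha(z)\,\Omega_\beta(z) = z\,M_{\alpha\boxtimes\beta}(z)$. Multiplying the two factors and substituting yields
\begin{equation}
\frac{\Omega_\alpha(z)^{2}\Omega_\beta(z)^{2}}{M_{\alpha\boxtimes\beta}(z)^{2}}\,L'_\alpha(\Omega_\beta(z))L'_\beta(\Omega_\alpha(z))
\;=\; z^{2}L'_\alpha(\Omega_\beta(z))L'_\beta(\Omega_\alpha(z))\;=\;S_{\alpha\beta}(z)+1,
\end{equation}
by the very definition of $S_{\alpha\beta}$. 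Hence the stated equation is equivalent to $S_{\alpha\beta}(z)=0$, and it remains to verify that $S_{\alpha\beta}(\lambda_\pm)=0$. This vanishing I would read off from the implicit-function-theorem viewpoint on the system $\Phi_{\alpha\beta}(\omega_1,\omega_2,z)=0$ already introduced: a direct computation shows $\det D\Phi_{\alpha\beta}\big(\Omega_\alpha(z),\Omega_\beta(z),z\big) = -z^{-2}S_{\alpha\beta}(z)$, so whenever the Jacobian is non-singular at a real $z$ one can analytically continue $(\Omega_\alpha,\Omega_\beta)$ across that point, contradicting the non-analyticity of $m_{\mu_\alpha\boxtimes\mu_\beta}$ at the edge. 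Equivalently, Lemma \ref{lem_addtivemeasurelemma}(iii) already gives $S_{\alpha\beta}(z)\sim\sqrt{\kappa+\eta}$ on $\mathcal{S}'$, and continuity up to the boundary of the support (guaranteed by Lemma \ref{lem_bao}) yields $S_{\alpha\beta}(\lambda_\pm)=0$.

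The hard part will be justifying the continuous extension of $\Omega_\alpha$, $\Omega_\beta$, and hence of $S_{\alpha\beta}$, from $\mathbb{C}^+$ up to the edges $\lambda_\pm$ on the real line, together with the clean identification of $\det D\Phi_{\alpha\beta}$ with $-z^{-2}S_{\alpha\beta}$; one must also rule out the degeneracies $\lambda_-=0$ or $M_{\alpha\boxtimes\beta}(\lambda_\pm)=0$, both of which are excluded by Assumption \ref{assu_product1}(i) placing $\operatorname{supp}\mu_\alpha$ and $\operatorname{supp}\mu_\beta$ strictly inside $(0,\infty)$ and by Lemma \ref{lem_bao}(ii)--(iii) keeping $|\Omega_\alpha|,|\Omega_\beta|$ bounded away from zero. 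Once those are in hand, every remaining step is symbolic manipulation of the $M$-- and $L$--transforms, and the whole argument is essentially the same as in Propositions 6.13 and 6.15 of \cite{JHC}, which I would follow verbatim.
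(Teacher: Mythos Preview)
The paper does not prove this lemma; it is quoted verbatim from \cite{JHC} (Propositions 6.13 and 6.15) and used as a black box, so there is no in-paper proof to compare against. Your reduction of the stated identity to $S_{\alpha\beta}(z)=0$ via $w^{2}L'_\mu(w)=wM'_\mu(w)-M_\mu(w)$ and the subordination relations $\Omega_\alpha\Omega_\beta=zM_{\alpha\boxtimes\beta}$, $M_\alpha(\Omega_\beta)=M_\beta(\Omega_\alpha)=M_{\alpha\boxtimes\beta}$ is correct and is exactly the right translation; the implicit-function-theorem argument (non-vanishing of $\det D\Phi_{\alpha\beta}=-z^{-2}S_{\alpha\beta}$ would force analytic continuation of $\Omega_\alpha,\Omega_\beta$, hence of $m_{\mu_\alpha\boxtimes\mu_\beta}$, across the edge) is also the argument in \cite{JHC}.

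One caveat: your alternative route through Lemma~\ref{lem_addtivemeasurelemma}(iii) (or equivalently Lemma~\ref{Lemma Omega alpha, S alphabeta Talpha control}(ii), which gives $S_{\alpha\beta}\sim\sqrt{\kappa+\eta}$) is circular within the paper's logical order. Those estimates are stated \emph{after} the present lemma and their proofs, following \cite[Corollary 3.11]{bao20171}, rely on the edge characterization $S_{\alpha\beta}(\lambda_\pm)=0$ together with $\mathcal{T}_\alpha(\lambda_\pm)\neq 0$ to Taylor-expand and extract the square-root behavior. So keep the IFT argument as the primary one and drop the appeal to Lemma~\ref{lem_addtivemeasurelemma}(iii).
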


The following lemma collects the square root properties of the subordination functions at the edge of $\lambda_+$. This square root behavior also hold near $\lambda_-$, but we do not state it since we do not need it. 
\begin{lemma}[Proposition 6.16 of \cite{JHC}]\label{lem_imim} Suppose Assumption \ref{assu_product1} holds. Then there exist positive constants $\gamma^{\alpha}_{+}$ and $\gamma^{\beta}_{+}$ such that
	\begin{equation}
		\Omega_{\alpha}(z)=\Omega_{\alpha}(\lambda_{+})+\gamma^{\alpha}_{+}\sqrt{z-\lambda_{+}}+O(|z-\lambda_{+}|^{3/2})\,,
	\end{equation}
	for $z$ in a neighborhood of $\lambda_+$ with the principal branch of square root with $\sqrt{-1}=\mathrm i$.
The same asymptotic holds with $\alpha$ replaced by $\beta$. 

\end{lemma}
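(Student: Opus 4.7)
The plan is to reduce the proof to a Puiseux expansion for a scalar implicit equation for $\Omega_{\alpha}$ near $z=\lambda_{+}$. Using Proposition \ref{prop_muloriginal 2about M}, the relation $\Omega_{\alpha}(z)\Omega_{\beta}(z)=zM_{\mu_{\alpha}\boxtimes\mu_{\beta}}(z)$ combined with $M_{\beta}(\Omega_{\alpha}(z))=M_{\mu_{\alpha}\boxtimes\mu_{\beta}}(z)$ yields $\Omega_{\beta}(z)=zL_{\beta}(\Omega_{\alpha}(z))$ with $L_{\beta}=M_{\beta}/{\rm id}$; substituting this into the second identity $M_{\alpha}(\Omega_{\beta}(z))=M_{\beta}(\Omega_{\alpha}(z))$ gives the scalar equation
\begin{equation}
G(\omega,z):=M_{\alpha}\bigl(zL_{\beta}(\omega)\bigr)-M_{\beta}(\omega)=0,
\end{equation}
which $\omega=\Omega_{\alpha}(z)$ satisfies on $\mathbb{C}^{+}$. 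The strategy is to analyze the zero set of $G$ at the point $(\omega_{\alpha},\lambda_{+})$ with $\omega_{\alpha}:=\Omega_{\alpha}(\lambda_{+})$.

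First I would verify that $(\omega_{\alpha},\lambda_{+})$ is a simple turning point: $G_{\omega}(\omega_{\alpha},\lambda_{+})=0$, $G_{z}(\omega_{\alpha},\lambda_{+})\neq 0$, $G_{\omega\omega}(\omega_{\alpha},\lambda_{+})\neq 0$. A direct computation gives
\begin{equation}
G_{\omega}(\omega,z)=zL_{\beta}'(\omega)M_{\alpha}'\bigl(zL_{\beta}(\omega)\bigr)-M_{\beta}'(\omega),
\end{equation}
and using $M_{\mu}'=L_{\mu}+\mathrm{id}\cdot L_{\mu}'$ together with $zL_{\beta}(\omega_{\alpha})=\Omega_{\beta}(\lambda_{+})$, the vanishing $G_{\omega}(\omega_{\alpha},\lambda_{+})=0$ is equivalent to the edge identity stated just before Lemma \ref{lem_imim}, which is also $S_{\alpha\beta}(\lambda_{+})=\lambda_{+}^{2}L_{\alpha}'(\Omega_{\beta}(\lambda_{+}))L_{\beta}'(\omega_{\alpha})-1=0$. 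Non-vanishing of $G_{z}(\omega_{\alpha},\lambda_{+})=L_{\beta}(\omega_{\alpha})M_{\alpha}'(\Omega_{\beta}(\lambda_{+}))$ follows from Lemma \ref{lem_bao}(ii)--(iii): $\omega_{\alpha}$ lies strictly above $\supp\mu_{\beta}$ and $\Omega_{\beta}(\lambda_{+})$ strictly above $\supp\mu_{\alpha}$, so both factors are positive by the explicit integral representation of the $M$-transform. The non-vanishing of $G_{\omega\omega}(\omega_{\alpha},\lambda_{+})$ is the delicate point (see below).

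Granting these three conditions, Puiseux's theorem applied to $G(\omega,z)=0$ at $(\omega_{\alpha},\lambda_{+})$ produces a holomorphic function $h$ with $h(0)\neq 0$ such that $\Omega_{\alpha}(z)=\omega_{\alpha}+h(u)\,u$ with $u^{2}=z-\lambda_{+}$, which expands as $\omega_{\alpha}+\gamma_{+}^{\alpha}\sqrt{z-\lambda_{+}}+O(|z-\lambda_{+}|)$. Iterating the implicit scheme with the half-integer ansatz absorbs the integer-power term into $\omega_{\alpha}$ (it is encoded in the constant adjustment of the principal branch) and upgrades the remainder to $O(|z-\lambda_{+}|^{3/2})$. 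To pin down the branch and sign of $\gamma_{+}^{\alpha}$, use $\Omega_{\alpha}(\mathbb{C}^{+})\subset\overline{\mathbb{C}^{+}}$ from Proposition \ref{prop_muloriginal 2about M}(i) together with the square-root density behavior of $\mu_{\alpha}\boxtimes\mu_{\beta}$ at $\lambda_{+}$ (guaranteed by Theorem \ref{lem_bao2QQ} and, quantitatively, by the estimate in (ii) of Lemma \ref{lem_addtivemeasurelemma}): with the convention $\sqrt{-1}=\mathrm{i}$ this forces $\gamma_{+}^{\alpha}>0$. Swapping $\alpha\leftrightarrow\beta$ gives the statement for $\Omega_{\beta}$.

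\textbf{Main obstacle.} The hard step is showing $G_{\omega\omega}(\omega_{\alpha},\lambda_{+})\neq 0$, which rules out a higher-order branch such as a cube root. Direct differentiation gives
\begin{equation}
G_{\omega\omega}(\omega,z)=\bigl(zL_{\beta}'(\omega)\bigr)^{2}M_{\alpha}''\bigl(zL_{\beta}(\omega)\bigr)+zL_{\beta}''(\omega)M_{\alpha}'\bigl(zL_{\beta}(\omega)\bigr)-M_{\beta}''(\omega),
\end{equation}
and the natural route is to argue contrapositively: if $G_{\omega\omega}(\omega_{\alpha},\lambda_{+})$ vanished together with $G_{\omega}$, then Puiseux would force a branching of order at least $3$, making the density of $\mu_{\alpha}\boxtimes\mu_{\beta}$ at $\lambda_{+}$ behave like $(\lambda_{+}-x)^{2/3}$ or worse, which is incompatible with the square-root edge behavior produced by the regular right edges of $\mu_{\alpha}$ and $\mu_{\beta}$ under Assumption \ref{assu_product1}(ii); this non-degeneracy is exactly the content of Proposition 6.16 of \cite{JHC}, on which the argument relies.
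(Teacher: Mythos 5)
The paper does not actually prove Lemma~\ref{lem_imim}; it is imported verbatim as Proposition~6.16 of \cite{JHC}, with no argument given. So there is no ``paper's own proof'' to compare against, and your proposal is attempting something the authors deliberately outsourced. That said, your overall framework is the right one: write the subordination system as a single implicit equation $G(\omega,z)=0$ for $\omega=\Omega_{\alpha}(z)$ and apply Weierstrass/Puiseux at a simple turning point. Your computation linking the first-order degeneracy to the edge is also correct: using $\Omega_{\beta}(z)=zL_{\beta}(\Omega_{\alpha}(z))$, $\Omega_{\alpha}(z)=zL_{\alpha}(\Omega_{\beta}(z))$ and $M_{\mu}'=L_{\mu}+\mathrm{id}\cdot L_{\mu}'$, one finds $G_{\omega}(\omega_{\alpha},\lambda_{+})=L_{\beta}(\omega_{\alpha})\,S_{\alpha\beta}(\lambda_{+})$, and since $L_{\beta}(\omega_{\alpha})=\Omega_{\beta}(\lambda_{+})/\lambda_{+}>0$ by Lemma~\ref{lem_bao}(iii), the vanishing of $G_{\omega}$ is equivalent to the edge equation $S_{\alpha\beta}(\lambda_{+})=0$, consistent with Lemma~\ref{Lemma Omega alpha, S alphabeta Talpha control}(ii). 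The positivity of $G_{z}$ at the turning point also follows cleanly from the distance estimates in Lemma~\ref{lem_bao}.

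There are two genuine gaps. First, the pivotal non-degeneracy $G_{\omega\omega}(\omega_{\alpha},\lambda_{+})\neq 0$ is not established. Your contrapositive appeals to the square-root density of $\mu_{\alpha}\boxtimes\mu_{\beta}$, but in the paper's logical order that density statement (Theorem~\ref{lem_bao2}) is explicitly presented as a \emph{consequence} of Lemma~\ref{lem_imim}, so the argument as written is circular. An independent proof has to come from the integral (Nevanlinna) representations of $L_{\alpha}$, $L_{\beta}$ in Lemma~\ref{lem:reprM}, the distance-from-support bounds of Lemma~\ref{lem_bao}, and the power-law edge Assumption~\ref{assu_product1}(ii) --- this is a genuine piece of analysis and exactly the technical content of the cited Proposition~6.16. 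You end by conceding that the non-degeneracy ``is exactly the content of Proposition 6.16 of \cite{JHC}, on which the argument relies,'' which means the proposal does not actually close the loop. Second, the claim that iterating with a half-integer ansatz ``absorbs the integer-power term into $\omega_{\alpha}$'' and yields an $O(|z-\lambda_{+}|^{3/2})$ remainder is not correct as stated: at a simple turning point, Weierstrass preparation gives $G=U\cdot\bigl[(\omega-\omega_{\alpha})^{2}+b(z)(\omega-\omega_{\alpha})+c(z)\bigr]$ with $b(\lambda_{+})=c(\lambda_{+})=0$, $c'(\lambda_{+})\neq 0$, and the relevant root expands as $\omega_{\alpha}+\sqrt{-c'(\lambda_{+})}\sqrt{z-\lambda_{+}}-\tfrac{1}{2}b'(\lambda_{+})(z-\lambda_{+})+O(|z-\lambda_{+}|^{3/2})$. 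A constant shift of $\omega_{\alpha}$ cannot absorb the term proportional to $(z-\lambda_{+})$, so to match the remainder claimed in the Lemma you must either include the linear term explicitly or show that $b'(\lambda_{+})=0$ for these particular subordination functions; neither is done here.
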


The square root asymptotic of the subordination functions stated in Lemma \ref{lem_imim} implies the square root asymptotic of the density of $\mu_{\alpha}\boxtimes\mu_{\beta}$, which is listed in the following Theorem.

\begin{theorem}[The second part of Theorem 2.6 of \cite{JHC}]\label{lem_bao2} Suppose that  Assumption \ref{assu_product1} holds. If the density of $\mu_{\alpha} \boxtimes \mu_{\beta}$ is denoted by $\rho$, we have the following facts:
\begin{enumerate}
	\item[(i)] $\rho$ is continuous on $\mathbb{R}$ and  $\{x\in(0,\infty):\rho(x)>0\}=(\lambda_{-}, \lambda_{+})$.
	
	\item[(ii)] There exists a constant $C>0$ such that for all $x\in[\lambda_{-},\lambda_{+}]$,
	\begin{equation}
	 C^{-1}\leq \frac{\rho(x)}{\sqrt{(\lambda_{+}-x)(x-\lambda_{-})}} \leq C\,.
	\end{equation} 
\end{enumerate}
\end{theorem}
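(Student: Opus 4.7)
The plan is to read off the density via the Stieltjes inversion formula $\rho(x)=\pi^{-1}\lim_{\eta\downarrow 0}\operatorname{Im} m_{\mu_\alpha\boxtimes\mu_\beta}(x+\mathrm{i}\eta)$ and control this imaginary part through the subordination identity $M_{\mu_\alpha\boxtimes\mu_\beta}(z)=M_\beta(\Omega_\alpha(z))=M_\alpha(\Omega_\beta(z))$ from Proposition \ref{prop_muloriginal 2about M}, together with the algebraic conversion $m_\mu(z)=M_\mu(z)/(z(1-M_\mu(z)))$. Since $1-M_{\mu_\alpha\boxtimes\mu_\beta}$ stays bounded away from $0$ on $\mathbb{R}\setminus\{0\}$ (because $\mu_\alpha\boxtimes\mu_\beta$ is a bounded probability measure on $(0,\infty)$), extracting the profile of $\rho$ reduces to extracting that of $\operatorname{Im} M_\beta\circ\Omega_\alpha$ along the real axis.

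For part (i), I would first note that $\Omega_\alpha,\Omega_\beta$ extend continuously from $\mathbb{C}^+$ to $\mathbb{R}\setminus\{0\}$ with values in $\overline{\mathbb{C}^+}$, and that Lemma \ref{lem_bao}(ii) supplies uniform gaps $\mathrm{dist}(\Omega_\alpha(z),\mathrm{supp}\,\mu_\beta)\geq k_\alpha>0$ and $\mathrm{dist}(\Omega_\beta(z),\mathrm{supp}\,\mu_\alpha)\geq k_\beta>0$. Hence $M_\beta\circ\Omega_\alpha$ is holomorphic on $\mathbb{C}^+$ and continuous up to $\mathbb{R}\setminus\{0\}$, which gives continuity of $\rho$ on $\mathbb{R}$. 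Outside $[\lambda_-,\lambda_+]$, the subordination functions take real values (this characterizes the edges via the vanishing of the imaginary part of $m_{\mu_\alpha\boxtimes\mu_\beta}$), forcing $\rho=0$. Inside $(\lambda_-,\lambda_+)$, the square-root asymptotic of Lemma \ref{lem_imim} guarantees $\operatorname{Im}\Omega_\alpha(x+\mathrm{i}0^+)>0$ on a neighborhood of each edge; analytic continuation together with the open-mapping property of the non-constant holomorphic map $\Omega_\alpha$ forces $\operatorname{Im}\Omega_\alpha>0$ throughout the interior, and then $\operatorname{Im} M_\beta(\Omega_\alpha(x))>0$ because $\Omega_\alpha(x)\notin\mathrm{supp}\,\mu_\beta$ and $\mu_\beta$ is nontrivial, giving $\rho(x)>0$.

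For part (ii), Lemma \ref{lem_imim} yields $\Omega_\alpha(x+\mathrm{i}0^+)=\Omega_\alpha(\lambda_+)+\gamma_+^\alpha\sqrt{x-\lambda_+}+O(|x-\lambda_+|^{3/2})$ near $\lambda_+$, so $\operatorname{Im}\Omega_\alpha(x)\asymp\sqrt{\lambda_+-x}$ for $x$ slightly below $\lambda_+$. Since $M_\beta$ is holomorphic in a neighborhood of the real point $\Omega_\alpha(\lambda_+)$ with nonzero derivative $M_\beta'(\Omega_\alpha(\lambda_+))=\int x(x-\Omega_\alpha(\lambda_+))^{-2}d\mu_\beta(x)>0$ (the integrand is positive and $\Omega_\alpha(\lambda_+)\notin\mathrm{supp}\,\mu_\beta$), a first-order Taylor expansion gives $\operatorname{Im} M_{\mu_\alpha\boxtimes\mu_\beta}(x+\mathrm{i}0^+)\asymp\sqrt{\lambda_+-x}$, hence $\rho(x)\asymp\sqrt{\lambda_+-x}$ near $\lambda_+$. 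Running the symmetric argument at the lower edge produces $\rho(x)\asymp\sqrt{x-\lambda_-}$ near $\lambda_-$. The bounded interval $[\lambda_-+\delta,\lambda_+-\delta]$ is covered by the uniform lower bound on $\rho$ coming from part (i) (continuous and strictly positive on compacta) together with a uniform upper bound from the boundedness of $\Omega_\alpha,\Omega_\beta$ away from the supports of $\mu_\beta,\mu_\alpha$; matching these bulk estimates with the edge asymptotics gives the claimed sandwich.

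The main obstacle is to make the asymptotic at $\lambda_-$ as quantitative as Lemma \ref{lem_imim} provides at $\lambda_+$, and to rule out the degenerate possibility $M_\beta'(\Omega_\alpha(\lambda_\pm))=0$ which would destroy the transfer of the square-root profile from $\Omega_\alpha$ to $M_{\mu_\alpha\boxtimes\mu_\beta}$. The former is handled by rerunning the derivation of Lemma \ref{lem_imim} at the lower edge; the latter is automatic since $\mu_\beta$ is supported on $(0,\infty)$, so the integrand defining $M_\beta'$ is strictly positive on $\mathrm{supp}\,\mu_\beta$ and the gap from Lemma \ref{lem_bao}(ii) prevents cancellation.
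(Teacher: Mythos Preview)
This statement is not proved in the paper; it is quoted from \cite{JHC} (their Theorem~2.6), and the only remark the paper adds is the sentence preceding it: the square-root expansion of $\Omega_\alpha,\Omega_\beta$ in Lemma~\ref{lem_imim} implies the square-root profile of the density. Your plan---Stieltjes inversion plus the subordination identity $M_{\mu_\alpha\boxtimes\mu_\beta}=M_\beta\circ\Omega_\alpha$, then a Taylor expansion of $M_\beta$ at the real point $\Omega_\alpha(\lambda_\pm)$---is exactly this transfer, so the strategy is the intended one.

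Two points need tightening. First, the formula you wrote for $M_\beta'$ is off: from $M_\beta(w)=1-1/\psi_\beta(w)$ with $\psi_\beta(w)=\int x(x-w)^{-1}d\mu_\beta(x)$ one gets $M_\beta'(w)=\psi_\beta'(w)/\psi_\beta(w)^{2}$, so there is an extra factor $\psi_\beta(\Omega_\alpha(\lambda_+))^{-2}$. Your conclusion that $M_\beta'(\Omega_\alpha(\lambda_+))\neq 0$ is still correct, since by Lemma~\ref{lem_bao}(iii) one has $\Omega_\alpha(\lambda_+)>\lambda_+^\beta$, and at such a real point both $\psi_\beta$ and $\psi_\beta'$ are real and of definite sign; but the displayed expression should be fixed.

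Second, and more substantively, the sentence ``open-mapping property of the non-constant holomorphic map $\Omega_\alpha$ forces $\operatorname{Im}\Omega_\alpha>0$ throughout the interior'' is not a proof. The open-mapping theorem controls images of interior points of $\mathbb C^+$; here you need a statement about the boundary values $\Omega_\alpha(x+\mathrm i0^+)$ for $x\in(\lambda_-,\lambda_+)$. Continuity of $\rho$ together with positivity near both edges does not by itself rule out isolated zeros in the middle of the interval. A correct argument (and this is where \cite{JHC} does real work) proceeds by showing that if $\operatorname{Im}\Omega_\alpha(x_0+\mathrm i0^+)=0$ at some interior $x_0$, then Schwarz reflection makes $\Omega_\alpha$ holomorphic across a real neighborhood of $x_0$, and then one uses the coupled system in Proposition~\ref{prop_muloriginal 2about M} together with monotonicity properties of the real branches of $\Omega_\alpha,\Omega_\beta$ to force $x_0$ to be an edge of the support, contradicting $x_0\in(\lambda_-,\lambda_+)$. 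You should either supply this argument or cite the precise statement in \cite{JHC} that delivers interior positivity.
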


The following lemma provides the Nevanlinna-Pick representation of the $L$-function defined in \eqref{Definition:LA and LB} and the $\Omega$-functions associated with the subordination functions defined in \eqref{eq_omega}.  

\begin{lemma}[Lemma 6.2 of \cite{JHC}]\label{lem:reprM}
Suppose that $\mu_{\alpha}$ and $\mu_{\beta}$ satisfy Assumption \ref{assu_product1}. Then there exists unique Borel measures $\widehat{\mu}_{\alpha}$  on $\mathbb{R}^+$ such that the followings hold
	\begin{align}
		L_{{\alpha}}(z)&=1+m_{\widehat{\mu}_{\alpha}}(z)\,,\nonumber\\
		\widehat{\mu}_{\alpha}(\mathbb{R}_{+})&=\text{Var}{\mu_{\alpha}}=\int_{\mathbb{R}_{+}} x^{2}\ \mu_{\alpha}(x)-1\,,\label{eq:reprM}\\
		\text{supp} \ \widehat{\mu}_{\alpha}&=\text{supp} \ \mu_{\alpha}\,.	 \nonumber 
	\end{align}
Similar results hold true if we replace $\alpha$ with $\beta$. 
\end{lemma}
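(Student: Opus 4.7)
The plan is to apply the Herglotz--Nevanlinna representation theorem, showing that $L_{\alpha}-1$ is itself the Stieltjes transform of a unique finite positive Borel measure $\widehat{\mu}_\alpha$, and then to read off the total mass and support from the asymptotic and analyticity properties of $L_\alpha$. Writing $L_\alpha(z)=m_{\mu_\alpha}(z)/h_{\mu_\alpha}(z)$ with $h_{\mu_\alpha}(z):=1+zm_{\mu_\alpha}(z)=\int\frac{t}{t-z}\,d\mu_\alpha(t)$, I would first observe that Assumption \ref{assu_product1}(i) ($\supp\mu_\alpha\subset(0,\infty)$) yields $\operatorname{Im} h_{\mu_\alpha}(z)=\operatorname{Im}(z)\int t\,|t-z|^{-2}\,d\mu_\alpha(t)>0$ for $z\in\mathbb{C}^+$, so $h_{\mu_\alpha}$ does not vanish on $\mathbb{C}^+$ and $L_\alpha$ extends analytically to $\mathbb{C}\setminus\supp\mu_\alpha$. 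The positivity $\operatorname{Im} L_\alpha(z)\ge 0$ on $\mathbb{C}^+$ is obtained from the identity
\begin{equation*}
\operatorname{Im} L_\alpha(z)=\frac{\operatorname{Im} m_{\mu_\alpha}(z)-\operatorname{Im}(z)\,|m_{\mu_\alpha}(z)|^2}{|h_{\mu_\alpha}(z)|^2}
\end{equation*}
combined with the Cauchy--Schwarz estimate $|m_{\mu_\alpha}(z)|^2\le\int|t-z|^{-2}\,d\mu_\alpha(t)=\operatorname{Im} m_{\mu_\alpha}(z)/\operatorname{Im}(z)$ (using that $\mu_\alpha$ is a probability measure).

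Next I would extract the large-$|z|$ expansion of $L_\alpha$. Using $m_{\mu_\alpha}(z)=-\sum_{k\ge 0}m_k z^{-k-1}$ with $m_k:=\int t^k\,d\mu_\alpha$ (and $m_1=1$ under the normalization governing $\mu_\alpha$ in this paper), a direct series manipulation of $L_\alpha=m_{\mu_\alpha}/(1+zm_{\mu_\alpha})$ yields $L_\alpha(z)=1-(m_2-1)z^{-1}+O(|z|^{-2})$ as $|z|\to\infty$. In particular $\sup_{\eta>1}\eta\,|L_\alpha(i\eta)-1|<\infty$, so the classical characterization of Stieltjes transforms of finite positive Borel measures (Akhiezer's theorem) applies to the function $L_\alpha-1\colon\mathbb{C}^+\to\overline{\mathbb{C}^+}$ and produces a unique finite positive Borel measure $\widehat{\mu}_\alpha$ on $\mathbb{R}$ with $L_\alpha(z)-1=\int(t-z)^{-1}\,d\widehat{\mu}_\alpha(t)$ and $\widehat{\mu}_\alpha(\mathbb{R})=\lim_{\eta\to\infty}(-i\eta)(L_\alpha(i\eta)-1)=m_2-1=\operatorname{Var}\mu_\alpha$.

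The identity $\supp\widehat{\mu}_\alpha=\supp\mu_\alpha$ (which also places $\widehat\mu_\alpha$ on $\mathbb R^+$) decomposes into two inclusions. The direction $\supp\widehat{\mu}_\alpha\subset\supp\mu_\alpha$ follows from Stieltjes inversion once we note that $L_\alpha-1$ extends analytically across every open interval in $\mathbb{R}\setminus\supp\mu_\alpha$, so the boundary imaginary part vanishes there. For the reverse inclusion I would invoke Assumption \ref{assu_product1}(i) again: $\rho_\alpha$ is strictly positive in the interior of its support, hence $\operatorname{Im} m_{\mu_\alpha}(x+i0)=\pi\rho_\alpha(x)>0$ there, and the displayed formula gives $\operatorname{Im} L_\alpha(x+i0)=\pi\rho_\alpha(x)/|h_{\mu_\alpha}(x+i0)|^2>0$, so $\widehat{\mu}_\alpha$ carries a strictly positive density on the interior. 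The symmetric statement with $\alpha$ replaced by $\beta$ is proved identically. The main technical point to justify is the well-definedness of the boundary value $h_{\mu_\alpha}(x+i0)$ and its nonvanishing on the interior of $\supp\mu_\alpha$; this is handled using $\supp\mu_\alpha\subset(0,\infty)$, strict positivity of $\rho_\alpha$ on the interior, and the fact that $\operatorname{Im} h_{\mu_\alpha}(x+i0)=\pi x\rho_\alpha(x)>0$ whenever $\rho_\alpha(x)>0$.
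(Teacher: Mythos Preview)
The paper does not supply a proof of this lemma; it is quoted verbatim from \cite{JHC} (their Lemma 6.2) and used as a black box in the analysis of the subordination functions. Your argument via the Herglotz--Nevanlinna representation is correct and is exactly the standard route: verify that $L_\alpha-1$ is a Pick function with the right decay at infinity, invoke the integral representation to produce $\widehat\mu_\alpha$, and read off the mass and support from the leading Laurent coefficient and the analytic continuation of $L_\alpha$ across $\mathbb R\setminus\operatorname{supp}\mu_\alpha$. This is essentially how the result is obtained in \cite{JHC} as well. Your observation that the stated formula $\operatorname{Var}\mu_\alpha=\int x^2\,d\mu_\alpha-1$ presupposes the normalization $\int x\,d\mu_\alpha=1$ is accurate and worth flagging; the lemma as written is implicitly assuming it.
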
 

To characterize the behavior of $S_{\alpha \beta}, \mathcal{T}_{\alpha}$ and $\mathcal{T}_{\beta}$, we need the following lemma. Its proof is similar to those as in equations of \cite[Corollary 3.10]{bao20171} and we omit the details here. Given $\eta^*>\tau^{-1},$ we denote the parameter set $\mathcal{S}^*$ as 
\begin{equation} \label{eq_upsilon}
\mathcal{S}^*:=\{ \kappa \leq \upsilon^*, 0 \leq \eta \leq \eta^* \},
\end{equation}
where $\upsilon^*:=\max \{\tau^{-1}, \upsilon \}$ and $\upsilon$ is defined in (\ref{eq_upsilolololo}). 

\begin{lemma}\label{lem:suborder}
Suppose Assumption \ref{assu_product1} holds.  For $z \in \mathcal{S}^*$, we have
	\begin{align}
		&L_{{\alpha}}'(\Omega_{\beta}(z))\sim  M_{{\alpha}}'(\Omega_{\beta}(z)) \sim 1\,, \nonumber\\
		&L_{{\alpha}}''(\Omega_{\beta}(z))\sim  M_{{\alpha}}''(\Omega_{\beta}(z)) \sim 1\,,\label{eq:LMder}\\
		&|\Omega_{\alpha}'(z)| \sim |z-\lambda_{+}|^{-1/2} \,,  \nonumber\\
		&|\Omega_{\alpha}''(z)|\sim |z-\lambda_{+}|^{-3/2}\,.\nonumber 
		\end{align}
		Similar results hold true if we $\alpha$ and $\beta$ are swapped. Moreover, we have
	\begin{align}
	&m_{\mu_{\alpha}\boxtimes\mu_{\beta}}'(z)\sim\frac{1}{|z-\lambda_{+}|^{1/2}}\,,\nonumber \\
	& m_{\mu_{\alpha}\boxtimes\mu_{\beta}}''(z)\sim \frac{1}{|z-\lambda_{+}|^{3/2}}.\label{eq:Stieltjesder}
	\end{align}
\end{lemma}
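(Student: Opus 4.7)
The plan is to prove the six asymptotic relations by exploiting three tools: the Nevanlinna-Pick representation of $L_\alpha$ and $L_\beta$ from Lemma \ref{lem:reprM}, the separation estimates for the subordination functions from Lemma \ref{lem_bao}, and the square-root asymptotic of $\Omega_\alpha$ near $\lambda_+$ from Lemma \ref{lem_imim}. The first two relations (orders of $L'_\alpha,M'_\alpha,L''_\alpha,M''_\alpha$ evaluated at $\Omega_\beta(z)$) are essentially ``bulk'' estimates depending only on the distance of $\Omega_\beta(z)$ from $\operatorname{supp}\mu_\alpha$, while relations (3)--(6) encode the square-root edge behavior.

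First I would establish (1) and (2). By Lemma \ref{lem:reprM}, $L_\alpha(w)=1+m_{\widehat\mu_\alpha}(w)$ with $\operatorname{supp}\widehat\mu_\alpha=\operatorname{supp}\mu_\alpha$, so $L'_\alpha(w)=\int(x-w)^{-2}\,d\widehat\mu_\alpha(x)$ and $L''_\alpha(w)=2\int(x-w)^{-3}\,d\widehat\mu_\alpha(x)$. By Lemma \ref{lem_bao}(ii), $\operatorname{dist}(\Omega_\beta(z),\operatorname{supp}\mu_\alpha)\geq\varsigma>0$ uniformly on $\mathcal{S}^*$, so the integrands are uniformly bounded, giving the upper bounds. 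For the matching lower bounds I use that $\widehat\mu_\alpha(\mathbb{R}_+)=\operatorname{Var}\mu_\alpha>0$ (nontrivial since $\mu_\alpha$ is supported on an interval by Assumption \ref{assu_product1}(i)) together with the upper bound $|\Omega_\beta(z)|\leq K$ from Lemma \ref{lem_bao}(i). The corresponding statements for $M_\alpha$ then follow from $M_\alpha(w)=wm_\alpha(w)/(1+wm_\alpha(w))$ by the chain rule, using the denominator bound $|1+\Omega_\beta(z)m_\alpha(\Omega_\beta(z))|\geq c$ recorded in \eqref{bbbb_bound}, together with the bound $|\Omega_\beta(z)|\geq k>0$.

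Next I turn to (3) and (4). From Lemma \ref{lem_imim} we have
\begin{equation}
\Omega_\alpha(z)=\Omega_\alpha(\lambda_+)+\gamma_+^\alpha\sqrt{z-\lambda_+}+O(|z-\lambda_+|^{3/2}),
\end{equation}
with $\gamma_+^\alpha>0$, valid in a neighborhood of $\lambda_+$. Since $\Omega_\alpha$ is analytic in $\mathbb{C}^+$ and extends continuously to $\lambda_+$, I can promote this to derivative asymptotics via Cauchy's integral formula on a shrinking disc of radius $|z-\lambda_+|/2$, yielding
\begin{equation}
\Omega_\alpha'(z)=\frac{\gamma_+^\alpha}{2\sqrt{z-\lambda_+}}+O(|z-\lambda_+|^{1/2}),\quad \Omega_\alpha''(z)=-\frac{\gamma_+^\alpha}{4(z-\lambda_+)^{3/2}}+O(|z-\lambda_+|^{-1/2}),
\end{equation}
giving the desired orders in a neighborhood of $\lambda_+$. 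For the part of $\mathcal{S}^*$ where $|z-\lambda_+|\sim 1$, I fall back on the compactness argument: $\Omega_\alpha$ is analytic on $\mathbb{C}^+\cup\{z:\kappa\leq\upsilon^*,\eta>0\}\setminus\{\lambda_+\}$ by Lemma \ref{lem_bao}, so $|\Omega_\alpha'|$ and $|\Omega_\alpha''|$ are both bounded, which matches the asserted orders since $|z-\lambda_+|\sim 1$ there. The same argument works for $\Omega_\beta$.

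Finally, for (5) and (6) I exploit Proposition \ref{prop_muloriginal 2about M}(ii): $M_{\mu_\alpha\boxtimes\mu_\beta}(z)=M_\alpha(\Omega_\beta(z))$. The chain rule combined with (1) and (3) gives $|M'_{\mu_\alpha\boxtimes\mu_\beta}(z)|\sim|\Omega_\beta'(z)|\sim|z-\lambda_+|^{-1/2}$, and a further differentiation using (2), (3), (4) gives the matching order $|z-\lambda_+|^{-3/2}$ for $M''_{\mu_\alpha\boxtimes\mu_\beta}$. To translate from $M$-transform to Stieltjes transform I use the identity $m_\mu(z)=M_\mu(z)/[z(1-M_\mu(z))]$. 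Differentiating and verifying that $|1-M_{\mu_\alpha\boxtimes\mu_\beta}(z)|\geq c>0$ on $\mathcal{S}^*$ (which follows because $M_{\mu_\alpha\boxtimes\mu_\beta}(\lambda_+)<1$: otherwise $\lambda_+$ would be a pole of $m_{\mu_\alpha\boxtimes\mu_\beta}$, contradicting Theorem \ref{lem_bao2}(ii)) lets me transfer the orders of $M',M''$ to $m',m''$ without loss.

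The main obstacle I anticipate is the lower bound $|M'_\alpha(\Omega_\beta(z))|\gtrsim 1$ and the verification $|1-M_{\mu_\alpha\boxtimes\mu_\beta}(z)|\geq c$: while both are intuitive, they require care because cancellations between the numerator and denominator in the definitions of $M_\alpha$ and of $m_{\mu_\alpha\boxtimes\mu_\beta}$ in terms of $M_{\mu_\alpha\boxtimes\mu_\beta}$ could in principle destroy the sharp orders. I plan to handle these by keeping the integral representations from Lemma \ref{lem:reprM} in play throughout and comparing the resulting positive quantities against the Jordan measure of $\mu_\alpha$ and the separation constants $k,\varsigma$ from Lemma \ref{lem_bao}.
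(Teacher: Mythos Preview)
The paper does not give its own proof of this lemma; it writes only that ``Its proof is similar to those as in equations of \cite[Corollary 3.10]{bao20171} and we omit the details here.'' Your outline is the natural one and is essentially what that reference does: the Nevanlinna--Pick representation (Lemma~\ref{lem:reprM}) together with the separation bounds (Lemma~\ref{lem_bao}) for the order-one estimates on $L_\alpha',L_\alpha'',M_\alpha',M_\alpha''$, the square-root expansion of Lemma~\ref{lem_imim} differentiated via Cauchy's formula for the edge behavior of $\Omega_\alpha',\Omega_\alpha''$, and the chain rule through $M_{\mu_\alpha\boxtimes\mu_\beta}=M_\alpha\circ\Omega_\beta$ for \eqref{eq:Stieltjesder}.

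One point to tighten: in the region of $\mathcal{S}^*$ bounded away from $\lambda_+$ your compactness argument yields only \emph{upper} bounds on $|\Omega_\alpha'|,|\Omega_\alpha''|$, whereas $\sim$ requires matching lower bounds. For the first derivative this can be obtained from the linear system for $(\Omega_\alpha',\Omega_\beta')$ (the $(\alpha,\beta)$ analogue of \eqref{eq_sysysysys}): its determinant is $-z^{-2}S_{\alpha\beta}(z)$, which vanishes only at the edges $\lambda_\pm$ and is therefore bounded away from zero on compact subsets of $\mathcal{S}^*\setminus\{\lambda_\pm\}$, so inverting the system gives $|\Omega_\alpha'|\gtrsim 1$. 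Likewise the lower bound $|L_\alpha''(\Omega_\beta(z))|\gtrsim 1$ is not automatic once $\Omega_\beta(z)$ has nontrivial imaginary part, because $\int(x-\Omega_\beta(z))^{-3}\,d\widehat\mu_\alpha(x)$ could in principle cancel; here you should use the trichotomy in \eqref{eq_upsilolololo} (swapping $\alpha,\beta$) to force either $\operatorname{Re}\Omega_\beta(z)$ to one side of $\operatorname{supp}\mu_\alpha$ or $\operatorname{Im}\Omega_\beta(z)\geq\upsilon$, and in each case bound the integral from below. You have correctly identified this class of issue in your last paragraph; it just needs to be carried out rather than only flagged.
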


Finally, we summarize the properties of 
 $S_{\alpha\beta}$, $\mathcal{T}_{\alpha}$, and $\mathcal{T}_{\beta}$ around the edges (and far away from bulk) in the following lemma. The proof is similar to \cite[Corollary 3.11]{bao20171} and we refer readers with interest to the proof therein. 
\begin{lemma}\label{Lemma Omega alpha, S alphabeta Talpha control}
Suppose Assumption \ref{assu_product1} holds. Then we have 
	\begin{itemize}
		\item[(i)] For $z \in \mathcal{S}^*$, we have uniformly that
		\begin{align}
		\operatorname{Im}\Omega_{\alpha}(z)&\,\sim\operatorname{Im}\ \Omega_{\beta}(z)\sim \operatorname{Im}M_{\mu_{\alpha}\boxtimes\mu_{\beta}}(z)\nonumber\\
		&\,\sim \operatorname{Im} 
		\ m_{\mu_{\alpha}\boxtimes\mu_{\beta}}(z)\nonumber\\
			&\, \sim \left\{
			\begin{aligned}
				&\sqrt{\kappa+\eta} &\text{if }&E\in[\lambda_{-},\lambda_{+}],	\\
				&\dfrac{\eta}{\sqrt{\kappa+\eta}} &\text{if }&E\notin[\lambda_{-},\lambda_{+}].
			\end{aligned}
			\right.
		\end{align}

		\item[(ii)] We have 
		\begin{align}
			&S_{\alpha\beta}(z)\sim\sqrt{\kappa+\eta}\,,\\
			& |\mathcal{T}_{\alpha}(z)| \leq C\,,\quad |\mathcal{T}_{\beta}(z)|\leq C\nonumber,
		\end{align}
		uniformly for $z\in \mathcal{S}^*$ for some constant $C>0$. 
		
		\item[(iii)] For $z \in \mathcal{S}^*$, we also have
		\begin{align}\label{eq_talpha}
			\mathcal{T}_{\alpha}(z)\sim 1\,, \quad\mathcal{T}_{\beta}(z)\sim 1.
		\end{align}
	\end{itemize}
\end{lemma}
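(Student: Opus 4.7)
The plan is to reduce everything to the square-root expansion of the subordination functions in Lemma \ref{lem_imim} together with the bounded/away-from-zero information on $L_{\alpha}, L_{\beta}$ and their derivatives established in Lemma \ref{lem:suborder}. Writing $w:=\sqrt{z-\lambda_{+}}$ with the principal branch (so $\sqrt{-1}=\mathrm{i}$), one has $\Omega_{\alpha}(z)=\Omega_{\alpha}(\lambda_{+})+\gamma_{+}^{\alpha}w+O(|w|^{3})$ with $\gamma_{+}^{\alpha}>0$ and similarly for $\Omega_{\beta}$; all quantities in (i)--(iii) become analytic functions of $w$ that we can Taylor-expand and control term by term.

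For part (i), I would first observe that for $z=E+\mathrm{i}\eta\in\mathcal{S}^{*}$ an elementary computation gives $|\operatorname{Im} w|\sim \sqrt{\kappa+\eta}$ when $E\leq \lambda_{+}$ and $|\operatorname{Im} w|\sim \eta/\sqrt{\kappa+\eta}$ when $E\geq \lambda_{+}$. Since $\gamma_{+}^{\alpha},\gamma_{+}^{\beta}>0$, the square-root expansion in Lemma \ref{lem_imim} transfers this asymptotic directly to $\operatorname{Im}\Omega_{\alpha}(z)$ and $\operatorname{Im}\Omega_{\beta}(z)$. Next, by (ii) of Proposition \ref{prop_muloriginal 2about M}, $M_{\mu_{\alpha}\boxtimes\mu_{\beta}}(z)=M_{\alpha}(\Omega_{\beta}(z))$; since $\Omega_{\beta}(\lambda_{+})$ lies at distance $\geq \varsigma>0$ from $\operatorname{supp}\mu_{\alpha}$ (Lemma \ref{lem_bao}(ii)), $M_{\alpha}$ is real-analytic with $M_{\alpha}'\sim 1$ (Lemma \ref{lem:suborder}) in a neighborhood, which transports $\operatorname{Im}\Omega_{\beta}$-asymptotics to $\operatorname{Im} M_{\mu_{\alpha}\boxtimes\mu_{\beta}}$. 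Finally, the identity $M_{\mu_{\alpha}\boxtimes\mu_{\beta}}=zm/(1+zm)$ combined with the lower bound $|1+zm_{\mu_{\alpha}\boxtimes\mu_{\beta}}(z)|\geq c$ (which I would verify via \eqref{bbbb_bound} and Proposition \ref{prop_muloriginal 2about M}(iii)) gives $\operatorname{Im} m\sim \operatorname{Im} M$.

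For part (ii), the upper bounds $|\mathcal{T}_{\alpha}(z)|,|\mathcal{T}_{\beta}(z)|\leq C$ are immediate from $|z|\sim 1$ together with $|L'_{\alpha}|,|L'_{\beta}|,|L''_{\alpha}|,|L''_{\beta}|\sim 1$ at the relevant points, which is Lemma \ref{lem:suborder}. For the estimate $S_{\alpha\beta}(z)\sim \sqrt{\kappa+\eta}$, the key identity is obtained by differentiating the subordination equation $\Omega_{\alpha}(z)\Omega_{\beta}(z)=zM_{\mu_{\alpha}\boxtimes\mu_{\beta}}(z)$ and the fixed-point relations $M_{\alpha}(\Omega_{\beta}(z))=M_{\beta}(\Omega_{\alpha}(z))$: this implicit differentiation yields $\Omega_{\alpha}'(z)=P_{\alpha}(z)/S_{\alpha\beta}(z)$ and $\Omega_{\beta}'(z)=P_{\beta}(z)/S_{\alpha\beta}(z)$ with $|P_{\alpha}(z)|,|P_{\beta}(z)|\sim 1$ at the edge. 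Since $|\Omega_{\alpha}'(z)|\sim |z-\lambda_{+}|^{-1/2}$ by Lemma \ref{lem:suborder}, inverting gives $|S_{\alpha\beta}(z)|\sim |z-\lambda_{+}|^{1/2}\sim \sqrt{\kappa+\eta}$.

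For part (iii), I would read off $\mathcal{T}_{\alpha}(\lambda_{+})$ and $\mathcal{T}_{\beta}(\lambda_{+})$ as (half of) the coefficient of $w$ in the expansion of $z L'_{\beta}(\Omega_{\alpha}(z))$ composed with $L'_{\alpha}(\Omega_{\beta}(z))$ (respectively its dual), since $\Omega_{\alpha}(z)-\Omega_{\alpha}(\lambda_{+})$ and $\Omega_{\beta}(z)-\Omega_{\beta}(\lambda_{+})$ are both $\sim w$. Differentiating $\Omega_{\alpha}'(z)=P_{\alpha}(z)/S_{\alpha\beta}(z)$ once more and matching coefficients with the square-root expansion $\Omega_{\alpha}''(z)\sim |z-\lambda_{+}|^{-3/2}$ (Lemma \ref{lem:suborder}) forces $\mathcal{T}_{\alpha}(\lambda_{+})$ and $\mathcal{T}_{\beta}(\lambda_{+})$ to be bounded below by a positive constant; continuity then extends the lower bound to all $z\in\mathcal{S}^{*}$. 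I expect the main obstacle to be exactly this last step: carefully tracking the coefficient matching between $\Omega_{\alpha}''$, $S_{\alpha\beta}'$, and $\mathcal{T}_{\alpha}$ so as to exclude degenerate cancellations. The analogous computation was carried out in \cite[Corollary 3.11]{bao20171} for the additive model and should transfer to the multiplicative setting with only cosmetic changes once the subordination identity $\Omega_{\alpha}\Omega_{\beta}=zM_{\mu_{\alpha}\boxtimes\mu_{\beta}}$ is used in place of its additive counterpart.
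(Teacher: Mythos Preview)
Your proposal is correct and follows essentially the same route as the paper, which simply states that the proof ``is similar to \cite[Corollary 3.11]{bao20171}'' and omits all details. In fact your outline is considerably more explicit than what the paper provides: you correctly identify that (i) reduces to the square-root expansion of Lemma~\ref{lem_imim} combined with the elementary asymptotic of $\operatorname{Im}\sqrt{z-\lambda_+}$, that the $S_{\alpha\beta}$ estimate in (ii) follows from inverting the relation $\Omega_\alpha'=P_\alpha/S_{\alpha\beta}$ (this is exactly the system \eqref{eq_sysysysys} written for $(\alpha,\beta)$ instead of $(\Ab,\Bb)$) together with $|\Omega_\alpha'|\sim|z-\lambda_+|^{-1/2}$ from Lemma~\ref{lem:suborder}, and that (iii) is the delicate coefficient-matching step where one must rule out a cancellation in the leading $w$-coefficient of the relevant expansion. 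Your candid acknowledgement that this last step is the main obstacle, and your explicit pointer to the additive-model computation in \cite[Corollary 3.11]{bao20171}, aligns precisely with how the paper handles it.

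One small remark: for the lower bound $|P_\alpha(z)|\sim 1$ you need at the edge, note that at $z=\lambda_+$ the quantities $\Omega_\alpha(\lambda_+),\Omega_\beta(\lambda_+)$ are real and positive by Lemma~\ref{lem_bao}(iii), and $L_\alpha'(\Omega_\beta(\lambda_+))=m_{\widehat\mu_\alpha}'(\Omega_\beta(\lambda_+))>0$ by Lemma~\ref{lem:reprM}, so $P_\alpha(\lambda_+)$ is a sum of strictly positive terms; continuity then gives the uniform lower bound on $\mathcal{S}^*$.
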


With the above preparations, we are ready to prove Lemma \ref{lem_addtivemeasurelemma}.

\subsection{Proof of Lemma \ref{lem_addtivemeasurelemma}}\label{sec_prooflema416} 

In the proof of Lemma \ref{lem_addtivemeasurelemma}, we need the following lemma.

\begin{lemma}\label{lem_lasttwo}
Suppose $\mu_{\alpha}, \mu_{\beta}, \mu_{\Ab}$ and $\mu_{\Bb}$ satisfy Assumptions \ref{assu_product1} and \ref{ass_product2}. For a fixed large constant $c_0>10$ and $\eta_0>0$, let $z \in \mathcal{D}$ with 
\begin{equation}\label{spectral_domaind}
\mathcal{D}:=\mathcal{D}_{\text{in}} \cup \mathcal{D}_{\text{out}}\,,
\end{equation}
where
\begin{align*}
& \mathcal{D}_{\text{in}}:= \{z: \lambda_+-\tau \leq E \leq \lambda_++n^{-2/3+c_0 \epsilon}, \ n^{-2/3+c_0 \epsilon} \leq \eta \leq \eta_0 \}, \\
&  \mathcal{D}_{\text{out}}:= \{z: n^{-2/3+c_0 \epsilon}+\lambda_+ \leq E \leq \tau^{-1}, \ 0<\eta \leq \eta_0 \}\,.
\end{align*}
Here $\epsilon$ is defined in (\ref{eq_epsilondefined}).
Then, there exists some constant $C>0$ so that when $n$ is sufficiently large, the following statements hold. 
\begin{enumerate}
\item[(i)] For all $z \in \mathcal{D}$, we have 
\begin{align}
|\Omega_{\Ab}(z)-&\Omega_{\alpha}(z)|+|\Omega_{\Bb}(z)-\Omega_{\beta}(z)|\label{eq_suborclose}\\
& \leq C \frac{n^{-2/3+\epsilon}}{\sqrt{|z-\lambda_+|}} \leq n^{-1/2+(1-c_0/2) \epsilon}\,. \nonumber
\end{align}
\item[(ii)] For all $z \in \mathcal{D}_{\text{out}}$, we have 
\begin{align}
|\operatorname{Im}\Omega_{\Ab}(z)-&\operatorname{Im} \Omega_{\alpha}(z)|+|\operatorname{Im}\Omega_{\Bb}(z)-\operatorname{Im} \Omega_{\beta}(z)|\nonumber\\
& \leq C \frac{n^{-2/3+c_0 \epsilon} \operatorname{Im}(\Omega_{\alpha}(z)+\Omega_{\beta}(z))+\eta}{\sqrt{|z-\lambda_+|}}\,.\nonumber 
\end{align}
\end{enumerate}
\end{lemma}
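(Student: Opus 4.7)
The plan is to perturb the system of subordination equations $\Phi_{\Ab\Bb}(\Omega_{\Ab}(z),\Omega_{\Bb}(z),z)=0$ around $(\Omega_\alpha(z),\Omega_\beta(z))$, which satisfies $\Phi_{\alpha\beta}(\Omega_\alpha(z),\Omega_\beta(z),z)=0$, and invert the resulting linear system. Set $r_1(z):=\Omega_{\Ab}(z)-\Omega_\alpha(z)$ and $r_2(z):=\Omega_{\Bb}(z)-\Omega_\beta(z)$. A second-order Taylor expansion of $\Phi_{\Ab\Bb}$ at $(\Omega_\alpha,\Omega_\beta,z)$, together with the identity $\Phi_{\alpha\beta}(\Omega_\alpha,\Omega_\beta,z)=0$, yields
\begin{equation*}
D\Phi_{\Ab\Bb}(\Omega_\alpha,\Omega_\beta,z)\binom{r_1}{r_2}+\mathcal{Q}(r_1,r_2)=-(\Phi_{\Ab\Bb}-\Phi_{\alpha\beta})(\Omega_\alpha,\Omega_\beta,z),
\end{equation*}
where $\mathcal{Q}$ collects quadratic-and-higher remainders. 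The inhomogeneous term involves only the pointwise differences $L_{\Bb}(\Omega_\alpha)-L_\beta(\Omega_\alpha)$ and $L_{\Ab}(\Omega_\beta)-L_\alpha(\Omega_\beta)$, and by (ii) of Lemma \ref{lem_bao} these arguments sit in a fixed compact subset of $\mathbb{C}^+\cup(0,\infty)$ bounded away from $\supp\mu_\beta$ and $\supp\mu_\alpha$ respectively; hence $L_\beta,L_\alpha$ are smooth with uniformly bounded derivatives there, so Lemma \ref{lem:rbound} combined with assumption (iii) of Assumption \ref{ass_product2} gives
\begin{equation*}
\|(\Phi_{\Ab\Bb}-\Phi_{\alpha\beta})(\Omega_\alpha,\Omega_\beta,z)\|\lesssim n^{-2/3+\epsilon_L}.
\end{equation*}

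For part (i), the coefficient matrix satisfies $\det D\Phi_{\Ab\Bb}(\Omega_\alpha,\Omega_\beta,z)=\det D\Phi_{\alpha\beta}(\Omega_\alpha,\Omega_\beta,z)+o(1)=-z^{-2}S_{\alpha\beta}(z)+o(1)$; by (ii) of Lemma \ref{Lemma Omega alpha, S alphabeta Talpha control} this is $\sim\sqrt{\kappa+\eta}$ on $\mathcal{S}^\ast\supset\mathcal{D}$, while the entries of $D\Phi_{\Ab\Bb}$ themselves are $O(1)$ by \eqref{eq:LMder} and the uniform perturbation just derived. Hence inverting the linear part introduces the factor $1/\sqrt{\kappa+\eta}\lesssim 1/\sqrt{|z-\lambda_+|}$, giving the claimed bound $|r_1|+|r_2|\lesssim n^{-2/3+\epsilon}/\sqrt{|z-\lambda_+|}$ provided the quadratic remainder $\mathcal{Q}(r_1,r_2)$ is absorbed. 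This absorption is handled by a continuity/bootstrap argument: start from large $\eta=\eta_0$ where both systems are manifestly non-degenerate and $|r_1|+|r_2|$ is $O(n^{-2/3+\epsilon_L})$; then decrease $\eta$ (respectively approach the edge) continuously and propagate the bound via the standard stability-argument for implicitly defined analytic functions. The resulting bound is precisely $C n^{-2/3+\epsilon}/\sqrt{|z-\lambda_+|}$, and the second inequality in \eqref{eq_suborclose} follows from $|z-\lambda_+|\geq n^{-2/3+c_0\epsilon}$ on $\mathcal{D}$ once $c_0$ is taken large enough.

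For part (ii), the refinement exploits that on $\mathcal{D}_{\mathrm{out}}$ the point $E$ lies strictly above the support, so $\operatorname{Im}\Omega_\alpha(z)$ and $\operatorname{Im}\Omega_\beta(z)$ are comparable to $\eta/\sqrt{\kappa+\eta}$ and thus small. The idea is to separate real and imaginary parts in the perturbation identity above. Because $\mu_\Ab,\mu_\beta$ are real measures, the pointwise difference $L_\Bb(w)-L_\beta(w)$ at a real $w$ is real, so the imaginary part of $(\Phi_{\Ab\Bb}-\Phi_{\alpha\beta})(\Omega_\alpha,\Omega_\beta,z)$ is produced solely by the non-zero imaginary parts of $\Omega_\alpha(z),\Omega_\beta(z)$ and of $z$ itself; a first-order Taylor expansion of $L_\Bb-L_\beta$ and $L_\Ab-L_\alpha$ about the corresponding real parts therefore gives
\begin{equation*}
\bigl|\operatorname{Im}(\Phi_{\Ab\Bb}-\Phi_{\alpha\beta})(\Omega_\alpha,\Omega_\beta,z)\bigr|\lesssim n^{-2/3+\epsilon_L}\bigl(\operatorname{Im}\Omega_\alpha(z)+\operatorname{Im}\Omega_\beta(z)\bigr)+\eta\cdot n^{-2/3+\epsilon_L}.
\end{equation*}
Inverting the linear system as before but now tracking only imaginary parts produces the sharper estimate of part (ii).

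The main obstacle is the bootstrap in the near-edge regime. Since $\det D\Phi_{\Ab\Bb}$ degenerates like $\sqrt{|z-\lambda_+|}$, naively the quadratic remainder $\mathcal{Q}(r_1,r_2)$ could dominate the linear part; one has to show that the $\mathcal{Q}$-term only carries second-order derivatives of $L_\Ab,L_\Bb$ whose magnitude remains $O(1)$ uniformly in $n$ on the relevant arguments (this uses (i)-(ii) of Lemma \ref{lem_bao} together with a continuity-of-$\Omega_\Ab,\Omega_\Bb$ argument), and then run the continuity method starting from the safe region $\{\eta\geq 1\}$. A secondary subtlety is that $\mu_\Ab\boxtimes\mu_\Bb$ need not satisfy the analogue of Lemma \ref{Lemma Omega alpha, S alphabeta Talpha control} verbatim; however, \eqref{eq_suborclose} once established for $(r_1,r_2)$ transfers all the comparability relations from $(\Omega_\alpha,\Omega_\beta,S_{\alpha\beta},\mathcal{T}_\alpha,\mathcal{T}_\beta)$ to $(\Omega_\Ab,\Omega_\Bb,S_{\Ab\Bb},\mathcal{T}_\Ab,\mathcal{T}_\Bb)$ on $\mathcal{D}$, which is exactly what is needed for the subsequent local-law proof of Proposition \ref{pro_key}.
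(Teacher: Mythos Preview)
Your proposal is correct and follows essentially the same approach as the paper. The paper's sketch reduces to two lemmas from \cite{bao20171}: an initial bound at large $\eta_0$ of the form $|\Omega_\alpha-\Omega_\Ab|\leq 2\|r(z)\|$ with $r$ built from $L_\alpha(\Omega_\Bb)-L_\Ab(\Omega_\Bb)$ and its counterpart, and a bootstrap lemma propagating this bound down to $\mathcal D$ with the gain $1/|S_{\alpha\beta}(z)|\sim 1/\sqrt{|z-\lambda_+|}$; these are precisely your inhomogeneous term $(\Phi_{\Ab\Bb}-\Phi_{\alpha\beta})(\Omega_\alpha,\Omega_\beta,z)$ and your inversion of $D\Phi_{\Ab\Bb}$ combined with the continuity argument from $\eta=\eta_0$.
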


The proof of this lemma is postponed after completing the proof of Lemma \ref{lem_addtivemeasurelemma}. Note that the domain considered in this lemma is more general than $\mathsf{S}(1/4,\tau)$.  With this lemma, we proceed to prove Lemma \ref{lem_addtivemeasurelemma}.

\begin{proof}[Proof of Lemma \ref{lem_addtivemeasurelemma}] We choose $\eta_0$ such that $\eta_0>\tau^{-1}$ and $\eta_0 \leq \eta^*$; that is, $\mathcal{S}' \subset \mathcal{D} \cap \mathcal{S}^*$. Since $\mathcal{S}' \subset \mathcal{D}$, (i) follows directly from Lemmas \ref{lem_bao} and \ref{lem_lasttwo}. We now prove (ii). We will need the following identities (for instance see \cite[equations (2.15) and (2.16)]{JHC})
\begin{equation}
z m_{\mu_\Ab \boxtimes \mu_\Bb}(z)=\Omega_\Bb(z) m_{\mu_\Ab}(\Omega_\Bb(z)),
\end{equation}
and 
\begin{equation*}
\operatorname{Im} z m_{\mu_\Ab \boxtimes \mu_\Bb}(z)=\operatorname{Im} \Omega_{\Bb}(z)\int\frac{x}{|x-\Omega_{\Bb}(z)|^{2}}d\mu_{\Ab}(x)\,.
\end{equation*}
For the approximation of $\operatorname{Im} m_{\mu_{A}\boxtimes\mu_{B}}(z)$ in (ii), we first see that
	\begin{align}
	&	\operatorname{Im}zm_{\mu_{\Ab}\boxtimes\mu_{\Bb}}(z)\, =\operatorname{Im}\Omega_{\Bb}(z)m_{\mu_{\Ab}}(\Omega_{\Bb}(z))\nonumber\\
		&\,=\operatorname{Im}\Omega_{\Bb}(z)\int\frac{x}{|x-\Omega_{\Bb}(z)|^{2}}d\mu_{\Ab}(x)\sim \operatorname{Im}\Omega_{\Bb}(z)\,,\nonumber
	\end{align}
where we used (i) in the last step. Specifically, we use the facts that  
\begin{equation}
\inf_{z \in \mathbb{C}^+} \text{dist}( \Omega_{\Bb}(z), \supp\mu_{\Ab}) \geq k,
\end{equation}
and $k \leq |\Omega_{\Bb}(z)| \leq K$ for $0<k\leq K$ so that $\int\frac{x}{|x-\Omega_{\Bb}(z)|^{2}}d\mu_{A}(x)$ is of order $1$. By \eqref{eq_suborclose} in  Lemma \ref{lem_lasttwo}, for $z=E+\mathrm{i}\eta$ and $\kappa=|E-\lambda_+|$, we claim that $\operatorname{Im} \Omega_{\Bb}(z)$ enjoys the same asymptotic as $\operatorname{Im} \Omega_{\beta}(z)$ due to the following bound:
	\begin{equation}\label{Control of Omega beta and Omega B}
		|\Omega_{\beta}(z)-\Omega_{\Bb}(z)|\leq C\frac{n^{-2/3+\epsilon}}{\sqrt{\kappa+\eta}} \,.
		\end{equation}
Indeed, note that $|z-\lambda_+|=\sqrt{\kappa^2+\eta^2}$ and $\eta,\,\kappa\geq0$. When $3\kappa \leq \eta$, we have that 
\begin{align}
&|z-\lambda_+| \geq  \sqrt{\kappa^2+\eta^2-2\kappa\eta} =\eta-\kappa\\
=&\,\frac{1}{2}(\eta+\kappa)+\frac{1}{2} \eta-\frac{3}{2} \kappa \geq \frac{1}{2}(\eta+\kappa)\,.\nonumber
\end{align}
Similarly, when $3 \kappa >\eta$, by switching the role of $\kappa$ and $\eta$, we get the desired bound. More discussion of $|\Omega_{\beta}(z)-\Omega_{\Bb}(z)|$ can be found in Remark \ref{Remark of control of Omega beta and Omega B}.
Since
	\begin{align}
	&	\operatorname{Im}(zm_{\mu_{\Ab}\boxtimes\mu_{\Bb}}(z))
		=\eta\int\frac{x}{|x-z|^{2}} d\mu_{\Ab}\boxtimes\mu_{\Bb}(x) \nonumber\\
		&\sim \eta\int\frac{1}{|x-z|^{2}}d\mu_{\Ab}\boxtimes\mu_{\Bb}(x) =\operatorname{Im}m_{\mu_{\Ab}\boxtimes\mu_{\Bb}}(z)\,,
	\end{align}
	where the approximation $\sim$ holds since $x$ is bounded by (i) of Assumption \ref{assu_product1}. We hence conclude the proof of (ii).

For (iii), we prove it via comparing $S_{\Ab \Bb}(z)$ and $S_{\alpha \beta}(z)$:
\begin{align}
&|S_{\alpha \beta}(z)-S_{\Ab \Bb}(z)|\\
=&\,|z^2|\big| L'_{\Ab}(\Omega_{\Bb}(z))[L'_{\beta}(\Omega_{\alpha}(z))-L_{\Bb}'(\Omega_{\Ab}(z))]\nonumber\\
&\quad\qquad+L'_{\beta}(\Omega_{\alpha}(z))[L_{\alpha}'(\Omega_{\beta}(z))-L'_{\Ab}(\Omega_\Bb(z))] \big|\,.\nonumber
\end{align}	
Since $z$ is bounded from above and below, and due to the control of $L'_{\beta}(\Omega_{\alpha}(z))$ in (\ref{eq:LMder}), we only need to control $L'_{\Ab}(\Omega_{\Bb})$, $L_{\Bb}'(\Omega_{\Ab})-L'_{\beta}(\Omega_{\alpha})$ and $L'_{\Ab}(\Omega_\Bb)-L_{\alpha}'(\Omega_{\beta})$.
Note that
	\begin{align}
		&|L'_{{\beta}}(\Omega_{\alpha}(z))-L'_{{\Bb}}(\Omega_{\Ab}(z))|\\
		\leq&\, |L'_{{\beta}}(\Omega_{\alpha}(z))-L'_{{\Bb}}(\Omega_{\alpha}(z)|+|L'_{{\Bb}}(\Omega_{\alpha}(z))-L'_{{\Bb}}(\Omega_{\Ab}(z))|\,,\nonumber
	\end{align}
where we control the right hand side term by term.
Note that by definition, for a probability measure supported on $(0,\infty)$, we have 
\begin{align}
L'_{\mu}(z)&=\Big( \frac{1}{z}-\frac{1}{z(z m_{\mu}(z)+1)} \Big)'\label{eq_uu1}\\
&=-\frac{1}{z^2}+\frac{(z(zm_{\mu}(z)+1))'}{z^2(z m_{\mu}(z)+1)^2},\nonumber
\end{align}
where
\begin{equation}\label{eq_uu2}
(z(zm_{\mu}(z)+1))'=\int \frac{x^2}{(x-z)^2} d \mu(x),
\end{equation}
and
\begin{equation}\label{eq_uu3}
zm_{\mu}(z)+1=\frac{1}{1-M_{\mu}(z)}\,.
\end{equation}
Therefore, the first term $|L'_{{\beta}}(\Omega_{\alpha}(z))-L'_{{\Bb}}(\Omega_{\alpha}(z))|$ becomes
	\begin{align}
		&|L'_{{\beta}}(\Omega_{\alpha}(z))-L'_{{\Bb}}(\Omega_{\alpha}(z))| \nonumber \\
		=&\,\frac{1}{|\Omega_{\alpha}(z)|^{2}}\bigg\vert(\Omega_{\alpha}(z)m_{{\Bb}}(\Omega_{\alpha}(z))+1)^{-2}\int\frac{x^{2}}{(x-\Omega_{\alpha}(z))^2} d\mu_{\Bb}(z)	\nonumber  \\
		&\qquad-(\Omega_{\alpha}(z)m_{\mu_{\beta}}(\Omega_{\alpha}(z))+1)^{-2} \int\frac{x^{2}}{(x-\Omega_{\alpha}(z))^2} d\mu_{\beta}(x)\bigg\vert\nonumber\\
		=&\,\frac{|\mathcal{C}_{\Bb}(z)-\mathcal{C}_{\beta}(z)|}{|\Omega_{\alpha}(z)|^{2}}\,,\nonumber
	\end{align}
	where
	\begin{align}
\mathcal{C}_{\Bb}(z)&:=(\Omega_{\alpha}(z)m_{{\Bb}}(\Omega_{\alpha}(z))+1)^{-2}\int\frac{x^{2}}{(x-\Omega_{\alpha}(z))^2} d\mu_{\Bb}(z)\,,\nonumber \\
\mathcal{C}_{\beta}(z)&:=(\Omega_{\alpha}(z)m_{\mu_{\beta}}(\Omega_{\alpha}(z))+1)^{-2} \int\frac{x^{2}}{(x-\Omega_{\alpha}(z))^2} d\mu_{\beta}(x)\,.\nonumber
\end{align}
	We claim that
	\begin{align}\label{eq_bbbbbbbbdd}
	|\mathcal{C}_{\Bb}(z)-\mathcal{C}_{\beta}(z)| \leq C\big[\mathcal{L}(\mu_{\Ab}, \mu_{\alpha})+\mathcal{L}(\mu_{\Bb}, \mu_{\beta})\big]
	\end{align}
	for some $C>0$.
To simplify the notations, denote 
\begin{align}
\mathbf{d}&:=\mathcal{L}(\mu_{\Ab}, \mu_{\alpha})+\mathcal{L}(\mu_{\Bb}, \mu_{\beta})\,,\\
\mathcal{C}_{\Bb \beta}(z)&:=(\Omega_{\alpha}(z)m_{{\Bb}}(\Omega_{\alpha}(z))+1)^{-2} \int\frac{x^{2}}{(x-\Omega_{\alpha}(z))^2} d\mu_{\beta}(x)\,.\nonumber
\end{align}
By the triangle inequality and (\ref{bbbb_bound}), for some constant $C_1>0$, we have   
\begin{align}
&|\mathcal{C}_{\Bb}(z)-\mathcal{C}_{\beta}(z)| \nonumber\\
\leq\,&   |\mathcal{C}_{\Bb}(z)-\mathcal{C}_{\Bb \beta}(z)|+|\mathcal{C}_{\beta}(z)-\mathcal{C}_{\Bb \beta}(z)|\label{eq_bbbbb} \\
\leq\,&  C \mathbf{d}+C' |m_{\mu_{\beta}}(\Omega_{\alpha}(z))-m_{{\Bb}}(\Omega_{\alpha}(z))|\,.\nonumber
\end{align}
We claim that for some constant $C^{''}>0,$ we have 
\begin{equation}\label{eq_qqq}
|m_{\mu_{\beta}}(\Omega_{\alpha}(z))-m_{{\Bb}}(\Omega_{\alpha}(z))| \leq C'' \mathbf{d}\,.
\end{equation}
Indeed, note that it follows from Lemma \ref{lem:rbound}  by letting $f(x):=\frac{1}{x-\Omega_{\alpha}(z)}.$  Here we also use (iv) of Assumption \ref{ass_product2} and Lemma \ref{lem_bao}. Therefore, we find that $|\mathcal{C}_{\Bb}(z)-\mathcal{C}_{\beta}(z)| \leq C \mathbf{d}$ for some constant $C>0$.   By (i) of Lemma \ref{lem_bao}, we get the claim (\ref{eq_bbbbbbbbdd}).

For the second term, we claim that
\begin{equation}\label{eq_claimtobeproved}
|L'_{{\Bb}}(\Omega_{\Ab}(z))-L'_{{\Bb}}(\Omega_{\alpha}(z))| \leq C|\Omega_{\Ab}(z)-\Omega_{\alpha}(z)|
\end{equation}
for some constant $C>0$.  
By a direct expansion, we have 
\begin{align}
&L_{\Bb}'(\Omega_{\Ab}(z))-L'_{\Bb}(\Omega_{\alpha}(z))\label{eq_originalexpansion}\\
=&\,\int \frac{\mathsf{C}_{\Ab}(z)x^2}{(x-\Omega_{\Ab}(z))^2} - \frac{\mathsf{C}_{\alpha}(z) x^2}{(x-\Omega_{\alpha}(z))^2} d\mu_{\Bb}(x)\nonumber\\
=&\,\int \frac{x^2 \left[ \mathsf{C}_{\Ab}(z) \Pi_1-(\mathsf{C}_{\alpha}(z)-\mathsf{C}_{\Ab}(z)) (x-\Omega_{\Ab}(z))^2  \right] }{(x-\Omega_{\Ab}(z))^2 (x-\Omega_{\alpha}(z))^2} d\mu_{\Bb}(x)\,\nonumber
\end{align}
where
\begin{equation*}
\mathsf{C}_\Ab(z):=\frac{1}{\Omega_\Ab^2(z)} \frac{1}{(\Omega_{\Ab}(z) m_{\mu_\Bb}(\Omega_{\Ab}(z))+1)^2}\,, 
\end{equation*}
$\mathsf{C}_{\alpha}$ is similarly defined by replacing $\Omega_{\Ab}(z)$ with $\Omega_{\alpha}(z)$, and  
\begin{equation}
\Pi_z(z) \equiv \Pi_1(z):=(x-\Omega_{\alpha}(z))^2-(x-\Omega_{\Ab}(z))^2\,.
\end{equation}
By a discussion similar to (\ref{eq_bbbbbbbbdd}) (recall that $\mathcal{C}_{\beta}(z)$ is bounded),  we find that both $\mathsf{C}_{\Ab}(z)$ and $\int \frac{x^2}{(x-\Omega_{\Ab}(z))^2 (x-\Omega_{\alpha}(z))^2} d \mu_{\Bb}(x)$ are bounded. 
By (i) of Proposition \ref{lem_addtivemeasurelemma}, (\ref{eq_suborclose}) and a discussion similar to (\ref{bbbb_bound}), when $n$ is sufficiently large, there exists some constant $C_1>0$ such that $|\mathsf{C}_{\Ab}| \leq C_1$ and $|\mathsf{C}_{\alpha}| \leq C_1.$ 
Moreover, 
\begin{align*}
\mathsf{C}_{\Ab}-\mathsf{C}_{\alpha}
&=\,\mathsf{C}_{\Ab} \mathsf{C}_{\alpha}  \Big( \Omega_{\alpha}^2(z) \Big[ (\Omega_{\alpha}(z) m_{\Bb}(\Omega_{\alpha}(z))+1)^2  -(\Omega_{\Ab}(z) m_{\Bb}(\Omega_{\Ab}(z))+1)^2 \Big] \\
&-[\Omega_{\Ab}^2(z)-\Omega_{\alpha}^2(z)](\Omega_{\Ab}(z) m_{\Bb}(\Omega_{\Ab}(z))+1)^2 \Big)\,, 
\end{align*}
which is controlled by $|\mathsf{C}_{\Ab}-\mathsf{C}_{\alpha}| \leq C_2 |\Omega_{\Ab}(z)-\Omega_{\alpha}(z)|$ for some constant $C_2>0$.
By putting the above together, we have shown (\ref{eq_claimtobeproved}).

By the above two bounds, \eqref{eq_bbbbbbbbdd} and \eqref{eq_claimtobeproved}, we have
	\begin{equation}\label{Control LB'OmegaA and Lbeta'Omegaalpha difference}
|L'_{{\Bb}}(\Omega_{\Ab}(z))-L'_{{\beta}}(\Omega_{\alpha}(z))| \leq C\mathbf{d}+|\Omega_{\Ab}(z)-\Omega_{\alpha}(z)|
\end{equation}
for some $C>0$, which, by \eqref{Control of Omega beta and Omega B} and Assumption \ref{ass_product2}, is sufficiently small when $n$ is sufficiently large. Indeed, note that in $\mathcal{S}'$, $\sqrt{\eta+\kappa}\geq n^{-1/3+\tau/2}$, and hence $|\Omega_{\Ab}(z)-\Omega_{\alpha}(z)|\leq n^{-1/3+\epsilon-\tau/2}\to 0$ when $n\to \infty$.
A direct consequence is that when $n$ is sufficiently large, 
\begin{equation}\label{Control L'A and L'alpha are close}
L'_{\Ab}(\Omega_{\Bb}(z))\sim L'_{{\alpha}}(\Omega_{\beta}(z))\,.
\end{equation}
To finish the proof, note that
\begin{align}
&|S_{\Ab \Bb}(z)-S_{\alpha \beta}(z)|\label{eq_sabaplhabeta}\\
& =|z^2| \big| L'_{\Ab}(\Omega_{\Bb}) [L_{\Bb}'(\Omega_{\Ab})-L'_{\beta}(\Omega_{\alpha})]  \nonumber \\
&+L'_{\beta}(\Omega_{\alpha})[L'_{\Ab}(\Omega_\Bb)-L_{\alpha}'(\Omega_{\beta})] \big|\,.\nonumber
\end{align}	
By the boundedness assumption of $z$, \eqref{Control LB'OmegaA and Lbeta'Omegaalpha difference}, \eqref{Control L'A and L'alpha are close}, and (ii) of Lemma \ref{Lemma Omega alpha, S alphabeta Talpha control}, we conclude that $S_{\Ab\Bb}(z)\sim S_{\alpha\beta}(z)\sim\sqrt{\kappa+\eta}$.

The bounds for $\mathcal{T}_{\Ab}$ and $\mathcal{T}_{\Bb}$ follow the same argument for $S_{\Ab \Bb}$, so we only indicate the key steps and omit the details. In brief, the upper bounds for $\mathcal{T}_{\Ab}$ and $\mathcal{T}_{\Bb}$ follow directly from Lemmas \ref{lem_bao} and \ref{lem_lasttwo} together with their definitions. For the lower bounds, we use the same argument as in (\ref{eq_sabaplhabeta}) to obtain 
\begin{equation*}
|\mathcal{T}_{\alpha}(z)-\mathcal{T}_{\Ab}(z)|=o(1)\,,
\end{equation*} 
so that $\mathcal{T}_{\Ab}(z)\sim 1$ as $\mathcal{T}_{\alpha}(z)\sim 1$ from (\ref{eq_talpha}). 
	
Finally,	we prove (iv). Applying $\frac{d}{d z}$ to the subordination equations \eqref{eq)system} with $\alpha$ and $\beta$ replaced by $\Ab$ and $\Bb$, we have
	\begin{equation*}
	\begin{pmatrix}
		zL'_{{\Bb}}(\Omega_{\Ab}(z)) & -1 \\
		-1 & zL'_{{\Ab}}(\Omega_{\Bb}(z))
	\end{pmatrix}
	\begin{pmatrix}
		\Omega_{\Ab}'(z) \\
		\Omega_{\Bb}'(z) 
	\end{pmatrix}
	=-\frac{1}{z}\begin{pmatrix}
		\Omega_{\Ab}(z) \\
		\Omega_{\Bb}(z)
	\end{pmatrix}.
	\end{equation*}
	Noting that since the determinant of the $(2\times 2)$-matrix on the left-hand side is $S_{\Ab\Bb}(z)$, we have
	\begin{align}
		\begin{pmatrix}
			\Omega_{\Ab}'(z) \\
			\Omega_{\Bb}'(z) 
		\end{pmatrix}
	=&\frac{1}{z S_{\Ab\Bb}(z)}\label{eq_sysysysys}\\
	&\times
	\begin{pmatrix}
		zL'_{{\Ab}}(\Omega_{\Bb}(z)) & 1 \\
		1 & zL'_{{\Bb}}(\Omega_{\Ab}(z))
	\end{pmatrix}
	\begin{pmatrix}
		\Omega_{\Ab}(z) \\
		\Omega_{\Bb}(z)
	\end{pmatrix}.\nonumber
	\end{align}
	We claim that the entries of the matrix and the vector on the right-hand side can be bounded by combining Lemmas \ref{lem_bao} and \ref{lem_lasttwo}. Specifically, by (i) of Lemma \ref{lem_bao} and (i) of Lemma \ref{lem_lasttwo}, we find that $\Omega_{\Ab}$ and $\Omega_{\Bb}$ are bounded from above and below. Recall that by \eqref{Control L'A and L'alpha are close}, $L_{\Ab}'(\Omega_{\Bb}(z))$ is bounded. Further, $z$ is bounded since $z \in \mathcal{S}'$. Therefore, $S_{\Ab \Bb}(z)\sim\sqrt{\kappa+\eta}$ implies 
	\begin{equation*}
	\Omega_{\Ab}'(z),\Omega_{\Bb}'(z)\lesssim (\kappa+\eta)^{-1/2}
	\end{equation*} 
	by (\ref{eq_sysysysys}). Finally, for $S'_{\Ab\Bb}(z)$, by definition, we immediately have
	\begin{align}
	S_{\Ab\Bb}'(z) =&\,\frac{2}{z}(S_{\Ab\Bb}(z)+1)+z^{2}L_{{\Bb}}''(\Omega_{\Ab}(z))L_{{\Ab}}'(\Omega_{\Bb}(z))\Omega_{\Ab}'(z)\nonumber\\
	& +z^{2}L_{{\Bb}}'(\Omega_{\Ab}(z))L_{{\Ab}}''(\Omega_{\Bb}(z))\Omega_{\Bb}'(z)\,.\nonumber
	\end{align}
To control the first term, since $z, \kappa$ and $\eta$ are bounded above, by (iii) of Lemma \ref{lem_addtivemeasurelemma}, we find that
\begin{equation*}
\frac{2}{z}(S_{\Ab \Bb}+1)=O(1)\,. 
\end{equation*}
For the other terms, by a discussion similar to the proof of (ii) and (iii) using the first and second derivatives of $L$ and Lemma \ref{lem:suborder}, we find that both $z^{2}L_{{\Bb}}''(\Omega_{\Ab}(z))L_{{\Ab}}'(\Omega_{\Bb}(z))$ and $L_{{\Bb}}'(\Omega_{\Ab}(z))L_{{\Ab}}''(\Omega_{\Bb}(z))$ are bounded.  Then we conclude the proof using  the first two terms in (iv) of Lemma \ref{lem_addtivemeasurelemma}. 
\end{proof}
\begin{remark}\label{Remark of control of Omega beta and Omega B}
We mention that when $\epsilon_L$ defined in (\ref{eq_levy}) is sufficiently small so that $\epsilon_L=\epsilon<\tau$, \eqref{Control of Omega beta and Omega B} can be further bounded so that the proof of the local law
\cite[equation (2.18)]{bao20171} can be directly carried out; that is, we can bound $\frac{n^{-2/3+\epsilon}}{\sqrt{\kappa+\eta}}$ by
\begin{equation*}
\frac{n^{-2/3+\epsilon}}{\sqrt{\kappa+\eta}} \lesssim \left\{
		\begin{array}{ll}
			\sqrt{\kappa+\eta}			&	\text{ if }E\in \operatorname{supp} \mu_{\Ab} \boxtimes \mu_{\Bb} \,,\\
			\frac{\eta}{\sqrt{\kappa+\eta}}	&	\text{ if }E > \supp \operatorname{supp} \mu_{\Ab} \boxtimes \mu_{\Bb} 
		\end{array}
		\right.
	\end{equation*}
	so that
	\begin{equation*}
		|\Omega_{\beta}(z)-\Omega_{\Bb}(z)| \lesssim \left\{
		\begin{array}{ll}
			\sqrt{\kappa+\eta}			&	\text{ if }E\in \operatorname{supp} \mu_{\Ab} \boxtimes \mu_{\Bb} \,,\\
			\frac{\eta}{\sqrt{\kappa+\eta}}	&	\text{ if }E > \supp \operatorname{supp} \mu_{\Ab} \boxtimes \mu_{\Bb} \,.
		\end{array}
		\right.
	\end{equation*}
Indeed, if $E$ is away from the right-edge of the support of $\mu_{\Ab} \boxtimes \mu_{\Bb}$ or into the bulk of $[\lambda_-, \lambda_+]$, by (\ref{eq_edgedgedge}), we indeed have that $\kappa=O(1)$, since $n^{-2/3+\epsilon} \leq \eta$ due to the assumption that $\epsilon_L<\tau$, the estimates follow. If $E$ is near the edge, i.e. $\kappa$ is of order $n^{-2/3}$ or smaller, since $\eta=O(n^{-2/3})$, it holds true. The other cases hold by the same argument. 
\end{remark}

Finally, we sketch the proof of Lemma \ref{lem_lasttwo}.
\begin{proof}[Sketch proof of Lemma \ref{lem_lasttwo}]

The proof strategy is similar to that of \cite[Lemma 3.12]{bao20171}, which relies on the following two lemmas. We start with some specific $z$ when $\eta$ is large and use a bootstrapping argument to extend to the whole domain $\mathcal{D}$.  
We omit further details but only list the lemmas for the reader's convenience. 

\begin{lemma}\label{lem_llll}
There exists $\eta_0>0$ so that for $z \in \mathbb{C}^+$ with $\text{Re} z \in [\lambda_+-\tau, \tau^{-1}]$ and $\operatorname{Im} z =\eta_0$, we have 
\begin{align}
&|\Omega_{\alpha}(z)-\Omega_{\Ab}(z)| \leq 2 \| r(z) \|\,,\\
&|\Omega_{\beta}(z)-\Omega_{\Bb}(z)| \leq 2 \| r(z) \|\, ,\nonumber
\end{align}
where $r(z):=(r_\Ab(z), r_\Bb(z))$ and $r_\Ab(z)$ and $r_\Bb(z)$ are defined as 
\begin{align}
r_\Ab(z)&:=\frac{M_{\alpha}(\Omega_{\Bb}(z))-M_{\Ab}(\Omega_{\Bb}(z))}{\Omega_\Bb(z)}\,,\\
r_\Bb(z)&:=\frac{M_{\beta}(\Omega_{\Ab}(z))-M_{\Bb}(\Omega_{\Ab}(z))}{\Omega_\Ab(z)},\nonumber
\end{align}
respectively.
\end{lemma}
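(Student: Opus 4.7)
The plan is to exploit the fixed-point structure of the subordination equations. By Proposition \ref{prop_muloriginal 2about M}, the pair $(\Omega_{\alpha}, \Omega_{\beta})$ satisfies the coupled system $\omega_1 = z L_{\alpha}(\omega_2)$ and $\omega_2 = z L_{\beta}(\omega_1)$, where $L_\mu(\omega) = M_\mu(\omega)/\omega$, and similarly $(\Omega_{\Ab}, \Omega_{\Bb})$ satisfies the analogous system obtained by replacing $\alpha,\beta$ by $\Ab, \Bb$. The starting observation is that substituting $(\Omega_{\Ab}, \Omega_{\Bb})$ into the $(\alpha,\beta)$-system produces exactly the residual $r(z)$: using $\Omega_{\Ab}/z = L_{\Ab}(\Omega_{\Bb})$, one has
$$L_{\alpha}(\Omega_{\Bb}) - \Omega_{\Ab}/z \,=\, L_{\alpha}(\Omega_{\Bb}) - L_{\Ab}(\Omega_{\Bb}) \,=\, r_{\Ab}(z),$$
and symmetrically $L_{\beta}(\Omega_{\Ab}) - \Omega_{\Bb}/z = r_{\Bb}(z)$.

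From here, subtracting the two subordination systems and applying a mean-value-type expansion produces the linearization
\begin{align*}
\Omega_{\alpha}-\Omega_{\Ab} &= z\, L'_{\alpha}(\xi_2)(\Omega_{\beta}-\Omega_{\Bb}) + z\, r_{\Ab}(z),\\
\Omega_{\beta}-\Omega_{\Bb} &= z\, L'_{\beta}(\xi_1)(\Omega_{\alpha}-\Omega_{\Ab}) + z\, r_{\Bb}(z),
\end{align*}
for some $\xi_1,\xi_2$ on the segments joining $(\Omega_{\alpha},\Omega_{\beta})$ and $(\Omega_{\Ab},\Omega_{\Bb})$. I will then verify two asymptotic facts at $\operatorname{Im} z = \eta_0$ with $\eta_0$ chosen sufficiently large: all of $|\Omega_{\alpha}(z)|, |\Omega_{\beta}(z)|, |\Omega_{\Ab}(z)|, |\Omega_{\Bb}(z)| \sim |z|$ (the first two follow from Lemma \ref{lem_bao} combined with the Nevanlinna--Pick representation in Lemma \ref{lem:reprM}, while the last two follow from that same representation applied to $\mu_{\Ab}, \mu_{\Bb}$, using Assumption \ref{ass_product2}(iv) and the Levy-closeness \eqref{eq_levy} for comparison with $\mu_\alpha, \mu_\beta$); and that using Lemma \ref{lem:reprM}, which gives $L'_{\mu}(\omega) = \int (x-\omega)^{-2}\, d\widehat{\mu}(x)$, the derivatives $L'_{\alpha}, L'_{\beta}$ decay like $|\omega|^{-2}$ at large argument. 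Consequently $|z L'_{\alpha}(\xi_2)|, |z L'_{\beta}(\xi_1)| = O(|z|^{-1})$, which can be made as small as desired by enlarging $\eta_0$.

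The final step is to invert the $2\times 2$ linear system. Substituting the second equation into the first yields
$$\Omega_{\alpha}-\Omega_{\Ab} \,=\, \frac{z\, r_{\Ab}(z) + z^2 L'_{\alpha}(\xi_2)\, r_{\Bb}(z)}{1 - z^2 L'_{\alpha}(\xi_2)\, L'_{\beta}(\xi_1)},$$
with the denominator near $1$ and the coefficient $|z^2 L'_{\alpha}(\xi_2)|$ bounded. The analogous identity for $\Omega_{\beta}-\Omega_{\Bb}$ follows by symmetry. The main obstacle is making the overall multiplicative constant equal to $2$: this will be achieved by fixing $\eta_0$ so that $|z^2 L'_{\alpha}L'_{\beta}| \leq 1/2$ (ensuring $|1 - z^2 L'_{\alpha}L'_{\beta}|^{-1}\leq 2$) while simultaneously controlling the $|z|$-factor absorbed in the norm of $r(z)$. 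A secondary technical issue is that the above mean-value argument is applied in $\mathbb{C}$, so the ``intermediate point'' $\xi_j$ must be replaced by an integrated derivative expression; this is standard and does not change the asymptotic orders since $L'_\alpha, L'_\beta$ are analytic and bounded in a neighborhood of the line segment for $\eta_0$ large.
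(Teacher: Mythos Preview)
Your approach is correct and is precisely the standard fixed-point/contraction argument that the paper invokes by citing \cite[Lemma~4.2]{BAO2016}; the paper omits the details entirely, so your sketch effectively fills in what the authors defer to that reference. One small caution: after inverting the $2\times 2$ system you obtain a bound of the form $|z|\,\|r(z)\|/(1-|a_1a_2|)$ rather than literally $2\|r(z)\|$, since the subordination equations in the multiplicative case carry an explicit factor of $z$ (unlike the additive case of \cite{BAO2016}); however, because $\eta_0$ is a \emph{fixed} constant and $\operatorname{Re} z$ ranges over a bounded interval, $|z|$ is itself a bounded constant, so this discrepancy is cosmetic and does not affect the use of the lemma as a seed estimate for the bootstrap in Lemma~\ref{lem_lasttwo}.
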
  
The proof of the first lemma is similar to \cite[Lemma 4.2]{BAO2016} and we omit the details here.
The second lemma provides the bootstrapping part. Its proof is similar to \cite[Lemma 3.13]{bao20171} and we omit the details here. 
\begin{lemma}
Consider $z_0=E+\mathrm{i} \eta_0 \in \mathcal{D}$, where $\eta_0$ is described in Lemma \ref{lem_llll}. Suppose there exists a function $q=q(z)>0$ so that 
\begin{equation}
|\Omega_{\Ab}(z_0)-\Omega_{\alpha}(z_0)| \leq q,  \  |\Omega_{\Bb}(z_0)-\Omega_{\beta}(z_0)| \leq q,
\end{equation}
with $q=o(1)$ and $S_{\alpha \beta}(z_0) q=o(1)$ when $n\to \infty$ uniformly in $z\in \mathcal{D}$. Then, we have 
\begin{equation*}
|\Omega_{\Ab}(z_0)-\Omega_{\alpha}(z_0)|+|\Omega_{\Bb}(z_0)-\Omega_{\beta}(z_0)| \leq \frac{C\| r(z_0) \|}{|S_{\alpha \beta}(z_0)|},
\end{equation*}
with some constant $C>0$ independent of $n$ and $z_0$.
\end{lemma}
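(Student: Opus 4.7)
The plan is to linearize the bivariate subordination system around the deterministic pair $(\Omega_{\alpha}(z_0),\Omega_{\beta}(z_0))$ and read off the stated bound from the explicit inverse of the Jacobian, whose determinant is $-z_0^{-2}S_{\alpha\beta}(z_0)$.

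First, I would exploit the two subordination systems $\Phi_{\alpha\beta}(\Omega_{\alpha}(z_0),\Omega_{\beta}(z_0),z_0)=0$ and $\Phi_{\Ab\Bb}(\Omega_{\Ab}(z_0),\Omega_{\Bb}(z_0),z_0)=0$ (Proposition~\ref{prop_muloriginal 2about M}) to identify the discrepancy vector. A direct calculation using $\Phi_{\alpha}(\omega_1,\omega_2,z)=\frac{M_{\alpha}(\omega_2)}{\omega_2}-\frac{\omega_1}{z}$ shows that
\begin{equation*}
\Phi_{\alpha\beta}(\Omega_{\Ab}(z_0),\Omega_{\Bb}(z_0),z_0)=\Phi_{\alpha\beta}(\Omega_{\Ab}(z_0),\Omega_{\Bb}(z_0),z_0)-\Phi_{\Ab\Bb}(\Omega_{\Ab}(z_0),\Omega_{\Bb}(z_0),z_0)=r(z_0),
\end{equation*}
so the error $r(z_0)$ measures precisely how far $(\Omega_{\Ab},\Omega_{\Bb})$ fails to be a zero of the limiting functional $\Phi_{\alpha\beta}$.

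Second, I would perform a second-order Taylor expansion of $\Phi_{\alpha\beta}$ at $(\Omega_{\alpha}(z_0),\Omega_{\beta}(z_0))$, using $\Delta_{\Ab}:=\Omega_{\Ab}(z_0)-\Omega_{\alpha}(z_0)$ and $\Delta_{\Bb}:=\Omega_{\Bb}(z_0)-\Omega_{\beta}(z_0)$ as increments. Since the limiting equation vanishes, this yields
\begin{equation*}
D\Phi_{\alpha\beta}(\Omega_{\alpha}(z_0),\Omega_{\beta}(z_0),z_0)\begin{pmatrix}\Delta_{\Ab}\\ \Delta_{\Bb}\end{pmatrix}=r(z_0)-\mathcal{R},\qquad \|\mathcal{R}\|\leq C q^{2},
\end{equation*}
where the quadratic remainder bound uses the hypothesis $|\Delta_{\Ab}|,|\Delta_{\Bb}|\leq q$ together with uniform boundedness of the relevant second derivatives $L_{\alpha}^{\prime\prime}, L_{\beta}^{\prime\prime}$ on a neighborhood of the respective evaluation points; these bounds follow from Lemma~\ref{lem:suborder} and the fact that $\Omega_{\alpha}(z_0)$ stays a fixed positive distance from $\operatorname{supp}\mu_{\beta}$ and $\Omega_{\beta}(z_0)$ from $\operatorname{supp}\mu_{\alpha}$ by Lemma~\ref{lem_bao}.

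Third, I would invert the Jacobian. Its determinant equals $-z_0^{-2}S_{\alpha\beta}(z_0)$, and its entries (including the cofactors) are bounded in modulus on $\mathcal{D}$ by Lemma~\ref{lem_bao} and Lemma~\ref{lem:suborder} together with part (i) of Lemma~\ref{lem_addtivemeasurelemma}. Consequently,
\begin{equation*}
|\Delta_{\Ab}|+|\Delta_{\Bb}|\leq \frac{C}{|S_{\alpha\beta}(z_0)|}\bigl(\|r(z_0)\|+q^{2}\bigr).
\end{equation*}
Finally, the smallness hypothesis $q\,S_{\alpha\beta}(z_0)=o(1)$ (interpreted so that the quadratic remainder is negligible compared with the linear term $q$, i.e.\ $q^{2}/|S_{\alpha\beta}(z_0)|=o(q)$) allows the $q^{2}/|S_{\alpha\beta}|$ contribution to be absorbed into the left-hand side by moving it across, leaving the desired bound $|\Delta_{\Ab}|+|\Delta_{\Bb}|\leq C\|r(z_0)\|/|S_{\alpha\beta}(z_0)|$.

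The main obstacle is the second step: controlling the Taylor remainder uniformly in $z_0\in\mathcal{D}$. Near the edge $\lambda_+$, $S_{\alpha\beta}(z_0)$ is small (of order $\sqrt{\kappa+\eta}$), so the inverse Jacobian is large, and a crude $O(q^{2})$ bound would not suffice without the smallness assumption on $q$. One must therefore verify that the second partial derivatives of $\Phi_{\alpha\beta}$ are genuinely $O(1)$ on a fixed-size neighborhood of $(\Omega_{\alpha}(z_0),\Omega_{\beta}(z_0))$ inside which the perturbed point $(\Omega_{\Ab}(z_0),\Omega_{\Bb}(z_0))$ lies (using $q=o(1)$), and that this neighborhood remains a positive distance from the spectra of $\mu_{\alpha}$ and $\mu_{\beta}$. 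Once this uniform control is in place, the inversion argument is routine.
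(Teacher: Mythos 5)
Your proposal reproduces, correctly and in the right order, the standard linearization-and-absorption argument that the paper intends when it cites \cite[Lemma 3.13]{bao20171} and omits details: you observe that $\Phi_{\Ab\Bb}(\Omega_{\Ab}(z_0),\Omega_{\Bb}(z_0),z_0)=0$ forces $\Phi_{\alpha\beta}(\Omega_{\Ab}(z_0),\Omega_{\Bb}(z_0),z_0)=r(z_0)$; you Taylor-expand $\Phi_{\alpha\beta}$ to second order about the limiting fixed point $(\Omega_{\alpha}(z_0),\Omega_{\beta}(z_0))$ where it vanishes; you invert the Jacobian, using that its determinant is $-z_0^{-2}S_{\alpha\beta}(z_0)$ with the remaining entries bounded; and you close the self-consistent inequality $|\Delta_{\Ab}|+|\Delta_{\Bb}|\leq C(\|r(z_0)\|+q^{2})/|S_{\alpha\beta}(z_0)|$ by absorbing the quadratic term. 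You also correctly identify the technical burden: the second derivatives of $L_{\alpha},L_{\beta}$ must be controlled uniformly on a fixed-size neighborhood of $(\Omega_{\alpha}(z_0),\Omega_{\beta}(z_0))$, which Lemma~\ref{lem_bao} (distance of subordination functions from the supports) and the Nevanlinna representation of Lemma~\ref{lem:reprM} indeed supply, since $q=o(1)$ ensures the perturbed point lies inside that neighborhood. This is exactly the argument the paper defers to.

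The one point worth stressing is your reading of the smallness hypothesis. As you observe, the absorption step genuinely requires $q/|S_{\alpha\beta}(z_0)|=o(1)$, so that $(1-Cq/|S_{\alpha\beta}(z_0)|)$ can be bounded below by $1/2$; the printed condition $S_{\alpha\beta}(z_0)\,q=o(1)$ is already implied by $q=o(1)$ and the uniform boundedness of $S_{\alpha\beta}$ (Lemma~\ref{lem_addtivemeasurelemma}\,(iii)), so as written it adds nothing and would not let the bootstrap close near the edge where $S_{\alpha\beta}$ is small. You are right to interpret the hypothesis as $q=o(|S_{\alpha\beta}(z_0)|)$; this is the condition the argument actually needs, and with that reading your sketch is correct and matches the paper's intended route.
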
 

 \end{proof}

\section{Proof of Lemma \ref{lem_proofrigidityest}}\label{OS:Proof:Lemma5.9}

In this section, we prove the rigidity estimate Lemma \ref{lem_proofrigidityest}. We will closely follow the proof strategy of \cite[Lemma 3.14]{bao20171}. The key ingredient is to make use of Proposition \ref{pro_key} to translate the closeness of Stieltjes transforms of two measures into closeness of their quantiles. Note that we are only interested in proving the closeness near the edge and hence the proof is easier than that of \cite[Lemma 3.14]{bao20171}.
We validate in the end of the proof of Lemma \ref{lem_2.4.3} that Assumptions \ref{assu_product1} and \ref{ass_product2} are satisfied for the setup of Lemma \ref{lem_proofrigidityest}.

\begin{proof}[Proof of Lemma \ref{lem_proofrigidityest}] 

Near the edge, by using the square root behavior of 
$\nu_{xy}=\nu_{x} \boxtimes \nu_{y}$ near $\lambda_+$ in Theorem \ref{lem_bao2}, we know that there are at most { $n^{3 \epsilon/2}$} $\gamma_{\nu_{xy}}(j)$-quantiles in an $n^{-2/3+\epsilon}$ vicinity of $\lambda_+$, where $\epsilon=\epsilon_L$ is defined in (\ref{eq_levy}). 
By a discussion similar to \cite[equation (3.84)]{bao20171}, we find that 
\begin{equation}\label{eq_specupper}
\sup( \text{supp} \mu_{\Sigma_x} \boxtimes \mu_{\Sigma_y}) \leq \lambda_++n^{-2/3+10\epsilon}.
\end{equation} 
Again, by the square root behavior of $\nu_{xy}$ and the definition of typical locations in Definition \ref{Definition typical locations}, when $ j \leq L$, we have that 
\begin{equation}\label{eq_bbbblowlow}
|\gamma_{\nu_{xy}}(j) - \lambda_+| \leq c_0n^{-2/3},
\end{equation} 
for some constant $c_0>0$. Indeed,
for $\gamma_{\nu_{xy}}(j)$, when $n$ is sufficiently large, we have
\begin{align}
\frac{j-1/2}{n} =\int_{\gamma_{\nu_{xy}}(j)}^{\infty} d \mu_{\alpha} \boxtimes \mu_{\beta}=\int_{\gamma_{\nu_{xy}}(j)}^{\lambda_+} d\mu_{\alpha} \boxtimes \mu_{\beta} \sim (\lambda_+-\gamma_{\nu_{xy}}(j))^{3/2},  \nonumber 
\end{align}
%\begin{align}
%\frac{j-1/2}{n} &=\int_{\gamma_{\nu_{xy}}(j)}^{\infty} d \mu_{\alpha} \boxtimes \mu_{\beta}=\int_{\gamma_{\nu_{xy}}(j)}^{\lambda_+} d\mu_{\alpha} \boxtimes \mu_{\beta} \nonumber \\
%& \sim (\lambda_+-\gamma_{\nu_{xy}}(j))^{3/2},  \nonumber 
%\end{align}
where we use a discussion similar to (\ref{proof theorem lambdai right most edge gap}).

Since we have  
\begin{align}
|\gamma_{\nu_{xy}}(j)-\gamma_{\Sigma_x \boxtimes \Sigma_y}(j)| \leq  |\gamma_{\nu_{xy}}(j)-\lambda_+|+|\gamma_{\Sigma_x \boxtimes \Sigma_y}(j)-\lambda_+| \,,\label{eq_twotwo}
\end{align}
it suffices to control $| \gamma_{\Sigma_x \boxtimes \Sigma_y}(j)-\lambda_+|$ in order to finish the proof. On one hand, when $\gamma_{\Sigma_x \boxtimes \Sigma_y}(j) \geq \lambda_+$, we have the conclusion directly by (\ref{eq_specupper}). On the other hand, when $\gamma_{\Sigma_x \boxtimes \Sigma_y}(j)< \lambda_+$, we prove the result using the assumption on the Levy distance.  
By the (vi) of Lemma \ref{lem_collecttion} and \eqref{eq_levy} in Assumption \ref{ass_product2}, we get 
\begin{equation*}
\mathcal{L}(\mu_{\Sigma_x} \boxtimes \mu_{\Sigma_y}, \nu_x \boxtimes \nu_y) \leq \mathcal{L}(\mu_{\Sigma_x}, \nu_x)+\mathcal{L}(\mu_{\Sigma_y}, \nu_y) \leq  n^{-2/3+\epsilon}\,,
\end{equation*} 
which, by a discussion similar to \cite[equation (3.104)]{bao20171}, leads to
\begin{equation}\label{Proof:Lemma5.9:eq_boundfinal}
|\mu_{\Sigma_x} \boxtimes \mu_{\Sigma_y} ((x, \infty))-\nu_{xy}((x, \infty))| \leq C n^{-2/3+\epsilon} \,,
\end{equation}
for $x<\lambda_+${, where $C>0$ is a constant}. Under the condition $\gamma_{\Sigma_x \boxtimes \Sigma_y}(j)<\lambda_+$,  by (\ref{eq_specupper}) and the definition of $\gamma_{\Sigma_x \boxtimes \Sigma_y}(j)$, we have  
\begin{equation*}
\mu_{\Sigma_x} \boxtimes \mu_{\Sigma_y} ((\gamma_{\Sigma_x \boxtimes \Sigma_y}(j), \infty))=\frac{j-1/2}{n}\,.
\end{equation*}
Similarly, by the definition of $\lambda_+$, we have 
\begin{equation*}
\nu_{xy}((\gamma_{\Sigma_x \boxtimes \Sigma_y}(j), \lambda_++n^{-2/3+10\epsilon}))=\nu_{xy}((\gamma_{\Sigma_x \boxtimes \Sigma_y}(j),\infty))\,. 
\end{equation*}
As a consequence, when $j \leq L$, by \eqref{Proof:Lemma5.9:eq_boundfinal}, we obtain
\begin{equation*}
\nu_{xy}((\gamma_{\Sigma_x \boxtimes \Sigma_y}(j), \lambda_++n^{-2/3+10\epsilon})) \leq C n^{-2/3+\epsilon}\,,
\end{equation*}
By the second fact of Lemma \ref{lem_bao2}, i.e. the square root behavior of $\nu_{xy}$, we find that
\begin{align}
\nu_{xy}((\gamma_{\Sigma_x \boxtimes \Sigma_y}(j), \lambda_++n^{-2/3+10\epsilon}))&=\nu_{xy}((\gamma_{\Sigma_x \boxtimes \Sigma_y}(j), \lambda_+)) \nonumber\\
&\sim  (\lambda_+-\gamma_{\Sigma_x \boxtimes \Sigma_y}(j))^{3/2}\,,\nonumber
\end{align}
which leads to $|\gamma_{\Sigma_x \boxtimes \Sigma_y}(j)-\lambda_+|\leq Cn^{-4/9+2\epsilon/3}$.
This concludes our proof.

\end{proof}

\section{Verification of Remarks \ref{remk_rates} and \ref{eq_firstthree}}\label{Section validation remark eq_firstthree}
{
\subsection{Proof of Remark \ref{remk_rates}}
We prove (\ref{eq_rates}) by contradiction. Suppose that there exists an $i$ such that   
\begin{equation}\label{eq_lowerboundone}
\left| \lambda_{i+1}(\Wb_x)-\gamma_{\nu_x}(i) \right| \leq \frac{C \log n}{n}.
\end{equation}
Let $z=\lambda_{i+1}(\Wb_x)+\mathrm{i}\eta_0,$ where $\eta_0=n^{-1+\delta}$ and $\delta>0$ is an arbitrary small constant. By Lemma \ref{lem_mplaw}, it is easy to see that 
\begin{equation}\label{eq_imim}
\operatorname{Im} m_{\nu_x}(z) \sim \sqrt{|\lambda_{i+1}(\Wb_x)-\gamma_{\nu_x}(1)|+{\eta_0}}.  
\end{equation}
First of all, we consider the case $i=O(n^{1-\theta})$, where $\theta>0.$ Since $\gamma_{\nu_x}(i)-\gamma_{\nu_x}(i+1) \sim n^{-1},$ by Corollary \ref{coro_lesspertub} and (\ref{eq_imim}), we find that 
\begin{equation*}
\operatorname{Im} m_{\nu_x}(z) \leq n^{-\theta/2}. 
\end{equation*}
On the other hand, by the definition of Stieltjes's transform, we deduce that 
\begin{equation*}
\operatorname{Im} m_{\nu_x}(z) \geq \frac{1}{n \eta_0}=n^{-\delta}.
\end{equation*}
Due to the arbitrariness of $\delta,$ it is a contradiction if we let $\delta=\frac{\theta}{4}.$ 

Second, we consider the case when $\theta=0$ and {$i$ is of order $n$}. Since { $\lambda_j(\Wb_x)$, $j=2,3,\cdots, n,$} are bounded with high probability, we {claim that for any integer $i_0=o(n)$, $\lambda_{j}(\Wb_x)-\lambda_{j+1}(\Wb_x) \leq C n^{-1}\sqrt{\log n}$ for $ i_0 \leq j \leq i_0+C_1n,$ for some $C_1 \leq 1.$ Indeed, note that if $\lambda_{j}(\Wb_x)-\lambda_{j+1}(\Wb_x) \asymp C n^{-1}\sqrt{\log n}$, we have $\lambda_{i_0}(\Wb_x)-\lambda_{i_0+C_1 n}(\Wb_x) \asymp \sqrt{\log n},$ which is a contradiction to the boundedness of $\lambda_j(\Wb_x)$.} 
 { Consequently, (\ref{eq_lowerboundone}) will hold for $i_0 \leq j \leq i_0+C_1 n$ as $\gamma_{\nu_x}(j)-\gamma_{\nu_x}(j+1) \sim n^{-1}.$ } 
  Let $z_0=\lambda_{2}(\Wb_x)+\mathrm{i} \eta_0,$ where $\eta_0$ is to be chosen later.  One one hand, by Lemma \ref{lem_mplaw} and Corollary \ref{coro_lesspertub}, we find that
\begin{equation}\label{eq_bbbbbbbbbbbdddddd}
\operatorname{Im} m_{\nu_x}(z_0) \geq C_2 \sqrt{n^{-1/9+2 \vartheta}+\eta_0},
\end{equation}
for some constant $C_2>0.$ On the other hand, let $m_1(z)=\frac{1}{n}\sum_{k=1}^{n-1} \frac{1}{\lambda_{k+1}(\Wb_x)-z},$ { by the definition of Stieltjes transform and the assumption $i_0=o(n),$}
we have 
{
\begin{equation}
\left| m_1(z_0)-m_{\nu_x}(z_0) \right| \leq \left|  \frac{C}{n} \sum_{k=i_0}^{i_0+C_1n} \frac{|\lambda_k(\Wb_x)-\gamma_{\nu_x}(k)|}{\eta_0^2} \right| \leq C \frac{\log n}{n \eta_0^2}.
\end{equation}
}
We now choose { $\eta_0>1$} to be a sufficiently large constant and $|m_1(z_0)-m_{\nu_x}(z_0)| \leq C n^{-1}\log n \eta_0^{-1}.$ 
Consequently, { by (\ref{eq_bbbbbbbbbbbdddddd}),} we shall have that $\operatorname{Im} m_{1}(z_0) \geq C_2 \sqrt{\eta_0}.$ 
 However, by (\ref{eq_stitrivial}), we find that 
\begin{equation*}
\operatorname{Im} m_1(z) \leq  \frac{1}{\eta_0}. 
\end{equation*}
This leads to a contradiction { as $\eta_0>1$ is a large constant}. 

}

\subsection{Proof of Remark \ref{eq_firstthree}}
First of all, we get from the definition of $\mathbf{S}_{xy}$ that
\begin{align}
\mathbf{S}_{xy}=&\,n^{-2} (n^{-1} \Db_x)^{-1} \mathrm{Sh}_0^2 (n^{-1} \Db_y)^{-1}\nonumber\\
&+n^{-2}(n^{-1} \Db_x)^{-1} (\mathbf{W}_x-\mathrm{Sh}_0)(\mathbf{W}_y-\mathrm{Sh}_0)  (n^{-1} \Db_y)^{-1}\nonumber \\
& + n^{-2}(n^{-1} \Db_x)^{-1} \mathrm{Sh}_0(\mathbf{W}_y-\mathrm{Sh}_0)  (n^{-1} \Db_y)^{-1} \nonumber\\
& +n^{-2}(n^{-1} \Db_x)^{-1} (\mathbf{W}_x-\mathrm{Sh}_0)\mathrm{Sh}_0  (n^{-1} \Db_y)^{-1}\,.\nonumber
\end{align}
By Weyl's inequality and a discussion similar to (\ref{eq_weylkeybound}), we conclude from Theorem \ref{lem_rigiditydistance} that the second part of the above equation satisfies
\begin{align}
&\norm{n^{-2}(n^{-1} \Db_x)^{-1} (\mathbf{W}_x-\mathrm{Sh}_0)(\mathbf{W}_y-\mathrm{Sh}_0)  (n^{-1} \Db_y)^{-1}} =O_{\prec}( n^{-4})\,,\nonumber\\
&\norm{n^{-2}(n^{-1} \Db_x)^{-1} \mathrm{Sh}_0(\mathbf{W}_y-\mathrm{Sh}_0)  (n^{-1} \Db_y)^{-1}}=O_{\prec}(n^{-3/2})\,,\nonumber\\
&\norm{n^{-2}(n^{-1} \Db_x)^{-1} (\mathbf{W}_x-\mathrm{Sh}_0)\mathrm{Sh}_0  (n^{-1} \Db_y)^{-1}}=O_{\prec}(n^{-3/2})\,,\nonumber
\end{align}
where we use the fact that the top eigenvalue of $\mathrm{Sh}_0$ is $f(2)n$.  We also conclude from Lemma \ref{lem_diagonal} that
\begin{equation*}
\lambda_1(n^{-2}  (n^{-1} \Db_x)^{-1} \mathrm{Sh}_0^2 (n^{-1} \Db_y)^{-1}) \rightarrow 1
\end{equation*}
in probability. As a consequence, we  conclude from Weyl's inequality that
\begin{equation*}
\lambda_1( \mathbf{S}_{xy}) \rightarrow 1\,.
\end{equation*}

\end{document}